\DeclareMathAlphabet{\mathdutchcal}{U}{dutchcal}{m}{n}
\DeclareFontFamily{U}{stixtwobb}{\skewchar\font=45}%
\DeclareFontShape{U}{stixtwobb}{m}{n}{<->\mathalfa@bbscaled stix2-mathbb}{}
\DeclareMathAlphabet{\mathbbst}{U}{stixtwobb}{m}{n}
\newcommand{\R}{\ensuremath{\mathbbst{R}}}
\newcommand{\act}{\mathsf{act}}
\newcommand{\inert}{\mathsf{inert}}
\newcommand{\lc}{\mathsf{lc}}
\newcommand{\E}{\ensuremath{\mathbbst{E}^{\otimes}}}
\newcommand{\Eactive}{\ensuremath{\mathbbst{E}^{\otimes, \,\act}}}
\newcommand{\Cube}[2]{{\mathsf{Cube}^{\otimes}_{{#1},{#2}}}}
\newcommand{\CubeOrd}[2]{{\mathsf{Cube}_{{#1},{#2}}}}
\newcommand{\CubeActive}[2]{\mathsf{Cube}_{{#1},{#2}}^{\otimes, \,\act}}
\newcommand{\Prect}{\ensuremath{\mathsf{P}}}
\newcommand{\Drect}{\ensuremath{\mathsf{D}}}
\newcommand{\gE}{\ensuremath{\EuScript{E}}}
\newcommand{\cMfld}{\ensuremath{\,\!_{\lrcorner}\EuScript{M}\mathsf{fld}_n}}
\newcommand{\cgE}{\ensuremath{\,\!_{\lrcorner}\EuScript{E}_n}}
\newtheorem{thm}{Theorem}[section]
\newtheorem{prop}[thm]{Proposition}
\newtheorem{lemma}[thm]{Lemma}
\newtheorem{cor}[thm]{Corollary}
\newtheorem{hypothesis}[thm]{Hypothesis}
\theoremstyle{remark}
\newtheorem{remark}[thm]{Remark}
\newtheorem{remarks}[thm]{Remarks}
\theoremstyle{definition}
\newtheorem{defn}[thm]{Definition}
\newtheorem{notation}[thm]{Notation}
\newtheorem{example}[thm]{Example}
 \theoremstyle{plain}
\newtheorem{customthm}{Theorem}
 \theoremstyle{plain}
\newtheorem{customcor}{Corollary}
\DeclareMathOperator{\Rect}{\mathsf{Rect}}
\DeclareMathOperator{\Map}{\mathsf{Map}}
\DeclareMathOperator{\Emb}{\mathsf{Emb}}
\DeclareMathOperator{\Fr}{\mathsf{Fr}}
\DeclareMathOperator{\Conf}{\mathsf{Conf}}
\DeclareMathOperator{\Fin}{\mathsf{Fin}}
\DeclareMathOperator{\id}{\mathsf{id}}
\DeclareMathOperator{\const}{\mathsf{const}}
\DeclareMathOperator{\Fact}{\mathsf{Fact}}
\DeclareMathOperator{\FACT}{\normalfont{\textsc{Fact}}}
\DeclareMathOperator{\cbl}{\mathsf{cbl}}
\DeclareMathOperator{\holim}{\mathsf{holim}}
\DeclareMathOperator{\hocolim}{\mathsf{hocolim}}
\DeclareMathOperator{\Fun}{\mathsf{Fun}}
\DeclareMathOperator{\BiFun}{\mathsf{BiFun}}
\DeclareMathOperator{\Alg}{\mathsf{Alg}}
\DeclareMathOperator{\ALG}{\normalfont{\textsc{Alg}}}
\DeclareMathOperator{\open}{\mathsf{open}}
\DeclareMathOperator{\op}{\mathsf{op}}
\DeclareMathOperator{\Nerve}{\mathsf{N}}
\DeclareMathOperator{\Mon}{\mathsf{Mon}}
\DeclareMathOperator{\Cat}{\EuScript{C}\mathsf{at}}
\DeclareMathOperator{\Opd}{\EuScript{O}\mathsf{pd}}
\DeclareMathOperator{\Spc}{\EuScript{S}}
\DeclareMathOperator{\Basic}{\EuScript{B}\mathsf{sc}}
\newcommand{\brbinom}[2]{\genfrac{[}{]}{0pt}{}{#1}{#2}}
\newcommand{\vc}[1]{\color{blue} \{VC: #1\} \color{black}}
\newcommand{\Rbullet}{R_{\bullet}}
\newcommand{\Qbullet}{Q_{\bullet}}
\newcommand{\Rbulletn}{\{\Rbullet\}_{\langle n \rangle}}
\newcommand{\Qbulletm}{\{\Qbullet\}_{\langle m \rangle}}
\title{\textcolor{black}{\Large{\textsc{Additivity of constructible factorization algebras over manifolds with corners}}}}
\author{Victor Carmona, Anja Švraka
\thanks{V.C. was partially supported by PID2020-117971GB-C21 funded by MCIN/AEI. A.Š. was supported by SFB 1085: Higher Invariants from the Deutsche Forschungsgemeinschaft (DFG).}}
\date{October 2025}
\begin{document}

\maketitle

\begin{abstract}
We prove the statement in the title, solving in this way a conjecture stated by Ginot in \cite{ginot_notes_2013} for manifolds with corners. Along the way, we establish a derived Swiss-cheese additivity theorem and an alternative proof for the hyperdescent of factorization algebras over those manifolds.
 \end{abstract}
 \vspace{-2mm}

\paragraph*{Keywords:} factorization algebras, Swiss-cheese operads, homotopical algebra, derived Dunn additivity, manifolds with corners.
\vspace{-2mm}

\paragraph*{MSC 2020:}  18N70, 18M75, 57Rxx, 81Txx
\vspace{-2mm}

{\hypersetup{linkcolor=black}
\tableofcontents}

\section{Introduction}\label{sect: Intro}

Factorization algebras are mathematical objects that embody a local-to-global principle which accounts for simultaneous information around finite families of points. Despite their recent introduction, they have already shown remarkable applications in diverse fields of mathematics, including mathematical physics, algebraic and differential topology, and representation theory. Their algebro-geometric version was originally introduced by Beilinson and Drinfeld in their seminal work on vertex algebras, \cite{beilinson_chiral_2004}, and later extended by Francis and Gaitsgory to higher dimensions \cite{francis_chiral_2012}. Around the same time, the topological counterpart was developed by Lurie, Ayala--Francis, Andrade... in a series of works, e.g.\ \cite{lurie_classification_2009} and \cite{ayala_factorization_2015}, along with the closely related notion of chiral or factorization homology (see also \cite{ginot_higher_2014,horel_factorization_2017}). Recent progress by Hennion--Kapranov \cite{hennion_gelfand_2023} builds a bridge between these two worlds. Another remarkable use of these objects comes from the work of Costello--Gwilliam \cite{costello_factorization_2017,costello_factorization_2023} who identified their structure as the minimal axiom system encoding the observables of a perturbative quantum field theory (pQFT) in Riemannian signature, and systematically constructed examples of those via a striking combination of techniques. 


 More concretely, A factorization algebra $\mathdutchcal{F}$ on a topological space \(X\) in a symmetric monoidal \(\infty\)-category \(\EuScript{V}\) (satisfying mild requirements) is a rule that assigns, to every open subset \(U\subseteq X\), an object \(\mathdutchcal{F}(U) \in \EuScript{V}\), and to every inclusion of disjoint open subsets \(U_1 \sqcup U_2 \sqcup \dots \sqcup U_n \subseteq V\), a product-morphism in \(\EuScript{V}\)
    \[
    \mathdutchcal{F}(U_1) \otimes  \mathdutchcal{F}(U_2) \otimes \dots \otimes \mathdutchcal{F}(U_n) \xrightarrow[]{\;\;\quad\;\;} \mathdutchcal{F}(V),
    \]
 subject to natural associativity, unitality, and equivariance conditions. Additionally, $\mathdutchcal{F}$ is required to satisfy two properties: \emph{multiplicativity} and \emph{Weiss codescent} (see~\textsection\ref{sect: Preliminaries}, \cite{costello_factorization_2017, ginot_notes_2013} or \cite{karlsson_assembly_2024}). An obvious but crucial observation is that factorization algebras on $X$ can be seen as special algebras over an ($\infty$-)operad $\mathsf{Disj}_X$ built on ``disjoint open subsets" in $X$. We adopt this operadic perspective in the present work. 

When the underlying space $X$ comes with more structure, e.g., as is the case of interest for us, $X$ is a manifold with corners and hence it comes with a natural stratification, we can further impose an extra ``topological" property on $\mathdutchcal{F}$ called \emph{constructibility} (see Definition \ref{defn: Constructible Fact Algs}). Such a condition is just the natural generalization of local-constancy in singular or stratified settings, and it is strongly connected to TQFTs with defects (see \cite{ayala_factorization_2017, contreras_defects_2023}). Informally, it means that the value of $\mathdutchcal{F}$ on the basic building blocks of $X$, i.e.\ the open subsets $U\subseteq X$ isomorphic to $\R^{p,q}:=\R^p_{\phantom{\geq 0}}\!\!\!\!\times \R^{q}_{\geq 0}$ for some $(p,q)$ in the case of manifolds with corners, is independent of the size of such as long as they encapsulate the same singularity. As we will see, this is arguably the most powerful axiom among the three (e.g.\ it is responsible for all the main theorems in this article). Therefore, we will focus on those factorization algebras fulfilling \emph{constructibility}, and denote the symmetric monoidal $\infty$-category they span by $\FACT_{X}^{\cbl}(\EuScript{V})$ in the sequel. 

Our first key result solves a conjecture stated by Ginot, see \cite[\textsection 6.1]{ginot_notes_2013}, for manifolds with corners.\footnote{This question also appears in Berry's PhD thesis as \cite[Conjecture 1.0.5]{berry_additivity_2021}.} The problem concerns the behavior of constructible factorization algebras on a product of spaces, and our solution establishes that they function as expected even in the presence of non-trivial singularities/stratifications (see also Theorems \ref{thm: Global additivity max generality}--\ref{thm: Global additivity mixing local models}):

 \begin{customthm}
    \label{global thm: global additivity}
    (Global additivity; see Theorem \ref{thm: Global additivity})
        Let $\EuScript{V}$ be a presentable symmetric monoidal \(\infty\)-category and let $X$, $Y$ be two manifolds with corners. Then, there is an equivalence of symmetric monoidal \(\infty\)-categories 
        \[
        \FACT^{\cbl}_{X\times Y}(\EuScript{V}) \simeq \FACT^{\cbl}_{X}\big(\FACT^{\cbl}_{Y}(\EuScript{V})\big)
        \]
         induced by pushforward of factorization algebras along $\uppi_X\colon X\times Y\to X$.
    \end{customthm}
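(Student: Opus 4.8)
The plan is to reduce the global equivalence to a local additivity statement for the corner models $\R^{p,q}$ and then bootstrap via hyperdescent. As a first step I would pass to the operadic picture: $\FACT^{\cbl}_X(\EuScript{V})\simeq\Alg_{\mathsf{Disk}^{\cbl}_X}(\EuScript{V})$ for a small $\infty$-operad $\mathsf{Disk}^{\cbl}_X$ assembled from the corner charts of $X$ (this is what the material on the $\mathsf{Cube}^{\otimes}$ construction in the preliminaries provides), symmetric-monoidally and naturally in $\EuScript{V}$. Since $\EuScript{V}$ is presentable symmetric monoidal, so is $\FACT^{\cbl}_Y(\EuScript{V})$, and one has $\FACT^{\cbl}_X\big(\FACT^{\cbl}_Y(\EuScript{V})\big)\simeq\Alg_{\mathsf{Disk}^{\cbl}_X\otimes\mathsf{Disk}^{\cbl}_Y}(\EuScript{V})$, with $\otimes$ the Boardman--Vogt tensor of $\infty$-operads. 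Under these identifications the pushforward along $\uppi_X$ becomes restriction along the ``product of charts'' morphism $\mathsf{Disk}^{\cbl}_X\otimes\mathsf{Disk}^{\cbl}_Y\to\mathsf{Disk}^{\cbl}_{X\times Y}$, $(c,d)\mapsto c\times d$, so the theorem is equivalent to the assertion that this morphism is an equivalence of $\infty$-operads; I would also record here that it is compatible with restriction to subcharts and with finite disjoint unions.

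Second, I would prove the local case: for all $(p,q)$ and $(p',q')$, pushforward along the projection $\R^{p,q}\times\R^{p',q'}=\R^{p+p',q+q'}\to\R^{p,q}$ induces an equivalence
\[
\FACT^{\cbl}_{\R^{p+p',q+q'}}(\EuScript{V})\ \simeq\ \FACT^{\cbl}_{\R^{p,q}}\big(\FACT^{\cbl}_{\R^{p',q'}}(\EuScript{V})\big),
\]
equivalently $\mathsf{Disk}^{\cbl}_{\R^{p,q}}\otimes\mathsf{Disk}^{\cbl}_{\R^{p',q'}}\simeq\mathsf{Disk}^{\cbl}_{\R^{p+p',q+q'}}$. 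Classical Dunn additivity disposes of the Euclidean directions, so the new content is additivity among the corner directions, i.e.\ for the iterated Swiss-cheese operads controlling $\FACT^{\cbl}_{\R^{q}_{\geq 0}}$. I would establish it through the rectilinear-embedding model: write down an explicit comparison map between the spaces of rectilinear embeddings of finite disjoint unions of products of corner-cubes and check that it is an equivalence on each multimapping space by a scanning/rescaling deformation retraction, carried out uniformly over all colors (all corner codimensions) and compatibly with the decomposition of the corner stratification of a product. Checking that this comparison preserves constructibility is part of the work. This is the ``derived Swiss-cheese additivity theorem'' promised in the abstract.

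Third, I would globalize by a two-step Weiss hyperdescent. Fixing $Y$, both $W\mapsto\FACT^{\cbl}_{W\times Y}(\EuScript{V})$ and $W\mapsto\FACT^{\cbl}_{W}\big(\FACT^{\cbl}_Y(\EuScript{V})\big)$ are Weiss hypersheaves on the opens of $X$---the first because $\FACT^{\cbl}_{-}(\EuScript{V})$ is a hypersheaf and the product of a Weiss cover of $X$ with $Y$ is a Weiss cover of $X\times Y$, the second because the hyperdescent theorem (for which the paper gives an independent proof) applies with the presentable symmetric monoidal coefficients $\FACT^{\cbl}_Y(\EuScript{V})$---and the pushforward defines a morphism between them. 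Hence it is enough to treat the case $X=\R^{p,q}$ a corner chart; running the same argument in the $Y$-variable then reduces to $Y=\R^{p',q'}$ as well, which is exactly the local case of the second step. Symmetric monoidality of the global comparison is inherited from the chart level, and the variants recorded in Theorems \ref{thm: Global additivity max generality}--\ref{thm: Global additivity mixing local models} come out of the same scheme by allowing mixed or enlarged local models.

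I expect the crux to be the local additivity of the corner models. Classical Dunn additivity is comparatively transparent because $\mathbb{E}_n$ is single-colored and formal, whereas the corner models organize into a multi-colored, non-formal (Swiss-cheese-type) system, so the contraction has to be performed simultaneously over all strata and has to respect the interaction of corner stratifications under products; verifying that the comparison map preserves constructibility is precisely where the combinatorial bookkeeping becomes delicate. A secondary difficulty is having hyperdescent for constructible factorization algebras over manifolds with corners available in a form robust enough to take arbitrary presentable symmetric monoidal coefficients, so that it can be iterated in the globalization step---this is the role of the alternative proof of hyperdescent mentioned in the abstract.
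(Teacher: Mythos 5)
Your globalization step is, in substance, the paper's own proof of the global theorem: identify $\FACT^{\cbl}$ with $\ALG_{\gE_{(-)}}$, use hyperdescent of constructible factorization algebras (with presentable symmetric monoidal coefficients, in particular $\FACT^{\cbl}_{Y}(\EuScript{V})$) to reduce each variable to corner charts $\R^{p,q}$, and then invoke local additivity. The paper (proof of Theorem \ref{thm: Global additivity}) organizes this as a double homotopy limit over covers of $X$ and $Y$ by charts closed under finite intersections, using that $\ALG_{\EuScript{O}}(-)$ commutes with limits and Fubini; your ``fix $Y$, reduce $X$ to a chart, then repeat in $Y$'' version is only a reorganization of the same argument. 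One terminological caveat: the descent you need is along ordinary open hypercovers by charts (which are essentially never Weiss covers), so calling it ``Weiss hyperdescent'' conflates the Grothendieck topology used for descent of $\FACT^{\cbl}_{(-)}(\EuScript{V})$ (Theorem \ref{thm: Hyperdescent for cbl fact}) with the Weiss covers appearing in the codescent axiom of the algebras themselves; the product-of-covers observation goes through for ordinary covers just as well.

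The genuine gap is in the local step, which you correctly identify as the crux but propose to handle in a way that cannot work as stated. Writing down ``an explicit comparison map between the spaces of rectilinear embeddings \dots and checking that it is an equivalence on each multimapping space'' presupposes that the derived Boardman--Vogt tensor product $\mathbbst{E}_{p,q}\boxtimes\mathbbst{E}_{r,t}$ has identifiable multimorphism spaces to compare against; it does not, and the point-set interchange map is exactly the object known not to be homotopically meaningful (this is why the underived Dunn theorem does not transport). For the same reason, ``classical Dunn additivity disposes of the Euclidean directions'' is both unavailable (it concerns the ordinary tensor product) and circular (splitting $\mathbbst{E}_{p,q}$ into Euclidean and corner factors is itself an instance of the additivity to be proven). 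The paper's Section \ref{sect: Additivity for generalized swiss-cheese operads} supplies the missing mechanism, following Lurie--Harpaz: replace $\E_{p,q}$ by the discrete cube operad $\Cube{p}{q}$ via a weak approximation (Theorem \ref{thm:WeakApproxofE}), encode the two-variable algebra structure by the wreath product $\Cube{p}{q}\wr\Cube{r}{t}$ and show that its comparison to $\E_{p+r,q+t}$ is again a weak approximation (Theorem \ref{thm:WreathProdApprox}), using the Lurie--Seifert--van Kampen theorem together with the identification of mapping spaces with stratified configuration spaces (your ``scanning'' appears only here, as Lemma \ref{lem:rect-conf}), and finally match locally constant monoids on both sides (proof of Theorem \ref{thm:SwissCheeseAdditivity}). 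Some machinery of this kind is indispensable; without it the local additivity, and hence the theorem, is not established. Note also that constructibility is not an extra property of a comparison map to be checked afterwards: it enters as the local constancy condition on monoids over the cube and wreath models, which is what makes the localization/approximation arguments close up.
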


To exemplify the content of this result, let us consider a constructible factorization algebra $\mathdutchcal{F}$ on $X\times Y$. For QFT-inclined readers, $\mathdutchcal{F}$ can be seen as encoding the observables of a TQFT with defects over the canonical corner-stratification of $X\times Y$. Then, we could forget most of the structure and only keep the values of $\mathdutchcal{F}$ on open subsets of the form $U\times V$, plus the product-morphisms associated with inclusions of disjoint open subsets on each factor, called \emph{horizontal} and \emph{vertical products} (see Figure \ref{fig: figure 4}). Denoting this data by $(\mathdutchcal{F}\vert_{\mathsf{open}(X)\times \mathsf{open}(Y)},\upmu_{\mathsf{h}},\upmu_{\mathsf{v}})$, Theorem \ref{global thm: global additivity} asserts that we can reconstruct the whole factorization algebra $\mathdutchcal{F}$ from $(\mathdutchcal{F}\vert_{\mathsf{open}(X)\times \mathsf{open}(Y)},\upmu_{\mathsf{h}},\upmu_{\mathsf{v}})$ in an essentially unique way. In other words, to prescribe a constructible factorization algebra on $X\times Y$, you only need to provide a rule $U\times V\mapsto  \mathdutchcal{F}(U\times V)$ for $U\subseteq X$ and $V\subseteq Y$, equipped with compatible horizontal and vertical products, which satisfies the axioms to be a constructible factorization algebra on each variable. Moreover, our methods also show that you can just keep the values of $\mathdutchcal{F}$ over open subsets $U\times V$ with $U\cong \R^{p,q}$ and $V\cong \R^{r,t}$ for some $p,q,r,t$, and only check constructibility on each variable. This vastly simplifies the construction of TQFTs with defects on $ X\times Y$.\footnote{By no means we are referring to TQFTs in the sense of Atiyah's, or a related, definition, just what one would expect as observables of such when embracing Costello-Gwilliam's formalism (see \cite{costello_factorization_2023}).}

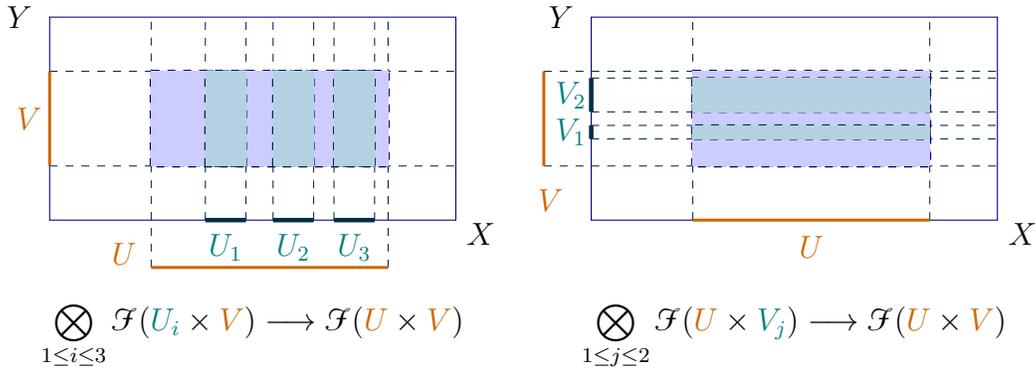
\begin{figure}[htp]
    \centering
    \begin{tikzpicture}[scale=0.9]

\begin{scope}
\draw[ blue!50!black] (-2,0) rectangle (4,3);

\draw[green!40!blue!45!black, dashed] (-0.5,-0.7) -- (-0.5,3);
\draw[green!40!blue!45!black, dashed] (-2,0.8) -- (4,0.8);
\draw[green!40!blue!45!black, dashed] (-2,2.2) -- (4,2.2);
\draw[green!40!blue!45!black, dashed] (3,-0.7) -- (3,3);

\draw[green!40!blue!45!black, dashed] (0.3,0) -- (0.3,3);
\draw[green!40!blue!45!black, dashed] (0.9,0) -- (0.9,3);
\draw[green!40!blue!45!black, dashed] (1.3,0) -- (1.3,3);
\draw[green!40!blue!45!black, dashed] (1.9,0) -- (1.9,3);
\draw[green!40!blue!45!black, dashed] (2.2,0) -- (2.2,3);
\draw[green!40!blue!45!black, dashed] (2.8,0) -- (2.8,3);
\draw[green!40!blue!45!black, dashed] (3,0) -- (3,3);

\node[left] at (-2.1,3) {$Y$};
\node[right] at (4,-0.2) {$X$};

\draw[blue!50!black, line width=0.4mm, dashed] (-0.5,0.8) rectangle (3,2.2);
\fill[blue!20] (-0.5,0.8) rectangle (3,2.2);
\foreach \x in {0.3,1.3,2.2}{
  \draw[green!40!blue!45!black, thick, dashed] (\x,0.8) rectangle ++(0.6,1.4);
  \fill[green!40!blue!30] (\x,0.8) rectangle ++(0.6,1.4);
}

\draw[line width=0.4mm,orange!80!black] (-2,0.8) -- (-2,2.2);
\node[thick,orange!80!black] at (-2.3,1.5) {$V$};
\draw[line width=0.4mm,orange!80!black] (-0.5,-0.7) -- (3,-0.7);
\node[thick,orange!80!black] at (-0.9,-0.5) {$U$};
\draw[line width=0.6mm,green!40!blue!45!black] (0.3,0) -- (0.9,0);
\node[thick,teal] at (0.6,-0.4) {$U_1$};
\draw[line width=0.6mm,green!40!blue!45!black] (1.3,0) -- (1.9,0);
\node[thick,teal] at (1.6,-0.4) {$U_2$};
\draw[line width=0.6mm,green!40!blue!45!black] (2.2,0) -- (2.8,0);
\node[thick,teal] at (2.5,-0.4) {$U_3$};

\node[thick] at (1,-1.7) {$\displaystyle \bigotimes_{1\leq i\leq 3}\mathdutchcal{F}(\textcolor{teal}{U_i}\times \textcolor{orange!80!black}{V})\longrightarrow \mathdutchcal{F}(\textcolor{orange!80!black}{U}\times \textcolor{orange!80!black}{V})$};

\end{scope}

\begin{scope}[xshift=6cm]
\draw[ blue!50!black] (0,0) rectangle (6,3);

\node[left] at (-0.1,3) {$Y$};

\node[right] at (6,-0.2) {$X$};

\draw[green!40!blue!45!black, dashed] (1.5,0) -- (1.5,3);
\draw[green!40!blue!45!black, dashed] (5,0) -- (5,3);

\draw[green!40!blue!45!black, dashed] (-0.7,0.8) -- (6,0.8);
\draw[green!40!blue!45!black, dashed] (0,1.2) -- (6,1.2);
\draw[green!40!blue!45!black, dashed] (0,1.4) -- (6,1.4);
\draw[green!40!blue!45!black, dashed] (0,1.6) -- (6,1.6);
\draw[green!40!blue!45!black, dashed] (0,2.1) -- (6,2.1);
\draw[green!40!blue!45!black, dashed] (-0.7,2.2) -- (6,2.2);

\draw[blue!50!black, line width=0.4mm, dashed] (1.5,0.8) rectangle (5,2.2);
\fill[blue!20] (1.5,0.8) rectangle (5,2.2);

\draw[green!40!blue!45!black, dashed, thick] (1.5,1.2) rectangle (5,1.4);
\fill[green!40!blue!30] (1.5,1.2) rectangle (5,1.4);
\draw[green!40!blue!45!black, thick, dashed] (1.5,1.6) rectangle (5,2.1);
\fill[green!40!blue!30] (1.5,1.6) rectangle (5,2.1);

\draw[line width=0.4mm,orange!80!black] (1.5,0) -- (5,0);
\node[thick,orange!80!black] at (3.25,-0.4) {$U$};
\draw[line width=0.4mm,orange!80!black] (-0.7,0.8) -- (-0.7,2.2);
\node[thick,orange!80!black] at (-0.6,0.3) {$V$};
\draw[line width=0.6mm,green!40!blue!45!black] (0,1.2) -- (0,1.4);
\node[thick,teal] at (-0.3,1.3) {$V_1$};
\draw[line width=0.6mm,green!40!blue!45!black] (0,1.6) -- (0,2.1);
\node[thick,teal] at (-0.3,1.85) {$V_2$};

\node[thick] at (3,-1.7) {$\displaystyle \bigotimes_{1\leq j\leq 2}\mathdutchcal{F}(\textcolor{orange!80!black}{U}\times \textcolor{teal}{V_j})\longrightarrow \mathdutchcal{F}(\textcolor{orange!80!black}{U}\times \textcolor{orange!80!black}{V})$};

\end{scope}

\end{tikzpicture}
    \caption{An instance of a horizontal (left) and a vertical product (right) of $\mathdutchcal{F}$ and the corresponding configuration of disjoint open subsets in $X\times Y$.}
    \label{fig: figure 4}
\end{figure}

Our strategy for proving Theorem \ref{global thm: global additivity} is quite natural, as it relies on proving the claim for the basic building blocks $\R^{p,q}$ first, and then globalizing via a (hyper)descent argument. Thus, it is a suitable path to obtain variations/generalizations for which one can establish additivity for the building blocks of more general, or simply different, stratified manifolds (cf. Theorems \ref{thm: Global additivity max generality}--\ref{thm: Global additivity mixing local models}). Our motivation for choosing manifolds with corners is twofold. On the one hand, both the additivity for building blocks and the differential topology of manifolds with corners are comparatively better controllable and understood than that of, e.g., conically smooth stratified spaces (see \cite{ayala_factorization_2017,ayala_local_2017}), allowing us to provide a self-contained narrative. On the other hand, the case treated in this article will serve as the base step for the generalization to all conically smooth manifolds in the forthcoming work of the second author.

The mentioned additivity for $X=\R^{p,q}$ and $Y=\R^{r,t}$ is obtained by relating it to an extension of Lurie--Dunn's Theorem \cite[Theorem 5.1.2.2]{lurie_higher_nodate} for (generalized) Swiss-cheese operads $\mathbbst{E}_{p,q}$ (a result which holds independent interest), since $\mathbbst{E}_{p,q}$ governs constructible factorization algebras on $\R^{p,q}$ (see Proposition \ref{prop: Coincidence of little (p,q)-disc operads} and Lemma \ref{lem:cbl Fact as operadic algebras}).

\begin{customthm}\label{global thm: local additivity} (Local additivity; see Theorems \ref{thm:SwissCheeseAdditivity} and \ref{thm:SwissCheeseAdditivity Reformulated})
 For any symmetric monoidal \(\infty\)-category $\EuScript{V}$, there is an equivalence of symmetric monoidal \(\infty\)-categories
    $$
    \ALG_{\mathbbst{E}_{p+r,q+t}}(\EuScript{V})\simeq 
    \ALG_{\mathbbst{E}_{p,q}}\big(\ALG_{\mathbbst{E}_{r,t}}(\EuScript{V})\big).
    $$ 
\end{customthm}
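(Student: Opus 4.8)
The plan is to reduce the statement to the rigid ``little cubes'' models and then invoke the universal property of the Boardman--Vogt tensor product of $\infty$-operads. By the coincidence of little $(p,q)$-disc operads (Proposition~\ref{prop: Coincidence of little (p,q)-disc operads}) it suffices to prove the equivalence with each of $\mathbbst{E}_{p,q}$, $\mathbbst{E}_{r,t}$, $\mathbbst{E}_{p+r,q+t}$ replaced by the strict operad $\Cube{a}{b}$ of stratum-preserving rectilinear embeddings. These models are convenient because their colours are in bijection with the strata of $\R^{a,b}$ -- equivalently, with the faces of $\R^{b}_{\geq 0}$, i.e.\ with the subsets of $\{1,\dots,b\}$ -- and their spaces of multi-operations are spaces of stratum-preserving affine embeddings $\bigsqcup_{k}\R^{p_{k},q_{k}}\hookrightarrow\R^{a,b}$. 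Under the canonical identification $\R^{p+r,q+t}\cong\R^{p,q}\times\R^{r,t}$ -- which matches the $q+t$ half-open coordinates with the $q$ coordinates of the first factor together with the $t$ of the second -- the colour set of $\Cube{p+r}{q+t}$ is identified \emph{on the nose} with that of $\Cube{p}{q}\otimes\Cube{r}{t}$, since the faces of $\R^{q}_{\geq 0}\times\R^{t}_{\geq 0}$ are precisely the faces of $\R^{q+t}_{\geq 0}$.

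I would then construct an interchange morphism of $\infty$-operads
\[
\theta\colon\ \Cube{p}{q}\otimes\Cube{r}{t}\ \longrightarrow\ \Cube{p+r}{q+t},
\]
modelled on Lurie's proof of Dunn additivity \cite[\textsection 5.1.2]{lurie_higher_nodate}: $\theta$ sends a pair consisting of a configuration $\{U_{i}\}$ of basic opens in $\R^{p,q}$ and a configuration $\{V_{j}\}$ of basic opens in $\R^{r,t}$ to the product configuration $\{U_{i}\times V_{j}\}$ in $\R^{p,q}\times\R^{r,t}\cong\R^{p+r,q+t}$. On colours $\theta$ is the bijection above, so it remains to show that $\theta$ is an equivalence on spaces of multi-operations. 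This amounts to a \emph{stratified interchange lemma}: the space of stratum-preserving rectilinear embeddings $\bigsqcup_{k}\R^{p_{k},q_{k}}\hookrightarrow\R^{p+r,q+t}$ is weakly equivalent to the space of compatibly interleaved pairs of rectilinear embeddings, one into $\R^{p,q}$ and one into $\R^{r,t}$. I would prove it by the classical shrink-and-slide deformation, performed separately on the block of open coordinates and on the block of half-open coordinates: the product stratification only records which half-open coordinates vanish, so the isotopy that makes the two cube families transverse never has to cross a wall -- a little cube pinned over a stratum is only shrunk towards, or translated parallel to, that stratum -- and the argument of \cite[Thm.~5.1.2.2]{lurie_higher_nodate} then runs stratum-wise.

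I expect this stratified interchange lemma to be the main obstacle, since it carries all the genuinely new geometric content. It is cleanest to organise the proof inductively on $q+t$. The base case $q=t=0$ is exactly Lurie--Dunn additivity for the $\mathbbst{E}_{n}$; and, after re-associating and commuting Boardman--Vogt tensor products and using the classical case to absorb open coordinates, the inductive step reduces to the single equivalence $\Cube{0}{1}\otimes\Cube{a}{b}\simeq\Cube{a}{b+1}$ -- the genuine Swiss-cheese step, in which one tensors an arbitrary $\mathbbst{E}_{a,b}$ with the half-line operad $\mathbbst{E}_{0,1}$ (whose algebras are, roughly, an associative algebra together with a module over it). Here only one extra half-open coordinate is present, so the interleaving isotopy must respect a single wall at $0$ and can be exhibited by hand.

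Finally, once $\theta$ is an equivalence of $\infty$-operads, applying $\ALG_{-}(\EuScript{V})$ and the universal property of the tensor product -- $\ALG_{\mathcal{O}\otimes\mathcal{O}'}(\EuScript{V})\simeq\ALG_{\mathcal{O}}\big(\ALG_{\mathcal{O}'}(\EuScript{V})\big)$, with the symmetric monoidal refinement packaged through the bifunctor formalism ($\BiFun$; cf.\ \cite[\textsection 5.1.2]{lurie_higher_nodate}) -- yields $\ALG_{\Cube{p+r}{q+t}}(\EuScript{V})\simeq\ALG_{\Cube{p}{q}}\big(\ALG_{\Cube{r}{t}}(\EuScript{V})\big)$, and transporting back along Proposition~\ref{prop: Coincidence of little (p,q)-disc operads} gives the claimed equivalence for the $\mathbbst{E}_{p,q}$, compatibly with restriction along $\theta$. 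The symmetric monoidal compatibility of every comparison is routine, as each is built from the product-of-cubes operation; composing further with Lemma~\ref{lem:cbl Fact as operadic algebras} (applied with coefficients in $\EuScript{V}$ and in $\FACT^{\cbl}_{\R^{r,t}}(\EuScript{V})$) produces the Reformulated statement (Theorem~\ref{thm:SwissCheeseAdditivity Reformulated}) in terms of constructible factorization algebras on $\R^{p,q}$, $\R^{r,t}$ and $\R^{p+r,q+t}$.
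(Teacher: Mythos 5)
Your central step does not go through as stated. After constructing the interchange bifunctor, you write that ``it remains to show that $\theta$ is an equivalence on spaces of multi-operations'' and propose a stratified shrink-and-slide isotopy to compare the operations of $\Cube{p}{q}\otimes\Cube{r}{t}$ with those of the target. But the derived Boardman--Vogt tensor product has no a priori accessible multimorphism spaces: it is characterized purely by its universal property (Definition \ref{defn: Exhibits a tensor product}), so there is nothing concrete to run an isotopy argument against, and ``checking $\theta$ on operations'' is exactly the statement of the theorem, not a reduction of it. This is the difficulty that Lurie's and Harpaz's proofs are built to circumvent: one never computes $\EuScript{O}\boxtimes\EuScript{O}'$ directly, but instead compares categories of algebras/monoids through explicit intermediaries. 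In the paper this is done by replacing $\E_{p,q}$ by the \emph{discrete} operad $\Cube{p}{q}$ via a weak approximation (Theorem \ref{thm:WeakApproxofE}), showing that the wreath product $\Cube{p}{q}\wr\Cube{r}{t}\to\E_{p+r,q+t}$ is likewise a weak approximation (Theorem \ref{thm:WreathProdApprox}) -- the homotopy-fiber computations there, via Lemma \ref{lem:rect-conf} and Lurie--Seifert--van Kampen applied to posets of sub-cubes covering configurations in strata, are where the ``genuinely new geometric content'' actually lives -- and then passing through locally constant monoids (Lemma \ref{lem:Harpaz weak approx}, Theorem \ref{thm:wreathprod}). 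Your proposal contains no substitute for this wreath-product/approximation layer.

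Two further points. First, your opening reduction is misplaced: Proposition \ref{prop: Coincidence of little (p,q)-disc operads} compares $\mathbbst{E}_{p,q}$ with the smooth-embedding operad $\gE_{\R^{p,q}}$, not with any rigid cube model, and the paper's $\Cube{p}{q}$ is a discrete operad that is \emph{not} equivalent to $\E_{p,q}$ -- the relation is a weak approximation, with $\E_{p,q}$-algebras corresponding only to \emph{locally constant} $\Cube{p}{q}$-algebras -- so ``replacing each $\mathbbst{E}_{a,b}$ by $\Cube{a}{b}$'' is not legitimate. Second, the induction on $q+t$ is not well-founded as described: absorbing coordinates one at a time requires intermediate equivalences such as $\E_{1,0}\otimes\E_{a,b}\simeq\E_{a+1,b}$ with $b>0$, which are again Swiss-cheese--type statements not covered by classical Lurie--Dunn additivity, so each inductive step faces the same unproved difficulty as the general case. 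Your final paragraph (universal property giving $\ALG_{\EuScript{O}\boxtimes\EuScript{O}'}(\EuScript{V})\simeq\ALG_{\EuScript{O}}(\ALG_{\EuScript{O}'}(\EuScript{V}))$, symmetric monoidal refinement via $\BiFun$) matches the paper and is fine, but it only kicks in once the tensor-product property of the bifunctor is established, which is the part your argument leaves open.
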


See Theorem \ref{thm: Additivity Quadrant Flags} for a related result also generalizing Lurie--Dunn's Theorem.

The proof of Theorem \ref{global thm: local additivity} is heavily inspired by Harpaz's reimagination (see   \cite{harpaz_little_nodate}) of Lurie's ideas in  \cite[\textsection 5]{lurie_higher_nodate}. In particular, we make extensive use of weak \(\infty\)-operads and their approximations (see \textsection \ref{sect:weak operads} for an account on these concepts).  

The last ingredient in our proof of Theorem \ref{global thm: global additivity} is the (hyper)descent of constructible factorization algebras. Such a result was considered folklore for quite some time, but it was not until recently that a complete proof, due to Karlsson--Scheimbauer--Walde, was achieved (see \cite[Theorems A and B]{karlsson_assembly_2024})\footnote{Notice that in loc.cit.\ there is a strong point-set hypothesis on the conically smooth stratified manifold, the existence of ``\emph{enough good disks}", which is proven to hold here for manifolds with corners (see Proposition \ref{prop: Good supply of Weiss covers}).}. In fact, the authors of \cite{karlsson_assembly_2024} develop a rigorous and extensive homotopical toolkit for dealing with (constructible) factorization algebras on conically smooth manifolds that the literature was lacking.

Nonetheless, their proof of descent is quite involved, and relies on a number of intermediate steps. For this reason, we provide an alternative, more streamlined, proof of hyperdescent over manifolds with corners.   

\begin{customthm}\label{global thm: hyperdescent} (Hyperdescent; see Theorem \ref{thm: Hyperdescent for cbl fact})
    Let $X$ be a manifold with corners, $\mathcal{U}\hookrightarrow \open(U)$ be a hypercover of $U\in \open(X)$, and $\EuScript{V}$ be a presentable symmetric monoidal $\infty$-category. Then, the canonical comparison map 
    $$
    \FACT_{U}^{\cbl}(\EuScript{V})\xrightarrow{\;\;\quad\;\;}\underset{V\in\mathcal{U}^{\op}}{\holim}\FACT_{V}^{\cbl}(\EuScript{V})
    $$
    is an equivalence of symmetric monoidal $\infty$-categories. In other words, $\FACT^{\cbl}_{\scalebox{0.4}{$(-)$}\!}(\EuScript{V})$ is a hypersheaf of symmetric monoidal $\infty$-categories over $X$.
\end{customthm}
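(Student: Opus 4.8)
The plan is to reformulate the assertion as a codescent property of a cosheaf of $\infty$-operads, cut hypercovers down to ordinary covers by hypercompleteness, and close with the weak-operad toolkit of \textsection\ref{sect:weak operads} fed by the good disks of Proposition \ref{prop: Good supply of Weiss covers}. First I would use (the evident extension to an arbitrary open $W$ of a manifold with corners of) Lemma \ref{lem:cbl Fact as operadic algebras} to identify $\FACT^{\cbl}_W(\EuScript{V})\simeq\ALG_{\gE_W}(\EuScript{V})$, symmetric-monoidally and naturally in $W$, where $\gE_W$ is the $\infty$-operad with colors the good disks $D\subseteq W$ and multi-operations their disjoint inclusions; this disk model is available precisely because of constructibility. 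Since $\ALG_{(-)}(\EuScript{V})\colon\Opd^{\op}\to\Cat^{\otimes}$ takes colimits of $\infty$-operads to limits of symmetric monoidal $\infty$-categories, the comparison map of the theorem becomes the map $\ALG_{\gE_U}(\EuScript{V})\to\ALG_{\hocolim_{V\in\mathcal{U}}\gE_V}(\EuScript{V})$ induced by the canonical $\hocolim_{V\in\mathcal{U}}\gE_V\to\gE_U$; so it suffices to prove that, for every hypercover $\mathcal{U}$ of an open $U$, this last map is an \emph{approximation} of $\infty$-operads in the sense of \textsection\ref{sect:weak operads} --- equivalently, that $W\mapsto\gE_W$ is a hypercosheaf of $\infty$-operads on $X$. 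Finally, since $X$ is finite-dimensional and paracompact the $\infty$-topos of sheaves on $X$ is hypercomplete, and because the functors $\Map_{\Cat^{\otimes}}(c,-)$ preserve limits and are jointly conservative a $\Cat^{\otimes}$-valued sheaf on $X$ is automatically a hypersheaf; so it is enough to treat the \v{C}ech nerve $\check{C}_\bullet$ of an arbitrary open cover $\{V_i\}_{i\in I}$ of $U$, i.e.\ to show that $\hocolim_{[n]\in\Delta^{\op}}\gE_{\check{C}_n}\to\gE_U$ is an approximation, where $\gE_{\check{C}_n}=\coprod_{i_0,\dots,i_n}\gE_{V_{i_0}\cap\cdots\cap V_{i_n}}$ and this colimit is carried by $\ALG_{(-)}(\EuScript{V})$ exactly to $\holim_{V\in\mathcal{U}^{\op}}\FACT^{\cbl}_V(\EuScript{V})$.

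Call a good disk \emph{subordinate} if it lies inside some $V_i$, and let $\gE^{\circ}_U\subseteq\gE_U$ be the full suboperad on the subordinate good disks. The inclusion $\gE^{\circ}_U\hookrightarrow\gE_U$ is an \emph{equivalence} of $\infty$-operads: it is fully faithful by construction, and essentially surjective because --- by Proposition \ref{prop: Good supply of Weiss covers}, via a Lebesgue-number argument --- every good disk $D\subseteq U$ contains a concentric good disk $D'$ of the same singularity type that is subordinate, while $D'\hookrightarrow D$ is an equivalence in $\gE_U$ since the space of good disks of a fixed type inside $D$ is weakly contractible. This is the single place where constructibility, and with it the differential topology of corners, genuinely intervenes. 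On the other hand, every good disk inside a component $V_{i_0}\cap\cdots\cap V_{i_n}$ of $\check{C}_n$ already lies in $V_{i_0}$, hence is subordinate, so $\gE_{V_{i_0}\cap\cdots\cap V_{i_n}}$ embeds --- through $\gE_{V_{i_0}}$ --- into $\gE^{\circ}_U$ as a full suboperad. It therefore remains to show that the canonical map $\hocolim_{[n]\in\Delta^{\op}}\gE_{\check{C}_n}\to\gE^{\circ}_U$ is an approximation.

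For this I would invoke the approximation criterion of \textsection\ref{sect:weak operads}, which reduces the claim to the weak contractibility, for every operation $\xi$ of $\gE^{\circ}_U$ --- with target disk $D_0$ and constituent disks of union $W_\xi\subseteq D_0$ --- of the slice $\infty$-category of lifts of $\xi$ along the simplicial diagram. Since each $\gE_{V_{i_0}\cap\cdots\cap V_{i_n}}$ is a \emph{full} suboperad, a lift over $(i_0,\dots,i_n)$ exists, and is then unique, exactly when $i_0,\dots,i_n\in J:=\{i\in I : W_\xi\subseteq V_i\}$; that slice is thus the category of elements of the simplicial set $[n]\mapsto J^{n+1}$, whose geometric realization is contractible as soon as $J\neq\varnothing$. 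And $J\neq\varnothing$ because $W_\xi\subseteq D_0$ while $D_0$, being a color of $\gE^{\circ}_U$, is subordinate. Everything above respects the symmetric monoidal structures, which yields the asserted equivalence of symmetric monoidal $\infty$-categories. This \v{C}ech step parallels the classical cosheaf property of Euclidean disk operads (\cite{ayala_factorization_2015}, \cite[\textsection 5]{lurie_higher_nodate}).

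The main obstacle is the operadic equivalence $\gE^{\circ}_U\simeq\gE_U$ of the second step. Turning the slogan ``shrinking a good disk is an equivalence'' into a theorem requires, inside every model $\R^{p,q}$, arbitrarily small concentric good disks of the same singularity type --- so that a Lebesgue-number argument bites against any open cover --- together with weak contractibility of the spaces of such sub-disks, so that the corresponding unary operations are invertible; these are exactly the corner-specific inputs packaged in Proposition \ref{prop: Good supply of Weiss covers}. The remaining work --- making $\gE_W$ into an $\infty$-operad functorial in $W$, checking that $\FACT^{\cbl}_W(\EuScript{V})\simeq\ALG_{\gE_W}(\EuScript{V})$ is natural enough that the map above really is the restriction comparison map, and running the approximation bookkeeping --- is routine.
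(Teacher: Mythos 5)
Your overall architecture (translate to operadic algebras, use that $\ALG_{(-)}(\EuScript{V})$ turns operadic colimits into limits, and prove a cosheaf statement for $W\mapsto\gE_W$) is reasonable and genuinely different from the paper's route, but the step that carries all the weight is not justified. You assert that $\hocolim_{[n]\in\Delta^{\op}}\gE_{\check{C}_n}\to\gE^{\circ}_U$ can be shown to be an ``approximation'' by checking, for each operation $\xi$ of $\gE^{\circ}_U$, contractibility of a ``slice $\infty$-category of lifts'', which you then identify with the category of elements of $[n]\mapsto J^{n+1}$. No such criterion exists for maps \emph{out of a homotopy colimit of $\infty$-operads}: the weak/strong approximation machinery of \textsection\ref{sect:weak operads} applies to an explicit functor of weak $\infty$-operads with a concrete domain, and colimits in $\Opd_{\infty}$ are not computed colorwise or operationwise, so operations of $\hocolim_n\gE_{\check{C}_n}$ are not simply operations of the pieces glued along the simplicial maps (new composites mixing different simplicial levels appear, and mapping spaces in the colimit are a localization that your lift category does not obviously model). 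To make this precise you would have to build an explicit model of the colimit (e.g.\ a Grothendieck construction over the index category, localized at cocartesian edges, with a weak-operad structure and a comparison functor to $\gE^{\circ}_U$) and only then run the approximation criteria --- which is exactly the nontrivial work the paper does with the directed integral $\vec{\int}_{\overline{X}}\E_x$ (Definition \ref{defn: Directed Integral}, Lemmas \ref{lem: Directed integral is cocartesian fibration}--\ref{lem: Homotopy fibers of directed integral}, Proposition \ref{prop: Integral is a weak approx}), and Remark \ref{rem: Why directed integral is needed} shows that the naive version of such a construction genuinely fails in the corner setting, so this part cannot be dismissed as routine. (Smaller points in the same step: a lift of $\xi$ in $\gE_{V_i}$ also requires the \emph{target} disk $D_0\subseteq V_i$, not just $W_\xi\subseteq V_i$, so $J$ should be $\{i: D_0\subseteq V_i\}$; and since the colimit of operads already computes $\holim_{V}\FACT^{\cbl}_V(\EuScript{V})$ only via Lemma \ref{lem:ALG commutes with lims}, proving the cosheaf statement by passing back through algebras would be circular --- you need the operad-level argument to stand on its own.)

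Two further remarks. First, your starting identification is off as stated: an operad whose multimorphisms are merely ``disjoint inclusions'' of good disks is the discrete $\mathsf{Bsc}_W$-type operad, and its algebras are not constructible factorization algebras unless you either impose constructibility on algebras or localize at same-type shrinkings; the paper's Lemma \ref{lem:cbl Fact as operadic algebras} uses the topologically enriched $\gE_W$ and needs Hypothesis \ref{hyp: Good supply of Weiss covers} (Weiss covers by finite disjoint unions of basics, Proposition \ref{prop: Good supply of Weiss covers}) --- this, rather than a Lebesgue-number shrinking argument, is where the corner geometry enters that step. Second, for comparison: the paper does not reduce hypercovers to \v{C}ech covers via hypercompleteness at all; it disintegrates $\Alg_{\gE_U}(\EuScript{V})\simeq\holim_{x\in\overline{U}^{\op}}\Alg_{\E_x}(\EuScript{V})$ over the $\infty$-category of basics $\overline{U}$ (Proposition \ref{prop: Strong approximation}) and then glues at the level of the spaces $\overline{V}$ using Dugger--Isaksen hypercover descent (Lemma \ref{lem: Seifert van Kampen for barX}) together with the nested limit formula (Lemma \ref{lem: colims over cocart fibrations}), so hypercovers are handled directly. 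Your hypercompleteness reduction is plausible in principle for a finite-dimensional paracompact base, but it only becomes useful once the \v{C}ech step is actually proved, and that is precisely the gap.
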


For establishing Theorem \ref{global thm: hyperdescent}, we generalize a ``disintegration" construction for $\infty$-operads appearing in \cite{harpaz_little_nodate} and \cite{lurie_higher_nodate} which may be of independent interest (see Definition \ref{defn: Directed Integral} and the surrounding discussion). For Harpaz--Lurie's construction, it is crucial to disintegrate over a manifold without boundaries or singularities, whereas our construction deals with this subtlety (see Remark \ref{rem: Why directed integral is needed}). It is also important to emphasize that our proof of Theorem \ref{global thm: hyperdescent} works in more generality; we restrict our treatment to manifolds with corners for coherence and self-containment. 

\paragraph{Applications.} Let us conclude this introduction by mentioning some (possible) applications of our main results.

We start with a direct consequence of Theorem \ref{global thm: global additivity}: it can be used to close the gap in understanding constructible factorization algebras on cones \(\mathsf{C} X\), where \(X\) is a manifold with corners. That is, combining \cite[Theorem 6.12]{karlsson_assembly_2024} with Theorem \ref{global thm: global additivity}, one obtains:

    \begin{customcor} 
    \cite[Question 6.11]{karlsson_assembly_2024} 
    Let \(X\) be a manifold with corners. Then, a $\EuScript{V}$-valued constructible factorization algebra on the cone \(\mathsf{C} X\) corresponds to a pair $(\mathdutchcal{A},\mathdutchcal{M})$ where  \(\mathdutchcal{A}\in \FACT_{X}^{\cbl}(\ALG_{\mathbbst{E}_1}(\EuScript{V}))\) is a constructible factorization algebra on $X$ valued in (unital) associative algebras, and  $\mathdutchcal{M}$ is a pointed module in $\EuScript{V}$ over \(\int_{X}\mathdutchcal{A}=\mathdutchcal{A}(X)\in \ALG_{\mathbbst{E}_1}(\EuScript{V})\).
    \end{customcor}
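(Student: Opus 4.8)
The plan is to combine two inputs. The first is \cite[Theorem 6.12]{karlsson_assembly_2024}, which identifies $\FACT^{\cbl}_{\mathsf{C}X}(\EuScript{V})$ with the $\infty$-category of pairs $(\mathdutchcal{B},\mathdutchcal{M})$, where $\mathdutchcal{B}$ is a constructible factorization algebra on the open stratum $\mathsf{C}X\setminus\{*\}\cong X\times\R_{>0}$ and $\mathdutchcal{M}$ is a pointed module in $\EuScript{V}$ over the $\mathbbst{E}_1$-algebra $\int_{X\times\R_{>0}}\mathdutchcal{B}=\mathdutchcal{B}(X\times\R_{>0})$, whose associative structure is induced by the inward radial rescalings $X\times(0,t)\subseteq X\times(0,t')$ of the cone. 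The second input is Global Additivity (Theorem~\ref{global thm: global additivity}): since $X\times\R_{>0}\cong X\times\R$ with $\R=\R^{1,0}$ a manifold with corners, there is an equivalence $\FACT^{\cbl}_{X\times\R_{>0}}(\EuScript{V})\simeq\FACT^{\cbl}_X\big(\FACT^{\cbl}_{\R}(\EuScript{V})\big)$ induced by $\uppi_{X,*}$, and $\FACT^{\cbl}_{\R}(\EuScript{V})\simeq\ALG_{\mathbbst{E}_1}(\EuScript{V})$ by Lemma~\ref{lem:cbl Fact as operadic algebras} (with $(p,q)=(1,0)$) and Proposition~\ref{prop: Coincidence of little (p,q)-disc operads}. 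So $\mathdutchcal{B}$ corresponds to $\mathdutchcal{A}:=\uppi_{X,*}\mathdutchcal{B}\in\FACT^{\cbl}_X(\ALG_{\mathbbst{E}_1}(\EuScript{V}))$, and on global sections $\mathdutchcal{B}(X\times\R_{>0})=(\uppi_{X,*}\mathdutchcal{B})(X)=\mathdutchcal{A}(X)=\int_X\mathdutchcal{A}$.

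Carrying this out, I would first phrase the target of \cite[Theorem 6.12]{karlsson_assembly_2024} as a genuine $\infty$-category — a suitable pullback recording an object of $\FACT^{\cbl}_{X\times\R_{>0}}(\EuScript{V})$ together with a pointed module over its global sections — so that the two steps assemble into one chain of equivalences; then transport this equivalence along the additivity equivalence of the first paragraph, replacing $\mathdutchcal{B}$ by $\mathdutchcal{A}$ everywhere. Everything else in the comparison is formal: the diffeomorphism $\R_{>0}\cong\R$, the identification $\FACT^{\cbl}_{\R}\simeq\ALG_{\mathbbst{E}_1}$, and the remark that, via the equality of objects $\mathdutchcal{B}(X\times\R_{>0})=\int_X\mathdutchcal{A}$, a pointed module over the former is the same as a pointed module over the latter.

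The hard part — the only step beyond bookkeeping — is to check that this last identification is compatible with the $\mathbbst{E}_1$-structures: the associative structure on $\int_X\mathdutchcal{A}$ produced in \cite[Theorem 6.12]{karlsson_assembly_2024} from the radial rescalings of $\mathsf{C}X$ must agree with the one $\mathdutchcal{A}(X)$ carries because $\mathdutchcal{A}$ is valued in associative algebras and global sections is lax symmetric monoidal. To settle this I would unwind the proof of Theorem~\ref{global thm: global additivity} (via its local counterpart, Theorem~\ref{global thm: local additivity}): the $\mathbbst{E}_1$-tensor factor that additivity splits off from $X\times\R$ is implemented precisely by the one-parameter family of disc inclusions along the $\R$-coordinate, i.e.\ by the rescalings $X\times(0,t)\subseteq X\times(0,t')$, which is exactly the family defining the associative structure in \cite[Theorem 6.12]{karlsson_assembly_2024}; hence the two structures agree naturally in $\mathdutchcal{B}$ and the corollary follows. (A minor further point: the module is one-sided, since the cone point is approached from a single radial direction, and "pointed" records the unit map $\mathdutchcal{F}(\emptyset)\to\mathdutchcal{F}(U)$ for $U$ a conical neighborhood of the cone point, matching the conventions in the statement.)
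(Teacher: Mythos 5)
Your proposal is correct and follows essentially the same route as the paper, which obtains the corollary as a direct combination of \cite[Theorem 6.12]{karlsson_assembly_2024} with Theorem \ref{global thm: global additivity} together with the identification $\FACT^{\cbl}_{\R}(\EuScript{V})\simeq\ALG_{\mathbbst{E}_1}(\EuScript{V})$. The compatibility of the radial-rescaling $\mathbbst{E}_1$-structure on $\mathdutchcal{B}(X\times\R_{>0})$ with the one carried by $\mathdutchcal{A}(X)=\int_X\mathdutchcal{A}$, which you rightly single out, is precisely the point the paper leaves implicit, and your unwinding of the pushforward equivalence (interval stacking in the $\R$-factor being exactly the family of radial rescalings) settles it.
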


As mentioned above, another important application of Theorem \ref{global thm: global additivity} concerns a more general global additivity of constructible factorization algebras. Namely, the additivity for manifolds with corners provides the foundational case from which additivity for conically smooth manifolds \cite{ayala_local_2017} can be derived. Notice that this is quite a non-trivial task, since even the basic building blocks for those manifolds are quite complicated objects. This generalization will appear in the subsequent work of the second author.

 While we do not provide all technical details here, Theorem \ref{global thm: global additivity} (or the more general Theorem \ref{thm: Global additivity max generality}), and similar results, yield an essential step in comparing two existing models for higher Morita categories: the one developed by Scheimbauer \cite{scheimbauer_factorization_2014} and that of Haugseng \cite{haugseng_higher_2017,haugseng_remarks_2023}. We note that additivity is built into Haugseng's construction, and establishing a comparison between these two frameworks has been a longstanding goal, as both are frequently employed interchangeably in the study of dualizability and TFTs. This application will be featured in subsequent work by Scheimbauer, Steffens, and Stewart.
 In recent joint work of the first author \cite{benini_cast-categorical_2025}, additivity of locally constant factorization algebras over ordinary smooth manifolds was exploited to geometrically explain the key braided monoidal structure on the $C^*$-category of cone-localized superselection sectors associated to a ground state in suitable lattice quantum systems living over $\mathbbst{Z}^2$. Results such as Theorem \ref{global thm: global additivity}, or Theorem \ref{thm: Global additivity max generality}, open the door for considering defects in this context, and related situations.

\paragraph{Related work.} As mentioned above, Theorem \ref{global thm: local additivity} is a generalization of \cite[Theorem 5.1.2.2]{lurie_higher_nodate}, which was proven by Lurie as a derived counterpart of the celebrated Dunn additivity of little cubes operads \cite{dunn_tensor_1988}. Dunn's original proof achieves additivity with respect to the ordinary Boardman--Vogt tensor product of topological operads, which is well known for not preserving weak equivalences (see \cite{moerdijk_mysterious_2023} for a short account, and \cite{fiedorowicz_additivity_2015} for a remarkable exception). Lurie's insight was to develop a homotopical version of such tensor product at the level of $\infty$-operads, and then prove a derived version of Dunn's theorem by using locally constant factorization algebras over Euclidean spaces. Later, Ginot, building on Lurie's result, established additivity for locally constant factorization algebras (valued in chain complexes) over ordinary smooth manifolds \cite[Proposition 18]{ginot_notes_2013}. Thus, Theorem \ref{global thm: global additivity} generalizes the last result in two directions: allowing more general coefficients and singularities. 

More recently, Berry proved in his thesis \cite{berry_additivity_2021} that factorization algebras satisfying a hyper-version of Weiss codescent (see \cite[Remark 2.59]{karlsson_assembly_2024}) also satisfy additivity, without imposing local-constancy or constructibility. 

Coming back to the underived Dunn additivity, Brinkmeier extended Dunn's original result to the tensor product of finitely many little cubes operads of varying dimensions in his thesis \cite{brinkmeier_operads_2001} (result which is not, a priori, a direct consequence of \cite[Theorem 2.9]{dunn_tensor_1988} because of the bad homotopical behavior of the ordinary Boardman--Vogt tensor product). More recently, Barata and Moerdijk provided a new and streamlined proof of Dunn's theorem, simplifying and clarifying the arguments presented in \cite{brinkmeier_operads_2001, dunn_tensor_1988}. Their methods have since been used by Szczesny to prove equivariant generalizations of Dunn additivity in \cite{szczesny_equivariant_2024}, and by us to prove an underived version of Theorem \ref{global thm: local additivity}. Independently and in parallel to our work, Stewart has developed related methods to establish a derived version of Szczesny's equivariant additivity in her forthcoming paper \cite{stewart_homotopical_2025}. 

Lastly, Theorem \ref{global thm: hyperdescent} also has its precursors in the literature. We have already discussed Karlsson--Scheimbauer--Walde's work, \cite{karlsson_assembly_2024}, in this direction, but we should also mention Arakawa's \cite{arakawa_universal_2025}, where he deduces descent of locally constant factorization algebras following, like us, Lurie's ``disintegration" insights. See also the related work \cite{horel_two_2025} by Horel--Krannich--Kupers. Previously, Matsuoka also studied the descent of locally constant factorization algebras in \cite{matsuoka_descent_2017}. 

\paragraph{Outline.} 
Section~\ref{sect: Preliminaries} collects notation and some preliminary results around generalized Swiss-cheese and related operads. Section~\ref{sect: Additivity for generalized swiss-cheese operads} builds on Harpaz's theory of weak $\infty$-operads and their approximations to address the derived additivity of generalized Swiss-cheese operads (Theorem \ref{thm:SwissCheeseAdditivity Reformulated}). Section~\ref{sect: Additivity for cbl Fact} contains our alternative proof for the hyperdescent of constructible factorization algebras on manifolds with corners (Theorem \ref{thm: Hyperdescent for cbl fact}), which exploits a construction that might be of independent interest (see Definition \ref{defn: Directed Integral} and the succeeding discussion), and the main theorem of this work, the additivity of constructible factorization algebras on manifolds with corners (Theorem \ref{thm: Global additivity}). At the end of Section \ref{sect: Additivity for cbl Fact}, we have also included a short digression on a variation of Theorem \ref{thm:SwissCheeseAdditivity}, namely Theorem \ref{thm: Additivity Quadrant Flags}, and its globalization. Appendices~\ref{sect:Manifolds with Corners}--\ref{sect:weak operads} collect technical results on manifolds with corners, higher algebra, and Harpaz's weak~\(\infty\)-operads, respectively.

\paragraph{Conventions.} We adopt some conventions throughout the text. Fonts like $\EuScript{C}$, $\EuScript{D}$... will be reserved for $\infty$-categories/operads, while symbols like $\mathsf{C}$, $\mathsf{D}$... often refer to strict 1-categorical notions. For example, we denote by $\Spc$ the $\infty$-category of spaces (aka $\infty$-groupoids or anima), and $\EuScript{V}$ will always stand for a symmetric monoidal $\infty$-category where algebras take values. Also, we will not distinguish between small and large $\infty$-categories/operads, but our results do not concern any subtlety that cannot be addressed with the usual methods, e.g.\ universes. We follow Haugseng's notation in  \cite{haugseng_allegedly_nodate,haugseng_algebras_2024} regarding algebras to avoid overusing the symbol $\otimes$: i.e.\ we denote by $\ALG_{\EuScript{O}}(\EuScript{V})$ the symmetric monoidal $\infty$-category of $\EuScript{O}$-algebras, and by $\Alg_{\EuScript{O}}(\EuScript{V})$ its underlying $\infty$-category (cf.\ \cite{karlsson_assembly_2024}). Lastly, we will use the notation $\EuScript{O}\brbinom{\{x_i\}_{i}}{y}$ for spaces of multimorphisms in $\EuScript{O}$ for the sake of compactness in some formulas.  

\paragraph{\bf Acknowledgements.} 
We thank C.\ Scheimbauer for initiating and supporting this collaboration, and together with E.\ Karlsson, P.\ Steffens, and T.\ Walde for many helpful discussions. In particular, we are indebted to P.\ Steffens for contributions to Lemma \ref{lem:strict pullback}, and for help in identifying the failure in Remark \ref{rem: Why directed integral is needed}, and proposing a solution that led us to Definition \ref{defn: Directed Integral}, and to T.\ Walde for his help with Lemma \ref{lem: colims over cocart fibrations}. We also thank M.\ Barata for elaborating on details of \cite{barata_additivity_2024}. Finally, we thank B.\ Conrad and A.\ Kupers for clarifying technical questions about manifolds with corners, and F.\ Cantero-Morán for suggesting \cite{palmer_homological_2021} to unravel \cite{cerf_topologie_1961}. 

\section{Background on Swiss-cheese and related operads}\label{sect: Preliminaries} 

Let us fix an $n$-dimensional manifold with corners $X$ throughout this section, and a presentable symmetric monoidal $\infty$-category $\EuScript{V}$. In this work, we are mostly interested in the $\infty$-category of constructible factorization algebras over $X$, denoted $\Fact^{\cbl}_X(\EuScript{V})$.\footnote{In order to safely work with factorization algebras, even to define them, one is required to impose some conditions on the symmetric monoidal $\infty$-category $\EuScript{V}$ where they take values. For simplicity, we will assume that $\EuScript{V}$ is presentably symmetric monoidal whenever we invoke factorization algebras.} As sketched in the introduction, this $\infty$-category admits a slick description using operads. 
\begin{defn} The operad of \emph{disjoint open subsets} in $X$, denoted $\mathsf{Disj}_X$, is the discrete operad whose colors are the open subsets in $X$, and multimorphisms are given by
$$
\mathsf{Disj}_X\brbinom{\{U_i\}_{i}}{V}=\begin{cases}
 * & \quad\text{if } \bigsqcup_i U_i\subseteq V,\\[2mm]
 \diameter & \quad\text{otherwise}.
\end{cases} 
$$
The composition rule and the action by permutations of the inputs are the only possible ones. We will denote by $\mathsf{Bsc}_X$ the full suboperad of $\mathsf{Disj}_X$ spanned by open subsets $D$ diffeomorphic to $\R^{p,q}=\R^p_{\phantom{\geq 0}}\!\!\!\!\times \R^{q}_{\geq 0}$ for some $(p,q)$. Furthermore, we will refer to the underlying categories of these operads by $\mathsf{open}(X)$ and $\mathsf{bsc}(X)$ respectively.
\end{defn}

With these simple operads at our disposal, one can define the $\infty$-category of \emph{prefactorization algebras} over $X$ with coefficients in $\EuScript{V}$ as the $\infty$-category of $\mathsf{Disj}_X$-algebras in $\EuScript{V}$, and among those select the ones we care about.
\begin{defn}\label{defn: Constructible Fact Algs} The $\infty$-category of \emph{constructible factorization algebras} over $X$ with coefficients in $\EuScript{V}$, denoted $\Fact^{\cbl}_X(\EuScript{V})$, is the full subcategory of $\Alg_{\mathsf{Disj_X}}(\EuScript{V})$ spanned by those prefactorization algebras $\mathdutchcal{F}$ satisfying:
\begin{itemize}
    \item (\emph{multiplicativity}) $\mathdutchcal{F}$ preserves cocartesian maps, i.e.\
    $$
    \bigotimes_{i\in I}\mathdutchcal{F}(U_i)\xrightarrow{\;\;\;\sim\;\;\;} \mathdutchcal{F}\big(\bigsqcup_{i\in I}U_i\big) \quad \text{and} \quad \mathbbst{I} \xrightarrow{\;\;\;\sim\;\;\;}\mathdutchcal{F}(\diameter)    
    $$
    for any finite family $\{U_i\}_{i\in I}$ of disjoint open subsets in $X$.
    \item (\emph{Weiss codescent}) For any open subset $U\subseteq X$ and any Weiss cover $\mathcal{W}$ of $U$,\footnote{We follow the conventions on (Weiss) covers from \cite{karlsson_assembly_2024}.} the comparison map
    $
    \underset{W\in\mathcal{W}}{\hocolim}\, \mathdutchcal{F}(W)\longrightarrow \mathdutchcal{F}(U)
    $
    is an equivalence.
    \item (\emph{constructibility}) $\mathdutchcal{F}$ sends any inclusion $U\subseteq V$ in $\mathsf{bsc}(X)$, with $U$ and $V$ abstractly diffeomorphic, to an equivalence.
\end{itemize}
\end{defn}

We recommend the reader to consult \cite{karlsson_assembly_2024} for an extensive and detailed discussion of these objects and the previous list of requirements. 

The central axiom for our purposes is \emph{constructibility}. To start, it is the fundamental ingredient to identify $\Fact^{\cbl}_{X}(\EuScript{V})$ for $X=\R^{p,q}$ with the $\infty$-category of algebras over the generalized Swiss-cheese operad $\mathbbst{E}_{p,q}$ defined below. However, its power goes far beyond this result. As we will see in \textsection \ref{sect: Additivity for cbl Fact}, it is also responsible for the (hyper)descent of constructible factorization algebras (Theorem \ref{thm: Hyperdescent for cbl fact}), and therefore for the derived additivity these algebras satisfy (Theorem \ref{thm: Global additivity}), which is the main result in this paper.

Let us now change gears and focus on the other main character in this story: the generalized Swiss-cheese operad and its friends, for the remainder of this section. 

\begin{defn} Let $U, V\subseteq \R^{n}$ be connected subsets of the euclidean space.
\begin{itemize}
    \item A \emph{rectilinear map} $f\colon U\to V$ is a continuous map obtained as the restriction of a map $\R^n\to \R^n$ which is a composition of a generalized dilation and a translation, i.e.\ 
    $$
    \begin{tikzcd}
    \R^n \ar[rr] && \R^n\ar[rr] && \R^n \\[-6mm]
    (x_i)_i \ar[rr, mapsto] && (\uplambda_ix_i)_i \\[-6mm]
    && x\ar[rr, mapsto] && x+v
    \end{tikzcd}
    $$
    for certain $(\uplambda_i>0)_i$ and $v\in \R^n$.
     We will denote by $\Rect(U, V)$ the space of open rectilinear maps $f\colon U\to V$ equipped with the compact-open topology. 
    \item Let $\{U_i\subset \R^n\}_{i\in I}$ be a finite collection of connected subsets of the Euclidean space. Define the space of open rectilinear maps $\Rect(\bigsqcup_{i\in I} U_i,V)$ as follows
    $$
    \Rect\big(\bigsqcup_{i\in I} U_i,V\big)=\Big\{(f_i)_i\in \prod_{i\in I}\Rect(U_i,V)\text{ s.t. } (f_i)_i\colon \bigsqcup_{i\in I}U_i\to V \text{ is injective}\Big\}.
    $$
\end{itemize}
\end{defn}

\begin{notation}\label{notat: Cubes indexed by subsets} In analogy with our convention for $\R^{p,q}=\R^p_{\phantom{\geq 0}}\!\!\!\!\times \R^q_{\geq 0}$, we will employ the notation $\square^{\,p,q}=(-1,1)^p\times [0,1)^q$. For a given subset $Q\subseteq \underline{q}=\{1,\dots,q\}$, we will also use $\square^{\,p,Q}$ to denote the product $(-1,1)^p\times \prod_{i\in \underline{q}} \updelta_i$ where
$$
\updelta_i=
\begin{cases}
        [0,1) & \quad \text{if } i\in Q, \vspace*{2mm}\\ 
        (-1,1) & \quad \text{if } i\notin Q.
    \end{cases}
$$
\end{notation}

This notation is just a particular example of our more general conventions in \textsection \ref{sect:Manifolds with Corners}. There, we discuss the poset $\Prect(X)$ of connected components of strata associated to a manifold with corners $X$, and the local types associated with its elements. To draw the connection with Notation \ref{notat: Cubes indexed by subsets}, we have:
\begin{example}\label{examp:Poset of Strata for quadrants} For the local model/quadrant $\R^{p,q}=\R^{p}_{\phantom{\geq 0}}\!\!\!\!\times \R_{\geq 0}^{q}$, the poset $\Prect(\R^{p,q})$ can be (anti-)identified with the power set $\underline{2}^{\underline{q}}$ of $\underline{q}$ ordered by inclusion. Recalling that 
$$
(\R^{p,q})_{\leq k}=\big\{(x,y)\in \R^{p,q}\;\text{ s.t. at least }q-k\text{ coordinates of }y\text{ vanish}\big\},
$$
the map $\Prect(\R^{p,q})^{\mathsf{op}}\to \underline{2}^{\underline{q}}$ sends a connected component $K\subseteq (\R^{p,q})_{\leq k}\backslash (\R^{p,q})_{\leq k-1}$ to the subset of $\underline{q}$ given by the vanishing coordinates determining $K$. 

For instance, the poset 
$$
    \Prect(\R^{1,2})=\left\{\begin{tikzcd}
        &\R\times\{0\}\times\{0\}\ar[ld]\ar[rd]\\[-0.5cm]
       \R  \times\{0\} \times\R_{> 0}&& \R\times \R_{> 0}\times\{0\}\\[-0.5cm]
       & \R\times \R_{> 0}\times \R_{>0}\ar[ru,leftarrow]\ar[lu,leftarrow]
    \end{tikzcd}\right\}
$$
is anti-equivalent to
$$
  \underline{2}^{\underline{2}}=\left\{\begin{tikzcd}
        &\{1,2\}\ar[ld,leftarrow]\ar[rd,leftarrow]\\[-0.5cm]
       \{1\} && \{2\}\\[-0.5cm]
       & \diameter\ar[lu]\ar[ru]
    \end{tikzcd}\right\}.
$$
\end{example}

\begin{defn} 
\label{defn: Swiss-Cheese Operad} The \emph{generalized Swiss-cheese operad} $\mathbbst{E}_{p,q}$, also called \emph{little} $(p,q)$\emph{-cubes operad}, is the operad in spaces with:
\begin{itemize}
    \item set of objects $\underline{2}^{\underline{q}}\;\overset{\mathsf{op}}{\simeq}\; \Prect(\R^{p,q})$ (see Example \ref{examp:Poset of Strata for quadrants}), the power set of $\underline{q}$;
    \item spaces of multimorphisms given by
    \begin{equation*}
     \mathbbst{E}_{p,q} \brbinom{\{Q_i\}_{i\in I}}{Q}=
    \begin{cases}
        \Rect \big( \bigsqcup_{i \in I} \square^{\,p, Q_i}, \square^{\,p, Q} \big)  & \quad \text{if } Q_i \subseteq Q, \vspace*{2mm}\\ 
         \diameter &  \quad \text{otherwise.}
    \end{cases}
    \end{equation*}
    Composition products and identities are the obvious ones for rectilinear maps.
\end{itemize}

We denote by $\E_{p,q}$ the $\infty$-operad obtained as the operadic coherent nerve of $\mathbbst{E}_{p,q}$.
\end{defn}

\begin{remark}
    The operadic coherent nerve turning an operad in spaces into an $\infty$-operad can be found in \cite[Definition 2.1.1.23]{lurie_higher_2009} or in \cite[Proposition 4.1.13]{harpaz_little_nodate} and the surrounding discussion. In the case of $\E_{p,q}$, the objects in the resulting \(\infty\)-category are tuples \((R_1, \dots, R_n)\), where each \(R_j\) is a subset of \(\underline{q}\). We will often abbreviate such a tuple as \(\Rbullet\), or as \(\Rbulletn\) when we want to emphasize the number of entries. Also, the mapping space from \(\Rbulletn\) to \(\Qbulletm\) is given by
    \[
     \coprod_{\upalpha: \langle n \rangle \to \langle m \rangle} \quad \prod_{j\in \langle m\rangle^{\circ}} \Rect \big( \bigsqcup_{i \in \upalpha^{-1}(j)} \square^{\,p, R_i}, \square^{\,p, Q_j} \big),
    \]
    when \(R_i \subseteq Q_{\upalpha(i)}\) for all \(i \in \langle n\rangle^{\circ}\), or empty otherwise.
\end{remark}

To better bridge the abstract machinery of the higher category theory with our geometric setting, we introduce a simpler $\infty$-operad $\Cube{p}{q}$ that approximates $\E_{p,q}$ in a suitable sense.

\begin{defn}\label{defn: Cube p,q operads}
    Let \(\CubeOrd{p}{q}\) denote the full suboperad of \(\mathsf{Disj}_{\square^{\,p,q}}\) spanned by the non-empty open sub-cubes of \(\square^{\,p,q}\), i.e.\ the images of open rectilinear embeddings. We will denote by $\Cube{p}{q}$ its associated $\infty$-operad.
\end{defn}

\begin{figure}[htp]
    \centering
    \begin{tikzpicture}[scale = 0.8]
\coordinate (A1) at (-7,0,0);
\coordinate (B1) at (-3,0,0);
\coordinate (C1) at (-3,0,-4);
\coordinate (D1) at (-7,0,-4);
\coordinate (P1) at (-5,0,-1);
\coordinate (Q1) at (-5,0,-3);
\coordinate (R1) at (-6,0,-1);
\coordinate (S1) at (-6,0,-3);

\coordinate (P3) at (-3.5,0,-3);
\coordinate (Q3) at (-3.5,0,-4);
\coordinate (R3) at (-4.5,0,-3);
\coordinate (S3) at (-4.5,0,-4);
\node[dashed,thick,green!40!blue!45!black] at (-5,0.5,-4) {$c$};

\draw[dashed,thick] (D1) -- (A1) -- (B1) -- (C1);
\draw[line width=0.7mm,blue!50!black] (C1) -- (D1);

\fill[blue!10] (D1) -- (A1) -- (B1) -- (C1) -- (D1);
\draw[dashed,thick,green!50!black] (P1) -- (Q1) -- (S1) -- (R1) -- (P1);
\fill[dashed,thick,green!40!blue!30] (P1) -- (Q1) -- (S1) -- (R1) -- (P1);
\node[dashed,thick,green!40!blue!45!black] at (-6.5,0,-2) {\small{$U$}};
\draw[dashed,thick,green!50!black] (P3) -- (Q3) -- (S3) -- (R3) -- (P3);
\fill[dashed,thick,green!40!blue!30] (P3) -- (Q3) -- (S3) -- (R3) -- (P3);
\node[dashed,thick,green!40!blue!45!black] at (-3.5,0,-2) {\small{$V$}};
\end{tikzpicture}

    \caption{Image of a multimorphism $c\in\mathbbst{E}_{1,1}\brbinom{\{\scalebox{0.8}{$\diameter$} , \{1\}\}}{\{1\}}$. The resulting collection of open subsets of $\square^{\,1,1}$ can also be viewed as an object in $\Cube{1}{1}$, denoted $(U,V)$.}
    \label{fig: figure 1}
    \end{figure}

There is a natural map of operads \(\upphi\colon\CubeOrd{p}{q} \xrightarrow{\quad} \mathbbst{E}_{p,q}\), and hence a map between the associated $\infty$-operads. For each sub-cube in \(\CubeOrd{p}{q}\), there is a unique object \(\square^{\,p,Q}\) in \(\mathbbst{E}_{p,q}\) such that the sub-cube arises as the image of \(\square^{\,p,Q}\) under a unique (open) rectilinear map. We refer to both \(\square^{\,p,Q}\) and \(Q\) as the \emph{type} of the sub-cube.

\begin{notation}
    Let \(U \subseteq V\) be sub-cubes in \(\CubeOrd{p}{q}\) with the same type, then we will denote this by writing \(U \overset{\sim}{\subseteq} V\). 
\end{notation}

\begin{remark}\label{rem:Mapping spaces in E_pq act in a compact form}
    We will focus on the mapping spaces in \(\Eactive_{p,q}\). To prepare for the proofs that follow, we rewrite these mapping spaces in a more convenient form. 

    Let \(X\) denote the disjoint union of a family of sub-cubes \(\{\square^{\,p,Q_j}\}_{j\in \langle m\rangle^{\circ}}\), seen as a subspace of \(\langle m \rangle^{\circ} \times \square^{\,p,q}\). Then, by connectivity considerations, we can absorb the coproduct over active maps  \(\upalpha: \langle n \rangle \to \langle m \rangle\) in the definition of mapping spaces in $\Eactive_{p,q} $ into a ``single" space of rectilinear embeddings; i.e.\ there is a canonical identification
    \[
\coprod_{\substack{\text{active maps} \\ \upalpha: \langle n \rangle \to \langle m \rangle}} \quad \prod_{j\in\langle m\rangle^{\circ}} \Rect \big( \bigsqcup_{i \in \upalpha^{-1}(j)} \square^{\,p,R_i}, \square^{\,p,Q_j} \big)
    \cong
    \Rect \big( \bigsqcup_{i\in \langle n\rangle^{\circ}} \square^{\,p,R_i}, X \big).
    \]
\end{remark}

\begin{figure}[htp]
    \centering
    \begin{tikzpicture}[scale=0.6] 
    \begin{scope}[yshift=0cm]
        \coordinate (A1) at (-7,0,0);
        \coordinate (B1) at (-3,0,0);
        \coordinate (C1) at (-3,0,-4);
        \coordinate (D1) at (-7,0,-4);
        \coordinate (P1) at (-5,0,-1);
        \coordinate (Q1) at (-5,0,-3);
        \coordinate (R1) at (-6,0,-1);
        \coordinate (S1) at (-6,0,-3);
        \coordinate (P3) at (-3.5,0,-3);
        \coordinate (Q3) at (-3.5,0,-4);
        \coordinate (R3) at (-4.5,0,-3);
        \coordinate (S3) at (-4.5,0,-4);
        \node[dashed,thick,green!40!blue!45!black] at (-2,-0.75,-4) {$\square ^{\, 1,Q_3}$};
        \draw[dashed,thick] (D1) -- (A1) -- (B1) -- (C1);
        \draw[line width=0.7mm,blue!50!black] (C1) -- (D1);
        \fill[blue!10] (D1) -- (A1) -- (B1) -- (C1) -- (D1);
    \end{scope}

    \begin{scope}[yshift=2cm]
        \coordinate (A1) at (-7,0,0);
        \coordinate (B1) at (-3,0,0);
        \coordinate (C1) at (-3,0,-4);
        \coordinate (D1) at (-7,0,-4);
        \coordinate (P1) at (-3.7,0,-1);
        \coordinate (Q1) at (-3.7,0,-3);
        \coordinate (R1) at (-6,0,-1);
        \coordinate (S1) at (-6,0,-3);
        \coordinate (P3) at (-3.5,0,-3);
        \coordinate (Q3) at (-3.5,0,-4);
        \coordinate (R3) at (-4.5,0,-3);
        \coordinate (S3) at (-4.5,0,-4);
        \node[dashed,thick,green!40!blue!45!black] at (-2,-0.75,-4) {$\square ^{\, 1,Q_2}$};
        \draw[dashed,thick] (D1) -- (A1) -- (B1) -- (C1);
        \draw[dashed,thick] (C1) -- (D1);
        \fill[blue!10] (D1) -- (A1) -- (B1) -- (C1) -- (D1);
        \draw[dashed,thick,green!50!black] (P1) -- (Q1) -- (S1) -- (R1) -- (P1);
        \fill[dashed,thick,green!40!blue!30] (P1) -- (Q1) -- (S1) -- (R1) -- (P1);
    \end{scope}

    \begin{scope}[yshift=4cm]
        \coordinate (A1) at (-7,0,0);
        \coordinate (B1) at (-3,0,0);
        \coordinate (C1) at (-3,0,-4);
        \coordinate (D1) at (-7,0,-4);
        \coordinate (P1) at (-5,0,-1);
        \coordinate (Q1) at (-5,0,-3);
        \coordinate (R1) at (-6,0,-1);
        \coordinate (S1) at (-6,0,-3);
        \coordinate (P3) at (-3.5,0,-3);
        \coordinate (Q3) at (-3.5,0,-4);
        \coordinate (R3) at (-4.5,0,-3);
        \coordinate (S3) at (-4.5,0,-4);
        \node[dashed,thick,green!40!blue!45!black] at (-2,-0.75,-4) {$\square ^{\, 1,Q_1}$};
        \draw[dashed,thick] (D1) -- (A1) -- (B1) -- (C1);
        \draw[line width=0.7mm,blue!50!black] (C1) -- (D1);
        \fill[blue!10] (D1) -- (A1) -- (B1) -- (C1) -- (D1);
        \draw[dashed,thick,green!50!black] (P1) -- (Q1) -- (S1) -- (R1) -- (P1);
        \fill[dashed,thick,green!40!blue!30] (P1) -- (Q1) -- (S1) -- (R1) -- (P1);
        \draw[dashed,thick,green!50!black] (P3) -- (Q3) -- (S3) -- (R3) -- (P3);
        \fill[dashed,thick,green!40!blue!30] (P3) -- (Q3) -- (S3) -- (R3) -- (P3);
    \end{scope}
    \end{tikzpicture}
    \caption{Image of a map in 
    \(
    \Rect \left( \bigsqcup_{i\in \langle 3\rangle^{\circ}} \square^{\,p, R_i}, X \right)
    \)
    with \((R_1 , R_2, R_3) = (\{1\}, \diameter, \diameter)\) and \((Q_1, Q_2, Q_3) = (\{1\}, \diameter, \{1\})\), both seen as objects of \(\E_{1,1}\).}
    \label{fig: figure 2}
\end{figure}
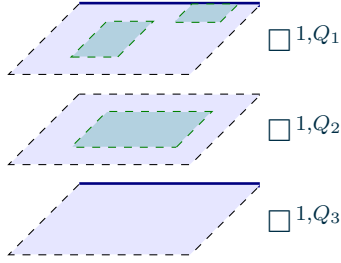

\begin{remark}
\label{remark: cubes are nice} In the sequel, we will use the following simple property of local types $\square^{Z}$
    and strata \(X_{[Z]}\) for \(X=\square^{\,p,q}\). Note that for any \(x \in X\), there exists a \emph{unique} local type \(\square^{\,p,Q}\), admitting a rectilinear embedding \(\upiota: \square^{\,p,Q} \hookrightarrow X\) such that \(x \in \upiota(\square^{\,p,Q}_{[Q]})\). Recall that the open condition we are imposing on rectilinear maps imposes restrictions on the translation components of those. In particular, an open rectilinear embedding $\upiota\colon \square^{\,p,Q_j}\hookrightarrow \square^{\,p,Q}$ must satisfy $\upiota(0)\in \square^{\,p,Q}_{[Q_j]}$.
    
    As a consequence of this, if \(\upiota_1: \square^{\,p,Q_1} \hookrightarrow X\) and \(\upiota_2: \square^{\,p,Q_2} \hookrightarrow X\) are two rectilinear embeddings in \(\E_{p,q}\) with a non-empty intersection  \( \upiota_1(\square^{\,p,Q_1}_{[Q_1]})\cap \upiota_2(\square^{\,p,Q_2}_{[Q_2]})\), then \(Q_1=Q_2\).
\end{remark}

\begin{defn}\label{defn:Little discs operad in X} Let $X$ be a manifold with corners. The \emph{little} $X$\emph{-discs operad} $\gE_{X}$ is the colored operad in spaces defined as follows:  
    \begin{itemize}
    \item its colors are pairs $\Drect=(\square^{\,p,q}, \upiota)$ where $\upiota\colon \square^{\,p,q}\hookrightarrow X$ is an open embedding;
    \item its spaces of multimorphisms are given by the homotopy pullbacks
    $$
     \hspace*{-1cm} \begin{tikzcd}
         \gE_{X} \brbinom{\{\Drect_j\}_{j\in I}}{\Drect}\ar[r]\ar[d]\ar[rd, phantom, "\overset{\mathsf{h}\;\;}{\lrcorner}" description] &   \prod_{j\in I} \Emb_{/X}(\square^{\, p_j,q_j},\square^{\, p,q})\ar[d] \ar[rd, phantom, "\overset{\mathsf{h}\;\;}{\lrcorner}" description] \ar[r] & * \ar[d,"(\upiota_j)_j"] \\
         \Emb\big( {\bigsqcup}_{j \in I} \square^{\,p_j,q_j}, \square^{\, p,q} \big)
          \ar[r] & \prod_{j\in I}\Emb(\square^{\, p_j,q_j},\square^{\, p,q}) \ar[r,"(\upiota_*)_j"'] & \prod_{j\in I}\Emb(\square^{\, p_j,q_j},X)
    \end{tikzcd}.
    $$
    Composition products and identities are the obvious ones for embeddings.\footnote{The strict composition and units on these homotopy pullbacks come from the use of appropriate point-set models; see \cite[\textsection 6]{horel_factorization_2017}. In the sequel, we will not need to manipulate these models explicitly.}
\end{itemize}
\end{defn}
\begin{remark} Note that Definition~\ref{defn:Little discs operad in X} imposes certain restrictions on the colors of $\gE_X$ for its multimorphism spaces to be non-empty. For instance, letting $\{\Drect_j\}_{j\in I}$ and $\Drect$ be a finite list and a single color of $\gE_X$ respectively, with $\upiota_j(\square^{\,p_j,q_j})\subseteq X_{[Z_j]}$ for all $j\in I$ and $\upiota(\square^{\,p,q})\subseteq X_{[Z]}$ where $Z_j,Z\in \Prect(X)$ are connected components of strata in $X$, we have $\gE_X\brbinom{\{\Drect_j\}_{j\in I}}{\Drect} =\diameter$  if $Z\leq Z_j$ does \emph{not} hold for some $j$. 
\end{remark}
\begin{remark}\label{rk:Comparing the two little (p,q)-operads}
    The operad $\gE_X$ can be seen as obtained from an \emph{operadic slice construction} out of a more elementary operad $\cgE$ with color-set $\{\square^{\,p,q}\}_{p+q=n}$, and as spaces of multimorphisms, embedding spaces $\Emb\big(\bigsqcup_{i}\square^{\,p_i,q_i},\square^{\, p,q}\big)$ (cf.\  Notation \ref{notat:Topological category of manifolds with corners}). One way to phrase this operadic slice construction is by observing that $\gE^{\otimes}_X$ can be identified with the homotopy pullback
    $$
      \begin{tikzcd}
         (\cgE)_X^{\otimes} \ar[rr]\ar[d] \ar[rrd, phantom, "\overset{\mathsf{h}\;\;}{\lrcorner}" description]
         &&  ({\cMfld}_{/X})^{\amalg} \ar[d] \\
         \cgE^{\otimes}
         \ar[rr]   && \cMfld^{\amalg}
    \end{tikzcd},
    $$
    where ${\cMfld}_{/X}$ denotes the slice $\infty$-category associated with $X\in \cMfld$, and $\EuScript{C}^{\amalg}$ is the cocartesian $\infty$-operad associated with the $\infty$-category $\EuScript{C}$ (see \cite[Example 4.1.12]{harpaz_little_nodate}).
\end{remark}

\begin{notation}\label{notat: Configuration spaces}
    Let \( X \) be a manifold with corners, and let \( I \) be a finite set. Suppose we are given a function \( Z \colon I \to \Prect(X) \). Since we will be working with configuration spaces of \( X \), we introduce the following version, which will be useful in what follows:
    \begin{equation}
    \Conf_Z(X) \coloneqq \prod_{K\in \Prect(X)} \Conf_{Z^{-1}\{K\}}\big(X_{[K]}\big),
    \label{config}
    \end{equation}
    where \(\Conf_{T}(M) \) denotes the (ordered) configuration space of 
    points in \(M\) parametrized by a finite set $T$. \\
    Another equivalent way to define this space is as follows. Let \( J \) be the quotient of \(I\) with respect to \(i \sim j\) iff \(Z(i)=Z(j)\) and \(I_t\) denotes the fiber of the canonical projection $I \to J$ over $t$. Then, 
    \begin{equation}
    \Conf_Z(X) \coloneqq \prod_{t\in J} \Conf_{I_t}\big(X_{[Z(t)]}\big).
    \label{config2}
    \end{equation}
    
\end{notation}

\begin{prop}\label{prop: Coincidence of little (p,q)-disc operads} 
    The canonical map $\mathbbst{E}_{p,q}\to\gE_{\mathbbst{R}^{p,q}}$ is an equivalence of operads.
\end{prop}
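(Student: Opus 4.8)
The plan is to compare the two operads $\mathbbst{E}_{p,q}$ and $\gE_{\mathbbst{R}^{p,q}}$ by first identifying the canonical comparison map explicitly on colors and multimorphism spaces, and then checking that it is a weak equivalence levelwise, using the fact that equivalences of operads in spaces can be detected objectwise (essential surjectivity on colors) and on each multimorphism space. On colors, the map $\mathbbst{E}_{p,q}\to \gE_{\mathbbst{R}^{p,q}}$ sends an object $Q\subseteq \underline{q}$, i.e.\ a local type $\square^{\,p,Q}$, to the pair consisting of the model cube together with some chosen open embedding $\square^{\,p,q}\hookrightarrow \mathbbst{R}^{p,q}$ realizing the stratum of type $Q$ near the origin; essential surjectivity is then the statement that every open embedding $\square^{\,p,q}\hookrightarrow \mathbbst{R}^{p,q}$ lands, up to isotopy, in one of the finitely many strata, which is exactly the content of Remark~\ref{remark: cubes are nice} applied to $X=\mathbbst{R}^{p,q}$ (there is a unique local type through each point). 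So colorwise the map is essentially surjective with $\underline{2}^{\underline{q}}$ worth of equivalence classes on both sides.

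The heart of the argument is the comparison of multimorphism spaces. On the source, $\mathbbst{E}_{p,q}\brbinom{\{Q_i\}_i}{Q}$ is the rectilinear embedding space $\Rect\big(\bigsqcup_i \square^{\,p,Q_i},\square^{\,p,Q}\big)$ (when $Q_i\subseteq Q$, else empty), while on the target $\gE_{\mathbbst{R}^{p,q}}\brbinom{\{\Drect_j\}_j}{\Drect}$ is the homotopy pullback in Definition~\ref{defn:Little discs operad in X} of smooth embedding spaces over $\mathbbst{R}^{p,q}$. The first step is to reduce both sides to configuration spaces: a rectilinear embedding of a disjoint union of sub-cubes into a fixed sub-cube is, up to a contractible choice of widths, determined by where it sends the centers, so $\Rect\big(\bigsqcup_i\square^{\,p,Q_i},\square^{\,p,Q}\big)$ deformation retracts onto $\Conf_Z(\square^{\,p,Q})$ in the sense of Notation~\ref{notat: Configuration spaces}, where $Z$ records the types $Q_i$ and the product is over the strata of $\square^{\,p,Q}$ (this uses Remark~\ref{remark: cubes are nice} to see that the center of a $Q_i$-cube must lie in the $Q_i$-stratum, so the configuration genuinely splits stratum-by-stratum). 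Symmetrically, the homotopy pullback of embedding spaces defining $\gE_{\mathbbst{R}^{p,q}}$ should be identified, via the standard ``derivatives at the center'' argument for framed embedding spaces of disks (as in Lurie and in \cite{horel_factorization_2017}), with the same stratified configuration space $\Conf_Z(\mathbbst{R}^{p,q})$; the homotopy-pullback shape in the definition is precisely what forces the centers to lie in the prescribed strata. Finally one observes $\Conf_Z(\square^{\,p,Q})\simeq \Conf_Z(\mathbbst{R}^{p,q})$ because each stratum of $\square^{\,p,Q}$ is an open rectilinear ball inside the corresponding stratum of $\mathbbst{R}^{p,q}$ (when $Q_i\subseteq Q$), hence the inclusion is an isotopy equivalence on configuration spaces; and the resulting chain of equivalences is compatible with the operadic comparison map by construction.

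The main obstacle, and the step requiring the most care, is the identification of the homotopy pullback of \emph{smooth} embedding spaces of cubes-with-corners with the stratified configuration space $\Conf_Z(\mathbbst{R}^{p,q})$. One must know that the space $\Emb(\square^{\,p_j,q_j},\mathbbst{R}^{p,q})$ of open embeddings of manifolds with corners deformation retracts onto the subspace of ``rectilinear-like'' embeddings, equivalently that scaling toward the center is a deformation retraction compatible with the corner strata — this is a corner-aware version of the classical fact that $\Emb(\mathbbst{R}^n,\mathbbst{R}^n)\simeq \Fr(\mathbbst{R}^n)\simeq O(n)$, and it is where the differential topology of manifolds with corners collected in the appendix (\textsection\ref{sect:Manifolds with Corners}) enters. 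Once that contractibility-of-frames-type statement is in hand, the homotopy pullback collapses the frame data and leaves exactly the positions of the centers constrained to the correct strata by the maps $(\upiota_*)_j$, giving $\Conf_Z(\mathbbst{R}^{p,q})$; the rest is the bookkeeping of matching this identification with the one on the $\mathbbst{E}_{p,q}$ side, which is routine given Remark~\ref{rem:Mapping spaces in E_pq act in a compact form}.
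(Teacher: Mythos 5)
Your proposal is correct and takes essentially the same route as the paper: both multimorphism spaces are identified, compatibly with evaluation/differentiation at centers, with the stratified configuration spaces of Notation~\ref{notat: Configuration spaces}, using the rectilinear-to-configuration equivalence (Lemma~\ref{lem:rect-conf}) and the corner-adapted scanning equivalences of the appendix (Propositions~\ref{prop: Scanning with one basic} and~\ref{prop: Scanning with several basics}) to collapse the homotopy pullback of embedding spaces, exactly the frame-bundle pasting the paper carries out. The only slight imprecision is attributing essential surjectivity to Remark~\ref{remark: cubes are nice} alone, whereas the paper deduces it from Proposition~\ref{prop: Scanning with one basic}; this is a minor point, not a gap.
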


\begin{proof} Let us focus on the equivalence at the level of spaces of multimorphisms, since essential surjectivity up to equivalence is just a consequence of Proposition \ref{prop: Scanning with one basic}.

By definition of $\mathbbst{E}_{p,q}$ and $\gE_{\mathbbst{R}^{p,q}}$, we are done if the square
$$
  \begin{tikzcd}
         \Rect \big( \bigsqcup_{j \in I} \square^{\,p, Q_j}, \square^{\,p, Q} \big)\ar[rr]\ar[d] 
         && \prod_{j\in I} \Emb_{/\mathbbst{R}^{p,q}}(\square^{\, p,Q_j},\square^{\, p,Q}) \ar[d] \\
        \Emb\big( {\bigsqcup}_{j \in I} \square^{\,p,Q_j}, \square^{\, p,Q} \big) \ar[rr]  && \prod_{j\in I}\Emb(\square^{\, p,Q_j },\square^{\, p, Q }) 
    \end{tikzcd}
$$
is a homotopy pullback. Note that $\Emb(\square^{\,p,Q_j},\square^{\, p,Q})$ refers to the connected component of $\Emb\big((-1,1)^{\,p+q-\vert Q_j\vert}\times [0,1)^{\vert Q_j\vert},(-1,1)^{\,p+q-\vert Q\vert}\times [0,1)^{\vert Q\vert}\big)$ where the canonical inclusion $\square^{\,p,Q_j}\hookrightarrow\square^{\,p,Q}$ lives. We can expand this square to form
$$
  \begin{tikzcd}
         \Rect \big( \bigsqcup_{j \in I} \square^{\,p, Q_j}, \square^{\,p, Q} \big)\ar[rr]\ar[d] 
         && \prod_{j\in I} \Emb_{/\mathbbst{R}^{p,q}}(\square^{\, p,Q_j},\square^{\, p,Q}) \ar[d] \\
         \Emb\big( {\bigsqcup}_{j \in I} \square^{\,p,Q_j}, \square^{\, p,Q} \big)
         \ar[rr] \ar[d,"\wr"']  && \prod_{j\in I}\Emb(\square^{\, p,Q_j},\square^{\, p, Q}) \ar[d,"\wr"] \\
        \prod_{t\in J} \Fr^{\,i}_{[Q_t]}\big( \Conf_{I_t} \big(\square^{\, p, Q}_{[Q_t]}\big) \big)
         \ar[rr]\ar[d, "\uppi"']\ar[rrd, phantom, "\overset{\mathsf{h}\;\;}{\lrcorner}" description]  && \prod_{j\in I}\Fr^{\,i}_{[Q_j]}\big(\square^{\, p, Q}_{[Q_j]}\big)  \ar[d,"\uppi"]\\
         \prod_{t\in J} \Conf_{I_t}\big(\square^{\, p,Q}_{[Q_t]}\big) \ar[rr] && \prod_{j\in I}\square^{\, p, Q}_{[Q_j]}
    \end{tikzcd}.
$$

The lower square is a homotopy pullback since the inner frames yield a principal bundle when restricted to points in the same stratum (see \textsection \ref{sect:Manifolds with Corners}), and the square in the middle is a homotopy pullback since it is made of equivalences by Proposition \ref{prop: Scanning with several basics}. Therefore, the top square is a homotopy pullback if and only if so is the composed rectangle. Since evaluation at centers $\Rect\big( \bigsqcup_{j \in I} \square^{\,p, Q_j}, \square^{\,p, Q} \big)\to \prod_{t\in J} \Conf_{I_t}\big(\square^{\, p, Q}_{[Q_t]}\big)$ is a weak homotopy equivalence (see Lemma \ref{lem:rect-conf} below), we are reduced to check that the analogous map $\Emb_{/\mathbbst{R}^{p,q}}(\square^{\, p, Q_j},\square^{\, p, Q})\to \square^{\,p ,Q}_{[Q_j]}$ is also a weak homotopy equivalence. Plugging-in the definition of $\Emb_{/\mathbbst{R}^{p,q}}$ and arguing as above, we find a diagram
\begin{equation}\label{eqt:hopbs showing space of relative embeddings is contractible}
  \begin{tikzcd}
         \Emb_{/\mathbbst{R}^{p,q}} \big( \square^{\,p, Q_j}, \square^{\,p, Q} \big)\ar[rr]\ar[d] \ar[rrd, phantom, "\overset{\mathsf{h}\;\;}{\lrcorner}" description]
         &&  * \ar[d,"\square^{\,p,Q_j}\hookrightarrow \mathbbst{R}^{p,q}"] \\
         \Emb\big( \square^{\,p, Q_j}, \square^{\, p,Q} \big)
         \ar[rr] \ar[d,"\mathsf{ev_0}"'] \ar[rrd, phantom, "\overset{\mathsf{h}\;\;}{\lrcorner}" description]  && \Emb\big( \square^{\,p,Q_j}, \mathbbst{R}^{p,q} \big) \ar[d,"\mathsf{ev}_0"]\\
   \square^{\, p, Q}_{[Q_j]} \ar[rr] && \mathbbst{R}^{p,q}_{[Q_j]}
    \end{tikzcd},
\end{equation}
which shows that the composite left vertical map is a weak homotopy equivalence since the right composition is so.
\end{proof}

We close this section by introducing a piece of notation that will prove useful for bookkeeping purposes in the sequel (notice the minor, but important, difference with Notation \ref{notat: Configuration spaces}), and by presenting the postponed auxiliary Lemma \ref{lem:rect-conf}, which generalizes \cite[Lemma 5.1.10]{harpaz_little_nodate} to the corner setting.

\begin{notation}
   Let \(\Rbulletn\) and \(\Qbulletm\) be objects in \(\E_{p,q}\). Recall that such an object can be described as a function
    \(
    \Rbullet: \langle n \rangle^{\circ} \to \Prect(\square^{\,p,q}) \cong (\underline{2}^{\underline{q}})^{\op},
    \)
    where \(\Prect(\square^{\,p,q})\) denotes the (poset of) connected components of strata within \(\square^{\,p,q}\). Set $T=\bigsqcup_{j\in \langle m\rangle^{\circ}}\square^{\,p,Q_j}$, seen as a subspace of $\langle m\rangle^{\circ}\times \square^{\,p,q}$. By an abuse of notation, set $T_{[R_i]}=T\cap (\langle m\rangle^{\circ}\times\square^{\,p,q}_{[R_i]})$, and consider the configuration space
    \begin{equation*}
        \Conf_{\Rbullet}(T) := \underset{t \in J_{R}}{\prod} \Conf_{I_t} (T_{[R_t]}),
    \end{equation*}
    where $J_{R}$ denotes the quotient of $\langle n\rangle^{\circ}$ by the equivalence relation $i\sim i'$ iff $R_i=R_{i'}$ and $I_t$ denotes the fiber of $\langle n\rangle^{\circ}\to J_R$ over $t$.
\end{notation}

\begin{lemma}
\label{lem:rect-conf}
    Let \(\Rbulletn\) and \(\Qbulletm\) be objects in \(\E_{p,q}\). Suppose \(T\) denotes the coproduct of the sub-cubes \(\{\square^{\,p,Q_j}\}_{j \in \langle m \rangle^{\circ}}\), so that \(T \subseteq \langle m\rangle^{\circ} \times \square^{\,p,q}\). Then, evaluation at $0\in\square^{\,p,R_i}$ for all $i\in \langle n\rangle^{\circ}$ induces a weak homotopy equivalence
      \begin{align*}
        \mathsf{ev}_0\colon\Rect \big( \bigsqcup_{i\in\langle n\rangle^{\circ}} \square^{\,p,R_i} , T \big)  \xrightarrow{\;\;\;\sim\;\;\;}
         \Conf_{\Rbullet} (T)
        \end{align*} 
\end{lemma}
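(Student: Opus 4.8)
The plan is to prove that evaluation at centers $\mathsf{ev}_0$ is a weak homotopy equivalence by exhibiting a deformation retraction of $\Rect\big(\bigsqcup_{i} \square^{\,p,R_i}, T\big)$ onto its "subspace of infinitesimal cubes", which maps homeomorphically to $\Conf_{\Rbullet}(T)$ up to a contractible-fiber issue. First I would observe that, since the target $T$ is a disjoint union of sub-cubes, a rectilinear embedding $\bigsqcup_i \square^{\,p,R_i} \to T$ is the same as a choice of "color-compatible assignment" $\upalpha\colon \langle n\rangle^{\circ} \to \langle m\rangle^{\circ}$ (telling us which component each source cube lands in) together with, for each $j$, a rectilinear embedding $\bigsqcup_{i\in \upalpha^{-1}(j)} \square^{\,p,R_i} \hookrightarrow \square^{\,p,Q_j}$; by Remark \ref{rem:Mapping spaces in E_pq act in a compact form} this is exactly the compact description of the mapping space. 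The condition $R_i \subseteq Q_{\upalpha(i)}$ is forced by non-emptiness. So it suffices to treat the case $m=1$, i.e.\ a single target cube $\square^{\,p,Q}$, and a family $\{\square^{\,p,R_i}\}_{i}$ with all $R_i \subseteq Q$; the general case follows by taking products over $j\in\langle m\rangle^{\circ}$ and reindexing, which is precisely how $\Conf_{\Rbullet}(T)$ was defined.

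In the single-cube case, the key geometric point is Remark \ref{remark: cubes are nice}: an open rectilinear embedding $\upiota\colon \square^{\,p,R_i}\hookrightarrow \square^{\,p,Q}$ must send $0$ into the stratum $\square^{\,p,Q}_{[R_i]}$, and conversely the "type" $R_i$ of a cube is detected by which stratum its center lands in. This means the evaluation map $\mathsf{ev}_0$ lands in $\prod_{t\in J_R}\Conf_{I_t}(\square^{\,p,Q}_{[R_t]})$ as claimed (two source cubes of the same type have disjoint closures, hence distinct centers, while source cubes of different types automatically have centers in different strata, so no collision condition is needed between them — this is exactly the bookkeeping encoded by $J_R$). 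To build the retraction, I would use the standard little-cubes shrinking argument: linearly scale all dilation parameters $\uplambda^{(i)}$ down toward $0$ while keeping the centers fixed, i.e.\ along $s\in[0,1]$ replace $\upiota_i$ by the composite of $\upiota_i$ with the dilation by $(1-s)+s\varepsilon$ centered at $\upiota_i(0)$ — one must check this stays rectilinear (it does: composition of dilations/translations), open, and injective on the disjoint union (injectivity can only improve as cubes shrink, and centers, being distinct or in distinct strata, never collide). As $s\to 1$ with $\varepsilon\to 0$ this contracts each fiber of $\mathsf{ev}_0$ to the "infinitesimal" configuration, giving a deformation retraction onto a subspace mapping homeomorphically to $\Conf_{\Rbullet}(\square^{\,p,Q})$; combined with the section "place a tiny cube of the correct type at each point", this shows $\mathsf{ev}_0$ is a weak homotopy equivalence. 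Reassembling over $j$ and using that $\square^{\,p,Q}_{[Q_j]}$-strata in distinct components of $T$ are disjoint gives the statement for general $T$.

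The main obstacle I anticipate is the careful handling of the interaction between the combinatorics of types and the boundary strata: I must verify that the shrinking homotopy is well-defined near the corner locus — a cube $\square^{\,p,R_i}$ with $R_i\neq\diameter$ touches the boundary of $\square^{\,p,Q}$, and shrinking toward a center lying in $\square^{\,p,Q}_{[R_i]}$ must keep the image inside $\square^{\,p,Q}$ and keep the embedding of the prescribed type (the dilation parameters in the "$[0,1)$-directions" indexed by $R_i$ must stay positive, which they do for $s<1$, and the limiting infinitesimal cube still has type $R_i$ because its center is pinned in the correct stratum). Once this is checked, everything else is the classical argument of \cite[Lemma 5.1.10]{harpaz_little_nodate} adapted verbatim, with the poset $\Prect(\square^{\,p,q})\cong(\underline{2}^{\underline{q}})^{\op}$ replacing the single color of the ordinary little cubes operad; in particular the product decomposition $\Conf_{\Rbullet}(T)=\prod_{t\in J_R}\Conf_{I_t}(T_{[R_t]})$ is exactly what makes the centers of differently-typed cubes impose no mutual constraint, so no subtlety beyond the boundary bookkeeping arises.
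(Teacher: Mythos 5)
Your overall skeleton is the same as the paper's: a section of $\mathsf{ev}_0$ obtained by placing small cubes of the prescribed types at the points of a configuration, plus a shrinking homotopy; your preliminary observations (splitting over the assignment of source cubes to components of $T$, and using Remark \ref{remark: cubes are nice} to see that $\mathsf{ev}_0$ lands in $\Conf_{\Rbullet}(T)$ and that types are pinned by strata) are fine. The gap is in the central step. Shrinking every embedding about its centers by the factor $(1-s)+s\upvarepsilon$ is, for each fixed $s$, an injective self-map of $\Rect\big(\bigsqcup_i\square^{\,p,R_i},T\big)$, so its time-one image is a homeomorphic copy of the whole embedding space, not a ``subspace of infinitesimal cubes'' mapping homeomorphically to $\Conf_{\Rbullet}(T)$ --- no such subspace exists. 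Nor can a fixed $\upvarepsilon$ serve for the section ``place a tiny cube at each point'': points of a configuration can be arbitrarily close to one another or to the boundary of their stratum, so the admissible cube size must be a \emph{continuous function of the configuration}. This is exactly what the paper constructs: the function $\upvarepsilon(\underline{x})$ built from the minimal pairwise separation (in a coordinatewise-min distance adapted to the anisotropic dilations allowed in $\Rect$) and the minimal distance of each point to the boundary of the stratum it lies in. Without this, your ``section'' is not defined continuously.

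Even granting the section $\mathsf{s}$, ``shrink, then invoke the section'' does not yet give $\mathsf{s}\circ\mathsf{ev}_0\simeq\id$: the fiber of $\mathsf{ev}_0$ over a configuration (tuples of dilation parameters with the centers fixed) is not convex, because disjointness of two cubes is a union over coordinates of linear conditions --- two embeddings may separate a given pair of cubes in different coordinates, and the straight-line interpolation of their dilation parameters can produce overlapping cubes. So one cannot simply interpolate from your shrunk cubes to the section's cubes. The repair, implicit in the paper's proof (which sets up $\upvarepsilon(\underline{x})$ precisely for this purpose and leaves the formulas to the reader), is a two-stage homotopy: first shrink the given embedding until all half-widths lie below $\upvarepsilon\big(\mathsf{ev}_0(\upiota)\big)$, and only then interpolate linearly to $\mathsf{s}\big(\mathsf{ev}_0(\upiota)\big)$; in that small-cube regime disjointness and containment are preserved coordinate by coordinate, and type is preserved because the centers never move. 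With the continuously varying size function and this shrink-then-interpolate homotopy supplied, your argument becomes the paper's proof.
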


\begin{proof} We construct a homotopy inverse 
    $\mathsf{s} : \Conf_{\Rbullet}(T) \xrightarrow{\quad} \Rect \big( \bigsqcup_{i\in\langle n\rangle^{\circ}} \square^{\,p,R_i} , T \big)$
    as follows. First, consider a \emph{distance function} on $T$, which is only possibly different from $2$ between points $x,y\in T$ lying in the same connected component, and in that case is
    $$
    d(x,y):=\begin{cases} \mathsf{min}\left\{|\uppi_k(x)-\uppi_k(y)|  \text{ for }1\leq k\leq p+q\text{ with }\uppi_k(x)\neq \uppi_k(y)\right\} & \text{if }x\neq y,\\[2mm]
    0 & \text{otherwise},    
    \end{cases}
    $$
    where $\uppi_k$ denotes the projection into the $k^{\text{th}}$-coordinate in $\square^{\,p,q}$. As in \cite{harpaz_little_nodate}, for a point $ \underline{x}=\{x_i\}_{i\in \langle n\rangle^{\circ}}$ in $\Conf_{\Rbullet}(T)$, let us define
    \[
    \upvarepsilon'(\underline{x}) \;=\; 
    \mathsf{min}\left\{d(x_i,x_j):\;i,j\in \langle n\rangle^{\circ},\;\;i\neq j\right\}
    \]
    as the minimum distance among points conforming to the configuration $\underline{x}$.
    
    Next, we measure the minimum distance from the relevant boundary of the cubes to the points in the configuration. Since some of the points may lie in strata of \(\square^{\,p, Q_k}\), we measure the distance to the boundary of the stratum in which each point lies, taken in $\langle m\rangle^{\circ}\times[-1,1]^{p}\times[0,1]^{q}$. For each point $x_i$ in $\underline{x}$, \(R_i\) denotes its local type. 
    Now define
    \[
    \upvarepsilon''(\underline{x}) \;=\; \mathsf{min}\left\{ d\big(x_i,\,\partial\,  \square^{\,p,Q_{k_i}}_{[R_i]}\big):\;i\in \langle n\rangle^{\circ}\right\},
    \]
    where the distance is induced in the obvious sense from the one above.  
    We then set
    \[
    \upvarepsilon(\underline{x}) \;=\; \tfrac{1}{2}\,\mathsf{min}\{\upvarepsilon'(\underline{x}),\,\upvarepsilon''(\underline{x})\}.
    \]

    For each point \(x_i\) in the configuration, we build a cube \(\square_{x_i}\) of type \(R_i\), whose side length is \(\upvarepsilon(\underline{x})\). The cube \(\square_{x_i}\) is placed in the appropriate \(\square^{\,p,Q_{k_i}}\) so that \(x_i\) is the center of \( (\square_{x_i})_{[R_i]}\). In other words, \(\mathsf{ev}_0 (\square_{x_i})=x_i\). Then the open rectilinear map \(\mathsf{s}(\underline{x})\) is determined by the way the cubes are constructed, and \( \mathsf{ev}_0 \circ \mathsf{s} = \id_{\Conf}\). 
    The homotopy \(\mathsf{s} \circ \mathsf{ev}_0 \simeq \id_{\Rect}\) can be given by explicit expressions on each connected component of \(T\). We leave it to the reader to write them down.
\end{proof}

\section{Additivity for generalized Swiss-cheese operads}
\label{sect: Additivity for generalized swiss-cheese operads}

This section presents a generalization of Harpaz's proof of derived Dunn additivity for the operads of little 
$n$-cubes, originally based on Lurie's approach. For the sake of completeness, the relevant components of Harpaz’s argument are included in \textsection \ref{sect:weak operads}. In the main body of the text, we identify the specific points at which modifications are required to extend the proof to the broader setting under consideration. This will allow readers to follow the progression of ideas within this context.

Let us begin by recalling that there are two quite natural maps of operads
$$
\begin{tikzcd}
    \mathbbst{E}_{p,q}\ar[r,"\upmu_1"] & \mathbbst{E}_{p+r,q+t} \ar[r,leftarrow,"\upmu_2"] & \mathbbst{E}_{r,t}
\end{tikzcd}
$$
which satisfy an interesting \emph{interchange law} (cf.\ \cite{brinkmeier_operads_2001,dunn_tensor_1988}) that we specify below. Ultimately, both maps boil down to the canonical identification
\begin{align*}
    \square^{\,p,q}\times \square^{\,r,t} &=(-1,1)^p\times [0,1)^q\times (-1,1)^r\times [0,1)^t\\
    &\cong (-1,1)^p\times (-1,1)^r\times [0,1)^q\times [0,1)^t=\square^{\, p+r,q+t},
\end{align*}
but let us be more precise here. On objects, they are given by:
\begin{itemize}
    \item $\upmu_1\colon \square^{\,p,Q}\longmapsto \square^{\,p,Q}\times \square^{\,r,t}\cong \square^{\, p+r,Q}$, and
    \item $\upmu_2\colon \square^{\,r,T}\longmapsto \square^{\,p,q}\times \square^{\, r,T}\cong \square^{\,p+r,q+T}$,
\end{itemize}
where $Q$ on the right-hand side denotes the image of the homonymous subset along $\underline{q}\hookrightarrow \underline{q+t}$, $i\mapsto i$, while $q+T$ denotes the image of $T$ along $\underline{t}\hookrightarrow \underline{q+t}$, $i\mapsto q+i$. On multimorphisms, $\upmu_2$ sends the open rectilinear embedding $\upiota\colon \bigsqcup_{j\in I}\square^{\,r, T_j}\hookrightarrow \square^{\,r,T}$ to 
$$
\upmu_2(\upiota)\colon \bigsqcup_{j\in I}\square^{\, p+r,q+T_j} \cong \square^{\,p,q}\times \bigsqcup_{j\in I}\square^{\, r,T_j}\xhookrightarrow{\quad\square^{\, p,q}\times \upiota\quad} \square^{\, p,q}\times \square^{\,r,T}\cong \square^{\,p+r,q+T},
$$
and similarly for $\upmu_1$.

To formulate the interchange law they satisfy, the picture is slightly more complicated than the uncolored case \cite[\textsection 7]{brinkmeier_operads_2001}, since we need to consider analogues of $\upmu_1$ and $\upmu_2$ for each color $T\in \mathbbst{E}_{r,t}$ and $Q\in \mathbbst{E}_{p,q}$ respectively (the previously defined maps been those corresponding to $\underline{t}$ and $\underline{q}$); see Figure \ref{fig: figure 5}. What they are is evident from the discussion above. Once we have them, the interchange law  asserts that
$$
\begin{tikzcd}
   &[-34mm] \mathbbst{E}_{p,q}\brbinom{\{Q_i\}_{i\in I}}{Q} \!\times\! \mathbbst{E}_{r,t}\brbinom{\{T_j\}_{j\in J}}{T} \ar[rdd, "\id\times \Delta"]\ar[ldd,"(\id\times \Delta)\cdot (\mathsf{switch})"'] &[-34mm] \\\\
\mathbbst{E}_{r,t}\brbinom{\{T_j\}_{j\in J}}{T} \!\times\! \mathbbst{E}_{p,q}\brbinom{\{Q_i\}_{i\in I}}{Q}^{\times \vert J\vert} \ar[dd,"\upmu_{Q}\times \prod_j\upmu_{T_j}"'] & & \mathbbst{E}_{p,q}\brbinom{\{Q_i\}_{i\in I}}{Q} \!\times\! \mathbbst{E}_{r,t}\brbinom{\{T_j\}_{j\in J}}{T}^{\times \vert I \vert} \ar[dd,"\upmu_{T}\times \prod_{i}\upmu_{Q_i}"]\\\\
\mathbbst{E}_{p+r,q+t}\brbinom{\{Q\times T_j\}_{j\in J}}{Q\times T} \!\times\! \prod_{j}\mathbbst{E}_{p+r,q+t}\brbinom{\{Q_i\times T_j\}_{i\in I}}{Q\times T_j}\ar[rdd,"\mathsf{compose}"'] & & \mathbbst{E}_{p+r,q+t}\brbinom{\{Q_i\times T\}_{i\in I}}{Q\times T} \!\times\!\prod_i \mathbbst{E}_{p+r,q+t}\brbinom{\{Q_i\times T_j\}_{j\in J}}{Q_i\times T} \ar[ldd,"\uptau_{I,J}\cdot\,\mathsf{compose}"]\\\\
& \mathbbst{E}_{p+r,q+t}\brbinom{\{Q_i\times T_j\}_{(i,j)}}{Q\times T}
\end{tikzcd}
$$
commutes for all possible choices, where we have denoted by $\Delta$ the appropriate diagonal maps, by $Q\times T$, the color of $\E_{p+r,q+t}$ corresponding to $\square^{\,p, Q}\times \square^{\,r, T}$, and by $\uptau_{I,J}$, the obvious bijection $I\times J\cong J\times I$.

\begin{figure}[htp]
    \centering
    \begin{tikzpicture}[scale = 0.63]
\coordinate (O) at (0,0,0);
\coordinate (A) at (4,0,0);
\coordinate (B) at (4,4,0);
\coordinate (C) at (0,4,0);
\coordinate (D) at (0,0,-4);
\coordinate (E) at (4,0,-4);
\coordinate (F) at (4,4,-4);
\coordinate (G) at (0,4,-4);
\coordinate (P) at (2,0,-1);
\coordinate (Q) at (2,0,-3);
\coordinate (R) at (1,0,-1);
\coordinate (S) at (1,0,-3);
\coordinate (P2) at (2,4,-1);
\coordinate (Q2) at (2,4,-3);
\coordinate (R2) at (1,4,-1);
\coordinate (S2) at (1,4,-3);
\coordinate (P4) at (3.5,0,-3);
\coordinate (Q4) at (3.5,0,-4);
\coordinate (R4) at (2.5,0,-3);
\coordinate (S4) at (2.5,0,-4);
\coordinate (P5) at (3.5,4,-3);
\coordinate (Q5) at (3.5,4,-4);
\coordinate (R5) at (2.5,4,-3);
\coordinate (S5) at (2.5,4,-4);
\node at (2,4.7,-4) {$\upmu_{\scalebox{0.8}{$\diameter$}}(c)$};

\fill[blue!20] (O) -- (D) -- (E) -- (A) -- cycle;
\fill[blue!20] (D) -- (E) -- (F) -- (G) -- cycle;

\fill[dashed,thick,green!40!blue!30] (R) -- (P) -- (P2) -- (R2);
\fill[dashed,thick,green!40!blue!30] (P) -- (Q) -- (Q2)--(P2);
\fill[dashed,thick,green!40!blue!30] (P2) -- (Q2) -- (S2)--(R2);
\fill[dashed,thick,green!40!blue!30] (R4) -- (P4) -- (P5) -- (R5);
\fill[dashed,thick,green!40!blue!30] (P4) -- (Q4) -- (Q5)--(P5);
\fill[dashed,thick,green!40!blue!30] (P5) -- (Q5) -- (S5)--(R5);
\fill[dashed,thick,green!40!blue!45] (S4) -- (Q4) -- (Q5) -- (S5);
\fill[ pattern={Lines[
                  distance=2mm,
                  angle=-45,
                  line width=0.7mm
                 ]},
        pattern color=green!30!black!90] (S4) -- (Q4) -- (Q5) -- (S5);
\draw[dashed,thick] (O) -- (D);
\draw[dashed,thick] (C) -- (G);
\draw[dashed,thick] (A) -- (B) -- (C) -- (O) -- (A);
\draw[dashed,thick] (A) -- (E) -- (F) -- (B) ;
\draw[dashed,thick] (D) -- (G) -- (F);

\draw[dashed,thick,green!50!black] (P2) -- (Q2) -- (S2) -- (R2) -- (P2);
\draw[dashed,thick,green!50!black] (P) -- (Q) -- (S) -- (R) -- (P);
\draw[dashed,thick,green!50!black] (P) -- (P2);
\draw[dashed,thick,green!50!black] (Q) -- (Q2);
\draw[dashed,thick,green!50!black] (R) -- (R2);
\draw[dashed,thick,green!50!black] (S) -- (S2);
\draw[dashed,thick,green!50!black] (P5) -- (Q5) -- (S5) -- (R5) -- (P5);
\draw[dashed,thick,green!50!black] (P4) -- (Q4) -- (S4) -- (R4) -- (P4);
\draw[dashed,thick,green!50!black] (P4) -- (P5);
\draw[dashed,thick,green!50!black] (Q4) -- (Q5);
\draw[dashed,thick,green!50!black] (R4) -- (R5);
\draw[dashed,thick,green!50!black] (S4) -- (S5);
\draw[line width=0.3mm,blue!50!black] (D) -- (E);
\end{tikzpicture}
\hspace{2cm} 
\begin{tikzpicture}[scale = 0.63]
\coordinate (O) at (0,0,0);
\coordinate (A) at (4,0,0);
\coordinate (B) at (4,4,0);
\coordinate (C) at (0,4,0);
\coordinate (D) at (0,0,-4);
\coordinate (E) at (4,0,-4);
\coordinate (F) at (4,4,-4);
\coordinate (G) at (0,4,-4);
\coordinate (P) at (2,0,-1);
\coordinate (Q) at (2,0,-3);
\coordinate (R) at (1,0,-1);
\coordinate (S) at (1,0,-3);
\coordinate (P2) at (2,4,-1);
\coordinate (Q2) at (2,4,-3);
\coordinate (R2) at (1,4,-1);
\coordinate (S2) at (1,4,-3);
\coordinate (P4) at (3.5,0,-3);
\coordinate (Q4) at (3.5,0,-4);
\coordinate (R4) at (2.5,0,-3);
\coordinate (S4) at (2.5,0,-4);
\coordinate (P5) at (3.5,4,-3);
\coordinate (Q5) at (3.5,4,-4);
\coordinate (R5) at (2.5,4,-3);
\coordinate (S5) at (2.5,4,-4);
\node at (2,4.7,-4) {$\upmu_{\{1\}}(c)$};
\fill[blue!20] (O) -- (D) -- (E) -- (A) -- cycle;
\fill[blue!20] (D) -- (E) -- (F) -- (G) -- cycle;
\fill[dashed,thick,green!40!blue!30] (R) -- (P) -- (P2) -- (R2);
\fill[dashed,thick,green!40!blue!30] (P) -- (Q) -- (Q2)--(P2);
\fill[dashed,thick,green!40!blue!30] (P2) -- (Q2) -- (S2)--(R2);
\fill[dashed,thick,green!40!blue!45] (P) -- (Q) -- (S)--(R);
\fill[ pattern={Lines[
                  distance=2mm,
                  angle=-45,
                  line width=0.7mm
                 ]},
        pattern color=green!30!black!90] (P) -- (Q) -- (S)--(R);
\fill[dashed,thick,green!40!blue!30] (R4) -- (P4) -- (P5) -- (R5);
\fill[dashed,thick,green!40!blue!30] (P4) -- (Q4) -- (Q5)--(P5);
\fill[dashed,thick,green!40!blue!30] (P5) -- (Q5) -- (S5)--(R5);
\fill[dashed,thick,green!40!blue!45] (P4) -- (Q4) -- (S4)--(R4);
\fill[dashed,thick,green!40!blue!45] (S4) -- (Q4) -- (Q5) -- (S5);
\fill[ pattern={Lines[
                  distance=2mm,
                  angle=-45,
                  line width=0.7mm
                 ]},
        pattern color=green!30!black!90] (S4) -- (Q4) -- (Q5) -- (S5);
\fill[ pattern={Lines[
                  distance=2mm,
                  angle=-45,
                  line width=0.7mm
                 ]},
        pattern color=green!30!black!90] (P4) -- (Q4) -- (S4) -- (R4);
\draw[dashed,thick] (O) -- (D);
\draw[dashed,thick] (C) -- (G);
\draw[dashed,thick] (A) -- (B) -- (C) -- (O) -- (A);
\draw[dashed,thick] (A) -- (E) -- (F) -- (B) ;
\draw[dashed,thick] (D) -- (G) -- (F);

\draw[dashed,thick,green!50!black] (P2) -- (Q2) -- (S2) -- (R2) -- (P2);
\draw[dashed,thick,green!50!black] (P) -- (Q) -- (S) -- (R) -- (P);
\draw[dashed,thick,green!50!black] (P) -- (P2);
\draw[dashed,thick,green!50!black] (Q) -- (Q2);
\draw[dashed,thick,green!50!black] (R) -- (R2);
\draw[dashed,thick,green!50!black] (S) -- (S2);
\draw[dashed,thick,green!50!black] (P5) -- (Q5) -- (S5) -- (R5) -- (P5);
\draw[dashed,thick,green!50!black] (P4) -- (Q4) -- (S4) -- (R4) -- (P4);
\draw[dashed,thick,green!50!black] (P4) -- (P5);
\draw[dashed,thick,green!50!black] (Q4) -- (Q5);
\draw[dashed,thick,green!50!black] (R4) -- (R5);
\draw[dashed,thick,green!50!black] (S4) -- (S5);
\draw[line width=0.3mm,blue!50!black] (D) -- (E);
\end{tikzpicture}.
    \caption{Open subsets associated with the image of the open rectilinear embedding $c$ from Figure \ref{fig: figure 1} along $\upmu_{\scalebox{0.8}{$\diameter$}},\upmu_{\{1\}}\colon \mathbbst{E}_{1,1}\longrightarrow \mathbbst{E}_{1,2}$; both maps use $\square^{\,1,2}\cong \square^{\, 1,1}\times \square^{0,1}$.}
    \label{fig: figure 5}
 \end{figure}
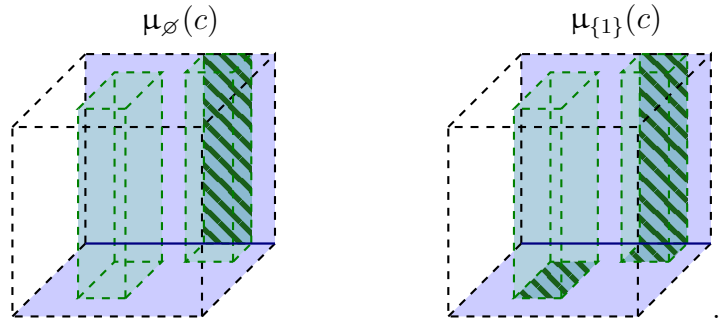

All this is precisely what one needs to build a bifunctor $\upmu\colon \E_{p,q} \times \E_{r,t} \xrightarrow{\quad}\E_{p+r,q+t}$ (see \textsection \ref{sect: Higher categorical tools}). More concretely, $\upmu$ is given on objects by
$$
\big((\langle n\rangle, Q_{\bullet}),(\langle m\rangle,T_{\bullet})\big) \xmapsto{\;\;\quad\;\;} (\langle n\rangle \wedge \langle m\rangle, Q_{\bullet}\times T_{\bullet}), 
$$
and on morphisms by 
$$
\begin{tikzcd}
   \displaystyle\left( \Big(\upalpha,\big(\upiota_j\colon \bigsqcup_{i\in \upalpha^{-1}(j)}\square^{\, p,Q_i}\hookrightarrow \square^{\, p,Q_j}\big)_{j}\Big),\Big(\upbeta,\big(\upiota'_k\colon \bigsqcup_{\ell\in \upbeta^{-1}(k)}\square^{\, r,T_\ell}\hookrightarrow \square^{\, r,T_k}\big)_{k}\Big)\right) \ar[d, mapsto]\\
  \displaystyle \Big(\upalpha\wedge \upbeta, \big(\bigsqcup_{(i,\ell)\in (\upalpha\wedge\upbeta)^{-1}(j,k)}\square^{\, p,Q_i}\times \square^{\, r, T_{\ell}}\xhookrightarrow{}\square^{\,p,Q_j}\times \square^{\, r,T_k} \big)_{(j,k)}\Big)
\end{tikzcd},
$$
where the $(j,k)$-labeled open rectilinear embedding above is explicitly defined as the operadic composition  $\upmu_{Q_j}(\upiota'_{k})\circ \big(\upmu_{T_{\ell}}(\upiota_{j})\big)_{\ell\in \upbeta^{-1}(k)}=\upmu_{T_k}(\upiota_{j})\circ \big(\upmu_{Q_{i}}(\upiota'_{k})\big)_{i\in \upalpha^{-1}(j)}$.


 \input{figure6}

The main goal of this section is to prove:
\begin{thm}
    \label{thm:SwissCheeseAdditivity}
    The bifunctor \(\upmu\colon \E_{p,q} \times \E_{r,t} \xrightarrow{\quad}\E_{p+r,q+t}\) exhibits \(\E_{p+r,q+t}\) as a tensor product (see Definition \ref{defn: Exhibits a tensor product}).
\end{thm}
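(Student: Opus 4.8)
\emph{Strategy.} The plan is to transport Harpaz's proof of derived Dunn additivity (\cite{harpaz_little_nodate}, itself a reworking of \cite[\textsection 5]{lurie_higher_nodate}) to the Swiss--cheese setting, staying throughout inside the theory of weak $\infty$-operads and their approximations recalled in \textsection\ref{sect:weak operads}. Unwinding Definition \ref{defn: Exhibits a tensor product}, the assertion that $\upmu$ exhibits $\E_{p+r,q+t}$ as a tensor product is equivalent to the statement that the comparison map out of the Boardman--Vogt assembly of $\E_{p,q}$ and $\E_{r,t}$ into $\E_{p+r,q+t}$ --- the map whose coherence is exactly packaged by the interchange diagram established above --- is an approximation of weak $\infty$-operads; since approximations are precisely the maps inducing equivalences on categories of algebras, this is also what will give the reformulation in Theorem \ref{thm:SwissCheeseAdditivity Reformulated}.

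\emph{Reduction to the cube models.} The first reduction replaces the Swiss--cheese operads $\E_{p,q}$, $\E_{r,t}$, $\E_{p+r,q+t}$ by the combinatorial cube operads $\Cube{p}{q}$, $\Cube{r}{t}$, $\Cube{p+r}{q+t}$ of Definition \ref{defn: Cube p,q operads}. Each $\Cube{a}{b}$ maps to $\E_{a,b}$ by an approximation (cf.\ \textsection\ref{sect:weak operads} and the discussion around Definition \ref{defn: Cube p,q operads}), and, using the reshuffling $\square^{\,p,q}\times\square^{\,r,t}\cong\square^{\,p+r,q+t}$, the rule $(\{U_i\}_i,\{V_j\}_j)\mapsto\{U_i\times V_j\}_{(i,j)}$ of taking products of open sub-cubes defines a bifunctor $\nu\colon \Cube{p}{q}\times\Cube{r}{t}\to\Cube{p+r}{q+t}$ sitting in a square over $\upmu$. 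Since, inside Harpaz's framework, the Boardman--Vogt tensor product is invariant under replacing its inputs by approximations and ``exhibiting a tensor product'' may be checked after such a replacement, it is enough to prove that $\nu$ exhibits $\Cube{p+r}{q+t}$ as a tensor product. This is the convenient reformulation, because each $\Cube{a}{b}$ is the nerve of a \emph{discrete} coloured operad built from a poset of open sub-cubes, so the assembled operad $\Cube{p}{q}\otimes\Cube{r}{t}$ is explicitly manageable and Harpaz's recognition criterion for approximations collapses to a concrete (weak) contractibility statement.

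\emph{The cube statement.} To see that $\nu$ exhibits a tensor product I would check: (i) \emph{essential surjectivity on colours} --- under the reshuffling, every non-empty open sub-cube of $\square^{\,p+r,q+t}$ is a product of an open sub-cube of $\square^{\,p,q}$ with one of $\square^{\,r,t}$, and its type $Z\subseteq\underline{q+t}$ splits compatibly into $Z\cap\underline q$ and the shift of $Z\cap(\underline{q+t}\setminus\underline q)$; and (ii) \emph{the comparison on multimorphisms}. For (ii), a multimorphism of $\Cube{p+r}{q+t}$ is a disjoint family of product-boxes inside a product-box, and two product-boxes $A\times B$ and $A'\times B'$ are disjoint exactly when $A\cap A'=\diameter$ \emph{or} $B\cap B'=\diameter$; hence such a family is nothing but an interchange-compatible family with respect to the two factors. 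By the recognition criterion, what is left is the weak contractibility of the poset of ways to disentangle such a family into a sequence of horizontal and vertical separations, which I would establish by an inductive separation argument: project a configuration to the $\square^{\,p,q}$-factor, perform a horizontal cut along a suitable axis-parallel hyperplane whenever two projected boxes become disjoint, and recurse on the pieces, every step remaining inside a fixed stratum thanks to the uniqueness of local types (Remark \ref{remark: cubes are nice}), so that the projection sends a sub-cube of type $Z$ to one of type $Z\cap\underline q$ and strata are never mixed. If one prefers to argue directly with $\E$ rather than with $\Cube{}{}$, the same contractibility is visible through Remark \ref{rem:Mapping spaces in E_pq act in a compact form} and Lemma \ref{lem:rect-conf}, which rewrite the relevant mapping spaces as configuration spaces and let the scanning map take over the role of the separation argument (cf.\ also Proposition \ref{prop: Coincidence of little (p,q)-disc operads}).

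\emph{Main obstacle.} The genuinely new work relative to Harpaz's boundaryless case is controlling the stratum poset $\Prect(\square^{\,p+r,q+t})\cong\underline 2^{\underline{q+t}}$ and its compatible factorization $\underline 2^{\underline q}\times\underline 2^{\underline t}$: every separation or scanning step must be carried out within a single stratum and must induce an equivalence on each stratum-indexed configuration space, and one needs local-type uniqueness (Remark \ref{remark: cubes are nice}) to guarantee that the horizontal and vertical pieces never interfere. A second, more bureaucratic point is to certify that the interchange diagram really promotes $\upmu$ to an operadic bifunctor --- equivalently, that the Boardman--Vogt assembly map is a well-defined map of weak $\infty$-operads --- although this is essentially the coherence already encoded in the interchange law stated before the theorem. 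As an alternative to treating $(p,q)$ and $(r,t)$ in one stroke, one could bootstrap the general case from the one-step instances where one factor is $\E_{1,0}$ or $\E_{0,1}$, using associativity of the Boardman--Vogt tensor product; the separation/scanning analysis above is exactly what is needed to dispatch those base cases, so it does not really lighten the load.
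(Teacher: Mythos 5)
Your outline follows the paper's general philosophy (Harpaz's weak-operad machinery, reduction to discrete cube models, configuration-space contractibility), but the two load-bearing steps have genuine gaps. First, the reduction itself: the maps $\Cube{p}{q}\to \E_{p,q}$ are weak approximations, i.e.\ operadic localizations, \emph{not} equivalences of $\infty$-operads, so ``exhibits a tensor product'' cannot simply be checked after replacing the $\E$'s by the $\Cube$'s. A weak approximation identifies only \emph{locally constant} monoids with monoids on the target (Lemma \ref{lem:Harpaz weak approx}), and the entire difficulty is to control how the bifunctor/tensor-product property interacts with this localization; your proposal offers no mechanism for transporting the statement back. Second, the cube-level claim you reduce to is false as stated: it is not true that every disjoint family of product boxes in $\square^{\,p+r,q+t}$ is ``interchange-compatible with respect to the two factors''. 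Pinwheel-type configurations of boxes admit no separation by any axis-parallel cut, so your inductive horizontal/vertical separation has nothing to recurse on, and the poset of disentanglings whose weak contractibility you invoke is empty rather than contractible. This is precisely the classical obstruction to proving Dunn-type additivity ``boxwise'', and it is why $\upnu\colon \Cube{p}{q}\times\Cube{r}{t}\to\Cube{p+r}{q+t}$ does not exhibit a tensor product at the discrete level.

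The paper's proof circumvents both points with an ingredient missing from your outline: the wreath product $\Cube{p}{q}\wr\Cube{r}{t}$. Its monoids agree formally with $\Cube{p}{q}\times\Cube{r}{t}$-monoids (Theorem \ref{thm:wreathprod}), and the geometric work consists of two weak-approximation statements, $\Cube{p}{q}\to\E_{p,q}$ (Theorem \ref{thm:WeakApproxofE}) and $\Cube{p}{q}\wr\Cube{r}{t}\to\E_{p+r,q+t}$ (Theorem \ref{thm:WreathProdApprox}), whose second conditions are verified on homotopy fibers identified with configuration spaces of \emph{points} (Lemma \ref{lem:rect-conf}) via Lurie--Seifert--van Kampen; there, covering and refining point configurations is what is required, and the extra flexibility of the wreath product over the product is exactly what makes the cofiltered-ness argument work (see the proof of Lemma \ref{lem: Second condition WreathProdApprox}). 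Finally, one still has to check that the formal equivalence $\Mon_{\Cube{p}{q}\,\wr\,\Cube{r}{t}}(\Spc)\simeq\Mon_{\Cube{p}{q}\times\Cube{r}{t}}(\Spc)$ restricts to locally constant monoids; this is the step that transports the conclusion from the cube models back to the $\E$'s, and it has no counterpart in your proposal. Your closing appeal to Remark \ref{rem:Mapping spaces in E_pq act in a compact form} and Lemma \ref{lem:rect-conf} points at the right tools, but without the wreath product and the locally-constant bookkeeping the argument does not go through.
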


Of course, this statement admits the following reformulation.
\begin{thm}\label{thm:SwissCheeseAdditivity Reformulated} For any symmetric monoidal \(\infty\)-category $\EuScript{V}$, there is an equivalence of \(\infty\)-categories
    $
    \Alg_{\mathbbst{E}_{p,q}}(\ALG_{\mathbbst{E}_{r,t}}(\EuScript{V}))\simeq \Alg_{\mathbbst{E}_{p+r,q+t}}(\EuScript{V}) 
    $
     induced by the maps of topological operads $(\upmu_T\colon\mathbbst{E}_{p,q}\to \mathbbst{E}_{p+r,q+t} \leftarrow \mathbbst{E}_{r,t}:\!\upmu_Q)_{T,Q}$. Furthermore, one can leverage this to a symmetric monoidal equivalence
    $$
    \ALG_{\mathbbst{E}_{p,q}}\big(\ALG_{\mathbbst{E}_{r,t}}(\EuScript{V})\big)\simeq \ALG_{\mathbbst{E}_{p+r,q+t}}(\EuScript{V}). 
    $$
\end{thm}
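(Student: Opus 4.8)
The plan is to derive both displayed equivalences formally from Theorem~\ref{thm:SwissCheeseAdditivity}, so that all of the real work is already behind us. Recall from Definition~\ref{defn: Exhibits a tensor product} that $\upmu$ exhibiting $\E_{p+r,q+t}$ as a tensor product means precisely that, for every symmetric monoidal $\infty$-category $\EuScript{V}$, the induced map $\Alg_{\E_{p+r,q+t}}(\EuScript{V})\to\BiFun(\E_{p,q},\E_{r,t};\EuScript{V})$ is an equivalence; equivalently, the canonical comparison out of the Boardman--Vogt tensor product, $\E_{p,q}\otimes\E_{r,t}\to\E_{p+r,q+t}$, is an equivalence of $\infty$-operads. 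Composing with the standard identification $\BiFun(\E_{p,q},\E_{r,t};\EuScript{V})\simeq\Alg_{\E_{p,q}}\big(\ALG_{\E_{r,t}}(\EuScript{V})\big)$, which is built into the very construction of the Boardman--Vogt tensor product (cf.\ \cite{lurie_higher_nodate}, or Harpaz's account in \cite{harpaz_little_nodate}), yields the first equivalence of $\infty$-categories. That it is the one induced by the operad maps $(\upmu_T,\upmu_Q)_{T,Q}$ is pure bookkeeping: by construction (see the discussion preceding Theorem~\ref{thm:SwissCheeseAdditivity}) the bifunctor $\upmu$ is assembled precisely from these maps, so under the identification above the equivalence is simply restriction along the family $(\upmu_T,\upmu_Q)$.

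The one point that is not immediate is the symmetric monoidal refinement. Both sides carry their pointwise symmetric monoidal structures---iterated, on the nested left-hand side---and one must check that the equivalence respects them. I would deduce this from the monoidality of the assignment $\EuScript{O}\mapsto\ALG_{\EuScript{O}}(\EuScript{V})$ with respect to the Boardman--Vogt tensor product: there is a natural \emph{symmetric monoidal} equivalence $\ALG_{\EuScript{O}\otimes\EuScript{P}}(\EuScript{V})\simeq\ALG_{\EuScript{O}}\big(\ALG_{\EuScript{P}}(\EuScript{V})\big)$ (see Haugseng \cite{haugseng_allegedly_nodate,haugseng_algebras_2024}, whose conventions we follow), into which one feeds the operadic equivalence $\E_{p,q}\otimes\E_{r,t}\simeq\E_{p+r,q+t}$ just obtained. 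Alternatively, one can argue by hand: testing against an arbitrary symmetric monoidal $\infty$-category $\EuScript{W}$ and applying $\Fun^{\otimes}(\EuScript{W},-)$ to a candidate symmetric monoidal functor between the two sides returns, via the universal property of the pointwise structures, a further instance of the underlying equivalence already established, and since a symmetric monoidal functor is an equivalence iff its underlying functor is, this suffices.

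So the genuine obstacle lies not in this statement but in its input, Theorem~\ref{thm:SwissCheeseAdditivity}. That I would attack following Harpaz's reimagining of Lurie's proof of derived Dunn additivity (the relevant machinery is collected in~\textsection\ref{sect:weak operads}): reduce the assertion that $\upmu$ exhibits a tensor product to the statement that an explicit comparison map is an \emph{approximation} of weak $\infty$-operads, and then discharge this using the geometry of rectilinear embeddings---in particular the scanning equivalences $\mathsf{ev}_0\colon\Rect\big(\bigsqcup_i\square^{\,p,R_i},T\big)\xrightarrow{\ \sim\ }\Conf_{\Rbullet}(T)$ of Lemma~\ref{lem:rect-conf}, which play the role of the corresponding contractibility statements in \cite{harpaz_little_nodate}. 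The extra bookkeeping forced by the corner setting is that every configuration- and embedding-space argument must be carried out stratum by stratum, tracking the poset $\Prect(\square^{\,p,q})\cong(\underline{2}^{\underline{q}})^{\op}$ of local types of sub-cubes throughout; the rigidity recorded in Remark~\ref{remark: cubes are nice}---two rectilinear sub-cubes meeting in their open strata have the same type---is what keeps the structural skeleton of Harpaz's argument intact.
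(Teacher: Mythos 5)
Your proposal is correct and follows essentially the same route as the paper, which likewise treats Theorem~\ref{thm:SwissCheeseAdditivity Reformulated} as an immediate reformulation of Theorem~\ref{thm:SwissCheeseAdditivity} via Definition~\ref{defn: Exhibits a tensor product} and the identification $\BiFun(\mathbbst{E}_{p,q},\mathbbst{E}_{r,t};\EuScript{V})\simeq\Alg_{\mathbbst{E}_{p,q}}\big(\ALG_{\mathbbst{E}_{r,t}}(\EuScript{V})\big)$, with the symmetric monoidal upgrade coming from the fact that all monoidal structures involved are pointwise. Your closing sketch of how Theorem~\ref{thm:SwissCheeseAdditivity} itself is established (Harpaz-style weak approximations together with the scanning equivalence of Lemma~\ref{lem:rect-conf} and Remark~\ref{remark: cubes are nice}) also matches the paper's strategy, up to the wreath-product step that the paper spells out explicitly.
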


Before embarking into the proof of the derived additivity theorem, let us stress that we will extensively use the weak $\infty$-operad structure on  \(\E_{p,q}\) and \(\Cube{p}{q}\), obtained in the standard way from their $\infty$-operad structure (see Remark \ref{ex:weak structure on infinity operads}).

\begin{remarks}
\label{rmks:WeakStructure E and Cube}
    The basics of the weak $\infty$-operad \(\E_{p,q}\) are single colors over \(\langle 1 \rangle\). The inert maps between basics are open rectilinear embeddings between the same types. On the other hand, the basics of the weak $\infty$-operad \(\Cube{p}{q}\) are single sub-cubes over \(\langle 1 \rangle\), and the only inert maps between basics are the identities.
\end{remarks}

The proof of Theorem \ref{thm:SwissCheeseAdditivity} will require some preparations, but the strategy is as follows. First, due to Propositions \ref{prop:weak operad 1} and \ref{thm:wreathprod}, it is enough to show that the bifunctor $\upmu$ induces an equivalence of $\infty$-categories 
\[
\Mon_{\E_{p+r,q+t}}(\Spc) \xrightarrow{\;\;\sim\;\;} \Mon_{\E_{p,q} \times \E_{r,t} }(\Spc) \simeq \Mon_{\E_{p,q}}(\Mon_{\E_{r,t} }(\Spc)).
\]
To see that this is the case, one replaces $\E_{p',q'}$ with the discrete $\Cube{p'}{q'}$, thanks to Theorems \ref{thm:WeakApproxofE} and \ref{thm:WreathProdApprox} below, and applies Lemma \ref{lem:Harpaz weak approx}. Thus, we focus now on proving the cited theorems. Actually, the proof of Theorem \ref{thm:WeakApproxofE}, which is divided into two lemmas, should be seen as a road map for that of Theorem \ref{thm:WreathProdApprox}.

\begin{thm}
\label{thm:WeakApproxofE}
    The map \(\upphi: \Cube{p}{q} \xrightarrow{\;\quad\;} \E_{p,q}\) is a weak approximation. 
\end{thm}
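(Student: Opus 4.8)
The plan is to verify directly the two conditions in the definition of a weak approximation (see~\textsection\ref{sect:weak operads}): one concerning basics and inert morphisms, and one concerning active morphisms into a basic. As announced, I would isolate these in two lemmas; the second carries essentially all of the geometric content and is the one to emulate when proving Theorem~\ref{thm:WreathProdApprox}.

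\emph{Basics and inert morphisms.} By Remarks~\ref{rmks:WeakStructure E and Cube}, the basics of $\E_{p,q}$ are the single colors $Q\subseteq\underline{q}$ over $\langle 1\rangle$, while those of $\Cube{p}{q}$ are the single sub-cubes over $\langle 1\rangle$, with inert morphisms given by rectilinear embeddings of equal type, respectively by identities. The first lemma records that $\upphi$ is inert-surjective onto basics and preserves and creates the relevant inert morphisms. Inert-surjectivity is immediate: every color $Q$ is the type of some non-empty open sub-cube of $\square^{\,p,q}$ --- for instance a small cube centred at a point of the stratum $\square^{\,p,q}_{[Q]}$, cf.\ Remark~\ref{remark: cubes are nice} --- and $\upphi$ sends such a sub-cube to $Q$; essential surjectivity up to equivalence then follows from Proposition~\ref{prop: Scanning with one basic}, exactly as in the proof of Proposition~\ref{prop: Coincidence of little (p,q)-disc operads}. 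That $\upphi$ takes inert maps to inert maps, and that the required inert pullbacks exist and are preserved, is checked coordinatewise, since an inert map out of a tuple is in either weak $\infty$-operad the product of the inert maps out of its entries, and sub-cubes are stable under the relevant coordinate restrictions.

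\emph{Active morphisms into a basic.} This is the heart of the matter. One fixes an object $\Drect_\bullet=(\Drect_1,\dots,\Drect_n)$ of $\Cube{p}{q}$, with $\Drect_i$ of type $Q_i$, and an active morphism $\upalpha\colon\upphi(\Drect_\bullet)\to Q$ onto a basic of $\E_{p,q}$ (so necessarily $Q_i\subseteq Q$ for every $i$, as otherwise the multimorphism space is empty), and must show that the comma $\infty$-category $\mathcal{K}_\upalpha$ occurring in the definition of weak approximation --- whose objects are, informally, factorizations of $\upalpha$ as $\upphi(\Drect_\bullet)\to\upphi(E_\bullet)\to Q$ through an active morphism $\Drect_\bullet\to E_\bullet$ of $\Cube{p}{q}$ --- is weakly contractible. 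Here the plan is to transcribe Harpaz's argument (reproduced in \textsection\ref{sect:weak operads}) step by step, feeding in at each stage the corner-theoretic inputs assembled in \textsection\ref{sect: Preliminaries}. In the boundaryless case one contracts $\mathcal{K}_\upalpha$ by an engulfing/``scanning'' deformation: the cubes $E_b$ occurring in a factorization are moved towards, and shrunk onto, a canonical type-$Q$ neighbourhood of the configuration cut out by $\upalpha$, using convexity of Euclidean space together with the (weak) contractibility of the relevant spaces of rectilinear embeddings. In our setting the behaviour of those spaces of rectilinear embeddings is governed by Lemma~\ref{lem:rect-conf} and the scanning Propositions~\ref{prop: Scanning with one basic}--\ref{prop: Scanning with several basics} (which is where the corner generalization of \cite[Lemma~5.1.10]{harpaz_little_nodate} enters), and the assertion that the whole deformation can be kept inside $\square^{\,p,q}$ and type-preserving follows from Remark~\ref{remark: cubes are nice}: the type of a sub-cube is constrained only through the location of its centre relative to the strata of $\square^{\,p,q}$, and the openness condition on rectilinear maps records exactly which boundary faces it meets. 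The one combinatorial fact needed along the way is that the axis-parallel bounding box of any family of type-$Q$ sub-cubes engulfing a fixed configuration is again of type $Q$, since the lower bounds in the coordinates indexed by $Q$ stay at $0$ and those in the remaining coordinates stay positive; carrying the deformation through then yields weak contractibility of $\mathcal{K}_\upalpha$.

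The main obstacle, and the only place where the proof genuinely departs from \cite{harpaz_little_nodate}, is precisely this stratified bookkeeping. In the boundaryless setting a finite configuration of points in $\R^n$ is engulfed in any cubical neighbourhood and contracted by convexity; here the configuration sits across several strata of $\square^{\,p,q}$, so an engulfing sub-cube must be \emph{stratified-compatible} --- its type must equal $Q$ on the nose, which by Remark~\ref{remark: cubes are nice} pins its centre to the stratum $\square^{\,p,q}_{[Q]}$, and the inclusions $\Drect_i\subseteq E_b$ must match up the codimension strata that the $\Drect_i$ touch (this is what forces $Q_i\subseteq Q$). One therefore has to verify throughout that no deformation or engulfing step induces a type jump, i.e.\ creates or destroys a boundary face. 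Granting these checks, $\mathcal{K}_\upalpha$ is weakly contractible and, together with the first lemma, this shows that $\upphi$ is a weak approximation.
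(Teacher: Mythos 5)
Your reduction to two lemmas matches the paper's outline, but both lemmas are set up incorrectly, so the proposal has genuine gaps rather than a variant proof. For the first condition, what Definition \ref{defn: weak and strong approximation} requires is that $\upphi^{-1}(\E_{p,q})^{\inert}_{\langle 1\rangle}\to (\E_{p,q})^{\inert}_{\langle 1\rangle}$ be an $\infty$-\emph{localization}; you replace this by surjectivity onto basics plus preservation/creation of inert morphisms, which is strictly weaker and checked essentially for free. The actual content (Lemma \ref{lem:First condition WeakAprroxofE}) is that, the target being an $\infty$-groupoid, one must show via Proposition \ref{prop:localization} that the homotopy fibers are contractible; by Lemma \ref{lem:strict pullback} the fiber over a color $Q$ is the poset of sub-cubes of type $Q$ with type-preserving inclusions, which is contractible because the full cube $\square^{\,p,Q}$ is terminal. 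None of this appears in your sketch.

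For the second condition the category you propose to contract is not the one in the definition. Definition \ref{defn: weak and strong approximation} asks for contractibility of the homotopy fibers of $\big(\CubeActive{p}{q}\big)_{/U}\to \big(\Eactive_{p,q}\big)_{/Q}$, where the \emph{target} $U$ is a fixed sub-cube of type $Q$ and the fiber is taken over an active morphism $R_\bullet\to Q$, so its objects are families of sub-cubes inside $U$ lifting the \emph{source} $R_\bullet$. Your $\mathcal{K}_\upalpha$ instead fixes a source $\Drect_\bullet$ in $\Cube{p}{q}$ and varies factorizations $\upphi(\Drect_\bullet)\to\upphi(E_\bullet)\to Q$; as described, the identity factorization $(\id,\upalpha)$ is an initial object, so that category is trivially contractible and its contractibility carries no geometric content and does not imply the approximation condition --- a sign the reduction is wrong. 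Moreover, the genuine fiber category is \emph{not} contractible in general: following Proposition \ref{prop: Categorical triangle to analyze fibers} and the right-fibration observation, one compares fibers over objects $R_\bullet$ of $\Eactive_{p,q}$; the $\E_{p,q}$-side fiber is the mapping space, identified with $\Conf_{R_\bullet}(U)$ by Lemma \ref{lem:rect-conf}, and the $\Cube{p}{q}$-side fiber is (by Lemma \ref{lem:strict pullback}) the poset $\Prect$ of families of disjoint sub-cubes of $U$ with types matching $R_\bullet$, whose classifying space must be shown to be $\Conf_{R_\bullet}(U)$ --- a configuration space, not a point --- via the Lurie--Seifert--van Kampen theorem. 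The engulfing/shrinking of cubes that you invoke is exactly the local input needed to show the subposets $\Prect_{\underline{x}}$ are non-empty and cofiltered; it cannot serve as a global contraction of the fiber, so ``transcribe Harpaz's argument step by step and contract by engulfing'' defers precisely the work that constitutes the proof while aiming it at the wrong target.
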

\begin{proof} The first condition to be a weak approximation (see Definition \ref{defn: weak and strong approximation}) is proven in Lemma \ref{lem:First condition WeakAprroxofE}, while the second is checked in Lemma \ref{lem:Second condition WeakAprroxofE}. Notice that, due to Remark \ref{rem:WeakApproxReduction}, we are able to reduce the second condition to the case of an object \(U\) living over \(\langle 1 \rangle \) in \(\Fin_*\). 
\end{proof}

\begin{lemma}\label{lem:First condition WeakAprroxofE} The functor 
$\upphi^{-1} (\E_{p,q})_{\langle 1 \rangle}^{\inert} \xrightarrow{\;\quad\;} (\E_{p,q})_{\langle 1 \rangle}^{\inert}$ exhibits an $\infty$-localization.
\end{lemma}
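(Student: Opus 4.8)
The plan is to show that \emph{both} the source and the target of this functor are equivalent to the discrete category on the set of local types $\underline 2^{\underline q}$, and that the functor itself realizes the $\infty$-groupoid completion of the source. First I would unwind the target: an object of $(\E_{p,q})_{\langle 1\rangle}^{\inert}$ is a single local type $Q\in\underline 2^{\underline q}$, and by Remarks \ref{rmks:WeakStructure E and Cube} the only inert unary maps are the type-preserving rectilinear self-embeddings, so there are no morphisms between distinct $Q$'s and $(\E_{p,q})_{\langle 1\rangle}^{\inert}(Q,Q)=\Rect(\square^{\,p,Q},\square^{\,p,Q})$. The $n=m=1$ case of Lemma \ref{lem:rect-conf} (evaluation at the origin) identifies this mapping space with $\square^{\,p,Q}_{[Q]}\cong(-1,1)^{p+q-|Q|}$, which is contractible. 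An $\infty$-category with one object per $Q$, contractible endomorphism spaces and no further morphisms is equivalent to the discrete category $\underline 2^{\underline q}$; in particular it is an $\infty$-groupoid.

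Next I would unwind the source. Since $(\E_{p,q})_{\langle 1\rangle}^{\inert}$ lives entirely over $\langle 1\rangle\in\Fin_*$, the preimage $\upphi^{-1}(\E_{p,q})_{\langle 1\rangle}^{\inert}$ is the poset $\mathcal{A}$ whose objects are the nonempty open sub-cubes of $\square^{\,p,q}$ and whose morphisms are the type-preserving inclusions $U\overset{\sim}{\subseteq}V$: a unary morphism $U\subseteq V$ in $\CubeOrd{p}{q}$ is sent by $\upphi$ to a rectilinear embedding $\square^{\,p,Q_U}\hookrightarrow\square^{\,p,Q_V}$, which is inert precisely when $Q_U=Q_V$. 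Because its morphisms preserve types, $\mathcal{A}$ splits as $\mathcal{A}=\coprod_{Q\in\underline 2^{\underline q}}\mathcal{A}_Q$, where $\mathcal{A}_Q$ is the poset of sub-cubes of $\square^{\,p,q}$ of type $Q$.

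The key step is to check that each $\mathcal{A}_Q$ has a maximum element. The open condition on rectilinear maps (Remark \ref{remark: cubes are nice}) forces any type-$Q$ sub-cube to start its $[0,1)$-directions at $0$ and to keep its interior directions inside $(0,1)$, hence to be contained in the standard one $U_Q^{\max}:=(-1,1)^p\times\prod_{i\in Q}[0,1)\times\prod_{i\in\underline q\setminus Q}(0,1)$, which is itself a type-$Q$ sub-cube. Therefore $N\mathcal{A}_Q$ is contractible, so $|\mathcal{A}|\simeq\coprod_Q*\simeq\underline 2^{\underline q}$.

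Finally I would conclude formally. The functor $\mathcal{A}\to(\E_{p,q})_{\langle 1\rangle}^{\inert}$ carries $\mathcal{A}_Q$ into the $Q$-th component, hence is a bijection on $\pi_0$; and since the target is an $\infty$-groupoid, every morphism of $\mathcal{A}$ is sent to an equivalence, so the functor factors through the localization of $\mathcal{A}$ at all its morphisms, i.e.\ through the $\infty$-groupoid completion $\mathcal{A}\to|\mathcal{A}|$. By the computations above the induced map $|\mathcal{A}|\to(\E_{p,q})_{\langle 1\rangle}^{\inert}$ is a bijection on $\pi_0$ and an equivalence $*\xrightarrow{\;\sim\;}*$ on each component, hence an equivalence; this is precisely the assertion that $\upphi^{-1}(\E_{p,q})_{\langle 1\rangle}^{\inert}\to(\E_{p,q})_{\langle 1\rangle}^{\inert}$ exhibits an $\infty$-localization. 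I expect the only genuinely delicate point to be the third step — the careful bookkeeping of the ``open'' restriction needed to pin down $U_Q^{\max}$ — while the first, second and fourth steps are essentially formal once both categories have been recognized as $\underline 2^{\underline q}$.
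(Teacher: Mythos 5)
Your proposal is correct and follows essentially the same route as the paper: both reduce, using that $(\E_{p,q})_{\langle 1\rangle}^{\inert}$ is an $\infty$-groupoid, to showing it is a classifying space of the preimage, and both get contractibility from the terminal object given by the maximal (``full'') sub-cube of type $Q$ (your $U_Q^{\max}$ is exactly the paper's full sub-cube $\square^{\,p,Q}$). The only cosmetic difference is that the paper organizes the argument via contractible homotopy fibers (Proposition \ref{prop:localization} together with Lemma \ref{lem:strict pullback}) instead of your component-by-component computation of $\lvert\mathcal{A}\rvert$; since the target is homotopy discrete, the fibers and the components $\mathcal{A}_Q$ coincide.
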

\begin{proof}

  Observe that \(( \E_{p,q} )_{\langle 1 \rangle}^{\inert}\) is an $\infty$-groupoid. Consequently, the localization statement reduces to showing that \(( \E_{p+r,q+t} )_{\langle 1 \rangle}^{\inert}\) is a classifying space of \(\upphi^{-1} (\E_{p,q})_{\langle 1 \rangle}^{\inert}\). Equivalently, by Proposition \ref{prop:localization}, we will show that the homotopy fibers of the map in the statement are contractible. Let \(Q\) be any object in \(( \E_{p,q} )_{\langle 1 \rangle}^{\inert}\). Since this map satisfies the assumptions of Lemma \ref{lem:strict pullback}, we conclude that the homotopy fiber over $Q$ is equivalent to an ordinary category with sub-cubes \(U\) of type \(Q\) as objects, and morphisms given by inclusions. This category has a terminal object, namely the full sub-cube \(\square^{\,p, Q}\); therefore, it is contractible.
\end{proof}

\begin{lemma}\label{lem:Second condition WeakAprroxofE}
    The homotopy fibers of the map 
    \(
    \big(\CubeActive{p}{q}\big)_{/ U} \xrightarrow{\;\quad\;}  \big(\Eactive_{p,q}\big)_{/Q}
    \)
    are contractible for any sub-cube \(U\subseteq \square^{\,p,q}\) of type \(Q\in  \mathbbst{E}_{p,q} \).
\end{lemma}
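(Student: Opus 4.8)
The plan is to exhibit a terminal object in each homotopy fiber. Write $\upphi_{/U}\colon\big(\CubeActive{p}{q}\big)_{/U}\to\big(\Eactive_{p,q}\big)_{/Q}$ for the functor of the statement (well-defined since $\upphi(U)=Q$). Its homotopy fiber over an object $\xi$ of $\big(\Eactive_{p,q}\big)_{/Q}$ is the comma $\infty$-category
$$
\EuScript{F}_{\xi}\;:=\;\big(\CubeActive{p}{q}\big)_{/U}\ \times_{(\Eactive_{p,q})_{/Q}}\ \Big(\big(\Eactive_{p,q}\big)_{/Q}\Big)_{\xi/}\,,
$$
whose objects are pairs $(\underline{V},\gamma)$ with $\underline{V}=(V_1,\dots,V_m)$ a tuple of pairwise disjoint sub-cubes of $\square^{\,p,q}$ contained in $U$ and $\gamma\colon\xi\to\upphi(\underline{V})$ a morphism of $\big(\Eactive_{p,q}\big)_{/Q}$. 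I would show $\EuScript{F}_{\xi}$ has a terminal object, whence it is weakly contractible.

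The point is that $\upphi_{/U}$ sends a terminal object to a terminal object. Indeed, in any over-$\infty$-category the object lying over the identity is terminal, so $(U,\id_U)$ is terminal in $\big(\CubeActive{p}{q}\big)_{/U}$ (here $\id_U$ is active) and $(Q,\id_Q)$ is terminal in $\big(\Eactive_{p,q}\big)_{/Q}$; moreover $\upphi_{/U}(U,\id_U)=(Q,\id_Q)$ because $\upphi$ is a functor with $\upphi(U)=Q$. Consequently the pair $(U,\gamma_U)$ — where $\gamma_U$ is the essentially unique morphism $\xi\to(Q,\id_Q)$, which exists because $\Map_{(\Eactive_{p,q})_{/Q}}(\xi,(Q,\id_Q))\simeq *$ — is a terminal object of $\EuScript{F}_{\xi}$: for any $(\underline{V},\gamma)$, the mapping space from it to $(U,\gamma_U)$ is the fiber product of $\Map_{(\CubeActive{p}{q})_{/U}}(\underline{V},U)$ and $\Map_{((\Eactive_{p,q})_{/Q})_{\xi/}}(\gamma,\gamma_U)$ over $\Map_{(\Eactive_{p,q})_{/Q}}(\upphi(\underline{V}),(Q,\id_Q))$; the first two are contractible by terminality of $U$ and $(Q,\id_Q)$, and the last is the homotopy fiber over $\gamma_U$ of a map between the contractible spaces $\Map_{(\Eactive_{p,q})_{/Q}}(\upphi(\underline{V}),(Q,\id_Q))$ and $\Map_{(\Eactive_{p,q})_{/Q}}(\xi,(Q,\id_Q))$, hence also contractible; so the whole mapping space is contractible. (Equivalently and more slickly: a functor carrying the terminal object of its source to the terminal object of its target is cofinal, and cofinal functors have weakly contractible homotopy fibers.)

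For readers who prefer to mirror the proof of Lemma~\ref{lem:First condition WeakAprroxofE}, one can instead first rectify $\EuScript{F}_{\xi}$ — via a variant of Lemma~\ref{lem:strict pullback} — to the ordinary category whose objects are tuples of pairwise disjoint sub-cubes of $U$ each containing the image in $U$, under the rectilinear identification $\square^{\,p,Q}\cong U$, of one of the sub-cubes of which $\xi$ consists, and whose morphisms are the active coarsening inclusions; there the one-colour tuple $U$ is again manifestly terminal. In either presentation the only point requiring genuine care is the bookkeeping: matching the notion of homotopy fiber in Definition~\ref{defn: weak and strong approximation} with the comma $\infty$-category $\EuScript{F}_{\xi}$ above (standard — though the orientation of the comma slice decides whether the obvious terminal object is the full sub-cube $U$, as above, or, in the opposite convention which then forces the rectification step, the tuple of sub-cubes underlying $\xi$ itself), and, if the rectification route is taken, checking the hypotheses of the strict-pullback lemma — which amounts to the mapping spaces of $\mathbbst{E}_{p,q}$ being honest spaces of rectilinear embeddings (Remark~\ref{rem:Mapping spaces in E_pq act in a compact form}, Lemma~\ref{lem:rect-conf}) and to the disjointness of the images of the sub-cubes inside $U$ forcing the underlying active map $\langle n\rangle\to\langle m\rangle$ to be unique, so that the relevant homotopy fibers become discrete.
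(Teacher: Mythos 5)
Your argument proves the wrong statement: it conflates the homotopy fiber with the comma category. The condition in Definition \ref{defn: weak and strong approximation} (and what Theorem \ref{thm:WeakApproxofE} needs) concerns the homotopy fibers $\big(\CubeActive{p}{q}\big)_{/U}\times^{\mathsf{h}}_{(\Eactive_{p,q})_{/Q}}\{\upxi\}$, i.e.\ the homotopy pullback along the inclusion of the \emph{object} $\upxi$, not the lax pullback $\EuScript{F}_{\upxi}=\big(\CubeActive{p}{q}\big)_{/U}\times_{(\Eactive_{p,q})_{/Q}}\big((\Eactive_{p,q})_{/Q}\big)_{\upxi/}$. What your terminal-object computation shows is that $\upphi_{/U}$ carries the terminal object $(U,\id_U)$ to the terminal object $(Q,\id_Q)$ and is therefore cofinal; cofinality is exactly the weak contractibility of the comma categories $\EuScript{F}_{\upxi}$, and it does \emph{not} imply contractibility of homotopy fibers. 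The parenthetical ``cofinal functors have weakly contractible homotopy fibers'' is false: the inclusion of the single object $(Q,\id_Q)$ into $(\Eactive_{p,q})_{/Q}$ satisfies your argument verbatim (it preserves the terminal object, so all its comma categories are contractible), yet its homotopy fiber over any $\upxi$ not equivalent to $(Q,\id_Q)$ is empty. So the main route collapses at the step ``matching the notion of homotopy fiber \ldots with the comma $\infty$-category'' which you flag as standard bookkeeping --- it is precisely the point, and it fails.

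The gap is not repairable by a formal argument, because the homotopy fibers of $\upphi_{/U}$ are not contractible for terminal-object reasons. As in the paper's proof, one compares fiberwise over $\Eactive_{p,q}$ using Proposition \ref{prop: Categorical triangle to analyze fibers}: since $(\Eactive_{p,q})_{/Q}\to\Eactive_{p,q}$ is a right fibration, its fiber over $\Rbullet$ is the mapping space $\mathbbst{E}_{p,q}\brbinom{\{R_{\bullet}\}}{Q}\simeq\Conf_{R_{\bullet}}(U)$ (Lemma \ref{lem:rect-conf}), and contractibility of the fibers of $\upphi_{/U}$ becomes the assertion that the classifying space of the poset of tuples of pairwise disjoint sub-cubes of $U$ whose types match $\Rbullet$, with morphisms the componentwise type-preserving inclusions (no coarsenings), is weakly equivalent to $\Conf_{R_{\bullet}}(U)$ --- this is where Lurie--Seifert--van Kampen and the shrinking-cube/cofiltered argument carry the real geometric content, and that poset has no terminal object once $\Rbullet$ has length at least two. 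Your ``rectification'' fallback describes a different category (allowing coarsening morphisms and only containment of the images of the cubes constituting $\upxi$), which is again a comma-type construction rather than the fiber produced by Lemma \ref{lem:strict pullback}, so the manifest terminal object there is irrelevant. The absence of configuration spaces anywhere in your argument is the telltale sign that it establishes a strictly weaker, purely formal statement than the lemma requires.
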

\begin{proof}
     The following argument will be repeated multiple times in the upcoming proofs. Consider the following composition
    \[
    \big(\CubeActive{p}{q}\big)_{/ U} \xrightarrow{\;\;\quad\;\;} \big(\Eactive_{p,q}\big)_{/Q} \xrightarrow{\;\;\quad\;\;} \Eactive_{p,q}.
    \]
    Then, it suffices to show that the induced map on homotopy fibers
    \begin{equation}\label{eqt:Map between hofibers in WeakApproxofE}
    \begin{tikzcd}
         \big(\CubeActive{p}{q}\big)_{/ U,\, |  R_{\bullet}}  \ar[r]
        &
        \big(\Eactive_{p,q}\big)_{/Q,\, |  R_{\bullet}} 
    \end{tikzcd}
    \end{equation}
    is a weak homotopy equivalence for any object \(\{ R_{\bullet} \}_{\langle l \rangle}\in \Eactive_{p,q}\). Since the second map $(\Eactive_{p,q})_{/Q}\to \Eactive_{p,q}$ in the composition is a right fibration (see Proposition \ref{prop: Categorical triangle to analyze fibers}),
     the right hand side is equivalent to \( {\mathbbst{E}_{p,q}} \brbinom{{\{R_\bullet\}}}{ Q}\), and hence to a configuration space by Lemma \ref{lem:rect-conf}. Namely, 
$$
 {\mathbbst{E}_{p,q}} \brbinom{{\{R_\bullet\}}}{Q} 
       =
        \Rect \big( \bigsqcup_{ i\in \langle l\rangle^{\circ}} \square^{\,p, R_i}, \square^{\,p, Q} \big)   \cong
          \Rect \big( \bigsqcup_{ i\in \langle l\rangle^{\circ}} \square^{\,p,R_i} , U \big)\simeq \Conf_{R_{\bullet}}(U),
$$    
    where \(R_i \subseteq Q\) for all \(1 \leq i \leq l\) (otherwise it is empty). 
    Therefore, we will proceed to show that 
    \[
    \Big|   \big(\CubeActive{p}{q}\big)_{/U,\, |  R_{\bullet}}  \Big| \simeq \Conf_{R_\bullet} (U)
    \]
    if \(R_i \subseteq Q\) for all \(1 \leq i \leq l\), or empty otherwise, through a map compatible with (\ref{eqt:Map between hofibers in WeakApproxofE}).

    By virtue of Lemma \ref{lem:strict pullback}, the homotopy fiber $\big(\CubeActive{p}{q}\big)_{/U,\, |  R_{\bullet}}$ over \( \{R_{\bullet}\}_{\langle l\rangle}\) is equivalent to the poset $\Prect$ described as follows. Objects are functions $V_{\bullet}\colon \langle l\rangle^{\circ}\to \mathsf{bsc}(\square^{\,p,q})$ such that $\bigsqcup_{i\in \langle l\rangle^{\circ}}V_i\subseteq U$ and whose types match the prescribed sequence \(R_\bullet\). More precisely, there exists a bijection $\upsigma$ of $\langle l\rangle^{\circ}$
    such that \(V_i\) is of type \(R_{\upsigma(i)}\). Morphisms in $\Prect$ are just families of inclusions $\big(V_i\overset{\sim}{\subseteq} V'_i\big)_{i\in \langle l\rangle^{\circ}}$ between open subsets of the same basic-type. Note that the empty-cases are already handled, as $\Prect$ is the empty poset in those instances.
    
    The idea is to apply Lurie--Seifert--van Kampen theorem \cite[Theorem A.3.1]{lurie_higher_nodate} to the functor
   $$
\begin{tikzcd}
    \upchi\colon &[-12mm] \Prect\ar[r] & \open\big(\Conf_{R_{\bullet}} (U)\big)\\[-8mm]
    & V_{\bullet} \ar[r, mapsto] & \displaystyle{\prod_{i\in \langle l\rangle^{\circ}} \Conf_{\{i\}}\big({V_{i}}_{[R_{\upsigma(i)}]}\big)}
\end{tikzcd}.
$$
    Given a point 
    \(
    \underline{x} = \{ x_k \}_{k \in \langle l \rangle^{\circ}} \in \Conf_{R_\bullet}(U),
    \)
    define the full subcategory
    \[
    \Prect_{\underline{x}} \coloneqq \big\{ V_{\bullet}\in \Prect :\;\; {\underline{x}} \in \upchi(V_{\bullet}) \big\} \subseteq \Prect.
    \]
    Hence, we are reduced to verify:
    \begin{itemize}
        \item The classifying space \( \lvert \Prect_{\underline{x}} \rvert \) is contractible.
        \item Each open set \( \upchi\left( V_{\bullet} \right) \) is contractible (obvious by inspection).
    \end{itemize}

    Firstly, \(  \Prect_{\underline{x}}  \) is non-empty for all \({\underline{x}}\in \Conf_{R_{\bullet}}(U)\). Since \(U\) is Hausdorff and sub-cubes form a basis for the topology, around each \(x_k\) we can find sub-cubes that we can shrink enough to make them pairwise disjoint for all \(k\). As \(x_k \in U_{[R_k]}\), by Remark \ref{remark: cubes are nice}, the sub-cubes will have the appropriate types \(R_{\bullet}\), making the list of sub-cubes an element in \(\Prect_{\underline{x}}\). Secondly, as  \(\Prect\) is already a poset, \(  \Prect_{\underline{x}}  \) is also a poset. The same process of finding and shrinking cubes can be utilized to show that for any two lists of sub-cubes in \( \Prect_{\underline{x}} \), there is a third list of smaller sub-cubes in the intersection. We conclude that \( \Prect_{\underline{x}} \) is cofiltered and therefore contractible.

    Finally, combining both points, we obtain the chain of weak homotopy equivalences
   $$
   \Big|\big(\CubeActive{p}{q}\big)_{/U,\, |  R_{\bullet}} \Big| \simeq \vert \Prect\vert \simeq \underset{V_{\bullet}\in \Prect}{\hocolim}\,\upchi(V,\upsigma)\simeq \Conf_{R_{\bullet}}(U),
   $$
   where the second equivalence is due to the fact that $\upchi(V_{\bullet})\simeq *$ for any $V_{\bullet}\in \Prect$, and the third is due to Lurie--Seifert--van Kampen theorem.
\end{proof}

Now, we move towards the second technical result supporting the derived additivity theorem. To continue, we will need to use the wreath product \(\Cube{p}{q} \wr \Cube{r}{t}\) (see \cite[Construction 4.3.8]{harpaz_little_nodate}). Recall that the objects of \(\Cube{p}{q} \wr \Cube{r}{t}\) are tuples of the form
    \[
    \big( (U_1, \dots, U_n), \{V_1^{1}, \dots, V_{l_1}^{1}\}, \dots, \{V_1^{n}, \dots, V_{l_n}^{n}\} \big),
    \]
    where each \(U_i\) is a sub-cube in \(\square^{\,p,q}\), and each \(V_j^{i}\) is a sub-cube in \(\square^{\,r,t}\). We will sometimes abbreviate this as \(\big( (U_1, \dots, U_n), \{V^{1}_{\bullet}\}, \dots, \{V^{n}_{\bullet}\} \big)\), or more succinctly \((U_{\bullet}, \{V^{\bullet}_{\bullet}\})\).

    A morphism from \((U_{\bullet}, \{V^{\bullet}_{\bullet}\})\) to \(('U_{\bullet}, \{'V^{\bullet}_{\bullet}\})\) consists of a tuple \((\upalpha, \{\upbeta_k\}_{k \in \langle n \rangle^{\circ}})\), where \(\upalpha : \langle n \rangle \to \langle m \rangle\) satisfies
    \[
    \bigsqcup_{i \in \upalpha^{-1}(j)} U_i \subseteq {}'U_j \quad \text{for all } j \in \upalpha(\langle n \rangle^{\circ}),
    \]
    and each \(\upbeta_k\) is a morphism \(V^{k}_{\bullet} \to {}'V^{\upalpha(k)}_{\bullet}\) in \(\Cube{r}{t}\). We equip the wreath product with a weak \(\infty\)-operad structure where the basics are objects of the form \((U, \{V\})\). A morphism \((\upalpha, \{\upbeta_{\bullet}\})\) is inert (resp.\ active) if \(\upalpha\) seen as morphism in \(\Cube{p}{q}\) and all \(\upbeta_{\bullet}\)'s in \(\Cube{r}{t}\) are inert (resp.\ active). 
There is an obvious weak $\infty$-operad map
\[
\uppsi': \Cube{p}{q} \wr \Cube{r}{t} \xrightarrow{\;\;\quad\;\;} \Cube{p+r}{q+t}
\]
which on objects acts as
\[
\big( (U_1, \dots, U_n), \{V^{1}_{\bullet}\}, \dots, \{V^{n}_{\bullet}\} \big)
\mapsto
\underset{(U_{\bullet} \times V^{\bullet}_{\bullet})}{\underbrace{\big(\,\overset{U_1\times V^1_{\bullet}}{\overbrace{ U_1 \times V_1^{1}, \dots, U_1 \times V_{l_1}^{1}}}, \dots, U_n \times V_1^{n}, \dots, U_n \times V_{l_n}^{n}\, \big)}}
 ,
\]
and on morphisms, it sends  
\(
(\upalpha, \{\upbeta_k\}_{k \in \langle n \rangle^{\circ}})
\) to the corresponding inclusions.

\begin{remark}
\label{rem:types of components of the product}
    For every \(U \subset U' \in \Cube{p}{q}\) and \(V \subset V' \in \Cube{r}{t}\), if the products have the same type, i.e. \(U \times V \overset{\sim}{\subseteq} U' \times V'\), then necessarily opens in each component of the product have the same type, \(U \overset{\sim}{\subseteq} U'\) and \(V \overset{\sim}{\subseteq} V'\). 
\end{remark}

\begin{thm}
\label{thm:WreathProdApprox}
    The composite map
    \[
    \uppsi : \Cube{p}{q} \wr \Cube{r}{t} \xrightarrow{\;\;\;\uppsi'\;\;\;} \Cube{p+r}{q+t} \xrightarrow{\;\;\;\upphi\;\;\;} \E_{p+r,q+t}
    \]
    is a weak approximation.
\end{thm}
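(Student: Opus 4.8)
The plan is to verify the two conditions in the definition of a weak approximation (Definition~\ref{defn: weak and strong approximation}) for $\uppsi$, following step by step the proof of Theorem~\ref{thm:WeakApproxofE}, i.e.\ Lemmas~\ref{lem:First condition WeakAprroxofE} and~\ref{lem:Second condition WeakAprroxofE}. Everything is organized around the identification $\underline{q+t}=\underline q\sqcup\underline t$: under it every color $W$ of $\E_{p+r,q+t}$ splits as $W=W^{(1)}\sqcup(q+W^{(2)})$ with $W^{(1)}\subseteq\underline q$ and $W^{(2)}\subseteq\underline t$, so that $\square^{\,p+r,W}=\square^{\,p,W^{(1)}}\times\square^{\,r,W^{(2)}}$; in particular types multiply across products and $(Q,T)\mapsto Q\times T$ is injective, so by Remark~\ref{rem:types of components of the product} a basic $(U,\{V\})$ of $\Cube{p}{q}\wr\Cube{r}{t}$ with $\uppsi(U,\{V\})$ of type $Q\times T$ has $U$ of type $Q$ and $V$ of type $T$, and a morphism of basics is sent by $\uppsi$ to an inert morphism of $\E_{p+r,q+t}$ precisely when it is a componentwise same-type inclusion. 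For the first condition, $(\E_{p+r,q+t})_{\langle 1\rangle}^{\inert}$ is an $\infty$-groupoid, so by Proposition~\ref{prop:localization} it suffices that the homotopy fibers of $\uppsi^{-1}(\E_{p+r,q+t})_{\langle 1\rangle}^{\inert}\to(\E_{p+r,q+t})_{\langle 1\rangle}^{\inert}$ be contractible; arguing as in Lemma~\ref{lem:First condition WeakAprroxofE} through Lemma~\ref{lem:strict pullback}, the fiber over $\square^{\,p+r,Q\times T}$ is the ordinary category of pairs $(U,V)$ with $U\subseteq\square^{\,p,q}$ of type $Q$, $V\subseteq\square^{\,r,t}$ of type $T$, and componentwise inclusions as morphisms, which has a terminal object $(\square^{\,p,Q},\square^{\,r,T})$ and hence is contractible.

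For the second condition, Remark~\ref{rem:WeakApproxReduction} lets us restrict to a basic $(U,\{V\})$, with $U$ of type $Q$ and $V$ of type $T$, and we must show that the homotopy fibers of $\big(\Cube{p}{q}\wr\Cube{r}{t}\big)^{\act}_{/(U,\{V\})}\to(\Eactive_{p+r,q+t})_{/(Q\times T)}$ are contractible. As in Lemma~\ref{lem:Second condition WeakAprroxofE}, I would postcompose with the right fibration $(\Eactive_{p+r,q+t})_{/(Q\times T)}\to\Eactive_{p+r,q+t}$ (Proposition~\ref{prop: Categorical triangle to analyze fibers}) and show that the induced map on homotopy fibers over any $\{W_\bullet\}_{\langle l\rangle}$ is a weak homotopy equivalence. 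On the right one gets $\mathbbst{E}_{p+r,q+t}\brbinom{\{W_\bullet\}}{Q\times T}\cong\Rect\big(\bigsqcup_i\square^{\,p+r,W_i},U\times V\big)\simeq\Conf_{W_\bullet}(U\times V)$ by Lemma~\ref{lem:rect-conf} (empty unless each $W_i\subseteq Q\times T$), while by Lemma~\ref{lem:strict pullback} the left-hand fiber is equivalent to a poset $\Prect$: its objects are active wreath maps $(U_\bullet,\{V^\bullet_\bullet\})\to(U,\{V\})$, that is, a family $(U_1,\dots,U_n)$ of disjoint sub-cubes of $U$ together with, for each $i$, a family $(V^i_1,\dots,V^i_{l_i})$ of disjoint sub-cubes of $V$, such that, under a bijection $\upsigma$ of the set of pairs $(i,j)$ with $\langle l\rangle^{\circ}$, $U_i$ has type $W^{(1)}_{\upsigma(i,j)}$ and $V^i_j$ has type $W^{(2)}_{\upsigma(i,j)}$; its morphisms are componentwise same-type inclusions, now allowing several groups of equal-type cubes to be merged along an active surjection of the outer index sets --- this last feature is the only structural novelty with respect to the discrete fiber of Lemma~\ref{lem:Second condition WeakAprroxofE}. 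I would then run Lurie--Seifert--van Kampen (\cite[Theorem~A.3.1]{lurie_higher_nodate}) through $\upchi\colon\Prect\to\open\big(\Conf_{W_\bullet}(U\times V)\big)$, $(U_\bullet,\{V^\bullet_\bullet\})\mapsto\prod_{(i,j)}\Conf_{\{(i,j)\}}\big((U_i\times V^i_j)_{[W_{\upsigma(i,j)}]}\big)$, whose values are visibly contractible, so that everything reduces to the weak contractibility of $\Prect_{\underline x}:=\{(U_\bullet,\{V^\bullet_\bullet\})\in\Prect:\underline x\in\upchi(U_\bullet,\{V^\bullet_\bullet\})\}$ for each point $\underline x=\{x_k\}\in\Conf_{W_\bullet}(U\times V)$.

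The main obstacle, and the one place where the wreath setting genuinely differs from Lemma~\ref{lem:Second condition WeakAprroxofE}, is that inside $\Prect_{\underline x}$ the combinatorial grouping ($n$, and the $l_i$) is not fixed, so one cannot directly intersect sub-cubes to build lower bounds. I would get around this by singling out, for each $\underline x$, the \emph{grouping by first coordinate}: writing $x_k=(a_k,b_k)\in U\times V$ (so $a_k\in U_{[W^{(1)}_k]}$ and $b_k\in V_{[W^{(2)}_k]}$, using $(U\times V)_{[W_k]}=U_{[W^{(1)}_k]}\times V_{[W^{(2)}_k]}$), put one block $B_a=\{k:a_k=a\}$ per value $a$, take a small sub-cube around $a$ of type $W^{(1)}_k$ (which does not depend on $k\in B_a$), shrunk so that the finitely many such sub-cubes are pairwise disjoint (possible by Remark~\ref{remark: cubes are nice} and Hausdorffness), and, for $k\in B_a$, small pairwise-disjoint sub-cubes of type $W^{(2)}_k$ around the $b_k$ (which are pairwise distinct, since the $x_k$ with $k\in B_a$ are). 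Two observations then close the argument: (i) this first-coordinate grouping is the finest one occurring in $\Prect_{\underline x}$ --- points sharing a first coordinate $a$ must lie in a common group, as $a$ would otherwise belong to two disjoint $U$-cubes --- so every object of $\Prect_{\underline x}$ admits a morphism from a first-coordinate object, obtained by subdividing each $U_i$ into pairwise-disjoint same-type sub-cubes around the distinct first coordinates it contains (again Remark~\ref{remark: cubes are nice}) and keeping the $V^i_\bullet$'s; and (ii) any two first-coordinate objects have a common lower bound, namely the componentwise intersection of their sub-cubes, which preserves types and the enclosing condition exactly as in Lemma~\ref{lem:Second condition WeakAprroxofE}. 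Hence $\Prect_{\underline x}$ is cofiltered, so $|\Prect_{\underline x}|\simeq\ast$, and Lurie--Seifert--van Kampen yields $|\Prect|\simeq\hocolim_{\Prect}\upchi\simeq\Conf_{W_\bullet}(U\times V)$ compatibly with the comparison map, establishing the second condition and completing the proof that $\uppsi$ is a weak approximation.
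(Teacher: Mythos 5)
Your overall strategy (verify the two conditions of Definition~\ref{defn: weak and strong approximation} via Lemma~\ref{lem:strict pullback}, Lurie--Seifert--van Kampen, and a ``group by first coordinate'' cofilteredness argument) is the paper's strategy, and your geometric core for the cofilteredness step is essentially the argument of Lemma~\ref{lem: Second condition WreathProdApprox}. But there are two genuine gaps. First, in the localization condition you misidentify the homotopy fiber. The objects of $\uppsi^{-1}(\E_{p+r,q+t})^{\inert}_{\langle 1\rangle}$ are \emph{not} just pairs $(U,\{V\})$: they are padded tuples $\big((U_1,\dots,U_n),\{\},\dots,\{V\},\dots,\{\}\big)$ with arbitrarily many outer cubes carrying empty inner lists, since $\uppsi$ only counts the products $U_i\times V^i_j$ when projecting to $\Fin_*$. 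Consequently the fiber over a type $Z$ contains all such padded objects, and the pair of full cubes $(\square^{\,p,Q},\{\square^{\,r,T}\})$ is not terminal in it (a padded object admits in general several maps to it, depending on whether the padding indices are sent to the basepoint or into $\square^{\,p,Q}$). Your terminal-object argument therefore does not apply to the actual fiber; one needs an extra step, e.g.\ the zig-zag of natural transformations through the projection $\uptheta\colon\big(U_{\bullet},\{V\}^{(\ell)}\big)\mapsto (U_\ell,\{V\})$ used in Lemma~\ref{lem: First condition WreathProdApprox}, or a cofinality argument for the inclusion of the unpadded objects.

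Second, and more seriously, your reduction of the second condition to the basics $(U,\{V\})$ via Remark~\ref{rem:WeakApproxReduction} is not justified. That remark is stated (and its proof, via Harpaz, works) for maps of $\infty$-operads, where the active slice over an object decomposes as the product of active slices over its inert components. For the wreath product this decomposition fails over non-basic objects: an active map into $(U,\{V_1,\dots,V_l\})$ may let a single outer cube carry inner cubes hitting several different $V_j$'s, and such operations are not recorded by the product of the slices over the basics $(U,\{V_j\})$ --- this extra flexibility is exactly the feature of the wreath product your own cofilteredness argument relies on. Since Definition~\ref{defn: weak and strong approximation} requires contractible fibers over \emph{every} object, you must treat general objects; the paper does so, taking $(U,\{V_1,\dots,V_l\})$ with $T=\bigsqcup_j U\times V_j$ as the representative case (hence target $\Conf_{R_\bullet}(T)$ over a disjoint union of products, with points first separated according to the component $U\times V_j$ in which they lie) and noting that the passage to arbitrary $(U_\bullet,\{V^\bullet_\bullet\})$ is analogous. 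Your argument covers only the case $l=1$; to repair it you should either carry out the general case as in Lemma~\ref{lem: Second condition WreathProdApprox}, or supply an actual reduction argument (which cannot simply cite Remark~\ref{rem:WeakApproxReduction}).
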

\begin{proof}  Recall that for an object \( (U_{\bullet}, \{V^{\bullet}_{\bullet}\}) \) in the wreath product $\Cube{p}{q} \wr \Cube{r}{t}$, where \(U_{i} \times V_{j}^{i}\) has type \(Z^{i}_{j}\subseteq \underline{q+t}\), the map \(\uppsi\) sends it to
    \[
    \big( (U_1, \dots, U_n), \{V_1^{1}, \dots, V_{l_1}^{1}\}, \dots, \{V_1^{n}, \dots, V_{l_n}^{n}\} \big) 
    \mapsto 
    \{ Z_{1}^{1}, \dots, Z_{l_1}^{1} ,\dots, Z_{1}^{n}, \dots, Z_{l_n}^{n} \}.
    \]
 That being said, the first condition to be a weak approximation is proven in Lemma \ref{lem: First condition WreathProdApprox}, while the second is checked in Lemma \ref{lem: Second condition WreathProdApprox}.
\end{proof}

\begin{lemma}\label{lem: First condition WreathProdApprox}
    The map
    \begin{equation}
    \label{eq:localization2}
    \mathsf{C}\coloneqq \uppsi^{-1} \left( \E_{p+r,q+t} \right)_{\langle 1 \rangle}^{\inert}\xrightarrow{\;\;\quad\;\;}  \left( \E_{p+r,q+t} \right)_{\langle 1 \rangle}^{\inert}
     \end{equation}
    is an $\infty$-localization.
\end{lemma}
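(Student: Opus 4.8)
The plan is to mirror the proof of Lemma \ref{lem:First condition WeakAprroxofE}, with the single building block replaced by the wreath product. First I would note that $(\E_{p+r,q+t})^{\inert}_{\langle 1 \rangle}$ is an $\infty$-groupoid, so that by Proposition \ref{prop:localization} it suffices to show that the homotopy fibers of the map (\ref{eq:localization2}) over each object $Z \in (\E_{p+r,q+t})^{\inert}_{\langle 1 \rangle}$ are contractible. Recall that $Z$ is a subset of $\underline{q+t}$, and that, using the canonical splitting $\underline{q+t} = \underline{q}\sqcup\underline{t}$, it decomposes as $Z = Z_1 \sqcup (q+Z_2)$ with $Z_1\subseteq \underline{q}$ and $Z_2\subseteq\underline{t}$, so that $\square^{\,p+r,Z}\cong\square^{\,p,Z_1}\times\square^{\,r,Z_2}$.

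Next I would invoke Lemma \ref{lem:strict pullback} — whose hypotheses are met exactly as in Lemma \ref{lem:First condition WeakAprroxofE}, since both $\uppsi'$ and $\upphi$ are suitably nice — to identify this homotopy fiber with an ordinary $1$-category. Concretely, an object of $\Cube{p}{q}\wr\Cube{r}{t}$ lying over $\langle 1\rangle$ is a basic $(U,\{V\})$, and a morphism between basics whose image under $\uppsi$ is inert in $\E_{p+r,q+t}$ necessarily covers $\id_{\langle 1\rangle}$, hence consists of inclusions $U\subseteq U'$ and $V\subseteq V'$; by Remark \ref{rmks:WeakStructure E and Cube} such a morphism lands among the inert maps of $\E_{p+r,q+t}$ exactly when $U\times V$ and $U'\times V'$ have the same type. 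Using Remark \ref{rem:types of components of the product} to split this last condition into ``$U \overset{\sim}{\subseteq} U'$'' and ``$V \overset{\sim}{\subseteq} V'$'', the homotopy fiber over $Z$ is thus equivalent to the $1$-category of pairs $(U,V)$ with $U\subseteq\square^{\,p,q}$ a sub-cube of type $Z_1$ and $V\subseteq\square^{\,r,t}$ a sub-cube of type $Z_2$, with morphisms the pairs of type-preserving inclusions. This is precisely the product of the homotopy fiber from Lemma \ref{lem:First condition WeakAprroxofE} for $\Cube{p}{q}$ over $Z_1$ with the corresponding one for $\Cube{r}{t}$ over $Z_2$.

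Finally, each of these two factor categories has a terminal object — the full sub-cube of the prescribed type, $\square^{\,p,Z_1}$ resp.\ $\square^{\,r,Z_2}$ — so their product has the terminal object $(\square^{\,p,Z_1},\square^{\,r,Z_2})$ and is therefore contractible. Hence all homotopy fibers of (\ref{eq:localization2}) are contractible and the map is an $\infty$-localization. I expect the main point requiring care to be the second step: one must verify that restricting $\uppsi$ to the objects over $\langle 1\rangle$ and passing to the homotopy fiber over $Z$ genuinely strictifies, via Lemma \ref{lem:strict pullback}, to the product $1$-category described above (in particular, that no extra automorphisms survive the strictification), and that Remark \ref{rem:types of components of the product} applies in exactly the form needed to decouple the type constraint on $U\times V$. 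Everything else is a direct transcription of the argument already given for $\upphi$.
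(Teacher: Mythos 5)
There is a genuine gap, and it lies exactly at the step you flagged as "requiring care": the identification of the objects of $\mathsf{C}$. The preimage $\uppsi^{-1}\big(\E_{p+r,q+t}\big)^{\inert}_{\langle 1\rangle}$ consists of \emph{all} objects of $\Cube{p}{q}\wr\Cube{r}{t}$ whose image under $\uppsi$ lies over $\langle 1\rangle$, i.e.\ whose total number of product cubes $U_i\times V^i_j$ equals one. These are not only the basics $(U,\{V\})$: they are all tuples $\big(U_{\bullet},\{V\}^{(\ell)}\big)=\big((U_1,\dots,U_n),\{\},\dots,\{V\},\dots,\{\}\big)$ with an arbitrary list $U_{\bullet}$ of sub-cubes, where every index $i\neq \ell$ carries the empty list and therefore contributes nothing to the $\uppsi$-image. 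Consequently the homotopy fiber over $Z$ is not the product of the two fibers from Lemma \ref{lem:First condition WeakAprroxofE}; it is the full subcategory of $\mathsf{C}$ on all such padded tuples with $U_\ell\times V$ of type $Z$.

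With the correct fiber in hand, your terminal-object argument breaks down: the pair of full cubes $(\square^{\,p,Z_1},\{\square^{\,r,Z_2}\})$ is \emph{not} terminal there. From a padded object $\big(U_{\bullet},\{V\}^{(\ell)}\big)$ with $n>1$ there are in general several morphisms to it, one for each admissible choice of $\upalpha\colon\langle n\rangle\to\langle 1\rangle$ (each extra index may be sent to the basepoint, or to $1$ whenever the resulting cubes are disjoint and contained in $\square^{\,p,Z_1}$); these morphisms have the same $\uppsi$-image but are distinct in the wreath product, hence distinct in $\mathsf{C}$, which is a subcategory and not a quotient. Your identification of the basics-only part of the fiber, with its terminal object, is the right core picture, but one still has to contract the padded objects onto it. The paper does this by exhibiting a zig-zag of natural transformations $\id\Leftarrow\uptheta\Rightarrow\const$, where $\uptheta$ sends $\big(U_{\bullet},\{V\}^{(\ell)}\big)$ to $(U_\ell,\{V\})$ and $\const$ is the constant functor at $(\square^{\,p,Z_1},\{\square^{\,r,Z_2}\})$; this makes the classifying space of the fiber contractible even though no terminal object exists. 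The remaining ingredients of your argument (the reduction via Proposition \ref{prop:localization}, the use of Lemma \ref{lem:strict pullback}, the splitting $Z=Z_1\sqcup(q+Z_2)$, and Remark \ref{rem:types of components of the product} to decouple the type condition) agree with the paper and are fine.
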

\begin{proof}
    As in the proof of Lemma \ref{lem:First condition WeakAprroxofE}, we have to show that \(( \E_{p+r,q+t} )_{\langle 1 \rangle}^{\inert}\) is a classifying space of \(\mathsf{C}\). Notice that the category \(\mathsf{C}\) is the wide subcategory of \((\Cube{p}{q} \wr \Cube{r}{t})_{\langle 1 \rangle}\) whose morphisms are of the form
    \[ 
    \begin{tikzcd}
    \label{fig:maps in the wreath prod}
  \big(U_{\bullet},\{V\}^{(\ell)}\big) = \big((U_{1}, \dots, U_{n}), \{\}, \dots, \underbrace{\{V\}}_{\text{position } \ell}, \dots, \{\}\big) \ar[d,"\upalpha \quad"'," \quad U_{\ell} \, \overset{\sim}{\subseteq} \, U'_{k} \text{ and } V \, \overset{\sim}{\subseteq} \, V'"] \\[6mm]
   \big(U'_{\bullet},\{V'\}^{(k)}\big) = \big((U'_{1}, \dots, U'_{m}), \{\}, \dots, \underbrace{\{V'\}}_{\text{position } k} , \dots, \{\}\big)
    \end{tikzcd},
    \]
   i.e.\ $\upalpha\colon \langle n\rangle\to \langle m\rangle $ sends $\ell$ to $k$, and  $U_{\ell}\overset{\sim}{\subseteq}U'_{k}$, $V\overset{\sim}{\subseteq} V'$ are inclusions of the same type.

    Following Proposition \ref{prop:localization}, we now show that the induced functor (\ref{eq:localization2}) has contractible fibers. Let \(Z\) be an object of \(( \E_{p+r,q+t} )_{\langle 1 \rangle}^{\inert}\). Following Lemma \ref{lem:strict pullback}, the homotopy fiber over \(Z\) is the full subcategory of \(\mathsf{C}\), whose objects are given by lists $\big(U_{\bullet},\{V\}^{(\ell)}\big)$ as above, with \(U_\ell \times V\) of type \(Z\). To show contractibility, it suffices to produce a zig-zag of natural transformations between the identity functor and a constant functor:
    \[
    \begin{tikzcd}
        \mathsf{C}_{\vert Z}
         \arrow[rr, bend left=65, "\id"{name=F}]
         \arrow[rr, "\uptheta"{inner sep=0,fill=white,anchor=center,name=G}]
         \arrow[rr, bend right=65, "\const"{name=H, swap}]
         \arrow[from=G.north-|F,to=F,Rightarrow,shorten=5pt,"\,\upalpha"'] 
         \arrow[from=G,to=H.north-|G,Rightarrow,shorten=5pt,"\,\upbeta"] &&
        \mathsf{C}_{\vert Z}     
    \end{tikzcd}.
    \]
    We define the functor \(\uptheta\) on objects by
    \(
      \big(U_{\bullet},\{V\}^{(\ell)}\big) \mapsto (U_\ell,\{V\}).
    \)
    The constant functor assigns to any object the fixed pair \((\square^{\,p,Q},\{\square^{\,r,T}\})\), where $Q\subseteq \underline{q}$, $T\subseteq  \underline{t}$ are the unique types such that \(\square^{\,p,Q} \times \square^{\,r,T}\), via $\square^{\,p,q}\times\square^{\,r,t}\cong \square^{\,p+r,q+t}$, is of type \(Z\subseteq \underline{q+t}\). 
   The natural transformation \(\upalpha\) is defined, for each object, by the morphism
   \[
   \upalpha_{(U_\bullet,V^{(\ell)})}: (U_\ell,\{V\}) \xrightarrow[]{\quad} \big(U_{\bullet},\{V\}^{(\ell)}\big),
   \] 
   induced by $\langle 1\rangle\to \langle n\rangle$, $1\mapsto \ell$,  and the identity maps on \(U_\ell\) and \(V\).
   Similarly, the transformation \(\upbeta\) is given by the morphisms
   \[
   \upbeta_{(U_\bullet,V^{(\ell)})}: (U_\ell,\{V\}) \xrightarrow[]{\quad} (\square^{\,p,Q},\{\square^{\,r,T}\}),
   \] 
   induced by the inclusions \(U_\ell \overset{\sim}{\subseteq} \square^{\,p,Q} \) and \(V \overset{\sim}{\subseteq} \square^{\,r,T}. \)
\end{proof}
\begin{lemma}\label{lem: Second condition WreathProdApprox} Let \((U_{\bullet}, \{V^{\bullet}_{\bullet}\})\) be an object of the wreath product such that, for \(i \in \langle n \rangle^{\circ}\) and \(j \in \langle l_i \rangle^{\circ}\), the product \(U_i \times V^{i}_{j}\) has type \(\square^{\,p+r, Z^{i}_{j}}\), where \(Z^{i}_{j} \subseteq \underline{q+t}\). Then, the homotopy fibers of the map
    \[
    \big( \Cube{p}{q} \wr \Cube{r}{t} \big)^{\act}_{/ (U_{\bullet}, \{V^{\bullet}_{\bullet}\})} \xrightarrow{\;\;\quad\;\;} \big(\Eactive_{p+r,q+t}\big)_{/ Z_{\bullet}^{\bullet}}
    \]
    are contractible.
\end{lemma}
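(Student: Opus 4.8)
The plan is to follow the argument of Lemma~\ref{lem:Second condition WeakAprroxofE} almost word for word, substituting the single cube $U$ with the wreath-product object $(U_\bullet,\{V^\bullet_\bullet\})$ and carrying the extra wreath bookkeeping along. First I would post-compose with the right fibration $(\Eactive_{p+r,q+t})_{/Z^\bullet_\bullet}\to\Eactive_{p+r,q+t}$ (Proposition~\ref{prop: Categorical triangle to analyze fibers}), reducing to the claim that the induced map on homotopy fibers over an arbitrary object $\{W_\bullet\}_{\langle l\rangle}\in\Eactive_{p+r,q+t}$ is a weak homotopy equivalence. Write $T:=\bigsqcup_{i}\bigsqcup_{j} U_i\times V^i_j$, seen as a subspace of $\big(\bigsqcup_i\langle l_i\rangle^{\circ}\big)\times\square^{\,p+r,q+t}$; since each $U_i\times V^i_j$ is a sub-cube of type $Z^i_j$, it is rectilinearly isomorphic, compatibly with strata, to the standard cube $\square^{\,p+r,Z^i_j}$, so $T$ may be identified with the disjoint union of these standard cubes.

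On the target side, the right-fibration property identifies the fiber over $\{W_\bullet\}_{\langle l\rangle}$ with $\mathbbst{E}_{p+r,q+t}\brbinom{\{W_\bullet\}}{Z^\bullet_\bullet}$, which by Remark~\ref{rem:Mapping spaces in E_pq act in a compact form} equals $\Rect\big(\bigsqcup_{k\in\langle l\rangle^{\circ}}\square^{\,p+r,W_k},T\big)$, hence by Lemma~\ref{lem:rect-conf} is weakly equivalent to $\Conf_{W_\bullet}(T)$ (and empty unless the types are compatible). On the source side, Lemma~\ref{lem:strict pullback} identifies the homotopy fiber with an ordinary category $\Prect$. Unwinding the definitions of the wreath product, of the active over-category, and of the type constraints --- the last controlled factorwise by Remark~\ref{rem:types of components of the product} --- an object of $\Prect$ is a \emph{wreath-compatible disjoint placement} realizing $W_\bullet$ inside $T$: a partition of $\langle l\rangle^{\circ}$ into blocks, a sub-cube $A_c$ inside the relevant $U_i$ attached to each block $c$ (shared by the whole block), and for each element of that block a sub-cube inside the corresponding $V^i_j$, subject to the requirements that the resulting product boxes be pairwise disjoint in $T$ and of the prescribed types; morphisms are, exactly as in the non-wreath case, the componentwise same-type inclusions (necessarily unique when they exist, since a morphism over $\{W_\bullet\}_{\langle l\rangle}$ is the identity on the underlying $\langle l\rangle^{\circ}$), which may coarsen the block partition. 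Then I would apply Lurie--Seifert--van~Kampen \cite[Theorem~A.3.1]{lurie_higher_nodate} to the functor $\upchi\colon \Prect\to\open(\Conf_{W_\bullet}(T))$ sending a placement to the open set of configurations whose $k$-th point lies in the relevant stratum of the $k$-th sub-box; each $\upchi(V_\bullet)$ is visibly contractible (a product of open strata of cubes). As in the cited lemma, it then remains to show that for every $\underline{x}\in\Conf_{W_\bullet}(T)$ the full subcategory $\Prect_{\underline{x}}\subseteq\Prect$ of placements containing $\underline{x}$ is weakly contractible, whereupon $|\Prect|\simeq\hocolim_{\Prect}\upchi\simeq\Conf_{W_\bullet}(T)$, compatibly with the target identification via evaluation at centers.

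The main obstacle is precisely the contractibility of $\Prect_{\underline{x}}$; this is the only point at which the wreath structure genuinely enters. In contrast with Lemma~\ref{lem:Second condition WeakAprroxofE}, the all-singletons placement need not be admissible: if two points $x_k,x_{k'}$ lie in slots of $T$ with the same $U$-index and have the same $\square^{\,p,q}$-projection, they \emph{must} occupy a common block, for otherwise their (shared) first-factor sub-cubes would be forced to be disjoint inside a single $U_i$ while both meeting that projection in the appropriate stratum. The right ``finest admissible'' placement for $\underline{x}$ therefore groups points exactly by this relation, attaches to each block a small sub-cube about the common $\square^{\,p,q}$-projection --- of the forced type, by Remarks~\ref{remark: cubes are nice} and~\ref{rem:types of components of the product} --- and to each point a small sub-cube about its $\square^{\,r,t}$-projection. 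Using that $\square^{\,p,q}$ and $\square^{\,r,t}$ are Hausdorff and that sub-cubes form a basis, one checks that such a placement exists, lies in $\Prect_{\underline{x}}$, and --- the crucial point --- that its block partition refines that of \emph{every} member of $\Prect_{\underline{x}}$, again because two first-factor sub-cubes inside a fixed $U_i$ that share a stratum point cannot be disjoint. Taking it small enough then yields a common lower bound for any pair of placements in $\Prect_{\underline{x}}$; since $\Prect_{\underline{x}}$ is a poset, this makes it cofiltered, hence contractible. Non-emptiness of $\Prect_{\underline{x}}$ is the special case of this same construction, as in the original proof.
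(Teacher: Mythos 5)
Your proposal is correct and follows essentially the same route as the paper's proof: reduce via the composite to $\Eactive_{p+r,q+t}$, identify the target fiber with $\Conf_{W_\bullet}(T)$ through Lemma \ref{lem:rect-conf}, identify the source fiber with a poset of wreath placements via Lemma \ref{lem:strict pullback}, and conclude by Lurie--Seifert--van Kampen after checking that $\Prect_{\underline{x}}$ is nonempty and cofiltered using exactly the observation that points with equal $\square^{\,p,q}$-projection must share a block (the paper merely simplifies notation to a single outer cube $(U,\{V_\bullet\})$, and builds the pairwise lower bound directly from intersections rather than through a ``finest admissible placement'', which is the same construction). One caveat: your unwinding of the fiber objects imposes only that the product boxes be pairwise disjoint in $T$, whereas the active wreath morphism $\upzeta$ forces the stronger factorwise condition (outer cubes attached to distinct blocks landing in the same $U_i$ are disjoint in $U_i$, and inner cubes within a block landing in the same $V^i_j$ are disjoint there); with only box-disjointness your key claim that equal projections force a common block would fail, but since your argument for that claim explicitly invokes disjointness of the first-factor cubes, you are in fact using the correct condition and the proof goes through as in the paper.
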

\begin{proof}
       To simplify the notational burden, let us consider that the object in the wreath product is of the form $(U,\{V_1,\dots, V_l\})$, where $U\times V_k$ is of type $Z_k$. The general case is analogous. Similarly to the proof of Lemma \ref{lem:Second condition WeakAprroxofE}, we apply Proposition \ref{prop: Categorical triangle to analyze fibers} to the following composition of maps:
    \[
      \begin{tikzcd}
          {\big(\Cube{p}{q} \wr \Cube{r}{t}\big)^{\act}_{/ (U, \{V_{\bullet}\})} } 
          \ar[rr] 
         &&  
         {\big(\Eactive_{p+r,q+t}\big)_{/ Z_{\bullet} } }
         \ar[rr] 
         &&  {\Eactive_{p+r,q+t}},
    \end{tikzcd}
    \]
    and we check that the induced map on homotopy fibers
    \[
    \begin{tikzcd}
        \big(\Cube{p}{q} \wr \Cube{r}{t}\big)^{\act}_{/ (U, \{V_{\bullet}\}),\, |  R_{\bullet}}  \ar[rr]
        &&
      \big(\Eactive_{p+r,q+t}\big)_{/ Z_{\bullet} ,\, |  R_{\bullet}} 
    \end{tikzcd}
    \]
    is a weak homotopy equivalence, for any object \(\{R_\bullet\}_{\langle l' \rangle}\) in \(\Eactive_{p+r,q+t}\). First, observe that the homotopy fiber on the right hand side is just the mapping space 
    \begin{align*}
        {\Eactive_{p+r,q+t}} \brbinom{{\{R_\bullet\}_ {\langle l' \rangle}}}{\{ Z_{\bullet} \} _{\langle l \rangle}} 
        & =
         \coprod_{\substack{\text{active maps} \\ \upalpha: \langle l' \rangle \to \langle l \rangle}} \quad \prod_{j\in \langle l \rangle^{\circ}} \Rect \big( \underset{i \in \upalpha^{-1}(j)}{\bigsqcup}\square^{\,p+r, R_i}, \square^{\,p+r, Z_j} \big)   \\[2mm] 
         & \cong 
          \Rect \big( \bigsqcup_{i\in\langle l' \rangle^{\circ}} \square^{\,p+r,R_i} , \bigsqcup_{j\in \langle l \rangle^{\circ}} U \times V_j \big)\\[2mm]
         & = \Rect \big( \bigsqcup_{i\in\langle l' \rangle^{\circ}} \square^{\,p+r,R_i} , T \big)\\[2mm]
         & \simeq \Conf_{R_{\bullet}}(T), 
    \end{align*} 
    where we assume \(R_i \subseteq Z_{\upalpha(i)}\) for all \(i \), we have set $T:=\bigsqcup_{j\in \langle l\rangle^{\circ}} U \times V_j$, and we have applied Lemma \ref{lem:rect-conf} for the last equivalence. As in the proof of Lemma \ref{lem:Second condition WeakAprroxofE}, there are cases where the mapping space is empty, but for those the proof follows by inspection. Hence, it suffices to show that
    \begin{align*}
        \bigm|  \big(\Cube{p}{q} \wr \Cube{r}{t}\big)^{\act}_{/ (U, \{V_{\bullet}\}),\, |  R_{\bullet}}   \bigm|
          \simeq 
        \Conf_{R_{\bullet}}(T)\;. 
    \end{align*}    
    The idea is to again apply Lurie--Seifert–-van Kampen theorem.

   By Lemma \ref{lem:strict pullback}, the homotopy fiber $\big(\Cube{p}{q} \wr \Cube{r}{t}\big)^{\act}_{/ (U, \{V_{\bullet}\}),\, |  R_{\bullet}}$ can be identified with a poset that we will denote $\Prect$ in the remainder of this proof. Its objects are given by tuples
    \[
   (U'_{\bullet},\{W^{\bullet}_{\bullet}\})_{\upzeta}\equiv \Big( \big((U'_1, \dots, U'_{n'}), \{W_1^1, \dots, W_{k_1}^1\}, \dots, \{W_1^{n'}, \dots, W_{k_{n'}}^{n'}\} \big), \upzeta \Big),
    \]
    where $(U'_{\bullet},\{W^{\bullet}_{\bullet}\})\in \Cube{p}{q} \wr\Cube{r}{t} $ and 
    \(
    \upzeta=\{\upzeta_k\}_{k\in \langle n'\rangle^{\circ}} : (U'_\bullet, \{W^\bullet_\bullet\}) \longrightarrow (U, \{V_\bullet\})
    \)
    is an active morphism in such a wreath product, specifying how the sub-cubes sit inside \((U, \{V_\bullet\})\), subject to the following condition: the collection of product cubes \(\{U'_i \times W_j^i\}_{i,j}\) has types matching the prescribed sequence \(\{R_\bullet\}_{\langle l' \rangle}\). More precisely, there exists a bijection
    \[
    \uprho : \langle l' \rangle^{\circ} \xrightarrow{\quad\simeq\quad} \coprod_{i\in \langle n'\rangle^{\circ}} \langle k_i \rangle^{\circ}
    \]
    such that each product \(U'_i \times W_j^i\) is of type \(R_{\uprho^{-1}(i,j)}\). Morphisms in $\Prect$ are given by 1-to-1 inclusions between sub-cubes of the same type. Thus, we define the functor
       $$
\begin{tikzcd}
    \upchi\colon &[-12mm] \Prect\ar[r] & \open\big(\Conf_{R_{\bullet}} (T)\big)\\[-8mm]
    & (U'_{\bullet},\{W^{\bullet}_{\bullet}\})_{\upzeta} \ar[r, mapsto] & \displaystyle{\prod_{k\in \langle l'\rangle^{\circ}} \Conf_{\{k\}}\big((U'_{i}\times W^i_j)_{[R_k]}\big)}
\end{tikzcd},
$$
where $\uprho(k)=(i,j)$. 

Given a point $\underline{x}=\{x_k\}_{k\in \langle l'\rangle^{\circ}}\in \Conf_{R_{\bullet}}(T)$, define the full subcategory
$$
 \Prect_{\underline{x}} \coloneqq \big\{ ( U'_{\bullet}, \{W^{\bullet}_{\bullet}\})_{\upzeta} \in \Prect :\;\; {\underline{x}} \in \upchi\left( U'_{\bullet},\{W^{\bullet}_{\bullet}\}\right)_{\upzeta} \big\} \subseteq \Prect.
$$
Observe that the functor $\upchi$ is well defined since $\upzeta$ yields inclusions of open subsets 
$$
\bigsqcup_{\substack{i\in \langle n'\rangle^{\circ} \\ j\in \upzeta_i^{-1}(z)}}(U'_i\times W^i_j)_{[R_k]}\subseteq (U\times V_{z})_{[R_k]}\subseteq T_{[R_k]}
$$ 
for $k=\uprho^{-1}(i,j)$. Since it is clear that each value of the functor $\upchi$ is contractible, i.e.\ $\upchi(U'_{\bullet},\{W^{\bullet}_{\bullet}\})_{\upzeta}\simeq * $ for any object in $\Prect$, it remains to check that $\vert \Prect_{\underline{x}}\vert$ is also contractible to invoke Lurie--Seifert--van Kampen theorem and conclude the proof using
$$
      \Big|  \big(\Cube{p}{q} \wr \Cube{r}{t}\big)^{\act}_{/ (U, \{V_{\bullet}\}),\, |  R_{\bullet}}   \Big| \simeq \vert \Prect\vert \simeq \underset{(U'_{\bullet},\{W^{\bullet}_{\bullet}\})_{\upzeta}\in\Prect}{\hocolim}\;\upchi(U'_{\bullet},\{W^{\bullet}_{\bullet}\})_{\upzeta}\simeq \Conf_{R_{\bullet}}(T).
$$

   To do so, we will prove that the poset $\Prect_{\underline{x}}$ is cofiltered. Let us emphasize that the additional flexibility of the wreath product is fundamental; there are configurations $\underline{x}=\{x_k\}_{k\in \langle l'\rangle^{\circ}}\in\Conf_{R_{\bullet}}(T)$ that cannot be covered by operations coming from the product $\Cube{p}{q}\times \Cube{r}{t} $ alone, but can be realized within \(\Cube{p}{q} \wr \Cube{r}{t}\). We begin by verifying that \( \Prect_{\underline{x}} \) is non-empty. Without loss of generality, we can assume that there exists $z\in \langle l\rangle^{\circ}$ such that $x_k\in U\times V_z$ for all $k$ (otherwise, we separate the points $\{x_k\}_k$ according to the subspace $U\times V_{j}$ with  $j\in \langle l\rangle^{\circ}$ where they lie and use the following procedure for those living in the same subspace). There are two cases:
   \begin{itemize}
       \item If $x_{k}$ and $x_{k'}$ have different projections along $U\times V_z\to U$, then we can choose disjoint sub-cubes $D_{k},D_{k'}\subset U$ such that $x_k\in D_k\times V_z$ (resp.\ for $k'$). 
       \item If $x_{k}$ and $x_{k'}$ have equal projection along $U\times V_z\to U$, we can choose disjoint sub-cubes $C_{k},C_{k'}\subset V_z$ such that $x_k\in U\times C_k$ (resp.\ for $k'$). 
   \end{itemize}
   Combining these two, we can group the $x_k$'s in collections of points with the same projection along $\uppi_{U}\colon U\times V_z\to U$ (producing a family of disjoint sub-cubes $D_k\subseteq U$)\footnote{By convention, we just set $D_{k}=D_{k'}$ if $x_{k}$ and $x_{k'}$ have equal projection along $U\times V_z\to U$.}, and then separate points in the same subcollection by means of the second dot above (obtaining disjoint sub-cubes $C_k\subseteq V_z$). To illustrate the process, let us consider $\{x_1,x_2,x_3\}$ with $\uppi_{U}(x_1)=\uppi_{U}(x_3)\neq \uppi_{U}(x_2)$. In this case, we find disjoint sub-cubes $D_1=D_3$ and $D_2$ of $ U$, such that $\{x_1,x_3\}\subset D_1\times V_z$ and $\{x_2\}\subset D_2\times V_z$, and then refine by choosing disjoint sub-cubes $C_1,C_3\subset V_z$ satisfying $x_i\in D_i\times C_i$ for $i=1,3$. Thus, the resulting object in the wreath product covering $\{x_1,x_2,x_3\}$ takes the form
   $$
   \big((D_1=D_3,D_2),\{C_1,C_3\},\{C_2=V_z\}\big). 
   $$

    \begin{figure}[htp]
        \centering
         \begin{tikzpicture}[scale=0.6]
    \begin{scope}[yshift=6cm]
        \coordinate (A1) at (-7,0,0);
        \coordinate (B1) at (-3,0,0);
        \coordinate (C1) at (-3,4,0);
        \coordinate (D1) at (-7,4,0);
        \draw[dashed,thick] (D1) -- (A1) -- (B1) -- (C1);
        \draw[line width=0.7mm,blue!50!black] (C1) -- (D1);
        \fill[blue!10] (D1) -- (A1) -- (B1) -- (C1) -- (D1);
        \coordinate (E) at (-4,4,0);
        \coordinate (F) at (-5,4,0);
        \coordinate (G1) at (-4,3,0);
        \coordinate (H1) at (-5,3,0);
        \coordinate (E2) at (-4,2.5,0);
        \coordinate (F2) at (-5,2.5,0);
        \coordinate (G2) at (-4,0.5,0);
        \coordinate (H2) at (-5,0.5,0);
        \node[dashed,thick,green!40!blue!45!black] at (-1,2,0) {$U \times V_z$};
         \node[dashed,thick,green!40!blue!45!black] at (-15,0.5,0) {$V_z$};
        \draw[dashed,thick,green!50!black] (F) -- (E) -- (G1) -- (H1) -- (F);
        \fill[dashed,thick,green!40!blue!30] (F) -- (E) -- (G1) -- (H1) -- (F);
        \draw[dashed,thick,green!50!black] (F2) -- (E2) -- (G2) -- (H2) -- (F2);
        \fill[dashed,thick,green!40!blue!30] (F2) -- (E2) -- (G2) -- (H2) -- (F2);
        \node[circle, fill=black, inner sep=2pt] at (-4.5,4,0) {};
        \node[dashed,thick,green!40!blue!45!black] at (-4.5,5,0.55) {$x_3$};
        \node[circle, fill=black, inner sep=2pt] at (-4.5,1.5,0) {};
        \node[dashed,thick,green!40!blue!45!black] at (-4.3,1,0.55) {$x_1$};
        \coordinate (D5) at (-16,4,0);       
        \coordinate (A5) at (-16,0,0);
        \draw[line width=0.7mm,blue!50!black] (A5) -- (D5);
        \draw[line width=0.7mm,dashed,orange!80!black] (-18.5,4,0) -- (-18.5,3,0);
         \draw[line width=0.2mm,dashed,orange!80!black] (-18.5,4,0) -- (-16,4,0);
          \draw[line width=0.2mm,dashed,orange!80!black] (-16,3,0) -- (-18.5,3,0);
        \node[thick,orange!80!black] at (-19.5,3.5,0) {$C_3$};
        \draw[line width=0.7mm,dashed,orange!80!black] (-18.5,2.5,0) -- (-18.5,0.5,0);
        \draw[line width=0.2mm,dashed,orange!80!black] (-18.5,2.5,0) -- (-16,2.5,0);
        \draw[line width=0.2mm,dashed,orange!80!black] (-18.5,0.5,0) -- (-16,0.5,0);
        \node[thick,orange!80!black] at (-19.5,1.5,0)  {$C_1$};
        \coordinate (E3) at (-5.5,3,0);
        \coordinate (F3) at (-6.5,3,0);
        \coordinate (G13) at (-5.5,0.5,0);
        \coordinate (H13) at (-6.5,0.5,0);
        \draw[dashed,thick,green!50!black] (F3) -- (E3) -- (G13) -- (H13) -- (F3);
        \fill[dashed,thick,green!40!blue!30] (F3) -- (E3) -- (G13) -- (H13) -- (F3);
        \node[circle, fill=black, inner sep=2pt] at (-6,2,0) {};
        \node[dashed,thick,green!40!blue!45!black] at (-5.7,1.4,0.55) {$x_2$}; 
    
        \draw[line width=0.7mm,dashed,orange!80!black] (-17,0.5,0) -- (-17,3,0);
        \draw[line width=0.2mm,dashed,orange!80!black] (-17,0.5,0) -- (-16,0.5,0);
        \draw[line width=0.2mm,dashed,orange!80!black] (-16,3,0) -- (-17,3,0);
        \node[thick,orange!80!black] at (-17.8,1.75,0) {$C_2$};
        \draw[->] (-8,2,0) -- (-14,2,0);
        \node at (-11,2.5,0) {$\uppi_{V_z}$};
    \end{scope}

    \begin{scope}[yshift=-0.8cm,xshift=-1.5cm]
        \draw[->] (-5,4.5,-4) -- (-5,2,-4);
        \node at (-4.25,3.25,-4) {$\uppi_U$};
        \coordinate (C1) at (-3,0,-4);
        \coordinate (D1) at (-7,0,-4);
        \draw[line width=0.7mm,blue!50!black] (C1) -- (D1);
        \node[dashed,thick,green!40!blue!45!black] at (-0.5,0,-4) {$U$};
        \draw[line width=0.7mm,orange!80!black] (-5.5,0,-4) -- (-6.5,0,-4);
        \node[thick,orange!80!black] at (-6,0.75,-4) {$D_2$};
        \node[thick,orange!80!black] at (-5.5,0,-4) {)};
        \node[thick,orange!80!black] at (-6.5,0,-4) {(};
        \draw[line width=0.7mm,orange!80!black] (-4,0,-4) -- (-5,0,-4);
        \node[thick,orange!80!black] at (-4.5,0.75,-4) {$D_1$}; \node[thick,orange!80!black] at (-4,0,-4) {)};
        \node[thick,orange!80!black] at (-5,0,-4) {(};
    \end{scope}
    \end{tikzpicture}
        \caption{An object in \(\Cube{p}{q} \wr \Cube{r}{t}\) covering a configuration $\{x_1,x_2,x_3\}$ with $\uppi_U(x_1)=\uppi_U(x_3)\neq \uppi_U(x_2)$.}
        \label{fig: figure 3}
    \end{figure}


    We have shown that $\Prect_{\underline{x}}$ is non-empty. Let us now argue that, given two objects $(U'_{\bullet},\{W^{\bullet}_{\bullet}\})_{\upzeta}$ and $('U'_{\bullet},\{'W^{\bullet}_{\bullet}\})_{\upzeta'} $ in $\Prect_{\underline{x}}$, there is a third $(''U'_{\bullet},\{''W^{\bullet}_{\bullet}\})_{\upzeta''}\in \Prect_{\underline{x}}$ equipped with maps $(U'_{\bullet},\{W^{\bullet}_{\bullet}\})_{\upzeta}\leftarrow (''U'_{\bullet},\{''W^{\bullet}_{\bullet}\})_{\upzeta''}\rightarrow ('U'_{\bullet},\{'W^{\bullet}_{\bullet}\})_{\upzeta'}$. Observe that for each \(k \in \langle l' \rangle^{\circ}\), the point \(x_k\) lies in the intersection of deepest strata
    $$
    x_k \in \left(U'_i \times W^i_j\right)_{[R_k]} \cap \left('U'_{i'} \times {}'W^{i'}_{j'}\right)_{[R_k]},
    $$
    with \(\uprho^{-1}(i,j)=k={\uprho'}\!\,^{-1}(i',j')\). Then, there exists a sub-cube $''U'_i\times {}''W^{i}_{j}$ of type $\square^{\,p+r,R_k}$ lying in the intersection of the given cubes that contains \(x_k\) (see Remark \ref{remark: cubes are nice}). By simply ensuring that the first factor in the product decomposition of the sub-cube is the same for points $x_k$ in $\underline{x}$ with the same projection along $U\times V_z\to U$, we obtain an object in $\Prect_{\underline{x}}$.

\end{proof}

\begin{proof}[Proof of Theorem \ref{thm:SwissCheeseAdditivity}]
   Consider the following commutative diagram
    \[
  \begin{tikzcd} 
         \Mon_{\E_{p+r,q+t}}(\Spc) \ar[rr] \ar[d,"\wr"'] &&  \Mon_{\E_{p,q} \,\wr\, \E_{r,t}} (\Spc) \ar[rr,"\sim", "\text{Theorem \ref{thm:wreathprod}}" '] && \Mon_{\E_{p,q} \times \E_{r,t}} (\Spc) \ar[d,"\wr"]  \\
        \Mon_{\Cube{p}{q}\,\wr\, \Cube{r}{t}}^{\mathsf{lc}}(\Spc) \ar[rrrr] \ar[d,hook] &&&& \Mon_{\Cube{p}{q} \times \Cube{r}{t}}^{\mathsf{lc}} (\Spc) \ar[d,hook] \\
         \Mon_{\Cube{p}{q}\,\wr\, \Cube{r}{t}}(\Spc) \ar[rrrr,"\sim", "\text{Theorem \ref{thm:wreathprod}}" '] &&&& \Mon_{\Cube{p}{q} \times \Cube{r}{t}} (\Spc) \\
    \end{tikzcd}.
    \]
    The vertical arrows at the top are equivalences by Theorems \ref{thm:WeakApproxofE} and \ref{thm:WreathProdApprox}, and by Lemmas \ref{rmk:ProductOfWeakApprox} and \ref{lem:Harpaz weak approx}. It thus suffices to verify that the middle horizontal arrow is an equivalence, as this will imply that the top horizontal arrow
     \[
     \Mon_{\E_{p+r,q+t}}(\Spc) \xrightarrow[]{\quad\sim\quad} \Mon_{\E_{p,q}\,\wr\, \E_{r,t}} (\Spc),
    \]
    is an equivalence as well, thereby concluding the proof.
    
    To establish that the middle horizontal arrow is an equivalence, we show that the bottom horizontal equivalence restricts to an equivalence between locally constant monoids. It is straightforward to check that every locally constant \(\Cube{p}{q} \wr \Cube{r}{t}\)-monoid is sent to a locally constant \(\Cube{p}{q} \times \Cube{r}{t}\)-monoid. Finally, we verify that every locally constant \(\Cube{p}{q} \times \Cube{r}{t}\)-monoid lies in the essential image of the functor \[
     \Mon_{\Cube{p}{q}\, \wr\, \Cube{r}{t}}^{\mathsf{lc}} (\Spc)\xlongrightarrow[]{\quad\phantom{\sim}\quad} \Mon_{\Cube{p}{q}\times\Cube{r}{t}} (\Spc).
    \]
    By Remark~\ref{rmk:LocConstMonoid}, every locally constant \(\Cube{p}{q} \times \Cube{r}{t}\)-monoid sends every element of the collection 
    \[
    \left\{ (U,V) \to (U',V') \;\middle|\; U \overset{\sim}{\subseteq} U' \text{ and } V \overset{\sim}{\subseteq} V' \right\}
    \]
    to equivalences. 
    In light of the discussion at the beginning of the proof of Lemma \ref{lem: First condition WreathProdApprox}, it remains to verify that every monoid \( A : \Cube{p}{q} \wr \Cube{r}{t} \xrightarrow[]{} \Spc\), which restricts to a locally constant \(\Cube{p}{q} \times \Cube{r}{t}\)-monoid, also maps any morphism of the form
$$
\begin{tikzcd}
  \big(U_{\bullet},\{V\}^{(\ell)}\big) = \big((U_{1}, \dots, U_{n}), \{\}, \dots, \underbrace{\{V\}}_{\text{position } \ell}, \dots, \{\}\big) \ar[d,"\upalpha \quad"'," \quad U_{\ell} \, \overset{\sim}{\subseteq} \, U'_{k} \text{ and } V \, \overset{\sim}{\subseteq} \, V'"] \\[6mm]
   \big(U'_{\bullet},\{V'\}^{(k)}\big) = \big((U'_{1}, \dots, U'_{m}), \{\}, \dots, \underbrace{\{V'\}}_{\text{position } k} , \dots, \{\}\big)
    \end{tikzcd},
$$
    to an equivalence. Just observe that the map \(A ( \upalpha )\) sits in the commutative diagram 
     \[ 
    \begin{tikzcd}
     A \big(U_{\bullet},\{V\}^{(\ell)}\big) 
    \ar[d,"A (\upalpha) \quad"']  \ar[r,"\sim"] &  
    A (U_{\ell},\{V\})
    \ar[d,"\wr"] \\
    A \big(U'_{\bullet},\{V'\}^{(k)}\big) \ar[r,"\sim"]  & 
    A (U'_{k},\{V'\})
    \end{tikzcd}.
    \]     
    The vertical map on the right is an equivalence by the assumption that the monoid \(A\) is locally constant upon restriction. The top and bottom horizontal maps are equivalences as a consequence of \(A\) being a monoid.  
\end{proof}

\section{Additivity for constructible factorization algebras}\label{sect: Additivity for cbl Fact}

\paragraph{Yet another localization theorem.}
Let us demonstrate that $\mathsf{Bsc}_X\to \gE_X$ exhibits an \(\infty\)-localization of \(\infty\)-operads. The ideas are adaptations of the ones presented in \cite{harpaz_little_nodate} to the corner situation together with, hopefully, simplified arguments at certain points.

First, an essentially formal result that will be useful in the proof of Proposition \ref{prop: LocalizationTheorem via WeakApprox}.
\begin{lemma}\label{lem:Simplifying slices for operads} Let $X$ be an $n$-manifold with corners and $u\colon \square^{\,p,q}\hookrightarrow X$ be an open embedding. Then, there is an identification of operads $(\cgE)_X = \gE_X$ (see Remark \ref{rk:Comparing the two little (p,q)-operads}) and  equivalences of \(\infty\)-categories
$$
\big(\gE_X^{\otimes,\act}\big)_{/(\square^{\,p,q},u)} \simeq \big(\cgE^{\otimes,\act}\big)_{/\square^{\,p,q}}\simeq \big(\gE_{\square^{\,p,q}}^{\otimes,\act}\big)_{/(\square^{\,p,q},\id)}\;.
$$    
\end{lemma}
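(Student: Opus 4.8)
The plan is to treat the two halves of the statement separately: first the strict identification $(\cgE)_X=\gE_X$, and then the two equivalences of active-slice $\infty$-categories, the second of which I will deduce as a special case of the first.

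\textbf{The identification $(\cgE)_X=\gE_X$.} Here I would simply unwind the operadic slice construction of Remark~\ref{rk:Comparing the two little (p,q)-operads}: writing out the multimorphism spaces of the homotopy pullback $\cgE^{\otimes}\times^{\mathsf{h}}_{\cMfld^{\amalg}}(\cMfld_{/X})^{\amalg}$, one recovers precisely the iterated homotopy pullbacks of embedding spaces that \emph{define} $\gE_X$ in Definition~\ref{defn:Little discs operad in X} (the $(\cMfld_{/X})^{\amalg}$-leg supplying the $\Emb_{/X}$ and $(\upiota_j)_j$ corners, the $\cgE^{\otimes}$-leg the plain $\Emb$-corners). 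The only genuine care needed is point-set-theoretic: one must fix a single model for all the embedding spaces involved (as in \cite[\textsection 6]{horel_factorization_2017}) so that composition and units are strictly associative and unital on both sides; with that in place the two operads are literally equal, not merely equivalent.

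\textbf{The equivalences.} It suffices to prove that for \emph{every} open embedding $v\colon\square^{\,p,q}\hookrightarrow M$ into a manifold with corners $M$ there is an equivalence
$$
\big(\gE_M^{\otimes,\act}\big)_{/(\square^{\,p,q},v)}\;\simeq\;\big(\cgE^{\otimes,\act}\big)_{/\square^{\,p,q}},
$$
natural in $(M,v)$; the two equivalences of the statement are then the instances $(M,v)=(X,u)$ and $(M,v)=(\square^{\,p,q},\id)$. Starting from the homotopy pullback square of Remark~\ref{rk:Comparing the two little (p,q)-operads} for $M$, I would pass to active parts (a base change along $\Fin_*^{\act}\hookrightarrow\Fin_*$) and then slice over the color of $\gE_M$ corresponding to $(\square^{\,p,q},v)$; since both operations commute with homotopy pullbacks of $\infty$-categories, this gives
$$
\big(\gE_M^{\otimes,\act}\big)_{/(\square^{\,p,q},v)}\;\simeq\;\big(\cgE^{\otimes,\act}\big)_{/\square^{\,p,q}}\;\times^{\mathsf{h}}_{(\cMfld^{\amalg,\act})_{/\square^{\,p,q}}}\;\big((\cMfld_{/M})^{\amalg,\act}\big)_{/(\square^{\,p,q},v)}.
$$
The claim then reduces to showing that the rightmost cofactor, together with its structure map, is an equivalence onto $(\cMfld^{\amalg,\act})_{/\square^{\,p,q}}$. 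For this I would use two standard facts: (a) for any $\infty$-category $\EuScript{D}$ and object $d\in\EuScript{D}$ there is a natural equivalence $(\EuScript{D}^{\amalg})^{\act}_{/d}\simeq(\EuScript{D}_{/d})^{\amalg,\act}$ — clear from the description of the cocartesian $\infty$-operad in \cite[Example~4.1.12]{harpaz_little_nodate}, since an active arrow into $d$ is exactly a tuple of arrows into $d$, i.e.\ a tuple of objects of $\EuScript{D}_{/d}$, compatibly on morphisms; and (b) iterated slices compose, $(\cMfld_{/M})_{/(\square^{\,p,q},v)}\simeq\cMfld_{/\square^{\,p,q}}$, compatibly with the forgetful functors to $\cMfld$. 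Combining (a) and (b), the rightmost cofactor becomes $(\cMfld_{/\square^{\,p,q}})^{\amalg,\act}\simeq(\cMfld^{\amalg,\act})_{/\square^{\,p,q}}$ with structure map the identity, so the homotopy pullback collapses to $\big(\cgE^{\otimes,\act}\big)_{/\square^{\,p,q}}$. For $(M,v)=(\square^{\,p,q},\id)$ this last point is even more transparent, as $(\square^{\,p,q},\id)$ is terminal in $\cMfld_{/\square^{\,p,q}}$ and slicing over a terminal object changes nothing.

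\textbf{Main obstacle.} I do not expect a conceptual obstruction; the whole argument is an exercise in manipulating slices of cocartesian $\infty$-operads. The delicate point is the coherence bookkeeping for the equivalences: verifying rigorously that taking active parts and slicing over a color are compatible with the homotopy pullback of Remark~\ref{rk:Comparing the two little (p,q)-operads}, and that the identification of the $(\cMfld_{/M})$-cofactor via (a)--(b) is natural in $(M,v)$, so that the two instances assemble as claimed. A secondary nuisance is making the operadic slice construction precise enough that the first identification is an equality of operads rather than an equivalence — which is exactly why the point-set models are invoked.
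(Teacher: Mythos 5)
Your proposal is correct, but it follows a different route from the paper. The paper also deduces the second equivalence as the instance $(X,u)=(\square^{\,p,q},\id)$ of the first, but it proves the first equivalence concretely: it takes the forgetful functor $\big(\gE_X^{\otimes,\act}\big)_{/(\square^{\,p,q},u)}\to\big(\cgE^{\otimes,\act}\big)_{/\square^{\,p,q}}$ induced by $\gE_X=(\cgE)_X\to\cgE$, observes essential surjectivity by unwinding what an object (an active multimorphism into $(\square^{\,p,q},u)$, i.e.\ an embedding into $\square^{\,p,q}$ together with isotopy data over $X$) is, and then verifies fully-faithfulness by writing the mapping spaces on both sides as explicit iterated homotopy pullbacks of embedding spaces $\Emb(-,-)$ and $\Emb_{/X}(-,-)$ and comparing the two diagrams. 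You instead argue entirely formally from the pullback presentation $\gE_X^{\otimes}\simeq\cgE^{\otimes}\times^{\mathsf{h}}_{\cMfld^{\amalg}}(\cMfld_{/X})^{\amalg}$ of Remark \ref{rk:Comparing the two little (p,q)-operads}: passing to active parts and slicing commute with the homotopy pullback, the cocartesian operad construction is compatible with slicing ($(\EuScript{D}^{\amalg,\act})_{/d}\simeq(\EuScript{D}_{/d})^{\amalg,\act}$, which one checks exactly as the paper checks mapping spaces, only at the level of $\EuScript{D}^{\amalg}$), and the iterated-slice identity $(\EuScript{C}_{/x})_{/(y,u)}\simeq\EuScript{C}_{/y}$ (precisely the heuristic the paper records in Remark \ref{rem: Operadic Slice}) collapses the remaining leg, so the pullback degenerates to $\big(\cgE^{\otimes,\act}\big)_{/\square^{\,p,q}}$. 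Your approach buys naturality in $(M,v)$ for free and avoids manipulating embedding-space diagrams, at the price of the coherence bookkeeping you flag — in particular, for the downstream uses (e.g.\ Lemma \ref{lem: Second condition LocalizationTheorem via WeakApprox}, where the equivalence must be compatible with the projections to $\gE_{\square^{\,p,q}}^{\otimes,\act}$) one should also record that your composite equivalence agrees with the forgetful comparison functor, which it does since the iterated-slice equivalence is implemented by forgetting the composite datum; the paper's hands-on proof makes this identification manifest from the start.
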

\begin{proof} 
The identification is the content of  Remarks \ref{rk:Comparing the two little (p,q)-operads}. The operadic slice construction is also explained, for instance, in \cite[\textsection 2]{carmona_model_2022}. 

Regarding the equivalences, notice that the second one is just a particular instance of the first. Thus, we focus on the first equivalence. Let us check that there is a fully-faithful and essentially surjective functor between them. Actually, we use the forgetful functor induced by $\gE_{X}=(\cgE)_{X}\longrightarrow\cgE$ for the comparison. Let us start by observing that the objects in $\big(\gE_X^{\otimes,\act}\big)_{/(\square^{\,p,q},u)} $ are tuples $\big(\{(\square^{\,p_i,q_i},v_i)\}_{i\in I}\,,\,\uppsi\big)$ where $v_i\in \Emb(\square^{\,p_i,q_i},X)$ for any $i\in I$ and $\uppsi \in \gE_X\brbinom{\{(\square^{\,p_i,q_i},v_i)\}_{i\in I}}{(\square^{\,p,q},u)}$. Recall that this last space is the homotopy fiber at $(v_i)_i$ of
$$
u_{i,*}:= u_*\circ(\mathsf{inc}_i^*)_i\colon \Emb\big({\bigsqcup}_i\square^{\,p_i,q_i},\square^{\,p,q}\big) \longrightarrow {\prod}_i \Emb\big(\square^{\,p_i,q_i},X\big)\;.
$$
That is, a point $\uppsi$ is really given by $\big(\psi,(\upeta_i\colon v_i\Rightarrow u\cdot\psi\cdot\mathsf{inc}_i)_i\big)$.
Therefore, it is clear that the forgetful functor is essentially surjective up to equivalence. It remains to show that it is fully-faithful. We will analyze mapping spaces when the target consists of just one $\Drect'=(\square^{\,p'\!,q'},v')$ since the general case can be deduced from this one. We also introduce the notation $\Drect_i=(\square^{\,p_i,q_i},v_i)$ for the sake of compactness. 

Disentangling the definition of the mapping space of $\EuScript{X}=\big(\gE_X^{\otimes,\act}\big)_{/\Drect}$ with source $(\{\Drect_i\}_{i\in I},\uppsi)$ and target $(\Drect',\uppsi')$ we find
$$
     \begin{tikzcd}
         \EuScript{X}\big((\{\Drect_i\}_{i\in I},\uppsi), (\Drect',\uppsi') \big)\ar[rr]\ar[d] \ar[rrd, phantom, "\overset{\mathsf{h}\;\;}{\lrcorner}" description]
         && \ast \ar[d,"\uppsi"] \\
         \gE_X\brbinom{\{\Drect_i\}_{i\in I}}{\Drect'}
         \ar[rr] \ar[d] \ar[rrd, phantom, "\overset{\mathsf{h}\;\;}{\lrcorner}" description, near end]  &&  \gE_X\brbinom{\{\Drect_i\}_{i\in I}}{\Drect} \ar[d] \ar[r] \ar[rd, phantom, "\overset{\mathsf{h}\;\;}{\lrcorner}" description] & \ast \ar[d,"(v_i)_i"] \\
  \Emb\big( \bigsqcup_i\square^{\,p_i, q_i}, \square^{\, p'\!,q'} \big) \ar[rr,"\psi'_*"'] \ar[rrr, bend right=15,"v'_{i,*}"']&& \Emb\big( \bigsqcup_i\square^{\,p_i, q_i}, \square^{\, p,q} \big) \ar[r,"u_{i,*}"'] & \prod_i\Emb\big(\square^{\,p_i,q_i},X\big)
    \end{tikzcd},
$$
where all the squares are homotopy pullbacks and $v_{i,*}'\simeq u_{i,*}\psi'_*$ since $\uppsi'\in\gE_X\brbinom{(\square^{\,p'\!,q'},v')}{(\square^{\,p,q},u)}$ contains an isotopy witnessing such a relation. Comparing this diagram with the one defining the mapping space of $\EuScript{Y}= \big(\cgE^{\otimes,\act}\big)_{/\square^{\,p,q}}$ with source $(\{\square^{\,p_i,q_i}\}_{i\in I},\psi)$ and target $(\square^{\,p'\!,q'},\psi') $,
$$
  \begin{tikzcd}
         \EuScript{Y}\big( (\{\square^{\,p_i,q_i}\}_{i\in I},\psi),(\square^{\,p'\!,q'},\psi')\big)\ar[rr]\ar[d] \ar[rrd, phantom, "\overset{\mathsf{h}\;\;}{\lrcorner}" description]
         &&  * \ar[d,"\psi"] \\
         \Emb\big(\bigsqcup_i \square^{\,p_i, q_i}, \square^{\, p'\!,q'} \big)
         \ar[rr,"\psi'_*"']   && \Emb\big( \bigsqcup_i\square^{\,p_i,q_i}, \square^{\,p,q} \big) 
    \end{tikzcd},
$$
we conclude the proof.
\end{proof}

\begin{remark}\label{rem: Operadic Slice} The previous lemma should be clear from an appropriate point of view; it is a ``multilinear" version of $(\EuScript{C}_{/x})_{/(y,u)}\simeq \EuScript{C}_{/y}$ for any $(y,u)\in \EuScript{C}_{/x}$.   
\end{remark}

\begin{prop}\label{prop: LocalizationTheorem via WeakApprox} For any manifold with corners $X$, the operad map $\upgamma\colon\mathsf{Bsc}_X\to \gE_X$ exhibits $\gE_X$ as the operadic \(\infty\)-localization of $\mathsf{Bsc}_{X}$ at the set of unary maps
\begin{equation}\label{eqt: set of maps to localize BscX}
    \left\lbrace
U\subseteq V \; \text{ in }\mathsf{bsc}(X)\text{ s.t. }\; U\cong \mathbbst{R}^{p,q} \text{ is abstractly diffeomorphic to }V\cong\mathbbst{R}^{p'\!,q'}
\right\rbrace\;.
\end{equation} 
\end{prop}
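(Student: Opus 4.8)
The plan is to follow the template of Theorem~\ref{thm:WeakApproxofE} and of the final part of the proof of Theorem~\ref{thm:SwissCheeseAdditivity}. First I would show that $\upgamma\colon \mathsf{Bsc}_X\to\gE_X$ is a weak approximation of weak $\infty$-operads in the sense of Definition~\ref{defn: weak and strong approximation}, where both sides carry the weak structure whose basics are the single colors over $\langle 1\rangle$ (cf.\ Remark~\ref{rmks:WeakStructure E and Cube} for the analogous structures on $\Cube{p}{q}$ and $\E_{p,q}$); by Lemma~\ref{lem:Harpaz weak approx} this yields an equivalence $\Mon_{\gE_X}(\EuScript{M})\simeq\Mon_{\mathsf{Bsc}_X}(\EuScript{M})$ for every target $\EuScript{M}$. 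Then, exactly as at the end of the proof of Theorem~\ref{thm:SwissCheeseAdditivity}, I would upgrade this to the stated universal property by checking that the equivalence restricts to one between $\gE_X$-monoids and those $\mathsf{Bsc}_X$-monoids that invert the class $W$ of \eqref{eqt: set of maps to localize BscX}. Concretely one verifies the two conditions of Definition~\ref{defn: weak and strong approximation}: the localization condition on inert parts over $\langle 1\rangle$ and the contractibility of active-slice fibers.

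\textbf{The active condition.} For a basic color $\Drect=(\square^{\,p,q},u)$ of $\gE_X$ with image $D\subseteq X$, Lemma~\ref{lem:Simplifying slices for operads} gives $\big(\gE_X^{\otimes,\act}\big)_{/\Drect}\simeq\big(\cgE^{\otimes,\act}\big)_{/\square^{\,p,q}}\simeq\big(\gE_{\square^{\,p,q}}^{\otimes,\act}\big)_{/(\square^{\,p,q},\id)}$, and the same slice-simplification identifies the source $\big(\mathsf{Bsc}_X^{\otimes,\act}\big)_{/D}$ with the corresponding active slice of $\mathsf{Bsc}_{\square^{\,p,q}}$ over the full cube. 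Using Proposition~\ref{prop: Coincidence of little (p,q)-disc operads} (so $\gE_{\square^{\,p,q}}\simeq\mathbbst{E}_{p,q}$), the claim reduces to contractibility of the homotopy fibers of $\big(\mathsf{Bsc}_{\square^{\,p,q}}^{\otimes,\act}\big)_{/\square^{\,p,q}}\to\big(\Eactive_{p,q}\big)_{/\underline{q}}$, which I would prove by the Lurie--Seifert--van Kampen argument of Lemma~\ref{lem:Second condition WeakAprroxofE}: by Lemma~\ref{lem:strict pullback} the fiber over $\{R_\bullet\}$ is the poset of tuples of basic opens of prescribed local types $R_\bullet$ inside $\square^{\,p,q}$; this poset maps to $\open\big(\Conf_{R_\bullet}(\square^{\,p,q})\big)$ by sending a tuple to the product of the deepest strata of its members; and the over-posets $\Prect_{\underline{x}}$ at a configuration $\underline{x}$ are cofiltered --- hence have contractible classifying space --- because around any point the basic opens of a fixed local type form a cofiltered poset under inclusion (Remark~\ref{remark: cubes are nice} together with the local triviality of the corner stratification recalled in \textsection\ref{sect:Manifolds with Corners}). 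Lemma~\ref{lem:rect-conf} then matches $\Conf_{R_\bullet}(\square^{\,p,q})$ with the relevant mapping space in $\Eactive_{p,q}$, compatibly with the map of fibers. This argument is essentially identical to that of Lemma~\ref{lem:Second condition WeakAprroxofE}, the only difference being that one now allows arbitrary basic opens, not just sub-cubes --- which only makes the shrinking argument easier.

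\textbf{The inert condition, and inversion of $W$.} Since $\big(\gE_X\big)^{\inert}_{\langle 1\rangle}$ is an $\infty$-groupoid, I must show, as in Lemma~\ref{lem:First condition WeakAprroxofE}, that its underlying space is a classifying space of $\upgamma^{-1}\big(\gE_X\big)^{\inert}_{\langle 1\rangle}$; by Proposition~\ref{prop:localization} it suffices that the homotopy fibers of $\upgamma^{-1}\big(\gE_X\big)^{\inert}_{\langle 1\rangle}\to\big(\gE_X\big)^{\inert}_{\langle 1\rangle}$ be contractible. Decomposing over the poset $\Prect(X)$ of local types, the component of $\big(\gE_X\big)^{\inert}_{\langle 1\rangle}$ of type $Z$ is, by the scanning results (Proposition~\ref{prop: Scanning with one basic}) and the frame-bundle description of strata in \textsection\ref{sect:Manifolds with Corners}, weakly equivalent to the stratum $X_{[Z]}$, while by Lemma~\ref{lem:strict pullback} the fiber over a color is the poset of basic opens of type $Z$ refining it --- cofiltered, by the same shrinking argument as above, hence contractible. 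The same computation shows that $\upgamma$ carries every map in $W$ to an inert equivalence of $\gE_X$; this is precisely the input for the final restriction step, where (as at the end of the proof of Theorem~\ref{thm:SwissCheeseAdditivity}) one checks that $\upgamma^{*}$ of a $\mathsf{Bsc}_X$-monoid lies in the essential image of restriction from $\gE_X$-monoids if and only if that monoid inverts $W$.

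\textbf{Main obstacle.} The technical heart is the globalization. On $\square^{\,p,q}$ one straightens a basic open to its center via the elementary evaluation map of Lemma~\ref{lem:rect-conf}; over a general manifold with corners there is no rectilinear structure, so this straightening must be carried out through the frame bundles of the strata and the scanning maps of \textsection\ref{sect:Manifolds with Corners}, and one must check that these identifications are natural enough to assemble over all of $X$ and to be compatible with the operadic-slice identification of Lemma~\ref{lem:Simplifying slices for operads}. All of the manifold-with-corners input is used here; once it is granted, the passage from the two weak-approximation conditions to the stated operadic $\infty$-localization is formal via the weak-operad machinery of \textsection\ref{sect:weak operads}.
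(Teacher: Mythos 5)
Your overall strategy is the paper's: show that $\upgamma$ is a weak approximation and conclude by the Harpaz machinery (the paper invokes Corollary \ref{cor: Harpaz approxs as operadic notions}, which packages the ``upgrade to the localization statement'' that you redo by hand), and your treatment of the active condition is essentially identical to Lemma \ref{lem: Second condition LocalizationTheorem via WeakApprox}: reduce to the chart via Lemma \ref{lem:Simplifying slices for operads}, use Proposition \ref{prop: Categorical triangle to analyze fibers} and Lemma \ref{lem:strict pullback} to exhibit the fiber as a poset of tuples of basics, and finish with Lurie--Seifert--van Kampen and Lemma \ref{lem:rect-conf} (the paper stays with $\gE^{\otimes,\act}_{\square^{\,p,q}}$ rather than passing through $\Eactive_{p,q}$, but this is cosmetic). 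Your closing worry about ``assembling straightenings over all of $X$'' is also misplaced: once Lemma \ref{lem:Simplifying slices for operads} is in hand, everything happens inside a single chart and no global compatibility of scanning data is needed.

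The genuine gap is in your inert condition. You propose to compute the homotopy fibers of $\upgamma^{-1}(\gE_X^{\otimes})^{\inert}_{\langle 1\rangle}\to(\gE_X^{\otimes})^{\inert}_{\langle 1\rangle}$ via Lemma \ref{lem:strict pullback}, but its hypothesis fails here: for a general manifold with corners the mapping spaces of $(\gE_X^{\otimes})^{\inert}_{\langle 1\rangle}$ are neither contractible nor empty --- by Proposition \ref{prop: Scanning with one basic} the type-$Z$ part of this $\infty$-groupoid has the homotopy type of the inner frame bundle $\Fr^{\,i}_{[Z]}(X_{[Z]})$ (not of the stratum $X_{[Z]}$, as you write), which carries nontrivial loops in general, unlike the rectilinear cases $\E_{p,q}$ where that lemma legitimately applies. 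Moreover, your contractibility argument for the resulting fiber is wrong as stated: the poset of basic opens of a fixed type sitting inside a given chart is \emph{not} cofiltered (two same-type basics near different points of the deepest stratum can be disjoint, so they admit no common refinement); the shrinking argument only works for the subposets $\Prect_{\underline{x}}$ where a configuration is fixed. The paper sidesteps both issues by applying Quillen's Theorem A directly to the slices $\big(\upgamma^{-1}(\gE^{\otimes}_{X})^{\inert}_{\langle 1\rangle}\big)_{/(\square^{\,p,q},u)}$, identifying each such slice, via the argument of Lemma \ref{lem:Simplifying slices for operads} (which uses the contractibility of the relative embedding spaces in diagram (\ref{eqt:hopbs showing space of relative embeddings is contractible})), with the wide subcategory of $\mathsf{bsc}(\square^{\,p,q})$ on same-type inclusions; this category is contractible because it has a \emph{terminal object}, namely $\square^{\,p,q}$ itself. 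If you replace your fiber computation by this slice identification and the terminal-object argument, your proof goes through and coincides with the paper's.
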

\begin{proof}
Due to Corollary \ref{cor: Harpaz approxs as operadic notions}, we can show that $\upgamma$ is a weak approximation. Lemma \ref{lem: First condition LocalizationTheorem via WeakApprox} below handles the first condition, while  Lemma \ref{lem: Second condition LocalizationTheorem via WeakApprox} proves the second.
\end{proof}

\begin{lemma}\label{lem: First condition LocalizationTheorem via WeakApprox}
    The functor $\upgamma^{-1}(\gE^{\otimes}_{X})^{\inert}_{\langle 1\rangle}\longrightarrow (\gE^{\otimes}_X)^{\inert}_{\langle 1\rangle}$ exhibits an $\infty$-localization.
\end{lemma}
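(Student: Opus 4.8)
The plan is to follow the proof of Lemma \ref{lem:First condition WeakAprroxofE} almost verbatim, replacing $\Cube{p}{q}\to\E_{p,q}$ by $\upgamma\colon\mathsf{Bsc}_X\to\gE_X$. First recall that the inert maps of $\gE_X$ between single colors $(\square^{\,p',q'},\upiota')\to(\square^{\,p,q},\upiota)$ are exactly the equivalences of $\gE_X$ between them, i.e.\ those classes in $\gE_X\brbinom{(\square^{\,p',q'},\upiota')}{(\square^{\,p,q},\upiota)}$ whose underlying embedding $\square^{\,p',q'}\hookrightarrow\square^{\,p,q}$ is isotopic to a diffeomorphism; in particular this forces $(p',q')=(p,q)$. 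Hence $(\gE^{\otimes}_X)^{\inert}_{\langle 1\rangle}$ is an $\infty$-groupoid, namely the $\infty$-groupoid of colors of $\gE_X$. As in Lemma \ref{lem:First condition WeakAprroxofE}, the localization statement then reduces to saying that $(\gE^{\otimes}_X)^{\inert}_{\langle 1\rangle}$ is a classifying space of $\upgamma^{-1}(\gE^{\otimes}_X)^{\inert}_{\langle 1\rangle}$, which by Proposition \ref{prop:localization} amounts to showing that the homotopy fibers of the functor in the statement are contractible. (Non-emptiness of the fibers also yields essential surjectivity, since every color $(\square^{\,p,q},\upiota)$ is $\upgamma$ of the basic $\upiota(\square^{\,p,q})\in\mathsf{bsc}(X)$.)

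Next I would identify these fibers. Unwinding the definitions, $\upgamma^{-1}(\gE^{\otimes}_X)^{\inert}_{\langle 1\rangle}$ is the ordinary $1$-category whose objects are the basics $D\in\mathsf{bsc}(X)$ and whose morphisms are the inclusions $D'\subseteq D$ with $D'$ abstractly diffeomorphic to $D$, i.e.\ precisely the arrows in the set (\ref{eqt: set of maps to localize BscX}). Since $\upgamma$ restricted to inert maps over $\langle 1\rangle$ satisfies the hypotheses of Lemma \ref{lem:strict pullback}, the homotopy fiber over a color $\Drect=(\square^{\,p,q},\upiota)$ is equivalent to a strict pullback, hence to the poset $\mathsf{C}_{\upiota}$ whose objects are the basics $D\subseteq\upiota(\square^{\,p,q})$ — these are automatically abstractly diffeomorphic to $\R^{p,q}$, so $D\overset{\sim}{\subseteq}\upiota(\square^{\,p,q})$ — and whose morphisms are inclusions; equivalently, by a reduction along $\upiota$ analogous to Lemma \ref{lem:Simplifying slices for operads}, one may take $X=\square^{\,p,q}$, $\upiota=\id$, in which case $\mathsf{C}_{\upiota}$ is the poset of sub-basics of $\square^{\,p,q}$ of type $(p,q)$. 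This poset has a terminal object, namely $\upiota(\square^{\,p,q})$ itself (respectively the full cube $\square^{\,p,q}$), so $\mathsf{C}_{\upiota}$ is contractible and we are done.

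The main obstacle is the identification of the homotopy fiber rather than the (immediate, once it is pinned down) contractibility argument: one has to verify that $\upgamma$ on the inert parts over $\langle 1\rangle$ meets the conditions of Lemma \ref{lem:strict pullback} and to pin down the resulting $1$-category correctly. The crucial geometric input is that an inclusion of a basic into an abstractly diffeomorphic basic is automatically inert in $\gE_X$ (so that $\mathsf{C}_{\upiota}$ really is all sub-basics of $\upiota(\square^{\,p,q})$, with $\upiota(\square^{\,p,q})$ terminal), which rests on the scanning/isotopy–extension facts for manifolds with corners recorded in \textsection\ref{sect:Manifolds with Corners} (cf.\ Propositions \ref{prop: Scanning with one basic} and \ref{prop: Scanning with several basics}); the same facts underpin the claim that the relevant inert maps of $\gE_X$ are genuine equivalences, which makes $(\gE^{\otimes}_X)^{\inert}_{\langle 1\rangle}$ an $\infty$-groupoid in the first place.
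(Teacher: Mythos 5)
Your overall architecture is the paper's: note that $(\gE^{\otimes}_X)^{\inert}_{\langle 1\rangle}$ is an $\infty$-groupoid, reduce the localization statement to contractibility of the fibers (the paper phrases this via Quillen's Theorem A applied to the slices, which is the same thing since slices over objects of an $\infty$-groupoid are homotopy fibers), and conclude by exhibiting a terminal object in a poset of same-type sub-basics. However, your identification of the fiber has two genuine problems. First, Lemma \ref{lem:strict pullback} cannot be applied directly to the functor into $(\gE^{\otimes}_X)^{\inert}_{\langle 1\rangle}$ for a general manifold with corners $X$: its hypothesis that the mapping spaces of the target be contractible or empty fails, because the space of equivalences between two colors of $\gE_X$ is (a union of components of) the homotopy fiber of $\Emb(\square^{\,p,q},\square^{\,p,q})\to\Emb(\square^{\,p,q},X)$, which has the homotopy type of a torsor over a based loop space of the relevant stratum --- already for $X=S^1$ the self-equivalences of a color form a space with $\mathbbst{Z}$-many components. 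So the reduction ``along $\upiota$'' to $X=\square^{\,p,q}$, $\upiota=\id$ via the arguments of Lemma \ref{lem:Simplifying slices for operads} is not an optional alternative, as your ``equivalently'' suggests, but the step that makes the combinatorial identification legitimate; this is exactly what the paper does.

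Second, the parenthetical ``these are automatically abstractly diffeomorphic to $\R^{p,q}$'' is false: sub-basics of $\upiota(\square^{\,p,q})$ occur in every local type $\square^{\,p,Q}$ (e.g.\ an interior cube has type $(p+q,0)$), so being contained in $\upiota(\square^{\,p,q})$ does not force $D\overset{\sim}{\subseteq}\upiota(\square^{\,p,q})$. The correct reason the fiber only contains same-type basics is different: an equivalence in $\gE_X$ between colors preserves the local type (diffeomorphism-invariance of the index), so only basics abstractly diffeomorphic to $\R^{p,q}$ can appear over the color $(\square^{\,p,q},\upiota)$. Note also that an object of the fiber is a priori a basic of type $(p,q)$ anywhere in $X$ together with an equivalence datum, not literally a subset of $\upiota(\square^{\,p,q})$; that it may be traded for an honest sub-basic of $\square^{\,p,q}$ with inclusions as morphisms is again part of what the Lemma \ref{lem:Simplifying slices for operads}-type comparison delivers. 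Since your argument uses the false parenthetical to see that the full cube is terminal (with only same-type inclusions as morphisms, a wrong-type object would admit no map to $\square^{\,p,q}$), this needs to be repaired as above; once it is, your proof coincides with the paper's.
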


\begin{proof}
    Noticing that $(\gE^{\otimes}_X)^{\inert}_{\langle 1\rangle}$ is an $\infty$-groupoid, we can prove that the classifying space of $\upgamma^{-1}(\gE^{\otimes}_{X})^{\inert}_{\langle 1\rangle}$ is weak homotopy equivalent to $(\gE^{\otimes}_{X})^{\inert}_{\langle 1\rangle}$ to conclude the localization statement. To do so, we apply Quillen's Theorem A: for any given $(\square^{\,p,q},u)\in (\gE^{\otimes}_{X})^{\inert}_{\langle 1\rangle}$, we need to show that the slice $\big(\upgamma^{-1}(\gE^{\otimes}_{X})^{\inert}_{\langle 1\rangle}\big)_{/(\square^{\,p,q},u)}$ is contractible. Observe that $\upgamma^{-1}(\gE^{\otimes}_{X})^{\inert}_{\langle 1\rangle}$ is the wide subcategory of $(\mathsf{Bsc}_X^{\otimes})_{\langle 1\rangle}=\mathsf{bsc}(X)$ spanned by the morphisms in (\ref{eqt: set of maps to localize BscX}), and that the same arguments provided in Lemma \ref{lem:Simplifying slices for operads} allow us to identify $\big(\upgamma^{-1}(\gE^{\otimes}_{X})^{\inert}_{\langle 1\rangle}\big)_{/(\square^{\,p,q},u)}$ with the wide subcategory of $\mathsf{bsc}(\square^{\,p,q})$ spanned by open inclusions $U\subseteq V$ where both $U$, $V$ are abstractly diffeomorphic to $\square^{\,p,q}$. This category is clearly contractible since it has a terminal object $\square^{\,p,q}$.
\end{proof}

\begin{lemma}\label{lem: Second condition LocalizationTheorem via WeakApprox}
 The homotopy fibers of $\big(\mathsf{Bsc}_X^{\otimes,\act}\big)_{/V}\longrightarrow \big(\gE_X^{\otimes,\act}\big)_{/\upgamma(V)}$ are contractible for any $V\in \mathsf{Bsc}_{X}$,.
\end{lemma}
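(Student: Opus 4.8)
The plan is to transcribe the proof of Lemma~\ref{lem:Second condition WeakAprroxofE} into the corner setting, using the differential topology of manifolds with corners from \textsection\ref{sect:Manifolds with Corners} in place of the elementary combinatorics of sub-cubes. Since $V\in\mathsf{Bsc}_X$ is a single basic, $V\cong\R^{p,q}$, write $\upgamma(V)=(\square^{\,p,q},\upiota_V)$ for the associated color of $\gE_X$. First I would factor the map in question as
\[
\big(\mathsf{Bsc}_X^{\otimes,\act}\big)_{/V}\xrightarrow{\;\;\quad\;\;}\big(\gE_X^{\otimes,\act}\big)_{/\upgamma(V)}\xrightarrow{\;\;\quad\;\;}\gE_X^{\otimes,\act},
\]
note via Proposition~\ref{prop: Categorical triangle to analyze fibers} that the second arrow is a right fibration, and thereby reduce to showing that the induced map on homotopy fibers over each object $\{\Drect_i\}_{i\in I}$ of $\gE_X^{\otimes,\act}$ (with $\Drect_i=(\square^{\,p_i,q_i},v_i)$) is a weak homotopy equivalence.

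For the target fiber: being the fiber of a right fibration, it is the mapping space $\gE_X\brbinom{\{\Drect_i\}_i}{\upgamma(V)}$. Combining Lemma~\ref{lem:Simplifying slices for operads}, Proposition~\ref{prop: Coincidence of little (p,q)-disc operads} and Lemma~\ref{lem:rect-conf} --- equivalently, unwinding Definition~\ref{defn:Little discs operad in X} and rerunning the scanning argument from the proof of Proposition~\ref{prop: Coincidence of little (p,q)-disc operads}, in particular diagram~\eqref{eqt:hopbs showing space of relative embeddings is contractible} --- I would show this space is empty unless each $v_i$ singles out a connected component $Z_i\in\Prect(V)$ of a stratum of $V$, in which case there is a natural weak equivalence to the configuration space $\Conf_{\Drect_\bullet}(V):=\prod_{t}\Conf_{I_t}(V_{[Z(t)]})$ in the sense of Notation~\ref{notat: Configuration spaces}. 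The empty cases are handled by inspection (the source fiber is then empty too), exactly as in Lemmas~\ref{lem:Second condition WeakAprroxofE} and~\ref{lem: Second condition WreathProdApprox}.

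For the source fiber: by Lemma~\ref{lem:strict pullback} the homotopy fiber $\big(\mathsf{Bsc}_X^{\otimes,\act}\big)_{/V,\,|\{\Drect_\bullet\}}$ is equivalent to the poset $\Prect$ whose objects are families $W_\bullet\colon I\to\mathsf{bsc}(V)$ of pairwise disjoint basic opens of $V$ whose diffeomorphism types and deepest strata match $\{\Drect_\bullet\}$ up to a permutation $\upsigma$ of $I$, and whose morphisms are families of type-preserving inclusions $(W_i\overset{\sim}{\subseteq}W_i')_i$. I would then run the Lurie--Seifert--van Kampen theorem \cite[Theorem~A.3.1]{lurie_higher_nodate} as in Lemma~\ref{lem:Second condition WeakAprroxofE}: the functor $\upchi\colon\Prect\to\open(\Conf_{\Drect_\bullet}(V))$ sending $W_\bullet\mapsto\prod_i\Conf_{\{i\}}(W_{i,[Z_{\upsigma(i)}]})$ has contractible values (each $W_{i,[Z_i]}$ is diffeomorphic to a Euclidean space and the $W_i$ are disjoint), and for each configuration $\underline{x}$ the full subposet $\Prect_{\underline x}$ on those $W_\bullet$ with $\underline{x}\in\upchi(W_\bullet)$ is non-empty and cofiltered, by the usual ``choose small basic charts around the $x_i$, then shrink to disjointness'' argument. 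This yields $|\Prect|\simeq\hocolim_{W_\bullet\in\Prect}\upchi(W_\bullet)\simeq\Conf_{\Drect_\bullet}(V)$, compatibly with the comparison map to the target fiber, and the lemma follows.

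The hard part is not the homotopical skeleton --- that is forced by Lemma~\ref{lem:Second condition WeakAprroxofE} --- but the two differential-topological inputs replacing the obvious facts about cubes: (i) the identification $\gE_X\brbinom{\{\Drect_i\}_i}{\upgamma(V)}\simeq\Conf_{\Drect_\bullet}(V)$, which must be bootstrapped from Lemma~\ref{lem:Simplifying slices for operads}, Proposition~\ref{prop: Coincidence of little (p,q)-disc operads} and the scanning results of \textsection\ref{sect: Preliminaries}--\textsection\ref{sect:Manifolds with Corners}; and (ii) the analogue of Remark~\ref{remark: cubes are nice} for $X$ (hence for $V$): around any point of any stratum component $V_{[Z]}$ there exist arbitrarily small basic charts whose deepest stratum passes through that point with the prescribed local type, and such charts can be shrunk within themselves --- the kind of statement recorded in \textsection\ref{sect:Manifolds with Corners} (cf.\ Proposition~\ref{prop: Good supply of Weiss covers}). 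With these two in hand, the remainder is a faithful transcription of the cube case.
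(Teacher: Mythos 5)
Your homotopical skeleton is the right one, but the place where you set it up is where the argument breaks: you keep the comparison triangle over $\gE_X^{\otimes,\act}$ for the \emph{original} manifold $X$, and both of your fiber identifications fail there. The target fiber over an object $\{\Drect_i\}=\{(\square^{\,p_i,q_i},v_i)\}$ is the mapping space $\gE_X\brbinom{\{\Drect_i\}_i}{\upgamma(V)}$, which by Definition \ref{defn:Little discs operad in X} records an embedding into $\square^{\,p,q}$ \emph{together with an isotopy in $X$} back to $(v_i)_i$; it is the homotopy fiber of $\Emb\big(\bigsqcup_i\square^{\,p_i,q_i},\square^{\,p,q}\big)\to\prod_i\Emb(\square^{\,p_i,q_i},X)$, and this is not $\Conf_{\Drect_\bullet}(V)$ in general. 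For instance, for $X=\mathbbst{S}^1$, $V$ an open interval and a single input interval, this fiber is weakly equivalent to $\Omega\,\mathbbst{S}^1\simeq \mathbbst{Z}$ (and it is non-empty even when $v_1$ lands outside $V$, contradicting your ``empty unless $Z_i\in\Prect(V)$'' reduction), whereas $\Conf_{\Drect_\bullet}(V)$ is contractible. For the same reason your use of Lemma \ref{lem:strict pullback} on the source side is not justified: its hypothesis that the mapping spaces of $\EuScript{D}_{|\langle n\rangle}$ be contractible or empty holds for $\gE^{\otimes,\act}_{\square^{\,p,q}}$ (strata of the quadrant are contractible, cf.\ diagram (\ref{eqt:hopbs showing space of relative embeddings is contractible})), but fails for $\gE_X^{\otimes,\act}$ with general $X$, since $\gE_X\brbinom{\Drect_i}{\Drect_i'}$ carries loop-space data of the stratum $X_{[Z_i]}$. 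Consequently the source fiber is \emph{not} the naive poset of disjoint basics in $V$ with matching types --- it also carries isotopy data --- and the Seifert--van Kampen comparison as you describe it would be between the wrong spaces.

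The missing idea, and the first move in the paper's proof, is to eliminate the ambient $X$ before taking fibers: by Lemma \ref{lem:Simplifying slices for operads} one has $\big(\gE_X^{\otimes,\act}\big)_{/\upgamma(V)}\simeq\big(\gE_{\square^{\,p,q}}^{\otimes,\act}\big)_{/(\square^{\,p,q},\id)}$, and by direct inspection $\big(\mathsf{Bsc}_X^{\otimes,\act}\big)_{/V}\simeq\big(\mathsf{Bsc}_{\,\square^{\,p,q}}^{\otimes,\act}\big)_{/\square^{\,p,q}}$, so the comparison triangle of Proposition \ref{prop: Categorical triangle to analyze fibers} is taken over $\gE_{\square^{\,p,q}}^{\otimes,\act}$ rather than $\gE_X^{\otimes,\act}$. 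Only after this reduction do your two computations become correct: the $\gE$-side fiber over $Q\colon\langle m\rangle^{\circ}\to\Prect(\square^{\,p,q})$ is $\Emb_{/\square^{\,p,q}}\big(\bigsqcup_i\square^{\,p,Q_i},\square^{\,p,q}\big)\simeq\Conf_Q(\square^{\,p,q})$, Lemma \ref{lem:strict pullback} applies to identify the $\mathsf{Bsc}$-side fiber with the poset of disjoint basics in $\square^{\,p,q}$ of matching types, and the Lurie--Seifert--van Kampen argument you outline (which is indeed a faithful transcription of Lemma \ref{lem:Second condition WeakAprroxofE}) finishes the proof. So you cited Lemma \ref{lem:Simplifying slices for operads}, but only as a tool for computing one mapping space; its real role is this slice cancellation, and without it the argument does not go through.
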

\begin{proof} Let us begin by introducing notation for the abstract diffeomorphism type of $V$, say $\upgamma(V)=(\square^{\,p,q},u)$. Now use Lemma \ref{lem:Simplifying slices for operads} to identify $\big(\gE_X^{\otimes,\act}\big)_{/(\square^{\,p,q},u)} \simeq  \big(\gE_{\square^{\,p,q}}^{\otimes,\act}\big)_{/(\square^{\,p,q},\id)}$ and direct inspection for the discrete version $\big(\mathsf{Bsc}_X^{\otimes,\act}\big)_{/u(\square^{\,p,q})} \simeq \big(\mathsf{Bsc}_{\,\square^{\,p,q}}^{\otimes,\act}\big)_{/\square^{\,p,q}}$. 

Then, the map to analyze gets identified with the horizontal arrow in the commutative triangle
$$
\begin{tikzcd}
   \big(\mathsf{Bsc}_{\,\square^{\,p,q}}^{\otimes,\act}\big)_{/\square^{\,p,q}} \ar[rr]\ar[rd] && \big(\gE_{\square^{\,p,q}}^{\otimes,\act}\big)_{/(\square^{\,p,q},\id)}\ar[ld] \\
    & \gE_{\square^{\,p,q}}^{\otimes,\act}
\end{tikzcd}.
$$
We are in a position to apply Proposition \ref{prop: Categorical triangle to analyze fibers} to compare homotopy fibers. Choose a function $Q:\langle m\rangle^{\circ} \to \Prect(\square^{\,p,q})\cong (\underline{2}^{\underline{q}})^{\op}$ (this is the same datum as an object in $\gE_{\square^{\,p,q}}^{\otimes,\act}$) and observe that the homotopy fiber over $Q$ of the right diagonal map, $(\mathsf{Bsc}_{\,\square^{\,p,q}}^{\otimes,\act})_{/\square^{\,p,q},\,\vert Q}$, is weak homotopy equivalent to $\Emb_{/\square^{\,p,q}}\big(\bigsqcup_{i\in \langle m\rangle^{\circ}}\square^{\,p,Q_i}, \square^{\,p,q}\big)\simeq \Conf_{Q}(\square^{\,p,q})$; see Notation \ref{notat: Configuration spaces} and the proof of Proposition \ref{prop: Coincidence of little (p,q)-disc operads}. Thus, it suffices to prove that the homotopy fiber over $Q$ of the left diagonal map is weak homotopy equivalent to $\Conf_{Q}(\square^{\,p,q})$ via the map induced from the commutative triangle above\footnote{There is also a case where both homotopy fibers are empty, but it should be clear that such situation poses no problems (cf. Proof of Lemma \ref{lem:Second condition WeakAprroxofE}).}.

By means of diagram (\ref{eqt:hopbs showing space of relative embeddings is contractible}) and Lemma \ref{lem:strict pullback}, we observe that this homotopy fiber is equivalent to the ordinary category $\Prect$ with: objects, functions $U\colon \langle m\rangle^{\circ}\to \mathsf{bsc}(\square^{\,p,q})$ such that $\bigsqcup_{i\in \langle m\rangle^{\circ}}U_i\subseteq \square^{\,p,q}$ and with types matching $Q$ (see the proof of Lemma \ref{lem:Second condition WeakAprroxofE}); and morphisms, families of inclusions $\big(U_i\overset{\sim}{\subseteq} V_i\big)_{i\in \langle m\rangle^{\circ}}$ between open subsets of the same basic-type. We will conclude the proof by applying Lurie--Seifert--van Kampen theorem to the functor
\begin{align*}
  \upchi\colon \Prect &\xrightarrow{\qquad\quad} \open\big(\Conf_{Q} (\square^{\,p,q}) \big)\\
  U & \xmapsto{\qquad\quad} \prod_{i\in \langle m\rangle^{\circ}} \Conf_{\{i\}}\big({U_i}_{[Q_{\upsigma(i)}]}\big)\quad.
\end{align*} 
The hypotheses of the cited theorem are readily verified in this case, and moreover, $\upchi(U)$ is contractible for any $U$. Therefore, we obtain a chain of weak homotopy equivalences
$$
\Big|\big(\mathsf{Bsc}_{\,\square^{\,p,q}}^{\otimes,\act}\big)_{/\square^{\,p,q},\,\vert Q}\Big| \simeq \vert \Prect\vert \simeq \underset{U\in \Prect}{\hocolim}\,\upchi(U)\simeq \Conf_Q(\square^{\,p,q}),
$$
where the second equivalence comes from the contractibility of $\upchi(U)$ for any $U\in \Prect$, and the third one follows from Lurie--Seifert--van Kampen theorem.
\end{proof}

\paragraph{Hyperdescent of constructible factorization algebras.} In this subsection, we provide an alternative proof of \cite[Theorem 6.1 and Proposition 6.10]{karlsson_assembly_2024} for manifolds with corners. Since no additional effort is required, we will show that $U\mapsto \Fact^{\cbl}_{U}(\EuScript{V})$ satisfies descent for arbitrary open hypercovers.\footnote{We follow the conventions on (hyper)covers from \cite{karlsson_assembly_2024}. See \cite[\textsection 2.5]{karlsson_assembly_2024} for an explanation of the equivalence between perspectives on (hyper)covers when working with the site $\open(X)$.} 

Recall that an open inclusion $\upiota\colon V\hookrightarrow U$ induces a pair of functors of \(\infty\)-categories
$$
\upiota_{*}\colon \Fact_{V}(\EuScript{V})\longrightarrow \Fact_{U}(\EuScript{V})\quad \text{ and }\quad \upiota^*\colon \Fact_{U}(\EuScript{V})\longrightarrow\Fact_{V}(\EuScript{V})
$$
called pushforward and restriction, respectively \cite[Lemma 3.43 and Lemma 3.44]{karlsson_assembly_2024}). Also, remember that being locally constant/constructible is a local property (see \cite[Proposition 24]{ginot_notes_2013} and \cite[\textsection 5.1]{karlsson_assembly_2024}), i.e.\ $\mathdutchcal{F}\in \Fact_{X}(\EuScript{V})$ 
is locally constant (resp.\ constructible) iff so is $\upiota^*\mathdutchcal{F}\in \Fact_{U}(\EuScript{V})$, for any open subset $U\subseteq X$ in an open cover of $X$. Hence, the pushforward and restriction functors are also defined for constructible factorization algebras 
$$
\upiota_{*}\colon \Fact_{V}^{\cbl}(\EuScript{V})\longrightarrow \Fact_{U}^{\cbl}(\EuScript{V})\quad \text{ and }\quad \upiota^*\colon \Fact_{U}^{\cbl}(\EuScript{V})\longrightarrow\Fact_{V}^{\cbl}(\EuScript{V}),
$$
for any open inclusion $\upiota\colon V\hookrightarrow U$. It is important to emphasize that the pushforward of constructible factorization algebras along arbitrary maps is not well-defined, in contrast to the pushforward of plain factorization algebras. However, there are classes of maps, like locally trivial fiber bundles, for which it is so (see \cite[Proposition 25 and 26]{ginot_notes_2013} and \cite[Example 3.45 and Remark 3.46]{karlsson_assembly_2024}).

For any manifold with corners $X$, the restriction functors along open inclusions yield a presheaf of \(\infty\)-categories
$$
\begin{tikzcd}
\Fact^{\cbl}_{\scalebox{0.4}{$(-)$}\!}(\EuScript{V})\colon  &[-12mm]      \open(X)^{\mathsf{op}} \ar[rr] && \mathsf{Cat}_{\infty} \\[-8mm]
 &   U  && \Fact_{U}^{\cbl}(\EuScript{V}) \\[-2mm]
  &  V \ar[u,hookrightarrow,"\upiota", "\quad"'{name=S}] && \Fact_{V}^{\cbl}(\EuScript{V}) \ar[u, leftarrow, "\upiota^*"', "\quad"{name=T}] \ar[mapsto, from=S, to=T]
\end{tikzcd}.
$$
Our goal in this subsection is to demonstrate that this presheaf is, in fact, a hypersheaf. That is, for any hypercover $\mathcal{U}\hookrightarrow\open(U)$ of $U\subseteq X$, the functor 
$$
\Fact_{U}^{\cbl}(\EuScript{V})\xrightarrow{\;\;\quad\;\;} \underset{V\in \mathcal{U}^{\op}}{\holim}\Fact_{V}^{\cbl}(\EuScript{V})
$$
is an equivalence of \(\infty\)-categories.

\begin{remark}
    Dropping the constructibility condition, we also have a presheaf of \(\infty\)-categories $U\mapsto \Fact_{U}(\EuScript{V})$. One can ask whether this is also a sheaf or not. To the best of our knowledge, this is an important open problem. Unfortunately, this question cannot be easily addressed with the ideas presented below, and would require important additions to the strategy followed in \cite{karlsson_assembly_2024}. 
\end{remark}

Another piece of structure that will be important for our purposes is the symmetric monoidal structure on $\Fact_{X}^{\cbl}(\EuScript{V})$ obtained in \cite[\textsection 6.2]{karlsson_assembly_2024}. We will refer to it using capital letters, $\FACT^{\cbl}_{X}(\EuScript{V})\to \Fin_*$ or simply $\FACT^{\cbl}_{X}(\EuScript{V})$, following the convention in \cite{haugseng_allegedly_nodate,haugseng_algebras_2024} that we have adopted in this work (cf.\  \cite{karlsson_assembly_2024}). Recall that there is a strong symmetric monoidal inclusion $\FACT^{\cbl}_X(\EuScript{V})\hookrightarrow \ALG_{\mathsf{Disj}_X}(\EuScript{V})$, which in plain words asserts that the monoidal structure on $\Fact^{\cbl}_{X}(\EuScript{V})$ is given by pointwise tensor products.

\begin{remark} The cited construction \cite[\textsection 6.2]{karlsson_assembly_2024} requires restricting $\EuScript{V}$ to be a presentable symmetric monoidal $\infty$-category. This aligns with our general convention: we assume $\EuScript{V}$ is presentably symmetric monoidal whenever we invoke factorization algebras. 
\end{remark}

Finally, before proceeding with our proof of hyperdescent for $\Fact^{\cbl}_{\scalebox{0.4}{$(-)$}\!}(\EuScript{V})$, we want to collect a pivotal assumption that both proofs, ours and Karlsson--Scheimbauer--Walde's, use in a crucial way.  

\begin{hypothesis}\label{hyp: Good supply of Weiss covers}
    Any open subset $U$ of $X$ admits a Weiss cover by finite disjoint unions of basics whose finite intersections remain so, i.e.\ for any open $U\subseteq X$, there is a presieve $\mathcal{U}\hookrightarrow \mathsf{bsc}(U)\hookrightarrow \open(U)$ which is a Weiss cover of $U$.
\end{hypothesis}

Proposition \ref{prop: Good supply of Weiss covers} shows that Hypothesis \ref{hyp: Good supply of Weiss covers} is satisfied for any manifold with corners $X$, which is our case of study. However, we will explicitly mention the hypothesis in the following results to emphasize which parts of the argument require it.

\begin{lemma}\label{lem:cbl Fact as operadic algebras} Let $X$ be a manifold with corners and assume that Hypothesis \ref{hyp: Good supply of Weiss covers} holds. Then, the operad maps $\mathsf{Disj}_X\leftarrow \mathsf{Bsc}_X\to \gE_X$ induce equivalences of \(\infty\)-categories
$$
\begin{tikzcd}
    \Fact_{X}^{\cbl}(\EuScript{V}) \ar[r, "\sim"] & \Alg_{\mathsf{Bsc}_X}^{\cbl}(\EuScript{V})\ar[r, "\sim", leftarrow] & \Alg_{\gE_X}(\EuScript{V}).
\end{tikzcd}
$$    
Moreover, one can leverage these equivalences to symmetric monoidal equivalences
$$
\begin{tikzcd}
    \FACT_{X}^{\cbl}(\EuScript{V}) \ar[r, "\sim"] & \ALG_{\mathsf{Bsc}_X}^{\cbl}(\EuScript{V})\ar[r, "\sim", leftarrow] & \ALG_{\gE_X}(\EuScript{V}).
\end{tikzcd}
$$
\end{lemma}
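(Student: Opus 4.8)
The plan is to establish the two displayed equivalences separately and then observe that both, being induced by restriction along operad maps, automatically promote to symmetric monoidal equivalences. For the equivalence $\Fact_X^{\cbl}(\EuScript{V})\simeq \Alg^{\cbl}_{\mathsf{Bsc}_X}(\EuScript{V})$ I would use the operadic left Kan extension adjunction $j_!\dashv j^*$ attached to the fully faithful inclusion of operads $j\colon \mathsf{Bsc}_X\hookrightarrow \mathsf{Disj}_X$, where $j^*$ is restriction and $j_!$ the operadic left Kan extension (available since $\EuScript{V}$ is presentably symmetric monoidal). Since $j$ is fully faithful, $j_!$ is fully faithful, i.e.\ the unit $\id\to j^*j_!$ is an equivalence; in particular $j^*$ and $j_!$ both preserve and reflect constructibility, since that condition only concerns unary maps lying in the underlying category $\mathsf{bsc}(X)$ of $\mathsf{Bsc}_X$, on which $j^*j_!$ acts as the identity. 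The content of this step is that $j_!A$ always lands in $\Fact_X(\EuScript{V})$: multiplicativity follows from the colimit formula for $j_!$, as the active slice of configurations of basics inside $U_1\sqcup U_2$ splits as a product over the two summands and $\otimes$ distributes over the resulting colimits in the presentable $\EuScript{V}$; Weiss codescent is precisely the statement that left Kan extension along the factorizing basis $\mathsf{bsc}(X)$ produces a Weiss cosheaf — this is where Hypothesis \ref{hyp: Good supply of Weiss covers} (verified in Proposition \ref{prop: Good supply of Weiss covers}) is essential. Granting this, $j^*$ restricted to $\Fact_X(\EuScript{V})$ is essentially surjective (as $j^*j_!A\simeq A$) and fully faithful (for $F,G\in\Fact_X(\EuScript{V})$ one computes $\Map(j^*F,j^*G)\simeq\Map(j_!j^*F,G)\simeq\Map(F,G)$, using that the counit $j_!j^*F\to F$ is an equivalence: on basics by the triangle identity, hence on finite disjoint unions of basics by multiplicativity of both sides, hence on every open by Weiss codescent of both sides). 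Restricting to constructible objects on both sides then yields $\Fact_X^{\cbl}(\EuScript{V})\simeq\Alg^{\cbl}_{\mathsf{Bsc}_X}(\EuScript{V})$. (Alternatively, this equivalence can be extracted from \cite{karlsson_assembly_2024}, with Hypothesis \ref{hyp: Good supply of Weiss covers} supplying the point-set input their arguments require.)

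The equivalence $\Alg_{\gE_X}(\EuScript{V})\simeq\Alg^{\cbl}_{\mathsf{Bsc}_X}(\EuScript{V})$ is then an immediate consequence of Proposition \ref{prop: LocalizationTheorem via WeakApprox}: since $\upgamma\colon\mathsf{Bsc}_X\to\gE_X$ exhibits $\gE_X$ as the operadic $\infty$-localization of $\mathsf{Bsc}_X$ at the set of unary maps (\ref{eqt: set of maps to localize BscX}), the universal property of operadic localization makes $\upgamma^*\colon\Alg_{\gE_X}(\EuScript{V})\to\Alg_{\mathsf{Bsc}_X}(\EuScript{V})$ fully faithful with essential image the $\mathsf{Bsc}_X$-algebras that invert every map in (\ref{eqt: set of maps to localize BscX}); and inverting exactly those maps is the constructibility condition of Definition \ref{defn: Constructible Fact Algs}, so the essential image is $\Alg^{\cbl}_{\mathsf{Bsc}_X}(\EuScript{V})$.

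For the symmetric monoidal statement, the point is that all three categories carry \emph{pointwise} symmetric monoidal structures — for $\FACT^{\cbl}_X(\EuScript{V})$ via the strong symmetric monoidal inclusion into $\ALG_{\mathsf{Disj}_X}(\EuScript{V})$ recalled above, and for $\ALG_{\mathsf{Bsc}_X}(\EuScript{V})$, $\ALG_{\gE_X}(\EuScript{V})$ by construction — so that the restriction functors $j^*$ and $\upgamma^*$ are strong symmetric monoidal. Moreover $\ALG^{\cbl}_{\mathsf{Bsc}_X}(\EuScript{V})$ is a symmetric monoidal subcategory of $\ALG_{\mathsf{Bsc}_X}(\EuScript{V})$, since a pointwise tensor of algebras inverting the maps (\ref{eqt: set of maps to localize BscX}) still inverts them and the monoidal unit (the trivial algebra) does too. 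As a strong symmetric monoidal functor whose underlying functor is an equivalence is automatically a symmetric monoidal equivalence, the equivalences of the two previous paragraphs promote to $\FACT_X^{\cbl}(\EuScript{V})\simeq\ALG^{\cbl}_{\mathsf{Bsc}_X}(\EuScript{V})\simeq\ALG_{\gE_X}(\EuScript{V})$.

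The main obstacle is the Weiss-codescent half of the first step: showing $j_!A$ is a Weiss cosheaf. I expect to handle it by a cofinality argument. For a Weiss cover $\mathcal{W}$ of $U$, the comparison map $\hocolim_{W\in\mathcal{W}}j_!A(W)\to j_!A(U)$ is induced by the forgetful functor from configurations of basics fitting inside a single member of $\mathcal{W}$ to configurations of basics inside $U$, and cofinality of this functor amounts to contractibility of the relevant comma categories. That reduces to two facts: every finite configuration of basics in $U$ can be shrunk to one contained in a single $W\in\mathcal{W}$ (using the Weiss property together with basics forming a topological basis), and any two such refinements admit a common refinement (using that finite intersections of members of the relevant Weiss cover are again covered by disjoint unions of basics) — precisely the two inputs packaged in Hypothesis \ref{hyp: Good supply of Weiss covers}.
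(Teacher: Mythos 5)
Your second step (deducing $\Alg_{\gE_X}(\EuScript{V})\simeq\Alg^{\cbl}_{\mathsf{Bsc}_X}(\EuScript{V})$ from Proposition \ref{prop: LocalizationTheorem via WeakApprox}) and your monoidal upgrade (all three symmetric monoidal structures are pointwise, so an underlying equivalence promotes) coincide with the paper's argument. The difference lies in the first equivalence: the paper does not prove it from scratch but cites \cite[Corollaries 4.24 and 5.24]{karlsson_assembly_2024} (made applicable by the basis supplied via Hypothesis \ref{hyp: Good supply of Weiss covers}) or the toolkit of \cite{carmona_model_2022} --- so your parenthetical fallback is in fact the paper's entire proof of that step, and with it your overall architecture is fine.

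Your self-contained argument for that step, however, has a genuine gap at the Weiss-codescent claim for $j_!A$. In the operadic Kan-extension formula, $j_!A(U)$ is a colimit over configurations of pairwise disjoint basics in $U$, with morphisms going from finer to coarser configurations ($\bigsqcup_{i\in\alpha^{-1}(j)}D_i\subseteq D'_j$). Cofinality of the forgetful functor from pairs $(W,\text{configuration inside }W)$ requires, for each configuration $c$ in $U$, a weakly contractible category of morphisms $c\to c'$ with $c'$ contained in a single $W\in\mathcal{W}$; such a morphism forces every basic of $c$ to be \emph{contained} in a member of the cover, which fails in general --- take $U$ itself a basic and the Weiss cover by complements of points: the one-element configuration $(U)$ admits no morphism to any configuration inside any $W$, so the comma category is empty and the functor is not cofinal. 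Your ``shrinking'' observation produces morphisms $c'\to c$, i.e.\ in the wrong direction; inverting those shrinkings is exactly the constructibility of $A$, so to repair the argument you must first localize the indexing category at such maps, at which point you are re-proving the comparison between constructible factorization algebras and constructible algebras on basics --- the content of \cite[Corollaries 4.24 and 5.24]{karlsson_assembly_2024}, a stratified analogue of Lurie's identification of $\mathbbst{E}_n$-algebras with locally constant factorization algebras --- rather than running a formal cofinality argument; Hypothesis \ref{hyp: Good supply of Weiss covers} alone does not deliver this. The same issue undercuts your fully-faithfulness step: ``hence on every open by Weiss codescent of both sides'' presupposes that $j_!j^*F$ is a Weiss cosheaf, which is precisely the point at issue. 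So either keep the citation, as the paper does, or be prepared to supply the substantially harder codescent argument; as written, the direct proof does not close.
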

\begin{proof}
The first equivalence follows from \cite[
Corollaries 4.24 and 5.24]{karlsson_assembly_2024}, if we use a basis, or from the toolkit developed in \cite{carmona_model_2022}. The second one follows from Proposition \ref{prop: LocalizationTheorem via WeakApprox}. The lifting to symmetric monoidal equivalences simply follows since the symmetric monoidal structure on the three $\infty$-categories is the pointwise tensor product.
\end{proof}

Thanks to this result, our problem translates into proving that 
$$
\Alg_{\gE_{U}}(\EuScript{V})\xrightarrow{\;\;\quad\;\;} \underset{V\in \mathcal{U}^{\op}}{\holim}\Alg_{\gE_{V}} (\EuScript{V})
$$
is an equivalence of \(\infty\)-categories. Note that this functor comes from restrictions along operad maps $\gE_{V}\to \gE_{U}$ associated to open inclusions $V\hookrightarrow U$. Showing that this is the case (see Theorem \ref{thm: Hyperdescent for cbl fact}) requires a little detour. The main idea is to prove and exploit Proposition \ref{prop: Strong approximation} below.

\begin{defn}
    Let \( X \) be an $n$-manifold with corners. We define the $\infty$-category of basics in \( X \), denoted \( \overline{X} \), as the fiber product
    \[
    \overline{X} \coloneqq {\cMfld}_{/X} \underset{\cMfld}{\times} {\Basic}_{n},
    \]
    where $\Basic_n $  denotes the full subcategory of $\cMfld $ spanned by the manifolds \(\{\square^{\,p,q}\}_{p+q=n}\) (see Notation \ref{notat:Topological category of manifolds with corners}).  Objects of \( \overline{X} \) consist of pairs $(\square^{\,p,q},\upiota)$, where $\upiota\colon \square^{\,p,q}\hookrightarrow X$ is an open embedding, and the mapping spaces are given by spaces of open embeddings over \( X \),
    \(
    \Emb_{/X}(\square^{\,p,q}, \square^{\,p'\!,q'}).
    \)
\end{defn}

\begin{remark}
    By Example \ref{ex:weak structure on infinity cat}, the $\infty$-category \({\overline{X}}\)  can be equipped with the structure of a weak \(\infty\)-operad in which every object is basic, inert morphisms are embeddings between basics of the same type, and all morphisms are regarded as active.
\end{remark}

Note that the spaces of multimorphisms in \(\gE_{X}\) come equipped with a canonical map 
\[
    \mathsf{c}\colon \gE_{X} \brbinom{\{\Drect_i\}_{i\in I}}{\Drect}
    \xlongrightarrow[]{\;\;\quad \;\;}
    \prod_{i\in I} \Emb_{/X}(\square^{\, p_i,q_i},\square^{\, p,q}),
\]
where \(\Drect=(\square^{\,p,q}, \upiota)\) with \(\upiota\colon \square^{\,p,q}\hookrightarrow X\), and analogously for $\Drect_i$. In fact, they yield an operad map  
\(
\mathsf{c}\colon\gE_{X}^{\otimes} \xrightarrow{\quad}{\overline{X}}\!\,^{\amalg}
\)
into the cocartesian $\infty$-operad associated with $\overline{X}$. 

The idea now is to use $\overline{X}$ to ``disintegrate" $\gE_X$ using a directed version of \(\int_{\overline{X}} \E_x\) \cite[Definition 6.2.8]{harpaz_little_nodate} (cf.\ Remarks \ref{rem: Operadic Slice} and \ref{rem: Why directed integral is needed}). For this purpose, we will denote by $(\overline{X}\,\!^{\amalg})^{\Delta^1}_{\mathsf{p}=\id}$ the full subcategory of the arrow $\infty$-category $(\overline{X}\,\!^{\amalg})^{\Delta^1}$ spanned by maps in $\overline{X}$ whose projection along $\mathsf{p}\colon \overline{X}\,\!^{\amalg}\longrightarrow \Fin_*$ is an identity. We also need the diagonal map 
\[
\Delta: \Fin_{*} \times \overline{X} \xrightarrow{\;\;\quad\;\;} {\overline{X}}\!\,^{\amalg}, \quad (\langle m\rangle , \square^{\,p,q})\longmapsto (\square^{\,p,q},\dots,\square^{\,p,q}).
\]

\begin{defn}\label{defn: Directed Integral} We define the $\infty$-category $\vec{\int}\!_{x\in \overline{X}}\,\E_x$ as the homotopy limit of
$$
\begin{tikzcd}
 \gE_X^{\otimes} \ar[rd,"\mathsf{c}"']  && (\overline{X}\,\!^{\amalg})^{\Delta^1}_{\mathsf{p}=\id} \ar[ld,"\mathsf{ev}_0"']\ar[rd,"\mathsf{ev}_1"] && \Fin_*\times \overline{X}\ar[ld,"\Delta"]\\
  & \overline{X}\,\!^{\amalg} && \overline{X}\,\!^{\amalg}
\end{tikzcd}.
$$
As objects, it has tuples $\big(\langle n\rangle, (\upxi_i\colon (\square^{Z_i}, u_i)\hookrightarrow (\square^{Z},v) )_{i\in \langle n \rangle^{\circ}} \big)$, where $\{(\square^{Z_i},u_i)\}_{i\in \langle n \rangle^{\circ}}$ is the associated object in $\gE_X^{\otimes}$, analogously for $(\square^{Z},v)$ and $\overline{X}$, and each $\upxi_i$ is a morphism in $\overline{X}$. To alleviate the notation, we will refer to such an object by $(\langle n \rangle, (\upxi_i)_i)$ or even $(\upxi_i)_i$. Its mapping spaces are given by the following homotopy pullbacks
$$
\begin{tikzcd}
\vec{\int}\!_{\overline{X}}\,\E_x\big((\upxi_i)_i,(\upxi'_j)_j\big) \ar[rr]\ar[d] \ar[rrd, phantom, "\overset{\mathsf{h}\;\;}{\lrcorner}" description, near start]
         &&  \displaystyle \coprod_{\upbeta\colon \langle n\rangle\to \langle n'\rangle}\prod_{j\in \langle n'\rangle^{\circ}}\Emb\big(\bigsqcup_{i\in \upbeta^{-1}(j)}\square^{Z_i},\square^{Z'_j}\big) \ar[d,"(\upxi'_{j})_*\,\cdot\, (\mathsf{inc}_i)^*"] \\
 \displaystyle \coprod_{\upbeta\colon \langle n\rangle\to \langle n'\rangle} \Emb_{/X}(\square^{Z},\square^{Z'}) 
         \ar[rr,"(\upxi_i)^*"']   && \displaystyle \coprod_{\upbeta\colon \langle n\rangle\to \langle n'\rangle}\;\prod_{i\in \upbeta^{-1}(\langle n'\rangle^{\circ})} \Emb(\square^{Z_i},\square^{Z'})
\end{tikzcd}.
$$    
\end{defn}

\begin{remark} We equip $\vec{\int}\!_{\overline{X}}\,\E_x$ with a weak $\infty$-operad structure as follows: First, note that the homotopy limit which defines this $\infty$-category yields the following diagram 
\begin{equation}\label{eqt: weak opd structure on directed Integral}
\begin{tikzcd}
  \vec{\int}\!_{\overline{X}}\,\E_x \ar[r]\ar[dd]\ar[rdd, phantom, "\overset{\mathsf{h}\;\;}{\lrcorner}" description]  &[6mm] \EuScript{T}\ar[r]\ar[d]\ar[rd, phantom, "\overset{\mathsf{h}\;\;}{\lrcorner}" description] & \Fin_*\times \overline{X}\ar[d,"\Delta"]\\
   &  (\overline{X}\,\!^{\amalg})^{\Delta^1}_{\mathsf{p}=\id}\ar[r, "\mathsf{ev}_1"']\ar[d,"\mathsf{ev}_0"] & \overline{X}\,\!^{\amalg}\\
   \gE_X^{\otimes}\ar[r,"\mathsf{c}"'] & \overline{X}\,\!^{\amalg}
\end{tikzcd} .
\end{equation}
Hence, by Remark \ref{rem: weak operad structure on the pullback}, since $\gE_X^{\otimes}$, $\overline{X}\,\!^{\amalg}$, and $\Fin_*\times \overline{X}$ have natural weak $\infty$-operad structures, it remains to equip $(\overline{X}\,\!^{\amalg})^{\Delta^1}_{\mathsf{p}=\id}$ with such a structure. For that purpose, we set the basics as those maps in $\overline{X}$ lying over $\id_{\langle 1\rangle}$, and we say that a morphism 
$$ 
\begin{tikzcd}
 \{(\square^{Z_i},u_i)\}_{i\in \langle n\rangle^{\circ}} \ar[rrrr,"\big(\upbeta\text{,}(\uppsi_i)_{i\in\upbeta^{-1}(\langle m\rangle^{\circ})}\big)"] \ar[d, shift right=4,"\big(\id_{\langle n\rangle}\text{,}(\upxi_i)_i\big)"'] &[6mm]&&&   \{(\square^{\widetilde{Z}_j},\widetilde{u}_j)\}_{j\in \langle m\rangle^{\circ}} \ar[d, shift right=4,"\big(\id_{\langle m\rangle}\text{,}(\widetilde{\upxi}_j)_j\big)"]\\[8mm]
  \{(\square^{Z'_i},u'_i)\}_{i\in \langle n\rangle^{\circ}} \ar[rrrr,"\big(\upbeta\text{,}(\uppsi'_i)_{i\in\upbeta^{-1}(\langle m\rangle^{\circ})}\big)"'] &&&&  \{(\square^{\widetilde{Z}'_i},\widetilde{u}'_j)\}_{j\in \langle m\rangle^{\circ}}
\end{tikzcd}
$$
in $(\overline{X}\,\!^{\amalg})^{\Delta^1}_{\mathsf{p}=\id}$ is inert (resp.\ active) if $\upbeta$ is inert and $\uppsi_i$, $\uppsi'_i$ are equivalences for any $i$ (resp.\ $\upbeta$ is active, and no further restriction on the components $(\uppsi_i)_i$, $(\uppsi'_i)_i$). By definition, $\mathsf{ev}_0$ and $\mathsf{ev}_1$ are morphisms of weak $\infty$-operads.
\end{remark}

The next set of results tackles the formal requirements we would need about $\vec{\int}\!_{\overline{X}}\,\E_x$ to prove Proposition \ref{prop: Strong approximation}.

\begin{lemma}\label{lem: Directed integral is cocartesian fibration}
The canonical map $\vec{\int}\!_{\overline{X}}\,\E_x\xrightarrow{\quad}\Fin_*\times \overline{X}\xrightarrow{\quad} \overline{X}$ is a cocartesian fibration.    
\end{lemma}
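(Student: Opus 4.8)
The plan is to produce the cocartesian lifts explicitly and then verify them along the factorization in the statement. Given a morphism $g\colon(\square^{Z},v)\to(\square^{W},w)$ of $\overline{X}$ (an open embedding over $X$) and an object $\mathcal{O}=\bigl(\langle n\rangle,(\upxi_i\colon(\square^{Z_i},u_i)\hookrightarrow(\square^{Z},v))_{i}\bigr)$ of $\vec{\int}\!_{\overline{X}}\,\E_x$ over $(\square^{Z},v)$, set $g_{!}\mathcal{O}=\bigl(\langle n\rangle,(g\circ\upxi_i)_{i}\bigr)$: same arity, same underlying $\gE_X^{\otimes}$-object $\{(\square^{Z_i},u_i)\}_i$, with the ambient cube and the structure maps post-composed with $g$. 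Let $\widetilde{g}\colon\mathcal{O}\to g_{!}\mathcal{O}$ be the morphism which is the identity on $\langle n\rangle$ and on the $\gE_X^{\otimes}$-component, and which on the $(\overline{X}^{\amalg})^{\Delta^1}_{\mathsf{p}=\id}$-component is the commuting square whose source leg is the identity and whose target leg is $g$ (diagonally, $n$ copies); the required compatibility is trivial. Then $\widetilde{g}$ lies over $g$, and I claim it is cocartesian for the map $\pi\colon\vec{\int}\!_{\overline{X}}\,\E_x\to\overline{X}$ of the statement. Since such lifts exist for every $g$ and every $\mathcal{O}$, and $\pi$ is clearly an inner fibration (a composite of pullbacks of inner fibrations — as always, the homotopy limit defining $\vec{\int}\!_{\overline{X}}\,\E_x$ is taken in a fibrant model, so these pullbacks are strict), this yields the lemma.

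To check cocartesianness I would use $\pi=\mathrm{pr}\circ\pi'$, with $\pi'\colon\vec{\int}\!_{\overline{X}}\,\E_x\to\Fin_*\times\overline{X}$ and $\mathrm{pr}\colon\Fin_*\times\overline{X}\to\overline{X}$ the projection. The latter is a cocartesian fibration whose cocartesian edges are those whose $\Fin_*$-component is an equivalence; $\pi'(\widetilde{g})$ has $\Fin_*$-component $\id_{\langle n\rangle}$, hence is $\mathrm{pr}$-cocartesian. So, by the standard fact that a cocartesian edge lying over a cocartesian edge is cocartesian for the composite, it suffices that $\widetilde{g}$ be $\pi'$-cocartesian. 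Reading off diagram \eqref{eqt: weak opd structure on directed Integral}, $\pi'$ factors as $\vec{\int}\!_{\overline{X}}\,\E_x\to\mathcal{P}\to\Fin_*\times\overline{X}$, where $\mathcal{P}:=(\overline{X}^{\amalg})^{\Delta^1}_{\mathsf{p}=\id}\underset{\overline{X}^{\amalg}}{\times}(\Fin_*\times\overline{X})$ is the fiber product along $\mathsf{ev}_1$ and $\Delta$, the first arrow being the pullback of $\mathsf{c}\colon\gE_X^{\otimes}\to\overline{X}^{\amalg}$ and the second the pullback of $\mathsf{ev}_1$. Now $\mathsf{ev}_1\colon\Fun(\Delta^1,\overline{X}^{\amalg})\to\overline{X}^{\amalg}$ is a cocartesian fibration, and its restriction to the full subcategory $(\overline{X}^{\amalg})^{\Delta^1}_{\mathsf{p}=\id}$ still admits cocartesian lifts over precisely those edges $h$ of $\overline{X}^{\amalg}$ with $\mathsf{p}(h)$ an identity (the candidate lift $h\circ(-)$ remains in the subcategory exactly then). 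Since $\Delta$ carries an edge $(\beta,g)$ to one with $\mathsf{p}=\beta$, the pullback $\mathcal{P}\to\Fin_*\times\overline{X}$ admits cocartesian lifts over the edges $(\id_{\langle n\rangle},g)$, and the image $\bar{g}$ of $\widetilde{g}$ in $\mathcal{P}$ is such a cocartesian lift; combining this with the $\mathrm{pr}$-cocartesianness of $(\id_{\langle n\rangle},g)$, we get that $\bar{g}$ is $(\mathcal{P}\to\overline{X})$-cocartesian. Finally, the image of $\widetilde{g}$ in $\gE_X^{\otimes}$ is an identity, hence $\mathsf{c}$-cocartesian; as $\vec{\int}\!_{\overline{X}}\,\E_x\to\mathcal{P}$ is a pullback of $\mathsf{c}$, this forces $\widetilde{g}$ to be cocartesian for $\vec{\int}\!_{\overline{X}}\,\E_x\to\mathcal{P}$, and then, lying over the cocartesian edge $\bar{g}$, it is cocartesian for $\pi'$, hence for $\pi$. (Alternatively, one can verify $\widetilde{g}$ is $\pi$-cocartesian directly from the mapping-space formula of Definition \ref{defn: Directed Integral}, at the cost of heavier bookkeeping.)

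The point I expect to require genuine care is the behaviour of $\mathsf{ev}_1$ on the \emph{directed} arrow category $(\overline{X}^{\amalg})^{\Delta^1}_{\mathsf{p}=\id}$: passing to this full subcategory destroys cocartesianness of $\mathsf{ev}_1$ over a general edge of $\overline{X}^{\amalg}$, but retains it over the edges lying over an identity of $\Fin_*$ — which is exactly the class of edges that survives the pullback along the diagonal $\Delta$, and is precisely what makes the ``directed'' variant the right object (cf.\ Remark \ref{rem: Why directed integral is needed}). The only other subtlety is that $\mathsf{c}\colon\gE_X^{\otimes}\to\overline{X}^{\amalg}$ is merely a map of $\infty$-operads, not a fibration; but this causes no difficulty, since the cocartesian lift produced above is literally the identity on the $\gE_X^{\otimes}$-component, and identities are cocartesian over any inner fibration.
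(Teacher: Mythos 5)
Your proposal is correct, and it constructs exactly the same candidate lift as the paper: $(\id_{\langle n\rangle},(\id_{\square^{Z_i}})_i,\upzeta)\colon (\upxi_i)_i\to(\upzeta\cdot\upxi_i)_i$. Where you diverge is in the verification of cocartesianness. The paper deliberately takes the hands-on route (it even remarks that ``a more abstract proof is possible''): it checks the mapping-space criterion directly from the explicit formula in Definition \ref{defn: Directed Integral}, showing the relevant square of mapping spaces is a homotopy pullback by a cancellation argument against the defining squares, with the bottom comparison square over the $\Emb_{/X}$-spaces being an evident pullback. You instead run the abstract argument: factor $\uppi$ through the intermediate category $\EuScript{T}$ of diagram (\ref{eqt: weak opd structure on directed Integral}) (your $\mathcal{P}$) and through $\Fin_*\times\overline{X}$, and reduce everything to three formal inputs — postcomposition squares are $\mathsf{ev}_1$-cocartesian and survive restriction to the $\mathsf{p}=\id$ full subcategory precisely over edges lying over identities of $\Fin_*$; equivalences (here, the identity component in $\gE_X^{\otimes}$) are cocartesian over any functor; and cocartesian edges are stable under (homotopy) pullback and under composition when the image edge is itself cocartesian. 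Both arguments are sound; what each buys is different. The paper's computation doubles as the setup for Remark \ref{rem: Why directed integral is needed}, where the same mapping-space square is shown to fail for the undirected pullback; your route is more modular and, pleasantly, isolates the role of directedness at a single point (postcomposition preserves the $\mathsf{p}=\id$ subcategory exactly over identities of $\Fin_*$, which is exactly the class of edges surviving pullback along $\Delta$), so it yields the same conceptual explanation. Your parenthetical caveats — that $\mathsf{c}$ is only a map of $\infty$-operads, handled because the relevant component is an identity, and that the homotopy limit is computed with fibrant models — are dealt with at the same level of rigor as the paper itself, so I see no gap.
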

\begin{proof} Although a more abstract proof is possible, we prefer the following hands-on approach, since it will allow us to see why this directed version of \cite[Definition 6.2.8]{harpaz_little_nodate} is required when $\overline{X}$ is not an $\infty$-groupoid in Remark \ref{rem: Why directed integral is needed}. See \cite{mazel-gee_users_2019} for a short and illustrative note on cocartesian fibrations.  

Let $(\langle n\rangle, (\upxi_i)_i)$ be an object in $\vec{\int}\!_{\overline{X}}\,\E_x$ and $\upzeta\colon (\square^{Z},v)\hookrightarrow (\square^{Z'},v')$ be a morphism in $ \overline{X}$, whose source matches the image of $(\langle n\rangle, (\upxi_i)_i)$ along $\vec{\int}\!_{\overline{X}}\,\E_x\xrightarrow{\quad} \overline{X}$. Then, the candidate cocartesian lift of $\upzeta$ with source $(\langle n\rangle, (\upxi_i)_i)$ is given by 
$$
(\id_{\langle n\rangle }, (\id_{\square^{Z_i}})_{i\in \langle n\rangle^{\circ}},\upzeta)\colon (\langle n\rangle, (\upxi_i)_i) \xrightarrow{\;\;\quad\;\;} \underset{ (\langle n\rangle,\,(\upzeta\cdot\upxi_i)_i)}{\underbrace{\upzeta_!(\langle n\rangle, (\upxi_i)_i)}}\;.
$$
To prove that this candidate is actually cocartesian, we consider the following commutative diagram
$$
\begin{tikzcd}
    \vec{\int}\!_{\overline{X}}\,\E_x\big(\upzeta_!(\upxi_i)_i,(\upxi_j'')_j\big) \ar[rr]\ar[d] && \vec{\int}\!_{\overline{X}}\,\E_x\big((\upxi_i)_i,(\upxi_j'')_j\big) \ar[d]\\
   \displaystyle \coprod_{\upbeta\colon \langle n\rangle \to \langle n''\rangle}\Emb_{/X}(\square^{Z'},\square^{Z''})\ar[rr,"\upzeta^*"']\ar[d] &&  \displaystyle \coprod_{\upbeta\colon \langle n\rangle \to \langle n''\rangle}\Emb_{/X}(\square^{Z},\square^{Z''})\ar[d]\\
   \Emb_{/X}(\square^{Z'},\square^{Z''})\ar[rr,"\upzeta^*"'] && \Emb_{/X}(\square^{Z},\square^{Z''})
\end{tikzcd}.
$$
To see that the square on top is a homotopy pullback, apply the cancellation property of such to the squares from the definition of mapping spaces in $\vec{\int}\!_{\overline{X}}\,\E_x$. The lower square is clearly a homotopy pullback, and hence we deduce that the previously defined lift is cocartesian.
\end{proof}

\begin{remark}\label{rem: Why directed integral is needed} Observe that a more naive variant of Definition \ref{defn: Directed Integral} using simply a homotopy pullback
   \[
        \begin{tikzcd}
           \int_{\overline{X}}\E_x \ar[r] \ar[d] \ar[rd, phantom, "\overset{\mathsf{h}\;\;}{\lrcorner}" description] & \Fin_{*} \times \overline{X} \ar[d, "\Delta"] \\
           \gE_{X}^{\otimes} \ar[r] & {\overline{X}}\!\,^{\amalg}
        \end{tikzcd},
    \]
    in line with \cite[Definition 6.2.8]{harpaz_little_nodate}, is doomed to fail for our purposes. Namely, the canonical map $\int_{\overline{X}}\E_x\longrightarrow \overline{X}$ is not a cocartesian fibration when $X$ is a honest manifold with corners (even a manifold with boundary), in contrast to the cases studied by Lurie and Harpaz where $\overline{X}$ is an $\infty$-groupoid. For the easiest counterexample, consider $X=\square^{\,0,1}=[0,1)$ and the  inclusion $(0,1)\subset [0,1)$ as the map $\upzeta$ in $\overline{X}$. In the data of the homotopy pullback, instead of arbitrary maps $\upxi_i\colon (\square^{Z_i},u_i)\hookrightarrow (\square^{Z},v)$ in $\overline{X}$, we would have embeddings between basics of the same type. This fixes the domain types to be $\square^{\,1,0}$ and the target types to be $\square^{\,0,1}$. Then, looking at maps in $\int_{\overline{X}}\E_x$ lying over the unique active map $\langle 2\rangle \to \langle 1\rangle$, one would need the square
    $$
    \begin{tikzcd}
      \displaystyle \Emb\big(\bigsqcup_{i=1,2}\square^{\, 0,1}, \square^{\,0,1}\big) \ar[rr, "\upzeta^*"]\ar[d] &&  \displaystyle \Emb \big(\bigsqcup_{i=1,2}\square^{\, 1,0}, \square^{\,0,1}\big)\ar[d]\\
      \displaystyle \prod_{i=1,2} \Emb(\square^{\, 0,1}, \square^{\,0,1})\ar[rr, "\upzeta^*"'] && \displaystyle \prod_{i=1,2}  \Emb(\square^{\, 1,0}, \square^{\,0,1})
    \end{tikzcd}
    $$
    to be a homotopy pullback in order for $\int_{\overline{X}}\E_x\longrightarrow \overline{X}$ to be cocartesian. This is clearly not the case, since the lower horizontal map is a weak homotopy equivalence by Proposition \ref{prop: Scanning with one basic}, while the upper horizontal map is manifestly not so (one vertex is empty and the other is not).
\end{remark}

\begin{lemma}\label{lem: Homotopy fibers of directed integral} Given an open embedding $v\colon \square^{\,p,q}\hookrightarrow X$, the weak $\infty$-operad $\E_{p,q}$ sits in the following homotopy pullback diagram
    \[
        \begin{tikzcd}
         \E_{p,q}\ar[rr]\ar[d]\ar[rrd, phantom, "\overset{\mathsf{h}\;\;}{\lrcorner}" description] &[14mm]&   \vec{\int}\!_{\overline{X}}\,\E_x \ar[d]   \\
          \Fin_*\ar[rr,"\id\times (\square^{\,p,q}\text{,}v)"'] && \Fin_*\times\overline{X}  
        \end{tikzcd}.
    \]
\end{lemma}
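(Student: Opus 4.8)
The plan is to identify the homotopy fibre of the map $\vec{\int}\!_{\overline{X}}\,\E_x\to\overline{X}$ over the object $D:=(\square^{\,p,q},v)$ with $\E_{p,q}$, and then to read off that the square in the statement is a homotopy pullback. For the second reduction, note that $\id\times(\square^{\,p,q},v)\colon\Fin_*\to\Fin_*\times\overline{X}$ is the base change of $(\square^{\,p,q},v)\colon\ast\to\overline{X}$ along the product projection $\Fin_*\times\overline{X}\to\overline{X}$, which is a cocartesian fibration; hence the displayed pullback coincides with the pullback of $\vec{\int}\!_{\overline{X}}\,\E_x\to\overline{X}$ along $\ast\to\overline{X}$, and by Lemma~\ref{lem: Directed integral is cocartesian fibration} this is the genuine fibre $\mathcal{F}$ of that cocartesian fibration, together with its residual projection to $\Fin_*$. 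So everything reduces to a fibrewise-over-$\Fin_*$ equivalence $\E_{p,q}\simeq\mathcal{F}$.

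The heart of the argument is a hands-on description of $\mathcal{F}$, obtained by unwinding the homotopy limit of Definition~\ref{defn: Directed Integral}. On objects, the vertex $\gE_X^{\otimes}$ contributes no new data, since its structure map $\mathsf{c}$ to $\overline{X}^{\amalg}$ is an isomorphism on objects; thus an object of $\mathcal{F}$ over $\langle n\rangle$ amounts to a tuple $(\upxi_i\colon(\square^{Z_i},u_i)\hookrightarrow D)_i$ of morphisms in $\overline{X}$. Writing such a morphism as a pair $(f_i,\upgamma_i)$ with $f_i\colon\square^{Z_i}\hookrightarrow\square^{\,p,q}$ an embedding and $\upgamma_i$ an isotopy $u_i\simeq v\circ f_i$, the datum $(u_i,\upgamma_i)$ ranges over a contractible space, so by remembering the $f_i$'s the space of objects of $\mathcal{F}$ over $\langle n\rangle$ is equivalent to that of $\gE_{\square^{\,p,q}}^{\otimes}$. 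On morphisms, one uses that a morphism of $\mathcal{F}$ lies over $\id_D$ in $\overline{X}$: in the mapping-space formula of Definition~\ref{defn: Directed Integral}, the ``common target'' factor $\coprod_{\upbeta}\Emb_{/X}(\square^{\,p,q},\square^{\,p,q})=\coprod_\upbeta\overline{X}(D,D)$ maps to $\overline{X}(D,D)$ by the identity, so restricting over $\id_D$ replaces it by the homotopy fibre of an identity map, hence by $\coprod_\upbeta\ast$. Consequently, the mapping space of $\mathcal{F}$ from $(\upxi_i)_i$ to $(\upxi'_j)_j$ is the homotopy pullback of the cospan
$$
\coprod_{\upbeta\colon\langle n\rangle\to\langle n'\rangle}\ \prod_{j\in\langle n'\rangle^{\circ}}\Emb\big(\textstyle\bigsqcup_{i\in\upbeta^{-1}(j)}\square^{Z_i},\square^{Z'_j}\big)\ \xrightarrow{\ (\upxi'_j)_*\,\cdot\,(\mathsf{inc}_i)^*\ }\ \coprod_{\upbeta}\,\prod_{i}\Emb(\square^{Z_i},\square^{\,p,q})\ \xleftarrow{\ (f_i)_i\ }\ \coprod_{\upbeta}\ast\,,
$$
and a direct comparison with Definition~\ref{defn:Little discs operad in X}, applied to the ambient manifold $\square^{\,p,q}$ with colours $(\square^{Z_i},f_i)$ and $(\square^{Z'_j},f'_j)$, shows this is exactly $\gE_{\square^{\,p,q}}^{\otimes}\!\big((\square^{Z_i},f_i)_i,(\square^{Z'_j},f'_j)_j\big)$. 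Hence the functor $\Theta\colon\gE_{\square^{\,p,q}}^{\otimes}\to\mathcal{F}$ carrying $(\square^{Z_i},f_i)_i$ to the tautological family $\big((\square^{Z_i},v\circ f_i),\upxi_i^{f_i}\big)_i$ (where $\upxi_i^{f_i}$ is the canonical relative morphism with underlying embedding $f_i$) is essentially surjective and fully faithful, hence an equivalence, and it is visibly compatible with the projections to $\Fin_*$ and with the inert/active classes.

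It then remains to observe that $\square^{\,p,q}=(-1,1)^p\times[0,1)^q$ is diffeomorphic, as a manifold with corners, to $\mathbbst{R}^{p,q}$, so the evident functoriality of the little‑$X$‑discs operad in diffeomorphisms of $X$ gives $\gE_{\square^{\,p,q}}^{\otimes}\simeq\gE_{\mathbbst{R}^{p,q}}^{\otimes}$, and the latter is equivalent to $\E_{p,q}$ by Proposition~\ref{prop: Coincidence of little (p,q)-disc operads}. Composing these equivalences yields $\E_{p,q}\simeq\mathcal{F}$ over $\Fin_*$, which identifies the square in the statement as a homotopy pullback (the canonical map $\E_{p,q}\to\vec{\int}\!_{\overline{X}}\,\E_x$ realising it); the weak $\infty$-operad structure transported to $\E_{p,q}$ is then the one induced on the pullback, as in Remark~\ref{rem: weak operad structure on the pullback}.

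I expect the middle paragraph to be the main obstacle: threading carefully through the three‑vertex homotopy limit defining $\vec{\int}\!_{\overline{X}}\,\E_x$ to extract the mapping spaces of the fibre, and in particular spotting that, over $\id_D$, the ``target‑of‑the‑target'' relative‑embedding factor collapses to a point, so that what survives is precisely the defining homotopy pullbacks of $\gE_{\square^{\,p,q}}$. The first and last steps are essentially formal once Lemma~\ref{lem: Directed integral is cocartesian fibration} and Proposition~\ref{prop: Coincidence of little (p,q)-disc operads} are in hand.
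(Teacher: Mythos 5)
Your proof is correct and takes essentially the same route as the paper's: identify the homotopy pullback with the fibre over $(\square^{\,p,q},v)$, note that the mapping-space formula of Definition \ref{defn: Directed Integral} collapses over $\id$ (the $\Emb_{/X}(\square^{\,p,q},\square^{\,p,q})$ factor becoming a point) to precisely the defining homotopy pullbacks of $\gE_{\square^{\,p,q}}^{\otimes}$, and conclude via Proposition \ref{prop: Coincidence of little (p,q)-disc operads}. The paper compresses your middle paragraph into one displayed square and the phrase ``inspection shows''; your additional care with the object spaces, the cocartesian-fibration reduction, and the explicit comparison functor is consistent with its argument.
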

\begin{proof} The objects of the homotopy pullback, that we denote $\EuScript{X}$ during the proof, can be identified with those $(\upxi_i)_i\in \vec{\int}\!_{\overline{X}}\,\E_x$ such that the target of $\upxi_i$ is $(\square^{\,p,q},v)$ for any $i\in \langle n\rangle^{\circ}$. Regarding mapping spaces, we have homotopy pullback squares
$$
\begin{tikzcd}
    \EuScript{X}\big((\upxi_i)_i,(\upxi'_j)_j\big) \ar[r] \ar[d] \ar[rd, phantom, "\overset{\mathsf{h}\;\;}{\lrcorner}" description] &[10mm] \displaystyle \coprod_{\upbeta}\prod_{j\in \langle n'\rangle^{\circ}}\Emb\big(\bigsqcup_{i\in\upbeta^{-1}(j)}\square^{Z_i},\square^{Z'_j}\big) \ar[d,"(\upxi'_j)_*\,\cdot\, (\mathsf{inc}_i)^*"]\\[4mm]
    \displaystyle \hspace{-8mm}\coprod_{\upbeta\colon \langle n\rangle \to \langle n'\rangle}\!\!\!\!* \ar[r,"(\upxi_i)_i"'] & \displaystyle \coprod_{\upbeta}\prod_{i\in\upbeta^{-1}(\langle n'\rangle^{\circ})}\Emb(\square^{Z_i},\square^{\,p,q})
\end{tikzcd}.
$$
Therefore, inspection shows that $\EuScript{X}$ is nothing but $\gE_{\square^{\,p,q}}^{\otimes}$, which is equivalent to $\E_{p,q}$ by Proposition \ref{prop: Coincidence of little (p,q)-disc operads}.
\end{proof}

\begin{remark}\label{rem: Unstraightening of diagram of operads} 
    Writing $\uppi\colon \vec{\int}\!_{\overline{X}}\,\E_x\xrightarrow{\quad} \overline{X}$ to denote the canonical map in Lemma \ref{lem: Directed integral is cocartesian fibration}, we obtain, by straightening of $\uppi$ and Lemma \ref{lem: Homotopy fibers of directed integral}, a diagram 
    $$
    \E_{\;\!\scalebox{0.4}{$(-)$}}\colon \overline{X}\xrightarrow{\;\;\quad\;\;}\Cat_{\infty}, \qquad (\square^{\,p,q},u)\xmapsto{\;\;\quad\;\;} \E_{p,q}.$$ 
    Let us argue that $\E_{\;\!\scalebox{0.4}{$(-)$}}$ factors through the composition of forgetful functors
    $$
    \Opd_{\infty} \xrightarrow{\;\;\quad\;\;} (\Cat_{\infty})_{/\Fin_*}\xrightarrow{\;\;\quad\;\;} \Cat_{\infty}.
    $$
    In other words, what we really have is a diagram of $\infty$-operads  $\E_{\;\!\scalebox{0.4}{$(-)$}}\colon \overline{X}\xrightarrow{\quad}\Opd_{\infty}$. First, each value of the diagram comes with a natural projection into $\Fin_*$, as it is manifest from Lemma \ref{lem: Homotopy fibers of directed integral}.  By observing the $\uppi$-cocartesian lifts provided in the proof of Lemma \ref{lem: Directed integral is cocartesian fibration}, we deduce:
    \begin{itemize}
        \item The functor $\upzeta_!\colon\E_{p,q}\xrightarrow{\quad}\E_{p'\!,q'} $ induced between homotopy fibers of $\uppi$ by a morphism $\upzeta\colon (\square^{\,p,q},u)\hookrightarrow (\square^{\,p'\!,q'},u')$ in $\overline{X}$ is compatible with the projections into $\Fin_*$; note that the projection to $\Fin_*$ of the $\uppi$-cocartesian lift of $\upzeta$ is $\id_{\langle n\rangle}$.
        \item The functor $\upzeta_!\colon\E_{p,q}\xrightarrow{\quad}\E_{p'\!,q'}$ induced by $\upzeta\colon (\square^{\,p,q},u)\hookrightarrow (\square^{\,p'\!,q'},u')$ preserves inert maps (recall that Lemma \ref{lem: Homotopy fibers of directed integral} computes the homotopy fibers with their weak $\infty$-operad structures). This follows from the commutative squares in $\vec{\int}\!_{\overline{X}}\,\E_x$
        $$ 
        \begin{tikzcd}
        (\upxi_i)_{i\in \langle n\rangle^{\circ}} \ar[rrrr,"(\id_{\langle n\rangle}\text{,}\,(\id_{i})_{i}\text{,}\,\upzeta)"]\ar[rrrr,"\uppi\text{-cocart.lift}"'] \ar[d, shift right=4,"(\uprho\text{,}\,(\id_{\scalebox{0.7}{$\uprho^{-1}(j)$}})_j\text{,}\, \id_{u})"'] &[6mm]&&&   (\upzeta\cdot\upxi_i)_{i\in \langle n\rangle^{\circ}} \ar[d, shift right=4,"(\uprho\text{,}\,(\id_{\scalebox{0.7}{$\uprho^{-1}(j)$}})_j\text{,}\, \id_{u'})"]\\[8mm]
        (\upxi_{\uprho^{-1}(j)})_{j\in \langle m\rangle^{\circ}} \ar[rrrr,"(\id_{\langle m\rangle}\text{,}\,(\id_{\scalebox{0.7}{$\uprho^{-1}(j)$}})_{j}\text{,}\,\upzeta)"']\ar[rrrr,"\uppi\text{-cocart.lift}"] &&&&   (\upzeta\cdot\upxi_{\uprho^{-1}(j)})_{j\in \langle m\rangle^{\circ}}
        \end{tikzcd},
        $$ 
        where the horizontal maps are $\uppi$-cocartesian lifts of $\upzeta$, the vertical maps are inert morphisms over an inert map $\uprho\colon \langle n\rangle\to \langle m\rangle$, and we have denoted $\id_{i}\equiv\id_{(\square^{Z_i},u_i)}$, $\id_{u}=\id_{(\square^{\,p,q},u)}$ and $\id_{u'}=\id_{(\square^{\,p'\!,q'},u')}$.
    \end{itemize}
    This means that $\E_{\;\!\scalebox{0.4}{$(-)$}}$ sends morphisms in $\overline{X}$ to morphisms of $\infty$-operads, thus concluding our claim. Furthermore, the map of weak $\infty$-operads $\vec{\int}\!_{\overline{X}}\,\E_x\xrightarrow{\quad} \gE_X^{\otimes}$ induces a canonical cocone 
    $$
    \begin{tikzcd}
        \overline{X} \ar[rrrd, bend left=20,"\E_{\;\!\scalebox{0.4}{$(-)$}}"]\ar[rd, hookrightarrow]\\[-4mm]
        & \overline{X}\,\!^{\triangleright}\ar[rr] &&\Opd_{\infty}\\[-4mm]
        \{\infty\}\ar[ru, hookrightarrow] \ar[rrru, bend right=20,"\gE^\otimes_X"']
    \end{tikzcd}.
    $$ 
\end{remark}

\begin{prop} 
\label{prop: Integral is a weak approx} 
The canonical map  $\vec{\int}\!_{\overline{X}}\,\E_x\xrightarrow{\quad}\gE_X^{\otimes}$ is a weak approximation.
\end{prop}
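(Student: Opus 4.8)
The plan is to verify that $F\colon\vec{\int}\!_{\overline{X}}\,\E_x\to\gE_X^{\otimes}$ satisfies the two conditions of Definition \ref{defn: weak and strong approximation}, following the blueprint of Theorem \ref{thm:WeakApproxofE} and Proposition \ref{prop: LocalizationTheorem via WeakApprox} and exploiting the structural results just established (Lemmas \ref{lem: Directed integral is cocartesian fibration} and \ref{lem: Homotopy fibers of directed integral}, Remark \ref{rem: Unstraightening of diagram of operads}). The only genuinely new ingredient compared with those proofs will be the need to carry along the ``target decoration'' recorded by the $(\overline{X}\,\!^{\amalg})^{\Delta^1}_{\mathsf{p}=\id}$-factor of $\vec{\int}\!_{\overline{X}}\,\E_x$.

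For the first condition, I would argue as in Lemma \ref{lem: First condition LocalizationTheorem via WeakApprox}: since $(\gE^{\otimes}_{X})^{\inert}_{\langle 1\rangle}$ is an $\infty$-groupoid, by Proposition \ref{prop:localization} it suffices to show that the homotopy fibers of $F^{-1}\big((\gE^{\otimes}_{X})^{\inert}_{\langle 1\rangle}\big)\to(\gE^{\otimes}_{X})^{\inert}_{\langle 1\rangle}$ are contractible. Unwinding Definition \ref{defn: Directed Integral}, the source is the $\infty$-category whose objects are the single morphisms $\upxi\colon(\square^{\,p,q},u)\hookrightarrow(\square^{\,p'\!,q'},v)$ of $\overline{X}$ and whose morphisms are the commuting squares in $\overline{X}$ with invertible source-leg, and the functor sends $\upxi$ to its source; hence the homotopy fiber over $(\square^{\,p,q},u)$ is the coslice $\overline{X}_{(\square^{\,p,q},u)/}$, which is contractible because $\id_{(\square^{\,p,q},u)}$ is an initial object.

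For the second condition, by Remark \ref{rem:WeakApproxReduction} I would reduce to $U=(\upxi\colon(\square^{\,p,q},u)\hookrightarrow(\square^{\,p'\!,q'},v))$ over $\langle 1\rangle$, so that $F(U)=(\square^{\,p,q},u)$. Then, exactly as in Lemmas \ref{lem:Second condition WeakAprroxofE} and \ref{lem: Second condition LocalizationTheorem via WeakApprox}, I would compose $\big(\vec{\int}\!_{\overline{X}}\,\E_x\big)^{\act}_{/U}\xrightarrow{\,G\,}\big(\gE_{X}^{\otimes,\act}\big)_{/F(U)}\to\gE_{X}^{\otimes,\act}$ and invoke Proposition \ref{prop: Categorical triangle to analyze fibers}. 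The homotopy fiber of the second (right-fibration) map over an object $\{R_{\bullet}\}_{\langle l\rangle}$ is the multimorphism space $\gE_X\brbinom{\{R_{\bullet}\}}{(\square^{\,p,q},u)}$, which by Lemma \ref{lem:Simplifying slices for operads}, Proposition \ref{prop: Coincidence of little (p,q)-disc operads} and Lemma \ref{lem:rect-conf} is weakly equivalent to $\Conf_{R_{\bullet}}(\square^{\,p,q})$ (or empty, a case settled by inspection). So the task is to show that the homotopy fiber of the composite $\big(\vec{\int}\!_{\overline{X}}\,\E_x\big)^{\act}_{/U}\to\gE_{X}^{\otimes,\act}$ over $\{R_{\bullet}\}$ is also weakly equivalent to $\Conf_{R_{\bullet}}(\square^{\,p,q})$, compatibly. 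I would use Lemma \ref{lem:strict pullback} to identify this homotopy fiber with the honest poset $\Prect$ of \emph{decorated configurations} — a family of pairwise disjoint sub-cubes $\{D_k\subseteq\square^{\,p,q}\}$ of types $R_{\bullet}$, a sub-cube $(\square^{D},w)\hookrightarrow(\square^{\,p'\!,q'},v)$ over $X$, and compatible embeddings $(\square^{R_k},w_k)\hookrightarrow(\square^{D},w)$ refining, through $\upxi$, the inclusions $D_k\subseteq\square^{\,p,q}$, with morphisms the $1$-to-$1$ type-preserving inclusions of all sub-cubes in sight — and then apply the Lurie--Seifert--van Kampen theorem \cite[Theorem A.3.1]{lurie_higher_nodate} to the functor $\upchi\colon\Prect\to\open\big(\Conf_{R_{\bullet}}(\square^{\,p,q})\big)$, $\big(\{D_k\},(\square^{D},w),\dots\big)\mapsto\prod_{k}\Conf_{\{k\}}\big((D_k)_{[R_k]}\big)$, which forgets the decoration and hence has contractible values. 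For a point $\underline{x}=\{x_k\}$, the subposet $\Prect_{\underline{x}}$ of decorated configurations containing $\underline{x}$ is non-empty — realize $\underline{x}$ by small disjoint sub-cubes $D_k$ of the correct types using Hausdorffness, the sub-cube basis and Remark \ref{remark: cubes are nice}, and decorate \emph{tautologically} by $(\square^{D},w)=(\square^{\,p,q},u)$, $g=\upxi$, and $D_k\hookrightarrow\square^{\,p,q}$ — and cofiltered — given two decorations of $\underline{x}$, refine the $D_k$ as usual and take for the common target any sub-cube of $\square^{\,p'\!,q'}$ inside the intersection of the two target sub-cubes, shrunk to restore the right type by Remark \ref{remark: cubes are nice}. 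Thus $\Prect_{\underline{x}}$ is contractible, and one gets $\Big|\big(\vec{\int}\!_{\overline{X}}\,\E_x\big)^{\act}_{/U,\,|R_{\bullet}}\Big|\simeq|\Prect|\simeq\hocolim_{\Prect}\upchi\simeq\Conf_{R_{\bullet}}(\square^{\,p,q})$, as needed.

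I expect the main obstacle to be the target decoration itself. In the boundaryless setting of Lurie and Harpaz the naive pullback $\int_{\overline{X}}\E_x$ already suffices because $\overline{X}$ is an $\infty$-groupoid; here (cf.\ Remark \ref{rem: Why directed integral is needed}) one must retain the $(\overline{X}\,\!^{\amalg})^{\Delta^1}$-data, and the crux is that the resulting decoration subcategories are only \emph{cofiltered} — they have no initial object, but admit common lower bounds obtained by intersecting and shrinking sub-cubes — rather than trivially contractible. A secondary technical point will be the disciplined use of Lemma \ref{lem:strict pullback} to pass to strict $1$-categorical models of the relevant homotopy fibers before the van Kampen argument can be applied.
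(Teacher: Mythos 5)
Your first-condition argument is essentially sound (it mirrors what the paper does for $\mathsf{ev}_0$, via coslice contractibility), but your treatment of the second condition has two genuine gaps. First, the reduction to a basic $U$ over $\langle 1\rangle$ via Remark \ref{rem:WeakApproxReduction} is not available: that remark is stated (and proved) for maps of $\infty$-operads, whereas $\vec{\int}\!_{\overline{X}}\,\E_x$ is only a weak $\infty$-operad --- an object over $\langle n\rangle$ is a family of arrows $(\upxi_i)_i$ sharing a \emph{single} common target in $\overline{X}$, so the active slice $\big(\vec{\int}\!_{\overline{X}}\,\E_x\big)^{\act}_{/(\upxi_i)_i}$ does not decompose componentwise (every morphism carries one shared map of targets). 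The second condition has to be checked for all objects, and the shared-target structure is precisely what distinguishes the general case from the unary one; you never address it. Second, and more seriously, your identification of the homotopy fiber of $\big(\vec{\int}\!_{\overline{X}}\,\E_x\big)^{\act}_{/U}\to\gE_X^{\otimes,\act}$ with an honest poset of ``decorated configurations'' rests on Lemma \ref{lem:strict pullback}, whose hypotheses fail here: the source must be an \emph{ordinary} category and the target must have levelwise contractible-or-empty mapping spaces. The directed integral has genuinely non-discrete mapping spaces (over-$X$ embedding spaces such as $\Emb_{/X}(\square^{Z},\square^{Z'})$, plus coherence homotopies), and these are not contractible-or-empty for a general manifold with corners (already for $X=S^1$ they have the homotopy type of a path space of the stratum); likewise the levelwise mapping spaces of $\gE_X^{\otimes,\act}$ are not contractible-or-empty for general $X$ --- the paper only invokes that lemma for discrete sources ($\mathsf{Cube}$, wreath products, $\mathsf{Bsc}$) and after reducing the target to a single quadrant via Lemma \ref{lem:Simplifying slices for operads}. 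So your poset $\Prect$ is only a discrete shadow of the actual fiber, and the Lurie--Seifert--van Kampen step is not licensed; filling this hole would require a substantially different argument, not just ``disciplined use'' of Lemma \ref{lem:strict pullback}.

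For comparison, the paper avoids analyzing the composite's fibers altogether: using the defining limit diagram (\ref{eqt: weak opd structure on directed Integral}) and the stability of weak/strong approximations under homotopy pullback (Remark \ref{rem: weak operad structure on the pullback}), it reduces the statement to $(i)$ the diagonal $\Delta\colon\Fin_*\times\overline{X}\to\overline{X}\,\!^{\amalg}$ being a strong approximation (Harpaz's argument plus weak contractibility of $\overline{\square}\,\!^{\,p,q}$) and $(ii)$ $\mathsf{ev}_0\colon(\overline{X}\,\!^{\amalg})^{\Delta^1}_{\mathsf{p}=\id}\to\overline{X}\,\!^{\amalg}$ being a weak approximation, both of which need only coslice-contractibility checks and no configuration-space or van Kampen input. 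If you want a direct proof in your style, you would at minimum have to replace Lemma \ref{lem:strict pullback} by an analysis adapted to non-discrete sources and handle arbitrary objects of the directed integral; the formal pullback route is exactly what makes the proposition tractable.
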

\begin{proof}
In light of Diagram (\ref{eqt: weak opd structure on directed Integral}) and Remark \ref{rem: weak operad structure on the pullback}, the conclusion holds if we can prove: $(i)$ the diagonal map $\Delta\colon \Fin_*\times \overline{X}\longmapsto \overline{X}\,\!^{\amalg}$ is a strong approximation, and $(ii)$ the evaluation functor $\mathsf{ev}_0\colon (\overline{X}\,\!^{\amalg})^{\Delta}_{\mathsf{p}=\id} \longrightarrow \overline{X}\,\!^{\amalg}$ is a weak approximation. As for $(i)$, essentially the same argument provided in the proof of \cite[Proposition 6.2.11]{harpaz_little_nodate} works in this case as well. Just note that $(\overline{X}\!\,^{\amalg})^{\inert}_{\langle 1 \rangle} :=\overline{X}\!\,^{ \simeq}=: \overline{X}\!\,^{\inert}\) and $\Delta^{-1}( \overline{X}\!\,^{\amalg})^{\inert}_{\langle 1 \rangle}\simeq \overline{X}\!\,^{ \simeq}$ by construction, and that $\overline{\square}\,\!^{\, p,q}$ is weakly contractible. Regarding $(ii)$, let us check the two conditions in Definition \ref{defn: weak and strong approximation} for the morphism of weak $\infty$-operads $\mathsf{ev}_0$.

First, we demonstrate that $\mathsf{ev}_0^{-1}(\overline{X}\,\!^{\simeq})\longrightarrow \overline{X}\,\!^{\simeq}$ is an $\infty$-localization. Since the target is an $\infty$-groupoid, it suffices to apply Quillen's theorem A to show that this is the case. Letting $(\square^{Z},v)\in \overline{X}\,\!^{\simeq}$, we conclude this part by just observing that there is an obvious identification $\mathsf{ev}_0^{-1}(\overline{X}\,\!^{\simeq})_{\square^{Z}/} \simeq \overline{X}_{\square^{Z}/}$, and that the latter $\infty$-category is contractible.

Finally, let us show that, for any $\big(\widetilde{\upxi}_j\colon (\square^{\widetilde{Z}_j},\widetilde{u}_j)\hookrightarrow (\square^{\widetilde{Z}\,\!_j'},\widetilde{u}\,\!'_j)\big)_{j\in \langle m\rangle^{\circ}} $ in $(\overline{X}\,\!^{\amalg})^{\Delta^1}_{\mathsf{p}=\id}$, the homotopy fibers of 
\begin{equation}\label{eqt: Second condition approx for modified arrow cat}
   \begin{tikzcd}
        \big((\overline{X}\,\!^{\amalg})^{\Delta^1}_{\mathsf{p}=\id}\big)^{\act}_{/(\widetilde{\upxi}_j)_j} \ar[rr] && \big(\overline{X}\,\!^{\amalg,\,\act}\big)_{/\{\square^{\widetilde{Z}_j}\}_j}
    \end{tikzcd}
\end{equation}
are contractible. Consider an object $(\upbeta,(\uppsi_i)_i)\colon\{(\square^{Z_i},u_i)\}_i\longrightarrow \{(\square^{\widetilde{Z}_j},\widetilde{u}_j)\}_j $ of the $\infty$-category on the right hand side, where $\uppsi_i\colon (\square^{Z_i},u_i)\hookrightarrow (\square^{\widetilde{Z}_{\upbeta(i)}},\widetilde{u}_{\upbeta(i)})$ is a morphism in $\overline{X}$ for each $i$. We can handle one component at a time, and hence we can assume without loss of generality that $m=1$. In this case, the homotopy fiber of (\ref{eqt: Second condition approx for modified arrow cat}) over $(\upbeta,(\uppsi_i)_i)$ can be identified with $\prod_i {\overline{\square}\!\,^{\widetilde{Z}'}}_{\square^{Z_i}/}$, since $\overline{X}\,\!_{/\square^{Z}}\simeq \overline{\square}\!\,^{Z}$, and hence it is contractible.
\end{proof}

We are finally in position to prove Proposition \ref{prop: Strong approximation}.

\begin{prop}\label{prop: Strong approximation} For any \(\infty\)-operad $\EuScript{P}$, the canonical cocone $\upgamma\colon \E_{\;\!\scalebox{0.4}{$(-)$}}\Longrightarrow \gE^{\otimes}_X$ from Remark \ref{rem: Unstraightening of diagram of operads} induces an equivalence of $\infty$-categories
$$
\upgamma^* \colon \Alg_{\gE_{X}}(\EuScript{P})\xrightarrow{\;\;\;\sim\;\;\;} \underset{x\in \overline{X}\!\,^{\op}}{\holim}\Alg_{\mathbbst{E}_{x}}(\EuScript{P}).
$$    
\end{prop}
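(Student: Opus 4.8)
The statement to prove is: for any $\infty$-operad $\EuScript{P}$, the cocone $\upgamma\colon \E_{\;\!\scalebox{0.4}{$(-)$}}\Longrightarrow \gE^{\otimes}_X$ induces an equivalence $\upgamma^*\colon \Alg_{\gE_X}(\EuScript{P})\xrightarrow{\sim}\holim_{x\in\overline{X}^{\op}}\Alg_{\mathbbst{E}_x}(\EuScript{P})$.

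Let me sketch how I'd approach this.

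The plan is to leverage the directed-integral construction $\vec{\int}_{\overline{X}}\E_x$ together with the two structural facts already established: (a) by Lemma~\ref{lem: Directed integral is cocartesian fibration}, $\uppi\colon\vec{\int}_{\overline{X}}\E_x\to\overline{X}$ is a cocartesian fibration, and by Remark~\ref{rem: Unstraightening of diagram of operads} its straightening is precisely the diagram of $\infty$-operads $\E_{\;\!\scalebox{0.4}{$(-)$}}\colon\overline{X}\to\Opd_\infty$, with the fiber over $(\square^{p,q},u)$ being $\E_{p,q}$ (Lemma~\ref{lem: Homotopy fibers of directed integral}); and (b) by Proposition~\ref{prop: Integral is a weak approx}, the canonical map $\vec{\int}_{\overline{X}}\E_x\to\gE_X^{\otimes}$ is a weak approximation of weak $\infty$-operads. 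First I would invoke the general principle (this is the analogue of \cite[Proposition 6.2.12/6.2.13]{harpaz_little_nodate}, available via Harpaz's theory of weak approximations in \textsection\ref{sect:weak operads}, cf.\ Lemma~\ref{lem:Harpaz weak approx}) that a weak approximation of weak $\infty$-operads induces an equivalence on categories of algebras (equivalently, on $\Mon_{(-)}$ after passing to monoids valued in spaces, and then with $\EuScript{P}$-coefficients). This gives the first equivalence
$$
\Alg_{\gE_X}(\EuScript{P})\xrightarrow{\;\sim\;}\Alg_{\vec{\int}_{\overline{X}}\E_x}(\EuScript{P}).
$$
Thus the problem reduces to identifying $\Alg_{\vec{\int}_{\overline{X}}\E_x}(\EuScript{P})$ with the homotopy limit $\holim_{x\in\overline{X}^{\op}}\Alg_{\mathbbst{E}_x}(\EuScript{P})$.

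The second step is the heart of the matter: a limit-decomposition of algebra categories over a cocartesian fibration of $\infty$-operads. Concretely, since $\vec{\int}_{\overline{X}}\E_x\to\overline{X}$ is a cocartesian fibration whose straightening is the $\Opd_\infty$-valued diagram $\E_{\;\!\scalebox{0.4}{$(-)$}}$, one has $\vec{\int}_{\overline{X}}\E_x\simeq\operatorname{colim}_{x\in\overline{X}}\E_x$ in $\Opd_\infty$ (a colimit of $\infty$-operads computed as the cocartesian unstraightening; this is exactly the content of the cocone diagram drawn at the end of Remark~\ref{rem: Unstraightening of diagram of operads}, which asserts $\gE_X^{\otimes}$ is the colimit). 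Applying the contravariant functor $\Alg_{(-)}(\EuScript{P})\colon\Opd_\infty^{\op}\to\Cat_\infty$, which sends colimits of $\infty$-operads to limits of $\infty$-categories, yields
$$
\Alg_{\vec{\int}_{\overline{X}}\E_x}(\EuScript{P})\;\simeq\;\Alg_{\operatorname{colim}_{x}\E_x}(\EuScript{P})\;\simeq\;\underset{x\in\overline{X}^{\op}}{\holim}\,\Alg_{\E_x}(\EuScript{P}),
$$
and since $\E_x=\E_{p,q}$ is the operadic nerve of $\mathbbst{E}_{p,q}$ (so $\Alg_{\E_x}(\EuScript{P})=\Alg_{\mathbbst{E}_x}(\EuScript{P})$), composing the two equivalences gives the claim. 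One must check that the composite equivalence is the one induced by the cocone $\upgamma$ — this is a naturality/compatibility bookkeeping point, using that $\upgamma$ is by construction the cocone map $\vec{\int}_{\overline{X}}\E_x\to\gE_X^{\otimes}$ restricted along the fiber inclusions $\E_x\hookrightarrow\vec{\int}_{\overline{X}}\E_x$.

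The main obstacle I anticipate is justifying the identification $\vec{\int}_{\overline{X}}\E_x\simeq\operatorname{colim}_{x\in\overline{X}}\E_x$ \emph{in $\Opd_\infty$} (not merely in $\Cat_\infty$ or $(\Cat_\infty)_{/\Fin_*}$) and, correlatively, that $\Alg_{(-)}(\EuScript{P})$ sends this particular colimit to the honest homotopy limit over $\overline{X}^{\op}$. The subtlety is that a cocartesian-fibration "integral" of $\infty$-operads need not be their $\Opd_\infty$-colimit unless the Segal/fibrancy conditions are compatible with the total space's weak $\infty$-operad structure — precisely why the \emph{directed} integral (rather than Harpaz--Lurie's naive pullback) is used, as Remark~\ref{rem: Why directed integral is needed} explains. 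I would address this by: (i) observing the weak $\infty$-operad structure on $\vec{\int}_{\overline{X}}\E_x$ from Remark~\eqref{eqt: weak opd structure on directed Integral} makes $\uppi$ a map of weak $\infty$-operads with $\uppi$-cocartesian lifts preserving inert morphisms (the two bullet points of Remark~\ref{rem: Unstraightening of diagram of operads}); (ii) invoking the relevant lemma on colimits over cocartesian fibrations (the one attributed to T.~Walde in the acknowledgements, \ref{lem: colims over cocart fibrations}) to pass between the total-space description and the diagram description; and (iii) using Proposition~\ref{prop: Integral is a weak approx} once more, in the form that $\Alg_{\vec{\int}_{\overline{X}}\E_x}(\EuScript{P})$ is computed as the limit of the monodromy diagram of $\EuScript{P}$-algebras over the fibers. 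A cleaner alternative — which I would present as the primary route if the colimit-of-operads formalism proves delicate — is to bypass $\Opd_\infty$-colimits entirely: directly show that an $\EuScript{P}$-algebra over $\vec{\int}_{\overline{X}}\E_x$ is the same datum as a compatible family of $\mathbbst{E}_x$-algebras indexed by $x\in\overline{X}^{\op}$, by unwinding the definition of algebras over a weak $\infty$-operad fibered over $\overline{X}$ via the cocartesian fibration $\uppi$ and the fact that sections of $\uppi$ satisfying the Segal condition assemble into the homotopy limit of the fiberwise algebra categories — this is a Grothendieck-construction argument entirely internal to the structures already built, and it makes the compatibility with $\upgamma$ transparent.
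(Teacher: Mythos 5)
Your route is assembled from the right ingredients (the directed integral, Proposition \ref{prop: Integral is a weak approx}, the cocartesian-fibration/colimit dictionary), but as written it has two genuine gaps. First, your opening step misstates what a weak approximation buys: Lemma \ref{lem:Harpaz weak approx} identifies $\Mon_{\gE_X^{\otimes}}(\Spc)$ with the \emph{locally constant} monoids on $\vec{\int}_{\overline{X}}\,\E_x$, i.e.\ those inverting the $\uppi$-cocartesian arrows, not with all monoids; and since $\vec{\int}_{\overline{X}}\,\E_x$ is only a \emph{weak} $\infty$-operad, the expression $\Alg_{\vec{\int}_{\overline{X}}\E_x}(\EuScript{P})$ for an arbitrary $\infty$-operad $\EuScript{P}$ is not defined in this framework, so the parenthetical ``and then with $\EuScript{P}$-coefficients'' has no support -- Harpaz's approximation machinery only governs monoids valued in an $\infty$-category with limits. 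Second, your central claim that $\gE_X^{\otimes}$ (equivalently the localization of the directed integral at cocartesian arrows) \emph{is} the colimit of $\E_{(-)}$ in $\Opd_\infty$ is not contained in Remark \ref{rem: Unstraightening of diagram of operads}: the remark only supplies the cocone, and the colimit statement is essentially the proposition being proved, so citing the remark for it is circular.

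What is missing is precisely the two-step reduction the paper performs. It first proves the statement for $\Spc$-valued monoids, composing $\Mon_{\gE_X^{\otimes}}(\Spc)\simeq \Mon^{\lc}_{\vec{\int}_{\overline{X}}\E_x}(\Spc)$ (Proposition \ref{prop: Integral is a weak approx} plus Lemma \ref{lem:Harpaz weak approx}) with the identification of $\uppi$-cocartesian-inverting functors out of the unstraightening with $\holim_{x}\Fun(\E_x,\Spc)$, and checking this restricts to monoids; note that the locally constant condition is exactly what makes the two halves compose, so dropping it is not harmless. Then, to reach arbitrary $\EuScript{P}$, it forms the honest operadic colimit $\hocolim_{x}\E_x$, invokes Proposition \ref{prop:weak operad 1} (a map of $\infty$-operads which is essentially surjective and an equivalence on $\Spc$-monoids is an operadic equivalence), and verifies essential surjectivity of $\hocolim_{x}\E_x\to\gE_X^{\otimes}$ by tracing a color through $\E_x\to\vec{\int}_{\overline{X}}\,\E_x\to\gE_X^{\otimes}$; only then does Lemma \ref{lem:ALG commutes with lims} convert the colimit of operads into the desired limit of algebra categories. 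Neither the reduction to $\Spc$-monoids nor the essential surjectivity check appears in your plan, and your fallback ``Grothendieck-construction'' sketch still presupposes a notion of $\EuScript{P}$-algebra over the weak $\infty$-operad $\vec{\int}_{\overline{X}}\,\E_x$, so it does not close the gap either. The skeleton of your argument is salvageable, but these points are the substance of the proof rather than bookkeeping.
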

\begin{proof}
   We will argue that this is the case in two steps: first, by showing that the analogous statement holds for monoids, and then by deducing the general case.

  For monoids, we can factor the map in the statement along $\vec{\int}\!_{\overline{X}}\,\E_x\longrightarrow \gE^{\otimes}_X$ obtaining:
  $$
  \begin{tikzcd}
      \Mon_{\gE^{\otimes}_X}(\Spc) \ar[rr, "(i)"] && \Mon^{\lc}_{\scalebox{0.7}{$\vec{\int}$}_{\overline{X}}\,\E_x}(\Spc) \ar[rr,"(ii)"] && \underset{x\in \overline{X}\,\!^{\op}}{\holim}\,\Mon_{\E_x}(\Spc),
  \end{tikzcd}
  $$
  where the local constancy condition here means that we are considering monoids over $\vec{\int}\!_{\overline{X}}\,\E_x$ which send $\uppi$-cocartesian arrows to equivalences. The map $(i)$ is an equivalence due to Proposition \ref{prop: Integral is a weak approx} and Lemma \ref{lem:Harpaz weak approx}. For $(ii)$, we observe that there is an equivalence of $\infty$-categories 
  $$
  \textstyle \Fun^{\uppi\text{-}\mathsf{cocart}}\big(\vec{\int}\!_{\overline{X}}\,\E_x,\Spc\big)\xrightarrow{\;\;\;\sim\;\;\;} \underset{x\in\overline{X}\!\,^{\op}}{\holim}\, \Fun(\E_x,\Spc)
  $$
  coming from the fact that $\uppi\colon\vec{\int}\!_{\overline{X}}\,\E_x\longrightarrow \overline{X}$ is the unstraightening, and hence the oplax colimit, of $\E_{\;\!\scalebox{0.4}{$(-)$}}\colon \overline{X}\longrightarrow \Cat_{\infty}$ (see \cite[Theorem 7.4]{gepner_lax_2017}). Recall that the homotopy colimit of a diagram $F\colon \EuScript{J}\longrightarrow \Cat_{\infty}$ can be computed as the $\infty$-localization at the cocartesian morphisms of the unstraightening of $F$. Therefore, it suffices to check that this last equivalence restricts to monoids. This holds because the last equivalence of $\infty$-categories is induced by restriction along the morphisms of weak $\infty$-operads $(\E_x\longrightarrow \vec{\int}\!_{\overline{X}}\,\E_x)_x$ and those restriction maps preserve monoids.

  Moving to the general case, we observe that the cocone $\upgamma\colon \E_{\;\!\scalebox{0.4}{$(-)$}}\Longrightarrow \gE^{\otimes}_X$ induces a map of $\infty$-operads 
  \begin{equation}\label{eqt: Assembly map for gEX at the level of oo-operads}
  \underset{x\in \overline{X}}{\hocolim}\,\E_x\xrightarrow{\;\;\quad\;\;} \gE^{\otimes}_X.
  \end{equation}
  Thanks to \cite[Proposition 4.1.23]{harpaz_little_nodate} and the monoid case above, we are reduced to check that the map (\ref{eqt: Assembly map for gEX at the level of oo-operads}) is essentially surjective. This is clearly the case since, for any color $x=(\square^{Z},u)$ of $\gE_X$, the composition (of maps of weak $\infty$-operads)
  $
  \textstyle \E_{x} \xrightarrow{\quad}\vec{\int}\!_{\overline{X}}\,\E_x\xrightarrow{\quad}\gE_X^{\otimes}
  $
  sends the color $\square^{Z}$ of $\mathbbst{E}_x$ to $x=(\square^{\,Z},u)$.
\end{proof}

\begin{lemma}\label{lem: Seifert van Kampen for barX} Let $X$ be a manifold with corners and let $\mathcal{U}\hookrightarrow \open(X)$ be a hypercover of $X$. Then, there is an equivalence 
$
\underset{V\in \mathcal{U}}{\hocolim}\,\overline{V}\simeq \overline{X}
$
of right fibrations over $\Basic_n$.
    
\end{lemma}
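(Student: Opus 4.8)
The plan is to reduce the statement to a homotopy-colimit computation inside right fibrations over $\Basic_n$ and then feed it into a Seifert--van Kampen-type argument, mirroring the way the analogous statement is handled in the non-stratified setting of \cite{harpaz_little_nodate}. First I would recall that $\overline{X}$ is defined as the fiber product ${\cMfld}_{/X}\times_{\cMfld}\Basic_n$; since $\Basic_n$ carries no dependence on $X$, the claim is equivalent to the assertion that $X\mapsto {\cMfld}_{/X}$ takes hypercovers to homotopy colimits, i.e.\ that $\hocolim_{V\in\mathcal{U}}{\cMfld}_{/V}\simeq {\cMfld}_{/X}$ as right fibrations over $\cMfld$, and then base-change along $\Basic_n\hookrightarrow \cMfld$, which preserves the relevant colimit since it is computed fiberwise. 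Equivalently, one unstraightens: a right fibration over $\Basic_n$ is a presheaf on $\Basic_n$, so it suffices to prove $\colim_{V\in\mathcal{U}}\Emb(\square^{\,p,q},V)\xrightarrow{\ \sim\ }\Emb(\square^{\,p,q},X)$ in $\Spc$ for every basic $\square^{\,p,q}$, where the colimit is over the (hyper)cover poset $\mathcal{U}$.

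The key step is therefore the space-level statement: for a fixed open embedding source $\square^{\,p,q}$, the functor $V\mapsto \Emb(\square^{\,p,q},V)$ sends the hypercover $\mathcal{U}\hookrightarrow\open(X)$ to a colimit diagram. I would obtain this by applying the Lurie--Seifert--van Kampen theorem \cite[Theorem A.3.1]{lurie_higher_nodate} (used repeatedly already in \textsection\ref{sect: Additivity for generalized swiss-cheese operads}) to the functor on the hypercover-poset $\mathcal{U}$ sending $V$ to the open subset $\{\, \upiota\in\Emb(\square^{\,p,q},X) : \upiota(\square^{\,p,q})\subseteq V\,\}$ of $\Emb(\square^{\,p,q},X)$. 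Concretely: given $\upiota\colon\square^{\,p,q}\hookrightarrow X$, its compact image $\upiota(\square^{\,p,q})$ — actually the closure of a slightly shrunk copy, or better, use that $\square^{\,p,q}$ is exhausted by relatively compact sub-basics — is covered by finitely many members of $\mathcal{U}$, and since a single basic is connected and $\mathcal{U}$ is a cover refining arbitrarily, one can find a single $V\in\mathcal{U}$ containing $\upiota(\square^{\,p,q})$; hence the relevant subcategory of $\mathcal{U}$ is nonempty, and a standard shrinking/intersection argument (as in the proofs of Lemmas \ref{lem:Second condition WeakAprroxofE} and \ref{lem: Second condition WreathProdApprox}) shows it is cofiltered, so its classifying space is contractible. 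That each open set in the diagram is contractible — indeed $\{\upiota : \upiota(\square^{\,p,q})\subseteq V\}\simeq \Emb(\square^{\,p,q},V)$ — is immediate. By Lurie--Seifert--van Kampen this yields $\hocolim_{V\in\mathcal{U}}\Emb(\square^{\,p,q},V)\simeq\Emb(\square^{\,p,q},X)$, which is exactly the fiberwise statement.

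Assembling, since both $\hocolim_{V\in\mathcal{U}}\overline{V}$ and $\overline{X}$ are right fibrations over $\Basic_n$ and the comparison map is a fiberwise equivalence — on the fiber over $\square^{\,p,q}$ it is precisely the map above — it is an equivalence of right fibrations over $\Basic_n$, as desired. The main obstacle I anticipate is the point-set bookkeeping needed to verify cofilteredness of the subcategory of $\mathcal{U}$ over a given embedding $\upiota$: one must handle a \emph{hyper}cover rather than an ordinary cover, so the poset $\mathcal{U}$ already includes iterated intersections and the indexing is more intricate; the shrinking argument must be done relative to the stratification so that the smaller sub-basics produced still have the correct local type (invoking Remark \ref{remark: cubes are nice}), and one should check that the exhaustion of $\square^{\,p,q}$ by relatively compact sub-basics interacts well with the pushforward so that the whole of $\Emb(\square^{\,p,q},X)$, not merely its ``small'' embeddings, is recovered — this is where Hypothesis \ref{hyp: Good supply of Weiss covers} may be invoked for comfort, though it is not strictly needed for this lemma.
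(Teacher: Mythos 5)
Your first reduction is the same as the paper's: unstraighten to presheaves on $\Basic_n$, note that homotopy colimits of right fibrations are computed fiberwise, and reduce to showing $\underset{V\in\mathcal{U}}{\hocolim}\,\Emb(\R^{Z},V)\to \Emb(\R^{Z},X)$ is a weak equivalence for each local type. However, your key step — applying Lurie--Seifert--van Kampen to the subsets $\upchi(V)=\{\upiota\in\Emb(\square^{\,p,q},X):\upiota(\square^{\,p,q})\subseteq V\}$ — has a genuine gap. The claim that ``one can find a single $V\in\mathcal{U}$ containing $\upiota(\square^{\,p,q})$'' is false: the image of an open embedding of a basic is an open (non-compact) subset of $X$, and nothing about a (hyper)cover forces any single member to contain it (take $X=\R^{p,q}$, $\upiota$ a surjective rectilinear chart, and a cover by small balls). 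Consequently the subcategory $\mathcal{U}_{\upiota}=\{V:\upiota\in\upchi(V)\}$ can be \emph{empty}, so its classifying space is empty rather than contractible, and indeed the sets $\upchi(V)$ do not even cover $\Emb(\square^{\,p,q},X)$ — only the ``small'' embeddings are hit. Shrinking inside $\square^{\,p,q}$ does not repair this, because the point of the fiberwise statement is about the full embedding space, not its subspace of embeddings factoring through some cover member; so the Seifert--van Kampen hypotheses simply fail and the argument breaks at this step.

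The paper routes around exactly this obstruction: it uses the scanning equivalence of Proposition \ref{prop: Scanning with one basic}, $\Emb(\R^{Z},Y)\xrightarrow{\ \sim\ }\Fr^{\,i}_{[Z]}(Y_{[Z]})$, to transfer the descent problem from embedding spaces to inner frame bundles over strata. There the relevant ``points'' really are points of $X_{[Z]}$, which are always contained in members of the cover, so one can apply topological hypercover descent \cite[Theorem 1.3]{dugger_topological_2004} to $\underset{V}{\hocolim}\,V_{[V\cap Z]}\to X_{[Z]}$ and then compare homotopy fibers, which on both sides are $\mathsf{GL}(\R^{Z})$ by the principal-bundle (homotopy quotient) structure of $\Fr^{\,i}_{[Z]}$. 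Your proposal would need to be rewritten along these lines (or with some other mechanism that makes the ``local'' objects point-sized rather than image-sized); as written, the central step does not go through. You are right, though, that Hypothesis \ref{hyp: Good supply of Weiss covers} is not needed for this lemma.
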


\begin{proof}
Through the equivalence $\mathsf{RFib}_{\Basic_n}\simeq \Fun(\Basic_n^{\op},\Spc)$, and since homotopy colimits in functor categories are computed pointwise, we are reduced to showing that the canonical map 
$$
\underset{V\in\mathcal{U}}{\hocolim}\,\Emb(\mathbbst{R}^{Z},V)\longrightarrow\Emb(\mathbbst{R}^{Z},X)
$$
is a weak homotopy equivalence for any $Z\in \Prect(X)$. Recall that $\overline{X}\to \Basic_n$ is the right fibration associated with the presheaf $\Emb(-,X)\colon \Basic_n^{\op}\to \Spc$. Due to Proposition \ref{prop: Scanning with one basic}, one can equivalently check that $\hocolim_V\Fr^{\,i}_{[V\cap Z]}(V_{[V\cap Z]}) \to \Fr^{\,i}_{[Z]}(X_{[Z]})$ is a weak homotopy equivalence. Using the commutative square
$$
\begin{tikzcd}
    \underset{V\in\mathcal{U}}{\hocolim}\,\Fr^{\,i}_{[V\cap Z]}(V_{[V\cap Z]}) \ar[rr]\ar[d] && \Fr^{\,i}_{[Z]}(X_{[Z]})\ar[d] \\
     \underset{V\in\mathcal{U}}{\hocolim}\,V_{[V\cap Z]} \ar[rr] && X_{[Z]}
\end{tikzcd}
$$
we will see that this is the case. The bottom horizontal map is a weak homotopy equivalence by \cite[Theorem 1.3]{dugger_topological_2004} and hence, it suffices to check that the map between homotopy fibers over any point is a weak homotopy equivalence. On the one hand, it is clear that the homotopy fiber of $\Fr^{\, i}_{[Z]}(X_{[Z]})\to X_{[Z]}$ at any point is the fiber group $\mathsf{GL}(\mathbbst{R}^Z)$. On the other hand, to compute the homotopy fiber of the left vertical map, observe that $\Fr^{\, i}_{[Z']}(X_{[Z']})\to X_{[Z']}$ exhibits $X_{[Z']}$ as the homotopy quotient $\Fr^{\, i}_{[Z']}(X_{[Z']})//\mathsf{GL}(\mathbbst{R}^{Z'})$. Therefore, by a commutation of homotopy colimits, we obtain a fiber sequence
$$
\begin{tikzcd}
   \mathsf{GL}(\mathbbst{R}^Z)\ar[d]\ar[rr]\ar[rrd, phantom, "\overset{\mathsf{h}\;\;\;}{\lrcorner}"] && \underset{V\in\mathcal{U}}{\hocolim}\,\Fr^{\,i}_{[V\cap Z]}(V_{[V\cap Z]}) \ar[d] \\
    *\ar[rr] && \big(\underset{V\in\mathcal{U}}{\hocolim}\,\Fr^{\,i}_{[V\cap Z]}(V_{[V\cap Z]})\big)//\mathsf{GL}(\mathbbst{R}^Z)
    \ar[rr,"\sim"] &&[-1.3cm] \underset{V\in\mathcal{U}}{\hocolim}\,V_{[V\cap Z]}
\end{tikzcd}
$$
for any choice of basepoint.  
\end{proof}

\begin{thm}\label{thm: Hyperdescent for cbl fact} Let $X$ be a manifold with corners and let $\mathcal{U}\hookrightarrow \open(U)$ be an hypercover of $U\in \open(X)$. Then, the diagram of operads $\mathcal{U}^{\triangleright}\to \Opd_{\infty}$, $V\mapsto \gE_V$, induces an equivalence of \(\infty\)-categories
$$
\Alg_{\gE_U}(\EuScript{V})\xrightarrow{\;\;\sim\;\;}\underset{V\in\mathcal{U}^{\op}}{\holim}\Alg_{\gE_{V}}(\EuScript{V}).
$$
In other words, $\Alg_{\gE_{\scalebox{0.4}{$(-)$}\!}}(\EuScript{V})\colon \open(X)^{\op}\to \Cat_{\infty}$ is an hypersheaf. The same holds for its symmetric monoidal enhancement $\ALG_{\gE_{\scalebox{0.4}{$(-)$}\!}}(\EuScript{V})$. 

Assuming Hypothesis \ref{hyp: Good supply of Weiss covers}, $\Fact^{\cbl}_{\scalebox{0.4}{$(-)$}\!}(\EuScript{V})\colon \open(X)^{\op}\to \Cat_{\infty}$ is also a hypersheaf, as well as its symmetric monoidal enhancement $\FACT^{\cbl}_{\scalebox{0.4}{$(-)$}\!}(\EuScript{V})$.
\end{thm}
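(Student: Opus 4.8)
The plan is to deduce the theorem by assembling the two substantial results of this subsection, Proposition~\ref{prop: Strong approximation} (the ``disintegration'' of $\gE_W$ over $\overline{W}$) and Lemma~\ref{lem: Seifert van Kampen for barX} (codescent for the presheaf $\overline{(-)}$), and then to bootstrap from the case of $\Alg_{\gE_{(-)}}(\EuScript{V})$ to the symmetric monoidal enhancement and to $\Fact^{\cbl}_{(-)}(\EuScript{V})$. First I would reduce to the manifold with corners $U$ and its open subsets $V\in\mathcal{U}$: by Proposition~\ref{prop: Good supply of Weiss covers} every one of these satisfies Hypothesis~\ref{hyp: Good supply of Weiss covers}, so that all the results invoked below apply to each of them.

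To treat $\Alg_{\gE_{(-)}}(\EuScript{V})$, I would establish the chain of equivalences
\[
\Alg_{\gE_U}(\EuScript{V})\;\simeq\;\holim_{x\in\overline{U}^{\op}}\Alg_{\mathbbst{E}_x}(\EuScript{V})\;\simeq\;\holim_{V\in\mathcal{U}^{\op}}\;\holim_{x\in\overline{V}^{\op}}\Alg_{\mathbbst{E}_x}(\EuScript{V})\;\simeq\;\holim_{V\in\mathcal{U}^{\op}}\Alg_{\gE_V}(\EuScript{V}).
\]
The outer two equivalences are Proposition~\ref{prop: Strong approximation} applied respectively to $U$ and to each $V$; for the last one I would use that the restriction of the diagram $\E_{(-)}\colon\overline{U}\to\Opd_{\infty}$ of Remark~\ref{rem: Unstraightening of diagram of operads} along $\overline{V}\hookrightarrow\overline{U}$ recovers the analogous diagram built from $V$. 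This is immediate from the description in Remark~\ref{rem: Unstraightening of diagram of operads} and Lemma~\ref{lem: Homotopy fibers of directed integral}, since both the value $\mathbbst{E}_x$ and the transition functors $\upzeta_!$ depend only on the basics $x$ and on the rectilinear embeddings $\upzeta$ between them, not on the ambient manifold --- equivalently, $\vec{\int}\!_{\overline{U}}\,\E_x\times_{\overline{U}}\overline{V}\simeq\vec{\int}\!_{\overline{V}}\,\E_x$ over $\overline{V}$, because restricting along $\overline{V}\hookrightarrow\overline{U}$ forces every basic in sight to embed through $V$. The middle equivalence is Lemma~\ref{lem: Seifert van Kampen for barX}, which exhibits $\overline{U}$ as $\hocolim_{V\in\mathcal{U}}\overline{V}$, combined with the standard fact that the limit functor $\Fun(\overline{U}^{\op},\Cat_{\infty})\to\Cat_{\infty}$ converts such a presentation of the index into the iterated limit over $\mathcal{U}^{\op}$ (for instance via cocartesian sections of the corresponding Grothendieck construction). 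Finally I would verify, using the naturality built into Proposition~\ref{prop: Strong approximation} and Lemma~\ref{lem: Seifert van Kampen for barX}, that the composite is the comparison map associated with the cocone $\mathcal{U}^{\triangleright}\to\Opd_{\infty}$, $V\mapsto\gE_V$.

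For the symmetric monoidal enhancement, I would observe that $\ALG_{\gE_{(-)}}(\EuScript{V})$ is a presheaf of symmetric monoidal $\infty$-categories and strong monoidal functors whose underlying $\Cat_{\infty}$-valued presheaf is $\Alg_{\gE_{(-)}}(\EuScript{V})$, since all the monoidal structures and all the restriction functors are given by pointwise tensor products. As the forgetful functor $\Cat_{\infty}^{\otimes}\to\Cat_{\infty}$ is conservative and creates limits, the hypersheaf property for $\ALG_{\gE_{(-)}}(\EuScript{V})$ follows from the one just proven for $\Alg_{\gE_{(-)}}(\EuScript{V})$. For the last assertion, Lemma~\ref{lem:cbl Fact as operadic algebras} --- valid for $U$ and every $V\in\mathcal{U}$ by Proposition~\ref{prop: Good supply of Weiss covers} --- provides a natural equivalence of presheaves $\Fact^{\cbl}_{(-)}(\EuScript{V})\simeq\Alg_{\gE_{(-)}}(\EuScript{V})$, naturality in open inclusions being clear since it is induced by the operad maps $\mathsf{Disj}_{(-)}\leftarrow\mathsf{Bsc}_{(-)}\to\gE_{(-)}$; together with its symmetric monoidal refinement $\FACT^{\cbl}_{(-)}(\EuScript{V})\simeq\ALG_{\gE_{(-)}}(\EuScript{V})$, this transports the hypersheaf property to $\Fact^{\cbl}_{(-)}(\EuScript{V})$ and $\FACT^{\cbl}_{(-)}(\EuScript{V})$.

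The genuinely hard work is already behind us, in Proposition~\ref{prop: Strong approximation} and Lemma~\ref{lem: Seifert van Kampen for barX}; what remains is assembly. The step I expect to demand the most care is the coherence bookkeeping: making Proposition~\ref{prop: Strong approximation} and Lemma~\ref{lem: Seifert van Kampen for barX} natural in the open subset simultaneously, checking that $\E_{(-)}$ restricts as claimed along $\overline{V}\hookrightarrow\overline{U}$, and confirming that the resulting chain of equivalences is the canonical comparison map rather than merely an abstract one. None of this is conceptually deep, but it is precisely the sort of thing one should spell out.
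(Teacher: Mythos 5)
Your proposal is correct and follows essentially the same route as the paper's own proof: the same chain of equivalences built from Proposition \ref{prop: Strong approximation} applied to $U$ and to each $V$, the identification $\hocolim_{V\in\mathcal{U}}\overline{V}\simeq\overline{U}$ of Lemma \ref{lem: Seifert van Kampen for barX}, and the nested limit formula (which the paper isolates as Lemma \ref{lem: colims over cocart fibrations} rather than treating as a standard fact), followed by transport along Lemma \ref{lem:cbl Fact as operadic algebras} for the constructible and symmetric monoidal statements. The extra coherence points you flag (restriction of $\E_{\;\!\scalebox{0.4}{$(-)$}}$ along $\overline{V}\hookrightarrow\overline{U}$, identifying the composite with the canonical comparison map) only spell out what the paper leaves implicit.
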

\begin{proof}
Just consider the following chain of equivalences
\begin{align*}
 \underset{V\in\mathcal{U}^{\op}}{\holim}\Alg_{\gE_V}(\EuScript{V}) &\overset{(i)}{\simeq} \underset{V\in \mathcal{U}^{\op}}{\holim}\,\underset{x\in \overline{V}\!\,^{\op}}{\holim}
\Alg_{\mathbbst{E}_x}(\EuScript{V})\\
&\overset{(ii)}{\simeq} \holim\big((\underset{V\in \mathcal{U}}{\hocolim}\,\overline{V})^{\op}\xrightarrow{\; \Alg_{\mathbbst{E}_{\;\!\scalebox{0.4}{$(-)$}\!}}(\EuScript{V})\;}\Cat_{\infty}\big)\\
&\overset{(iii)}{\simeq} \holim\big(\overline{U}\!\,^{\op}\xrightarrow{\; \Alg_{\mathbbst{E}_{\;\!\scalebox{0.4}{$(-)$}\!}}(\EuScript{V})\;}\Cat_{\infty}\big)\\
&\overset{(iv)}{\simeq} \Alg_{\gE_{U}}(\EuScript{V}) 
 ,
\end{align*}
where we have applied: Proposition \ref{prop: Strong approximation} in $(i)$ and $(iv)$; the nested homotopy limit formula of Lemma \ref{lem: colims over cocart fibrations} applied to $F\colon \mathcal{U} \longrightarrow \Cat_{\infty}$, $V \longmapsto \overline{V}$, and the diagram
$$
\overline{D}\coloneqq \Alg_{\mathbbst{E}_{\;\!\scalebox{0.4}{$(-)$}\!}}(\EuScript{V}) \colon (\underset{V\in \mathcal{U}}{\hocolim}\,\overline{V})^{\op} \xrightarrow{\;\;\quad \;\;} \Cat_{\infty}
$$ 
in $(ii)$; and Lemma \ref{lem: Seifert van Kampen for barX} in $(iii)$.  

Combine hyperdescent for $\Alg_{\gE_{\scalebox{0.4}{$(-)$}\!}}(\EuScript{V})$ with Lemma \ref{lem:cbl Fact as operadic algebras} to conclude the claim about constructible factorization algebras.
\end{proof}

\paragraph{From local to global additivity.} Finally, bringing together the derived additivity for (generalized) Swiss-cheese operads and (hyper)descent of constructible factorization algebras over manifolds with corners, we obtain the main result of this work:

 \begin{thm}\label{thm: Global additivity} Let $\EuScript{V}$ be a presentable symmetric monoidal \(\infty\)-category and let $X$, $Y$ be two manifolds with corners satisfying Hypothesis \ref{hyp: Good supply of Weiss covers}. Then, there is an equivalence of \(\infty\)-categories 
    $
    \Fact^{\cbl}_{X\times Y}(\EuScript{V}) \simeq \Fact^{\cbl}_{X}(\FACT^{\cbl}_{Y}(\EuScript{V}))
    $
     induced by pushforward of factorization algebras along $\uppi_X\colon X\times Y\to X$. Furthermore, one can leverage this to a symmetric monoidal equivalence
    $$
   \FACT^{\cbl}_{X\times Y}(\EuScript{V}) \simeq \FACT^{\cbl}_{X}\big(\FACT^{\cbl}_{Y}(\EuScript{V})\big).
    $$
 \end{thm}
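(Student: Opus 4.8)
The plan is to combine the two main technical inputs already established: the operadic (local) additivity of Theorem \ref{thm:SwissCheeseAdditivity Reformulated} and the hyperdescent of Theorem \ref{thm: Hyperdescent for cbl fact}. I would first reduce, by a descent argument carried out one factor at a time, to the case of basic building blocks $X\cong\mathbbst{R}^{p,q}$ and $Y\cong\mathbbst{R}^{r,t}$, and then conclude using the diffeomorphism $\mathbbst{R}^{p,q}\times\mathbbst{R}^{r,t}\cong\mathbbst{R}^{p+r,q+t}$, the identification of constructible factorization algebras with operadic algebras of Lemma \ref{lem:cbl Fact as operadic algebras}, and Theorem \ref{thm:SwissCheeseAdditivity Reformulated}.

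\emph{Globalization via double hyperdescent.} First one constructs the comparison functor: for an open $U\subseteq X$, the projection $\uppi_U\colon U\times Y\to U$ is a locally trivial fibre bundle with fibre $Y$, so it admits a relative pushforward $\uppi_{U,*}\colon\FACT^{\cbl}_{U\times Y}(\EuScript{V})\to\FACT^{\cbl}_{U}(\FACT^{\cbl}_{Y}(\EuScript{V}))$, $\mathdutchcal{F}\mapsto\big(U'\mapsto(V\mapsto\mathdutchcal{F}(U'\times V))\big)$ (cf.\ \cite[Propositions 25 and 26]{ginot_notes_2013}, \cite[Example 3.45 and Remark 3.46]{karlsson_assembly_2024}), and these are compatible with restriction along inclusions of opens of $X$. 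One thereby obtains a natural transformation $\mathcal{L}\Rightarrow\mathcal{R}$ between the presheaves of symmetric monoidal $\infty$-categories $\mathcal{L}\colon U\mapsto\FACT^{\cbl}_{U\times Y}(\EuScript{V})$ and $\mathcal{R}\colon U\mapsto\FACT^{\cbl}_{U}(\FACT^{\cbl}_{Y}(\EuScript{V}))$ on $\open(X)$. Now $\mathcal{L}$ is a hypersheaf by Theorem \ref{thm: Hyperdescent for cbl fact} applied to $X\times Y$ (a hypercover of $U$ pulls back along $\uppi_X$ to a hypercover of $U\times Y$), while $\mathcal{R}$ is a hypersheaf by the same theorem applied with $\EuScript{V}$ replaced by the presentable symmetric monoidal $\infty$-category $\FACT^{\cbl}_Y(\EuScript{V})$. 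Since a map of hypersheaves on $X$ which is an equivalence on a basis of $X$ is an equivalence, it suffices to treat $X\cong\mathbbst{R}^{p,q}$. Running the identical argument in the second variable — now comparing, for fixed $\mathbbst{R}^{p,q}$, the hypersheaves $V\mapsto\FACT^{\cbl}_{\mathbbst{R}^{p,q}\times V}(\EuScript{V})$ and $V\mapsto\FACT^{\cbl}_{\mathbbst{R}^{p,q}}(\FACT^{\cbl}_V(\EuScript{V}))$ on $\open(Y)$ — reduces us further to $Y\cong\mathbbst{R}^{r,t}$.

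\emph{The local case.} For $X=\mathbbst{R}^{p,q}$ and $Y=\mathbbst{R}^{r,t}$ one has $\mathbbst{R}^{p,q}\times\mathbbst{R}^{r,t}\cong\mathbbst{R}^{p+r,q+t}$ as manifolds with corners, so by Lemma \ref{lem:cbl Fact as operadic algebras} (which uses Hypothesis \ref{hyp: Good supply of Weiss covers}, valid here by Proposition \ref{prop: Good supply of Weiss covers}) there are symmetric monoidal equivalences $\FACT^{\cbl}_{\mathbbst{R}^{p+r,q+t}}(\EuScript{V})\simeq\ALG_{\mathbbst{E}_{p+r,q+t}}(\EuScript{V})$ and $\FACT^{\cbl}_{\mathbbst{R}^{p,q}}(\FACT^{\cbl}_{\mathbbst{R}^{r,t}}(\EuScript{V}))\simeq\ALG_{\mathbbst{E}_{p,q}}(\ALG_{\mathbbst{E}_{r,t}}(\EuScript{V}))$, and Theorem \ref{thm:SwissCheeseAdditivity Reformulated} identifies the two right-hand sides symmetric-monoidally. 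It remains to check that this coincides with the functor induced by $\uppi_{\mathbbst{R}^{p,q},*}$: under these identifications, the relative pushforward becomes restriction along the bifunctor $\upmu\colon\E_{p,q}\times\E_{r,t}\to\E_{p+r,q+t}$ (equivalently, along the maps $\upmu_T\colon\mathbbst{E}_{p,q}\to\mathbbst{E}_{p+r,q+t}\leftarrow\mathbbst{E}_{r,t}:\!\upmu_Q$), since $\upmu$ was built precisely to implement $\square^{\,p,Q}\times\square^{\,r,T}\cong\square^{\,p+r,q+t}$-type identifications together with the horizontal and vertical product structures (see the discussion preceding Theorem \ref{thm:SwissCheeseAdditivity} and Figures \ref{fig: figure 4}--\ref{fig: figure 6}). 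The symmetric monoidal enhancement requires no extra work: all $\infty$-categories in sight carry pointwise tensor products, the pushforward functors are strong symmetric monoidal for these, and Theorem \ref{thm:SwissCheeseAdditivity Reformulated} already supplies the symmetric monoidal equivalence locally, so the globally assembled equivalence is automatically symmetric monoidal (it is an equivalence of hypersheaves of symmetric monoidal $\infty$-categories, checked on basics).

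\emph{Main obstacle.} The two hyperdescent reductions and the presentability bookkeeping are routine given Theorems \ref{thm: Hyperdescent for cbl fact} and \ref{thm:SwissCheeseAdditivity Reformulated}. The delicate step is the local identification of the geometrically defined relative pushforward along the projection with the operadic bifunctor $\upmu$ — i.e.\ matching the ``remember the horizontal and vertical products'' description of $\uppi_{\mathbbst{R}^{p,q},*}$ with restriction along $\upmu$ — together with the check that the globally defined natural transformation $\mathcal{L}\Rightarrow\mathcal{R}$ restricts on basics to exactly this functor, so that the two descent arguments can legitimately be stacked.
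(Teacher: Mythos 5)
Your proposal is correct in substance and rests on exactly the same ingredients as the paper's proof — hyperdescent (Theorem \ref{thm: Hyperdescent for cbl fact}), the identification $\Fact^{\cbl}\simeq\Alg_{\gE}$ of Lemma \ref{lem:cbl Fact as operadic algebras}, and the local additivity of Theorem \ref{thm:SwissCheeseAdditivity Reformulated} — but it packages the globalization differently. The paper never fixes a comparison functor in advance: it writes one chain of equivalences, covering $Y$ by local models, commuting $\Fact^{\cbl}_X(-)$ past the homotopy limit via Lemma \ref{lem:ALG commutes with lims}, covering $X$, applying local additivity, and then reassembling with Fubini and hyperdescent over $X\times Y$ for the product cover; only afterwards does it remark that the composite is the pushforward along $\uppi_X$. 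You instead take the pushforward as the primitive map, organize both sides as hypersheaves on $\open(X)$ (and then $\open(Y)$), and check the map on the basis of Proposition \ref{prop: Good supply of Weiss covers}; this is a legitimate and arguably cleaner way to make the ``induced by pushforward'' clause visible, and your hypersheaf claims are fine (for $\mathcal{R}$ in the second reduction you are implicitly composing hyperdescent of $\FACT^{\cbl}_{(-)}(\EuScript{V})$ over $Y$ with Lemma \ref{lem:ALG commutes with lims}, which is precisely the paper's step $(ii)$, and as in the paper you need $\FACT^{\cbl}_Y(\EuScript{V})$ to be presentably symmetric monoidal to feed it back in as coefficients). The real difference in logical burden is the point you flag yourself: in your packaging the identification of the geometric pushforward $\uppi_{\R^{p,q},*}$ on basics with restriction along $\upmu\colon\E_{p,q}\times\E_{r,t}\to\E_{p+r,q+t}$ is load-bearing — without it you have not shown that \emph{your} map is an equivalence, only that \emph{some} equivalence exists — whereas the paper's chain establishes the equivalence without needing that compatibility and relegates the pushforward identification to a closing remark. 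So either supply that check (e.g.\ by comparing both composites on $\mathsf{Disj}$-level data through the commuting square of operad maps relating $\mathsf{Bsc}_{\R^{p,q}\times\R^{r,t}}$, $\gE_{\R^{p,q}\times\R^{r,t}}$ and the pair $(\upmu_T,\upmu_Q)$, where everything is determined by values on products of basics and the horizontal/vertical products), or decouple the two assertions as the paper does; otherwise the argument is the same reduction-to-basics strategy as Theorem \ref{thm: Global additivity}.
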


\begin{proof}
Let $\mathcal{V}\hookrightarrow \open(X) $ be an open cover of $X$ by local models $\{\R^{p,q}\}_{p+q=n}$ whose finite intersections are also local models (its existence is guaranteed by the proof of Proposition \ref{prop: Good supply of Weiss covers}), and $\mathcal{W} \hookrightarrow \open(Y)$ be an open cover of $Y$ with the same property. Now, using hyperdescent of constructible factorization algebras, we can reduce the problem to a local case where the additivity of factorization algebras over $\R^{p,q}$ applies. Concretely, one has
\begin{align*}
    \Fact^{\cbl}_{X} \big( \FACT^{\cbl}_{Y}( \EuScript{V}) \big) & \overset{(i)}{\simeq} \Fact^{\cbl}_{X} \big( \underset{W\in \mathcal{W}^{\op}}{\holim} \FACT^{\cbl}_{W}( \EuScript{V}) \big) \\
   & \overset{(ii)}{\simeq} \underset{W\in\mathcal{W}^{\op}}{\holim}  \Fact^{\cbl}_{X} \big( \FACT^{\cbl}_{W} ( \EuScript{V}) \big)  \\
    & \overset{(iii)}{\simeq} \underset{W\in\mathcal{W}^{\op}}{\holim} \big( \underset{V\in\mathcal{V}^{\op}}{\holim}  \Fact^{\cbl}_{V} 
 \big( \FACT^{\cbl}_{W} ( \EuScript{V}) \big)\big)   \\
    & \overset{(iv)}{\simeq} \underset{W\in\mathcal{W}^{\op}}{\holim} \big( \underset{V\in\mathcal{V}^{\op}}{\holim}
  \Fact^{\cbl}_{V \times W}( \EuScript{V}) \big) \\
  & \overset{(v)}{\simeq} \underset{V\times W}{\holim}\Fact^{\cbl}_{V\times W}(\EuScript{V})\\
  & \overset{(vi)}{\simeq} \Fact^{\cbl}_{X\times Y}(\EuScript{V}),
\end{align*}
where: in $(i)$, $(iii)$ and $(vi)$ we use hyperdescent of constructible factorization algebras (Theorem \ref{thm: Hyperdescent for cbl fact}) (note that $\mathcal{V}\times \mathcal{W}\hookrightarrow \open(X\times Y)$ is an open cover); $(ii)$ follows from the general fact
$
\ALG_{\EuScript{O}}(\holim_t \EuScript{V}_t)\simeq \holim_t\ALG_{\EuScript{O}}(\EuScript{V}_t)
$
which holds for any \(\infty\)-operad $\EuScript{O}$ and any diagram of presentable symmetric monoidal \(\infty\)-categories (see Lemma \ref{lem:ALG commutes with lims}); for $(iv)$ we apply the local additivity result (Theorem \ref{thm:SwissCheeseAdditivity}) together with Lemma \ref{lem:cbl Fact as operadic algebras}; $(v)$ is just Fubini for homotopy limits of \(\infty\)-categories. Note that the equivalence we have found between $\Fact^{\cbl}_{X\times Y}(\EuScript{V})$ and $\Fact^{\cbl}_X(\FACT^{\cbl}_{Y}(\EuScript{V})) $ is nothing but the one induced by pushforward along the projection $\uppi_X\colon X\times Y\to X$.    
\end{proof}

\begin{remark} We believe that an analogous hyperdescent result to Theorem \ref{thm: Hyperdescent for cbl fact} can be established, following our strategy, if one relaxes Hypothesis \ref{hyp: Good supply of Weiss covers} at the expense of considering constructible factorization algebras satisfying codescent with respect to Weiss hypercovers. Replicating the proof of Theorem \ref{thm: Global additivity} with such alternative hyperdescent will lead to a global additivity theorem without Hypothesis \ref{hyp: Good supply of Weiss covers}. For manifolds with corners, as we mentioned right after introducing the mentioned hypothesis, we will gain nothing, but this perspective might be interesting for more general conically smooth stratified manifolds. 
\end{remark}

\paragraph{Digression: a word on additivity for flag/linear singularities.} Applying our arguments verbatim, one can deduce simple variations of Theorems \ref{thm:SwissCheeseAdditivity}--\ref{thm:SwissCheeseAdditivity Reformulated}. One such scenario is given by linear stratifications in the sense of \cite[Definition 3.1]{calaque_algebras_2025}, also called flag stratifications in the literature, which satisfy cubical restriction. That is, linear stratifications 
$$
\upchi=\big(X_{0}\subsetneq X_{1}\subsetneq \cdots\subsetneq X_{d}=\R^{n}\big)
$$
of $\R^n$ such that, for any $0\leq j\leq d$, the linear subspace $X_j$ is spanned by a subset of the cartesian basis $\{e_1,\dots,e_n\}$, where $e_k=(0,\dots,0,1^{(k)},0,\dots,0)$. Let us refer to these as \emph{quadrant flags} on $\R^n$ in this digression. 

Replacing euclidean discs by cubes, one can easily modify \cite[Definition 3.8]{calaque_algebras_2025} to obtain the \emph{little} $(\upchi,n)$-\emph{cubes operad} $\mathbbst{E}_{n;\,\upchi}$ associated to the (quadrant) flag $\upchi$ on $\R^n$.\footnote{Notice that the open condition on rectilinear embeddings in this framework would not be enough to isolate the  embeddings respecting the stratifications, in contrast to what happens for corner singularities.} 
Furthermore, we can also modify Definition \ref{defn: Cube p,q operads} to obtain its discrete version $\mathsf{Cube}_{n;\,\upchi}$, which comes with an operad map $\mathsf{Cube}_{n;\,\upchi}\xrightarrow{\quad}\mathbbst{E}_{n;\,\upchi}$. Repeating the proof of Theorem \ref{thm:WeakApproxofE}, one shows that this map exhibits $\mathbbst{E}_{n;\,\upchi}$ as an operadic $\infty$-localization of $\mathsf{Cube}_{n;\,\upchi}$. 

Lastly, there is a canonical bifunctor $\upmu\colon \E_{n;\,\upchi}\times \E_{m;\,\upeta}\xrightarrow{\quad} \E_{n+m;\,\upchi\times \upeta}$ that can be constructed as its analogue $\E_{p,q}\times \E_{r,t}\xrightarrow{\quad}\E_{p+r,q+t} $ using the following trivial observation: two quadrant flags
$$
\R^n_\upchi=\big(X_0\subsetneq \cdots\subsetneq X_d=\R^n\big)\quad \text{and}\quad \R^m_\upeta=\big(Y_0\subsetneq \cdots \subsetneq Y_r=\R^m\big)
$$
induce a \emph{double} quadrant flag $\upchi\times \upeta$ on $\R^{n+m}=\R^n\times \R^m$ given by
$$
\upchi\times \upeta=\big(X_{i}\times Y_{j}\subseteq X_{i'}\times Y_{j'}\big)_{\substack{0\leq i\leq i'\leq d\\0\leq j\leq j'\leq r}} .
$$

Once we adapt the operads $\mathbbst{E}_{n;\,\upchi}$ and $\mathsf{Cube}_{n;\,\upchi}$ to include the double quadrant flags, Theorem \ref{thm:WeakApproxofE} and Theorem \ref{thm:WreathProdApprox} can be redone to cover those cases using the methods we already presented. Therefore, we have all the ingredients to deduce the analogue of Theorem \ref{thm:SwissCheeseAdditivity}. Following its steps verbatim, one arrives at:
\begin{thm}\label{thm: Additivity Quadrant Flags} Let $\R^n_\upchi$ and $\R^m_\upeta$ be quadrant flags. Then, the canonical bifunctor $\upmu\colon \E_{n;\,\upchi}\times\E_{m;\,\upeta}\xrightarrow{\quad}\E_{n+m;\,\upchi\times\upeta}$ exhibits $\E_{n+m;\,\upchi\times \upeta}$ as a tensor product. In particular, $\upmu$ induces a symmetric monoidal equivalence
$$
\ALG_{\mathbbst{E}_{n;\,\upchi}}\big(\ALG_{\mathbbst{E}_{m;\,\upeta}}(\EuScript{V})\big)\simeq \ALG_{\mathbbst{E}_{n+m;\,\upchi\times\upeta}}(\EuScript{V})
$$
for any symmetric monoidal $\infty$-category $\EuScript{V}$.
\end{thm}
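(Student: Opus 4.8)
The plan is to follow the template of the proof of Theorem~\ref{thm:SwissCheeseAdditivity} essentially verbatim, tracking where the combinatorics of \emph{quadrant flags} replace that of the power set $\underline{2}^{\underline{q}}$. First I would set up the basic objects: the poset $\Prect(\R^n_\upchi)$ of connected components of strata of the quadrant flag $\upchi$, which now indexes the colors of $\mathbbst{E}_{n;\,\upchi}$ (the local types $\square^{Z}$ being the sub-cubes $\prod_k\updelta_k$ obtained by restricting to the relevant affine coordinate subspaces), and I would record the analogue of Example~\ref{examp:Poset of Strata for quadrants}. Then I would spell out the weak $\infty$-operad structures on $\E_{n;\,\upchi}$ and $\Cube{n}{\upchi}$ (basics over $\langle 1\rangle$; inert maps between basics being rectilinear embeddings preserving type for $\E_{n;\,\upchi}$, and identities for $\Cube{n}{\upchi}$), exactly as in Remarks~\ref{rmks:WeakStructure E and Cube}, and the bifunctor $\upmu$ together with its interchange law, which is formally identical to the one displayed in \textsection\ref{sect: Additivity for generalized swiss-cheese operads} once one substitutes $\square^{\,p,Q}\times\square^{\,r,T}$ by the product of flag-cubes and uses that the product of two quadrant flags is again a quadrant flag.

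The heart of the proof is to redo Theorem~\ref{thm:WeakApproxofE} and Theorem~\ref{thm:WreathProdApprox} in this setting. For the first, the two conditions of being a weak approximation are checked as in Lemmas~\ref{lem:First condition WeakAprroxofE} and \ref{lem:Second condition WeakAprroxofE}: the $\infty$-localization statement reduces via Lemma~\ref{lem:strict pullback} to the contractibility of the poset of sub-cubes of a fixed type, which has a terminal object; and the second condition reduces, using Proposition~\ref{prop: Categorical triangle to analyze fibers} and the flag-analogue of Lemma~\ref{lem:rect-conf} identifying $\mathbbst{E}_{n;\,\upchi}\brbinom{\{R_\bullet\}}{Q}$ with a product of configuration spaces of the strata $\square^{Z}_{[R_i]}$, to a Lurie--Seifert--van Kampen argument over $\Conf_{R_\bullet}(U)$ with cofiltered fibers $\Prect_{\underline x}$. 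The only point requiring care is the analogue of Remark~\ref{remark: cubes are nice}: for a point $x$ in a flag-cube there is a \emph{unique} local type $\square^{Z}$ admitting a rectilinear embedding $\square^{Z}\hookrightarrow\square^{n,\upchi}$ with $x$ in the image of the corresponding deepest stratum --- this holds because the affine subspaces in a quadrant flag are coordinate subspaces, so the ``open'' restriction pins down which coordinates are frozen. With this lemma in hand, the shrinking-and-disjointifying argument that makes $\Prect_{\underline x}$ cofiltered goes through unchanged. The wreath-product statement Theorem~\ref{thm:WreathProdApprox} is then obtained from Lemmas~\ref{lem: First condition WreathProdApprox} and \ref{lem: Second condition WreathProdApprox} in exactly the same way, using Remark~\ref{rem:types of components of the product} (the type of a product of flag-cubes determines the type of each factor, since the product flag's strata are products of strata) and the same zig-zag of natural transformations for the localization, and the same ``use the extra freedom of the wreath product to separate points with equal projection to the first factor'' argument for cofilteredness.

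Finally, having established that $\Cube{n}{\upchi}\wr\Cube{m}{\upeta}\to\E_{n+m;\,\upchi\times\upeta}$ and $\Cube{n}{\upchi}\to\E_{n;\,\upchi}$ are weak approximations, I would run the diagram-chase at the end of the proof of Theorem~\ref{thm:SwissCheeseAdditivity} word for word: Propositions~\ref{prop:weak operad 1} and \ref{thm:wreathprod} reduce the claim to an equivalence $\Mon_{\E_{n+m;\,\upchi\times\upeta}}(\Spc)\simeq\Mon_{\E_{n;\,\upchi}}(\Mon_{\E_{m;\,\upeta}}(\Spc))$; Theorems~\ref{thm:wreathprod}, \ref{thm:WeakApproxofE}-analogue, \ref{thm:WreathProdApprox}-analogue and Lemmas~\ref{rmk:ProductOfWeakApprox}, \ref{lem:Harpaz weak approx} reduce this to the statement that the bottom equivalence $\Mon_{\Cube{n}{\upchi}\wr\Cube{m}{\upeta}}(\Spc)\simeq\Mon_{\Cube{n}{\upchi}\times\Cube{m}{\upeta}}(\Spc)$ restricts to locally constant monoids, and this is the short final argument via Remark~\ref{rmk:LocConstMonoid} and Remark~\ref{rmk:ProductOfWeakApprox} with the same commutative square $A(\upalpha)$. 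This exhibits $\E_{n+m;\,\upchi\times\upeta}$ as a tensor product in the sense of Definition~\ref{defn: Exhibits a tensor product}, and the symmetric monoidal equivalence of algebras follows as in Theorem~\ref{thm:SwissCheeseAdditivity Reformulated}. I expect the main --- indeed essentially the only --- obstacle to be verifying the uniqueness-of-local-type lemma (the flag analogue of Remark~\ref{remark: cubes are nice}) and confirming that Lemma~\ref{lem:strict pullback} and Proposition~\ref{prop: Categorical triangle to analyze fibers} apply unchanged to $\Cube{n}{\upchi}\to\E_{n;\,\upchi}$; everything downstream is purely formal transport of the corner-case argument.
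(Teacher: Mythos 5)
Your proposal takes exactly the route the paper itself takes: the paper proves Theorem \ref{thm: Additivity Quadrant Flags} by transporting the corner-case argument---redefining the operads $\mathbbst{E}_{n;\,\upchi}$ and $\mathsf{Cube}_{n;\,\upchi}$, redoing Theorems \ref{thm:WeakApproxofE} and \ref{thm:WreathProdApprox} for (double) quadrant flags, and then repeating the proof of Theorem \ref{thm:SwissCheeseAdditivity} verbatim---so your plan matches it step for step and in fact supplies more detail than the paper's sketch. One correction to the point you single out as the main obstacle: your justification of the flag analogue of Remark \ref{remark: cubes are nice} (``the open restriction pins down which coordinates are frozen'') is not right. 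The paper notes in a footnote that, in contrast to the corner case, openness of rectilinear embeddings does \emph{not} suffice to isolate the stratification-respecting embeddings for linear/flag singularities: the strata are interior linear subspaces, so there is no boundary constraining the translation part of a rectilinear map. The compatibility with the flag must instead be imposed in the definition of $\mathbbst{E}_{n;\,\upchi}$ itself (the paper obtains this by modifying \cite[Definition 3.8]{calaque_algebras_2025}, where admissibility of the embeddings is part of the data). Once admissibility is built in, the uniqueness-of-local-type statement you need follows simply because a point lies in exactly one stratum of the flag stratification, and your Lurie--Seifert--van Kampen and wreath-product arguments then go through as you describe.
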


\begin{remark}\label{rem: Local additivity mixing local models} 
It is also possible to combine corner singularities and quadrant flags together to equip the local models $\R^{p,q}=\R^{p}_{\phantom{\geq 0}}\!\!\!\!\times \R_{\geq 0}^{q}$ with singularities also depending on the factor $\R^{p}$. Let us denote those stratified spaces by  $\R^{p,q}_{\upchi}$, where $\upchi$ is a quadrant flag on $\R^p$, and by $\mathbbst{E}_{p,q;\,\upchi}$ the corresponding operad. The previous reasoning applies to these generalized operads as well: there is a canonical bifunctor 
$$
\E_{p,q;\,\upchi}\times \E_{r,t;\,\upeta}\xrightarrow{\;\;\quad\;\;}\E_{p+r,q+t;\,\upchi\times \upeta}
$$
which exhibits the target $\infty$-operad as a tensor product.

\end{remark}

This line of argument raises two natural questions: 
\begin{itemize}
    \item[$(a)$] Is the \emph{quadrant} hypothesis on the linear stratifications a real restriction? 
    \item[$(b)$] Can one globalize Theorem \ref{thm: Additivity Quadrant Flags} to obtain a version of Theorem \ref{thm: Global additivity}?
\end{itemize}

For $(a)$, the answer is: not really, it is just a convenient point-set condition to allow the same arguments as in the corner case to carry over. Since Lurie's Boardman-Vogt tensor product of $\infty$-operads preserves equivalences (unlike the underived/ordinary Boardman-Vogt tensor product), it suffices to show that two linear stratifications $\upchi_0$ and $\upchi_1$ of $\R^n$ with the same number of subspaces, which also share dimensions, yield equivalent ($\infty$-)operads $\mathbbst{E}_{n;\,\upchi_0}\simeq \mathbbst{E}_{n;\,\upchi_1}$. We can see that this holds due to the following reasons. Notice that there is natural equivalence of $\infty$-operads $\mathbbst{E}_{n;\, \upchi_i}\simeq \gE_{\R^n_{\upchi_i}}$, where we use spaces of open embeddings of conically smooth manifolds \cite{ayala_local_2017} to build $\gE_{\R^n_{\upchi_i}}$ thanks to Lemma \ref{lem: Flags are conical-basics} below (see the proof of Theorem \ref{thm: Global additivity max generality}). Using that there exists a linear map yielding $\R^n_{\upchi_0}\cong \R^n_{\upchi_1}$, we obtain $\gE_{\R^n_{\upchi_0}}\simeq \gE_{\R^{n}_{\upchi_1}}$, and so $\mathbbst{E}_{n;\,\upchi_0}\simeq \mathbbst{E}_{n;\,\upchi_1}$.

For a more down-to-earth reason for the existence of an equivalence $\mathbbst{E}_{n;\,\upchi_0}\simeq \mathbbst{E}_{n;\upchi_1}$, observe that the posets of strata associated to $\upchi_0$ and $\upchi_1$ are isomorphic under the previous combinatorial restriction. Additionally, the spaces of multimorphisms of both $\mathbbst{E}_{n;\,\upchi_0}$ and $\mathbbst{E}_{n;\,\upchi_1}$ have the homotopy type of configuration spaces of points living in the strata of $\upchi_0$ and $\upchi_1$ respectively, and, under the previous conditions on $\upchi_0$ and $\upchi_1$, the strata of both stratifications match. However, this argument is not complete, since we have not constructed an operad map (or zigzag of those) exhibiting this equivalence.

Question $(b)$ is more interesting, and we will present two results in this direction (see Theorems \ref{thm: Global additivity max generality}--\ref{thm: Global additivity mixing local models}). To start, let us have a closer look at (quadrant) flags $\R^{n}_{\upchi}$.

\begin{lemma}\label{lem: Flags are conical-basics} 
    Any euclidean space $\R^n$ equipped with a linear stratification $\upchi$ is a conically smooth manifold. More concretely, $\R^n_{\upchi}$ is of the form $\R^k\times \mathsf{C}Z$, where $Z$ is a conically smooth $(n-k-1)$-sphere.  
\end{lemma}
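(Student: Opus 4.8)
The plan is to peel the statement apart into a linear‑algebraic normalisation, a scaling argument that exhibits a cone, and an inductive check that the resulting stratified sphere is conically smooth; the induction is on $n$, and we may assume the statement (in the form displayed) for all smaller dimensions.

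\emph{Step 1 (split off the deepest stratum).} Put $k=\dim X_{0}$ and fix an orthogonal splitting $\mathbbst{R}^{n}=X_{0}\oplus W$ with $W=X_{0}^{\perp}$. Since $X_{0}\subseteq X_{j}$ for every $j$, one has $X_{j}=X_{0}\oplus Y_{j}$ with $Y_{j}:=X_{j}\cap W$, so a point $(a,b)\in X_{0}\oplus W$ lies in the stratum $X_{j}\setminus X_{j-1}$ precisely when $b\in Y_{j}\setminus Y_{j-1}$; hence $\mathbbst{R}^{n}_{\upchi}$ is, as a stratified space, the product of the trivially stratified $\mathbbst{R}^{k}=X_{0}$ with $W_{\upchi'}$, where $\upchi'=(\{0\}=Y_{0}\subsetneq Y_{1}\subsetneq\cdots\subsetneq Y_{d}=W)$ is a linear stratification of $W\cong\mathbbst{R}^{n-k}$ whose deepest stratum is a point. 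If $k>0$ the inductive hypothesis applies to $W_{\upchi'}$ and gives $W_{\upchi'}\cong\mathbbst{R}^{0}\times\mathsf{C}Z$ with $Z$ a compact conically smooth $(n-k-1)$‑sphere, whence $\mathbbst{R}^{n}_{\upchi}\cong\mathbbst{R}^{k}\times\mathsf{C}Z$ and we are done; so for the inductive step we may assume $k=0$, i.e.\ $X_{0}=\{0\}$.

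\emph{Step 2 (recognise the cone).} With $X_{0}=\{0\}$, the $\mathbbst{R}_{>0}$‑action by scaling on $\mathbbst{R}^{n}$ preserves every $X_{j}$ and is free off the origin, and $x\mapsto(x/|x|,|x|)$ is a scale‑equivariant diffeomorphism $\mathbbst{R}^{n}\setminus\{0\}\xrightarrow{\ \sim\ }S^{n-1}\times\mathbbst{R}_{>0}$. Matching strata, this identifies $\mathbbst{R}^{n}_{\upchi}$ with the open cone $\mathsf{C}Z$, where $Z=S^{n-1}$ carries the stratification by the great round subspheres $Z_{j}:=X_{j}\cap S^{n-1}$, $1\le j\le d$. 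Since $\mathsf{C}$ of a compact conically smooth stratified space is a conically smooth basic (and a product of a basic with $\mathbbst{R}^{k}$ is again a basic), the whole statement reduces to showing that the compact stratified sphere $Z$ is conically smooth — which then is, by construction, the desired conically smooth $(n-1)$‑sphere (an $(n-k-1)$‑sphere in the notation before Step 1).

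\emph{Step 3 (the stratified sphere is conically smooth).} Here I would produce an explicit atlas and again induct on $n$. Fix $\theta\in Z_{[i]}$, so $\theta\in S(X_{i})$, $\theta\notin X_{i-1}$ and $\dim X_{i}\ge 1$, and pick a ball $B\ni\theta$ in $S^{n-1}$ small enough to miss $X_{i-1}$. Gnomonic projection from the origin onto the affine tangent hyperplane $\theta+\theta^{\perp}$ is a smooth chart on $B$, and because each $X_{j}$ is a cone with $\theta\in X_{i}\subseteq X_{j}$ for $j\ge i$, it carries the stratification on $B$ to an open neighbourhood of the origin in the \emph{linear} stratification of $\theta^{\perp}\cong\mathbbst{R}^{n-1}$ by the subspaces $X_{j}\cap\theta^{\perp}$, $i\le j\le d$. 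By the inductive hypothesis applied to that flag on $\mathbbst{R}^{n-1}$, this stratified space is a conically smooth basic $\mathbbst{R}^{\dim X_{i}-1}\times\mathsf{C}Z_{\theta}$, so (an open subset of a conically smooth space being conically smooth) $Z$ is covered by conically smooth basic charts. What remains is atlas compatibility: a change of two such gnomonic charts is the restriction of a projective transformation of $\mathbb{P}^{n-1}$ preserving the projectivised flag $\{\mathbb{P}(X_{j})\}$, hence near any point it is a filtered linear isomorphism followed by a smooth positive rescaling and a translation. Verifying that such a map is a conically smooth morphism — yet another downward induction on depth, resting on the fact that a filtered linear isomorphism of $\mathbbst{R}^{m}$ induces a conically smooth self‑map of the corresponding linearly stratified $\mathbbst{R}^{m}$, together with closure of conical smoothness under composition with cone‑structure‑preserving diffeomorphisms — is the one routine‑but‑delicate point, and I expect it to be the main obstacle. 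One can sidestep it by instead invoking that every Whitney‑stratified smooth manifold — such as $Z$, or $\mathbbst{R}^{n}_{\upchi}$ itself, whose strata are linear subspaces and so trivially satisfy Whitney's conditions — carries a functorial conically smooth structure; applying this to $\mathbbst{R}^{n}_{\upchi}$ and transporting along the stratified‑smooth identification of Steps 1–2, functoriality forces that structure to coincide with that of the basic $\mathbbst{R}^{k}\times\mathsf{C}Z$, which finishes the proof.
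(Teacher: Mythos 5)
Your proposal is correct and follows essentially the same route as the paper: split off the deepest stratum $X_0$ as a Euclidean factor, reduce to $X_0=\{0\}$, identify $\R^n_{\upchi}$ with the open cone on the sphere stratified by the $X_j\cap S^{n-1}$, handle that sphere by induction, and note that one may instead simply cite the fact that Whitney stratified spaces (which linear stratifications obviously are) carry conically smooth structures. Your Step 3 (gnomonic charts and the transition-map issue) spells out in more detail the inductive step that the paper compresses into one sentence, and your fallback to the Whitney-stratified result is exactly the paper's own shortcut.
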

\begin{proof} It would suffice for our purposes to simply observe that $\R^n_{\upchi}$ is a Whitney stratified space (see \cite[Definition 2.5]{nocera_whitney_2023}), and hence a conically smooth manifold by \cite[Proposition 2.10 and Theorem 1.1]{nocera_whitney_2023}. However, these stratified spaces are simple enough that one can apply an induction on the depth to check the claim.

First, notice that, by applying a linear transformation, we can assume that $\R^n_{\upchi}$ is a quadrant flag. Denoting $X_0$ the first subspace of $\upchi$, we need to analyze two cases:
\begin{itemize}
    \item $\mathsf{dim}X_0=0$: In this case, $\R^n_{\upchi}$ can be manifestly seen as $\mathsf{C}Z$, where $Z$ is the unit sphere at the origin $\mathbbst{S}^{n-1}$ with the stratification obtained by intersecting the linear subspaces of $\upchi$ with $\mathbbst{S}^{n-1}$. By induction on the depth of strata, $\mathsf{C}Z$ is the cone of a compact conically smooth manifold.  
    
    \item $\mathsf{dim}X_0>0$: By applying another linear transformation to organize the euclidean factors at our convenience, we can assume that the linear subspaces in the quadrant flag $\R^n_\upchi$ are spanned by subsets of the cartesian basis $\{e_1,\dots,e_n\}$ which include the first $k$-vectors $\{e_1,\dots,e_k\}$, the ones spanning $X_0$. In other words, $\R^n_{\upchi}=\R^k\times \R^{n-k}_{\upchi'}$, where $\R^{n-k}_{\upchi'}$ is a quadrant flag falling into the case above. 
\end{itemize}    
\end{proof}

\begin{remark} Let us observe that $\R^{n+m}_{\upchi\times \upeta}$, for $\R^n_{\upchi}$ and $\R^m_{\upeta}$ two linear stratifications (e.g.\ two quadrant flags), is also a conically smooth manifold of the form $\R^k\times \mathsf{C}Z$ with $Z$ a $(n-k-1)$-sphere. One can either exploit the properties of the open cone construction $\mathsf{C}(-)$, or repeat the argument in Lemma \ref{lem: Flags are conical-basics}. Building on this fact and our previous arguments, one can easily show: $(1)$ there is an equivalence of $\infty$-operads $\mathbbst{E}_{n+m;\,\upchi_0\times\upeta_{0}}\simeq \mathbbst{E}_{n+m;\,\upchi_1\times \upeta_1}$ for any tuple of linear stratifications $(\R^n_{\upchi_i},\R^m_{\upeta_i})_{i=0,1}$, and $(2)$ in the scenario from Remark \ref{rem: Local additivity mixing local models}, there is an equivalence of $\infty$-categories
$$
\Fact^{\cbl}_{\R^{p+r,q+t}_{\upchi\times\upeta}}(\EuScript{V})\simeq \Alg_{\mathbbst{E}_{p+r,q+t;\,\upchi\times\upeta}}(\EuScript{V})
$$
coming from \cite[Corollaries 4.24 and 5.24]{karlsson_assembly_2024}.
\end{remark}

Combining this fact with the ideas employed to obtain Theorem \ref{thm: Global additivity}, we arrive at:
\begin{thm}\label{thm: Global additivity max generality} Let $\EuScript{V}$ be a presentable symmetric monoidal $\infty$-category. Let $X$, $Y$ be two conically smooth stratified spaces with enough good discs in the sense of \cite[Definition 3.34]{karlsson_assembly_2024} admitting open covers by subspaces of the form $\R^{p,q}_{\upchi}$, for some $(p,q)$ and with $\upchi$ a quadrant flag on $\R^p$, whose finite intersections remain so. Then, pushforward along $\uppi_X\colon X\times Y\to X$ induces a symmetric monoidal equivalence
$$
\FACT_{X\times Y}^{\cbl}(\EuScript{V})\simeq \FACT_{X}^{\cbl}\big(\FACT_{Y}^{\cbl}(\EuScript{V})\big).
$$    
\end{thm}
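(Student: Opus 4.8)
The plan is to mimic the proof of Theorem \ref{thm: Global additivity} essentially verbatim, with two inputs replaced by their ``mixed'' analogues discussed in the preceding digression. The only genuinely new ingredient needed is the observation that the spaces $\R^{p,q}_{\upchi}$ are conically smooth stratified manifolds (Lemma \ref{lem: Flags are conical-basics} and the surrounding remark), so that the hyperdescent machinery of \cite{karlsson_assembly_2024}, together with our Theorem \ref{thm: Hyperdescent for cbl fact}-style arguments, applies to $X$ and $Y$, and moreover the identification $\Fact^{\cbl}_{\R^{p,q}_{\upchi}}(\EuScript{V})\simeq \Alg_{\mathbbst{E}_{p,q;\,\upchi}}(\EuScript{V})$ holds (which we recorded above, via \cite[Corollaries 4.24 and 5.24]{karlsson_assembly_2024} applied to a basis of local models).

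First I would fix, using the hypothesis, an open cover $\mathcal{V}\hookrightarrow\open(X)$ of $X$ by subspaces of the form $\R^{p,q}_{\upchi}$ (with $\upchi$ a quadrant flag on $\R^p$) whose finite intersections remain of this form, and similarly $\mathcal{W}\hookrightarrow\open(Y)$. Since $X$ and $Y$ have enough good discs, the presheaves $\Fact^{\cbl}_{\scalebox{0.4}{$(-)$}\!}(\EuScript{V})$ and $\FACT^{\cbl}_{\scalebox{0.4}{$(-)$}\!}(\EuScript{V})$ satisfy hyperdescent over the relevant sites; this is \cite[Theorem 6.1, Proposition 6.10]{karlsson_assembly_2024} (our alternative argument in Theorem \ref{thm: Hyperdescent for cbl fact} also goes through, using the ``directed integral'' disintegration over $\overline{X}$, since the local models are still conically smooth basics). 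Then I would run the exact chain of equivalences from the proof of Theorem \ref{thm: Global additivity}:
\begin{align*}
 \Fact^{\cbl}_{X}\big(\FACT^{\cbl}_{Y}(\EuScript{V})\big)
 &\simeq \Fact^{\cbl}_{X}\big(\underset{W\in\mathcal{W}^{\op}}{\holim}\FACT^{\cbl}_{W}(\EuScript{V})\big)
 \simeq \underset{W\in\mathcal{W}^{\op}}{\holim}\Fact^{\cbl}_{X}\big(\FACT^{\cbl}_{W}(\EuScript{V})\big)\\
 &\simeq \underset{W\in\mathcal{W}^{\op}}{\holim}\,\underset{V\in\mathcal{V}^{\op}}{\holim}\Fact^{\cbl}_{V}\big(\FACT^{\cbl}_{W}(\EuScript{V})\big)
 \simeq \underset{W\in\mathcal{W}^{\op}}{\holim}\,\underset{V\in\mathcal{V}^{\op}}{\holim}\Fact^{\cbl}_{V\times W}(\EuScript{V})\\
 &\simeq \underset{V\times W}{\holim}\Fact^{\cbl}_{V\times W}(\EuScript{V})
 \simeq \Fact^{\cbl}_{X\times Y}(\EuScript{V}),
\end{align*}
where the first, third and last steps use hyperdescent (note $\mathcal{V}\times\mathcal{W}\hookrightarrow\open(X\times Y)$ is again an open cover by local models of the allowed type), the second uses that $\ALG_{\EuScript{O}}(-)$ commutes with homotopy limits of presentable symmetric monoidal $\infty$-categories (Lemma \ref{lem:ALG commutes with lims}), the fifth is Fubini for homotopy limits, and the crucial fourth step is the \emph{local additivity for mixed local models}: for $V=\R^{p,q}_{\upchi}$ and $W=\R^{r,t}_{\upeta}$ one has $V\times W\cong\R^{p+r,q+t}_{\upchi\times\upeta}$, and under the identifications $\Fact^{\cbl}_{V}\simeq\Alg_{\mathbbst{E}_{p,q;\,\upchi}}$, $\Fact^{\cbl}_{W}\simeq\Alg_{\mathbbst{E}_{r,t;\,\upeta}}$, $\Fact^{\cbl}_{V\times W}\simeq\Alg_{\mathbbst{E}_{p+r,q+t;\,\upchi\times\upeta}}$, the equivalence is exactly the mixed additivity of Remark \ref{rem: Local additivity mixing local models} (a verbatim adaptation of Theorem \ref{thm:SwissCheeseAdditivity}). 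Tracking the equivalence through the chain, one checks it is implemented by pushforward along $\uppi_X$, and the whole diagram upgrades to symmetric monoidal $\infty$-categories because all the structures involved are pointwise tensor products and all the cited equivalences are symmetric monoidal.

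The main obstacle is not in the globalization step, which is formal once the inputs are in place, but in \emph{justifying the local inputs} for the mixed operads $\mathbbst{E}_{p,q;\,\upchi}$: namely (i) that the map $\mathsf{Cube}_{p,q;\,\upchi}\to\mathbbst{E}_{p,q;\,\upchi}$ is still a weak approximation — i.e.\ that Theorems \ref{thm:WeakApproxofE} and \ref{thm:WreathProdApprox} survive when the local combinatorics of strata is governed by a double quadrant flag rather than a pure corner poset — and (ii) that $\R^{p,q}_{\upchi}$ is a conically smooth basic (Lemma \ref{lem: Flags are conical-basics}) admitting enough good discs, so that \cite[Corollaries 4.24 and 5.24]{karlsson_assembly_2024} yield $\Fact^{\cbl}_{\R^{p,q}_{\upchi}}(\EuScript{V})\simeq\Alg_{\mathbbst{E}_{p,q;\,\upchi}}(\EuScript{V})$. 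For (i) the key point is Remark \ref{remark: cubes are nice}, i.e.\ that each point of a local model lies in the deepest stratum of a \emph{unique} local type of sub-cube; this continues to hold for quadrant-flag stratifications of $\R^p$ (by the combinatorial nature of the flag), which is precisely why the quadrant hypothesis is imposed, and it lets the Lurie--Seifert--van Kampen cofiltered-covering arguments in Lemmas \ref{lem:Second condition WeakAprroxofE} and \ref{lem: Second condition WreathProdApprox} go through word for word. Granting these, the proof is complete.
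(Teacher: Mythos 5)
Your proposal follows essentially the same route as the paper's proof: cover $X$ and $Y$ by the mixed local models $\R^{p,q}_{\upchi}$ with intersections of the same form, glue constructible factorization algebras via the descent results of \cite{karlsson_assembly_2024}, identify $\FACT^{\cbl}_{\R^{p,q}_{\upchi}}(\EuScript{V})\simeq \ALG_{\mathbbst{E}_{p,q;\,\upchi}}(\EuScript{V})$ using \cite[Corollaries 4.24 and 5.24]{karlsson_assembly_2024} together with $\mathbbst{E}_{p,q;\,\upchi}\simeq \gE_{\R^{p,q}_{\upchi}}$, apply the mixed local additivity of Remark \ref{rem: Local additivity mixing local models}, and finish with Fubini for homotopy limits, exactly as in Theorem \ref{thm: Global additivity}. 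The one claim you should drop or downgrade is the parenthetical assertion that the directed-integral hyperdescent argument of Theorem \ref{thm: Hyperdescent for cbl fact} ``also goes through'' in this setting: that theorem is proved in the paper only for manifolds with corners (its differential-topological inputs, e.g.\ the frame-bundle and scanning arguments, are corner-specific), which is precisely why the paper invokes \cite[Theorems A and B]{karlsson_assembly_2024} here and merely remarks that replacing them would require feeding the arguments of \textsection\ref{sect: Additivity for cbl Fact} with the corresponding results for conically smooth manifolds from \cite{ayala_factorization_2017}.
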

\begin{proof} We follow the same strategy as in the proof of Theorem \ref{thm: Global additivity}, with slight modifications. We reproduce the argument for the reader's convenience. 

Letting $\mathcal{V}$ (resp.\ $\mathcal{W}$) be an open cover of $X$ (resp.\ $Y$) as in the statement, we consider the chain of equivalences:
\begin{align*}
    \FACT^{\cbl}_{X} \big( \FACT^{\cbl}_{Y}( \EuScript{V}) \big) & \overset{}{\simeq} \FACT^{\cbl}_{X} \big( \underset{W\in \mathcal{W}^{\op}}{\holim} \FACT^{\cbl}_{W}( \EuScript{V}) \big) \\[2mm]
   & \overset{}{\simeq} \underset{W\in\mathcal{W}^{\op}}{\holim}  \FACT^{\cbl}_{X} \big( \FACT^{\cbl}_{W} ( \EuScript{V}) \big)  \\[2mm]
    & \overset{}{\simeq} \underset{V\times W}{\holim}  \FACT^{\cbl}_{V} 
 \big( \FACT^{\cbl}_{W} ( \EuScript{V}) \big) \\[2mm]
     & \overset{}{\simeq} \underset{V\times W}{\holim}  \ALG_{\mathbbst{E}_{p,q;\,\upchi}} 
 \big( \ALG_{\mathbbst{E}_{r,t;\,\upeta}} ( \EuScript{V}) \big) \\[2mm]
    & \overset{}{\simeq} \underset{V\times W}{\holim}
  \ALG_{\mathbbst{E}_{p+r,q+t;\,\upchi\times \upeta}}( \EuScript{V}) \\[2mm]
  & \overset{}{\simeq} \underset{V\times W}{\holim} \FACT^{\cbl}_{V\times W}(\EuScript{V}) \\[2mm]
  & \overset{}{\simeq} \FACT^{\cbl}_{X\times Y}(\EuScript{V}),
\end{align*}
 where we have assumed $V\cong \R^{p,q}_{\upchi}$, $W\cong \R^{r,t}_{\upeta}$, and the only modifications with respect to Theorem \ref{thm: Global additivity} are: $(1)$ instead of Theorem \ref{thm: Hyperdescent for cbl fact} to glue constructible factorization algebras, we use \cite[Theorems A and B]{karlsson_assembly_2024}; $(2)$ in the fourth and sixth equivalences, we use \cite[Corollary 4.24 and 5.24]{karlsson_assembly_2024} together with the natural equivalence of operads $\mathbbst{E}_{p,q;\,\upchi}\simeq \gE_{\R^{p,q}_{\upchi}}$ which relies on \cite[Theorem 8.2.6]{ayala_local_2017} and the proof of \cite[Lemma 2.21]{ayala_factorization_2017} (cf.\ Proposition \ref{prop: Coincidence of little (p,q)-disc operads}); $(3)$ for the fifth equivalence we apply Remark \ref{rem: Local additivity mixing local models}.  
\end{proof}

Recall that Whitney stratified spaces are conically smooth manifolds by \cite{nocera_whitney_2023}, yielding a good supply of potential examples to apply Theorem \ref{thm: Global additivity max generality}. Also, notice that (the proof of) Lemma \ref{lem: Flags are conical-basics} can be employed to identify the basic building blocks we admit.

A particular instance of Theorem \ref{thm: Global additivity max generality}, with a cleaner statement, is:
\begin{thm}\label{thm: Global additivity mixing local models} Let $\EuScript{V}$ be a presentable symmetric monoidal $\infty$-category, $X$ be a manifold with corners and let $\R^n_\upchi$ be a quadrant flag. Then, there are equivalences of symmetric monoidal $\infty$-categories
    $$
    \FACT^{\cbl}_{\R^n_\upchi}\big(\FACT^{\cbl}_{X}(\EuScript{V})\big)\simeq \FACT_{\R^n_{\upchi}\times X}^{\cbl}(\EuScript{V})\simeq  \FACT^{\cbl}_{X}\big(\FACT^{\cbl}_{\R^n_\upchi}(\EuScript{V})\big).
    $$
\end{thm}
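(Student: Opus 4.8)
The plan is to derive Theorem \ref{thm: Global additivity mixing local models} directly from Theorem \ref{thm: Global additivity max generality} by checking that the pair $(X, \R^n_\upchi)$ (in either order) satisfies the hypotheses there. Concretely, I would argue as follows.

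\textbf{Step 1: The factors admit the required local covers.} For the manifold with corners $X$, Proposition \ref{prop: Good supply of Weiss covers} (via Hypothesis \ref{hyp: Good supply of Weiss covers}) gives an open cover by local models $\R^{p,q}$ closed under finite intersections; these are of the form $\R^{p,q}_\upchi$ with $\upchi$ the trivial flag, so they qualify. For $\R^n_\upchi$, one needs an open cover by subspaces of the form $\R^{p,q}_{\upzeta}$ (with $\upzeta$ a quadrant flag on $\R^p$) closed under finite intersections, together with the fact that $\R^n_\upchi$ has enough good discs in the sense of \cite[Definition 3.34]{karlsson_assembly_2024}. By Lemma \ref{lem: Flags are conical-basics}, $\R^n_\upchi$ is a conically smooth manifold of the form $\R^k\times \mathsf{C}Z$; its good-disc property can be extracted either from the fact that it is Whitney stratified (hence conically smooth by \cite{nocera_whitney_2023}) and the existence of conically smooth charts, or — more concretely — by observing that $\R^n_\upchi$ is itself a single basic $\R^{p,q}_{\upzeta}$ (with $q=0$ and $\upzeta=\upchi$), so the constant cover $\{\R^n_\upchi\}$ together with the rectilinear sub-cubes already gives a Weiss basis of basics closed under finite intersections, just as in the proof of Theorem \ref{thm:WeakApproxofE}. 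One should double-check that a nonempty rectilinear intersection of two copies of a quadrant-flag cube is again (abstractly) a quadrant-flag cube; this is immediate from the combinatorics of coordinate subspaces, exactly the observation already used in Remark \ref{remark: cubes are nice}.

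\textbf{Step 2: Apply Theorem \ref{thm: Global additivity max generality} twice.} With the covers from Step 1 in hand, Theorem \ref{thm: Global additivity max generality} applied to the pair $(X, \R^n_\upchi)$ yields a symmetric monoidal equivalence
$$
\FACT^{\cbl}_{X\times \R^n_\upchi}(\EuScript{V})\simeq \FACT^{\cbl}_{X}\big(\FACT^{\cbl}_{\R^n_\upchi}(\EuScript{V})\big),
$$
and applied to the pair $(\R^n_\upchi, X)$ — noting that the product stratified space $\R^n_\upchi\times X$ and $X\times \R^n_\upchi$ are canonically isomorphic as conically smooth manifolds — it yields
$$
\FACT^{\cbl}_{\R^n_\upchi\times X}(\EuScript{V})\simeq \FACT^{\cbl}_{\R^n_\upchi}\big(\FACT^{\cbl}_{X}(\EuScript{V})\big).
$$
Composing these through the identification $\FACT^{\cbl}_{X\times\R^n_\upchi}(\EuScript{V})\simeq\FACT^{\cbl}_{\R^n_\upchi\times X}(\EuScript{V})$ gives the chain of symmetric monoidal equivalences in the statement; both identifications are induced by pushforward along the respective projections, so the composite is the expected ``switch'' equivalence.

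\textbf{Main obstacle.} The routine part is Step 2; the only genuine content is Step 1, i.e.\ verifying that $\R^n_\upchi$ fits the framework of Theorem \ref{thm: Global additivity max generality} — specifically that it has enough good discs and an intersection-closed cover by quadrant-flag basics. The subtlety is that a \emph{nonempty rectilinear} intersection of two sub-cubes of type $\R^{p,q}_\upzeta$ need not be literally a translate of $\R^{p,q}_\upzeta$, but it is always abstractly diffeomorphic (as a stratified space) to a quadrant-flag cube of the same combinatorial type, which is all that is needed; once this point-set bookkeeping is dispatched exactly as in §\ref{sect: Additivity for generalized swiss-cheese operads}, the theorem follows. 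A second, milder point to be careful about is that the equivalence $\mathbbst{E}_{p,q;\,\upchi}\simeq \gE_{\R^{p,q}_\upchi}$ invoked inside Theorem \ref{thm: Global additivity max generality} relies on \cite[Theorem 8.2.6]{ayala_local_2017}; since we only ever use it for the trivial flag on one factor or a single quadrant flag on the other, no extra work beyond what Theorem \ref{thm: Global additivity max generality} already assumes is required.
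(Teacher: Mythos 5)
Your proposal takes essentially the same route as the paper: reduce to Theorem \ref{thm: Global additivity max generality}, handle $X$ via Proposition \ref{prop: Good supply of Weiss covers}, and observe that $\R^n_\upchi$ needs no cover since it is already a single building block, so the only real content is supplying enough good discs for $\R^n_\upchi$ (the paper does this with convex open subsets contained in euclidean balls that are basics for the stratification, together with their finite disjoint unions; your intersection-closed basis of sub-cubes plays the same role). One caution: of your two suggested justifications for the good-disc property, only the concrete one is valid — ``enough good discs'' in the sense of \cite[Definition 3.34]{karlsson_assembly_2024} is a genuine point-set hypothesis and does not follow merely from $\R^n_\upchi$ being Whitney stratified or conically smooth, as the paper itself stresses.
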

\begin{proof} 
We apply Theorem \ref{thm: Global additivity max generality} by noticing the following easy facts. On the one hand, any manifold with corners has enough good discs and admits an open cover by $\R^{p,q}$'s, whose finite intersections remain so, by Proposition \ref{prop: Good supply of Weiss covers}. On the other hand, there is no need to cover $\R^n_{\upchi}$, as it is already a building block, and it admits enough good discs since we can consider the full subcategory of $\open(\R^n_{\upchi})$ spanned by convex open subsets in $\R^n$ that can be included in euclidean balls which are basics for the stratification $\R^n_{\upchi}$, and their finite disjoint unions. The condition on euclidean balls simply avoids some pathological examples.
\end{proof}

\begin{remark} The use of descent of constructible factorization algebras from \cite{karlsson_assembly_2024} in Theorem \ref{thm: Global additivity max generality} could be replaced by feeding our arguments in \textsection \ref{sect: Additivity for cbl Fact} with appropriate differential topological results for conically smooth manifolds from \cite{ayala_factorization_2017}. In fact, Theorem \ref{thm: Global additivity mixing local models} might be handled more explicitly by combining embedding spaces for manifolds with corners and $\upchi$-admissible rectilinear embeddings, without appealing to the powerful machinery of conically smooth manifolds.    
\end{remark}

\appendix

\section{Manifolds with corners}\label{sect:Manifolds with Corners}

 This appendix fixes notation and presents adaptations of classical results that we require and could not find in the literature. References on manifolds with corners are relatively sparse and often employ different conventions depending on the authors' objectives. For instance, look at the various notions of maps between such manifolds in \cite{francis-staite_cinfty-algebraic_2024, melrose_calculus_1992}. In our setting, the simplest choices will suffice. We primarily follow \cite{cerf_topologie_1961,michor_manifolds_1980}, as well as the lecture notes \cite{conrad_differential_nodate}, where more detailed discussions can be found.

Let us begin by introducing the version of differentiability for functions between local models (also called quadrants \cite{michor_manifolds_1980} or sectors \cite{conrad_differential_nodate}) that we will use.

\begin{defn} Let $A$, $B$ be arbitrary subsets of $\R^n$, $\R^m$ respectively. A function $f\colon A\to B$ is \emph{(weakly) smooth} if it is continuous and it admits a smooth extension $F\colon U\to \R^m$, i.e. $F\vert_{A}=f$, where $U\subseteq \R^n$ is an open subset containing $A$. A \emph{diffeomorphism} $g\colon A\to B$ is a homeomorphism such that $g$ and $g^{-1}$ are (weakly) smooth.
\end{defn}

Given a (weakly) smooth function $f\colon A\to B$ and a point $x\in A$, we might define its differential $\mathsf{d}_xf\colon \R^n\to \R^m$ as $\mathsf{d}_xf:=\mathsf{d}_xF$ for a smooth extension $F\colon U\to \R^m$ of $ f$. This $\mathsf{d}_xf$ might depend on the choice of a smooth extension. However, if there is an open subset $V\subseteq \R^n$ such that $V\subseteq A \subseteq \overline{V}$, the map $\mathsf{d}_xf$ only depends on $f$ and $x$. This is the case for (weakly) smooth functions between quadrants $\R^{p,q}\to \R^{s,t}$.  

Slightly more generally, a \emph{quadrant} $Q$ is a subset of a finite-dimensional vector space $\mathsf{V}$ of the form 
$$
Q=\left\lbrace x\in\mathsf{V}:\;\;\ell_1(x)\geq0,\dots,\ell_r(x)\geq 0\right\rbrace,
$$
where $\{\ell_i\}_i$ is a linearly independent subset of $\mathsf{Hom}_{\R}(\mathsf{V},\R)$. Given a point $x\in Q$ in a quadrant, we can define its \emph{index} (or \emph{depth}) as the number $\#\{i:\;\ell_i(x)=0\}$. Quite importantly, this index is invariant under diffeomorphism of quadrants (see \cite[Manifolds with corners]{conrad_differential_nodate}). In fact, this implies that the number of linear inequalities describing a quadrant is also diffeomorphism-invariant. Furthermore, given a diffeomorphism $f\colon Q\to Q'$ between quadrants, its differential at the origin $\mathsf{d}_0f\colon \mathsf{V}\to \mathsf{V}'$ is a linear isomorphism which restricts to the quadrants (as subspaces of the corresponding tangent spaces); in particular, it induces a bijection between the sets $\{\ell_i\}_i$ and $\{\ell'_j\}_j$ of linear forms defining $Q$ and $Q'$ respectively\footnote{Notice that we are not claiming that $\ell'_j\circ \mathsf{d}_0f=\ell_i$ on the nose for the corresponding indices $i,j$.}, and sends one (and hence any) vector $v$ satisfying $\ell_i(v)>0$ for all $i$ to a vector satisfying the corresponding strict inequalities in $Q'$. Indeed, linear isomorphisms which restrict to quadrants are characterized by these conditions, and we will denote by $\mathsf{GL}(\R^{p,q})$ the subgroup of $\mathsf{GL(\R^n)}$ spanned by the linear isomorphisms which restrict to the quadrant $\R^{p,q}$. 

Based on this obvious generalization of smoothness (whose soundness is due to Whitney's extension theorem), one can define algebras of smooth functions on (open subsets of) $\R^{p,q}$, and diffeomorphisms between quadrants \cite{conrad_differential_nodate}. Hence, we can give:
\begin{defn}\label{defn:Mfld with corners} A \emph{premanifold with corners} is a locally $\R$-ringed space $X$ that, around each $x\in X$, is locally isomorphic to an open subset of a quadrant $\R^{p_x,q_x}$ equipped with its sheaf of smooth functions. We say that $X$ is a \emph{manifold with corners} if the underlying space is Hausdorff and second-countable\footnote{These hypotheses imply that $X$ is paracompact and metrizable, and thus it admits smooth partitions of unity subordinated to arbitrary open covers \cite[Paracompactness]{conrad_differential_nodate}.}. 
\end{defn}

Any $n$-dimensional manifold with corners $X$ admits a depth (or index) filtration 
$$
 X_{\leq 0}\subseteq X_{\leq 1}\subseteq \dots\subseteq X_{\leq n}=X,
$$
equiv.\ a continuous map $X\to [n]=\{0<\dots<n\}$. That is, $x\in X$ belongs to $X_{\leq k}$ iff $\mathsf{index}(x)\geq n-k$, where $\mathsf{index}(x)$ is the index of $x$ in some (and hence any) local chart containing $x$. However, it turns out to be more convenient to consider a refinement of this filtration that accounts for the incidence of strata in $X$. 

Consider the following set
$$
\Prect(X)=\bigcup_{k\in [n]}\big\{K\subseteq X_{\leq k}\backslash X_{\leq k-1}\text{ connected component}\big\}\footnote{By convention $X_{\leq -1}=\diameter$.}
$$
ordered by: $K'\leq K$ iff $K'$ is contained in $\overline{K}$, the closure of $K$ in $X$. Then, we define a continuous map $\uppi\colon X\to \Prect(X)$ by sending a point to the connected component of strata where it belongs. This map is continuous because
$
\uppi^{-1}\big(\{K': K'\leq K\}\big)=\overline{K}.
$
Such a closure, of an element $K\in \mathsf{P}(X)$, is called a \emph{face} of $X$.

The poset $\Prect(X)$ encodes the local structure of $X$ and the associated incidence data. One way to interpret this assertion is the following: for $x\in X$, the singularity type of any local chart around $x$ is determined by the image of $x$ along $\uppi\colon X\to \Prect(X)$, i.e.\,  the pair $(p,q)$ defining $\R^{p,q}$. Also, if we consider the equivalence relation between local charts (around possibly different points in the same stratum) generated by inclusions, we have:
$$
(U,x)\sim (V,y)\quad\quad \text{ iff } \quad\quad \uppi(x)=\uppi(y).
$$
These observations will be useful when considering constructible factorization algebras over manifolds with corners. For instance, given $Z\in \Prect(X)$, we will denote by $\mathbbst{R}^{Z}$ the local model $\mathbbst{R}^{p,q}$ around any point $z\in Z$, and by $X_{[Z]}$ the subspace of $X$ given by $Z$. We also use the notation $\square^{Z}=\R^{Z}\cap \square^{\,p,q}$, where $\square^{\, p,q}=(-1,1)^p\times [0,1)^{q}\subseteq \R^{p,q}$, and $\R^Z_{[Z]}$ (resp.\ $\square^{Z}_{[Z]}$) to refer to the subspace corresponding to the smallest element/deepest strata in the poset $\mathsf{P}(\R^{Z})$ (resp.\ $\mathsf{P}(\square^Z)$).  

A relevant example for us is the following:
\begin{example}\label{examp: Conf as mfld with corners} Given a manifold with corners $X$ and a function $Z\colon I\to \mathsf{P}(X)$, where $I$ is a finite set, the (ordered) configuration space 
$$
\Conf_{Z}(X)=\prod_{K\in \mathsf{P}(X)}\Conf_{Z^{-1}\{K\}}\big(X_{[K]}\big)
$$
from Notation \ref{notat: Configuration spaces} is again a manifold with corners, indeed, an ordinary manifold without boundary. $\Conf_{Z}(X)$ is a product of configuration spaces of ordinary smooth manifolds since $X_{[K]}$ is a manifold without boundary for any $K\in \Prect(X)$ (see \cite[\textsection 2.5]{michor_manifolds_1980}).

Compare this fact with the honest manifold with corners given by the usual configuration space of $I$-points
$$
\Conf_{I}(X)=\left\lbrace\underline{x}\in X^{\times I}:\; x_i\neq x_j \text{ if }i\neq j\right\rbrace.
$$
(Notice that $\Conf_I(X)$ is an open subspace of $X^{\times I}$ since $X$ is assumed to be Hausdorff). In fact, $\Conf_Z(X)$ can be seen as a subspace of $\Conf_I(X)$ and their collection as $Z$ ranges over functions $\mathsf{Hom}_{\mathsf{Set}}(I,\mathsf{P}(X))$ is related to the set $\mathsf{P}(\Conf_I(X))$. We will not need this fact, and thus we omit further discussion.
\end{example}

\paragraph{Tangent and frame bundles.} Using Definition \ref{defn:Mfld with corners}, one can redo a considerable amount of basic differential geometric constructions (cf.\ \cite{cerf_topologie_1961,conrad_differential_nodate,michor_manifolds_1980}). In particular, we will need the (inner) tangent and frame spaces associated with a manifold with corners:

Let $X$ be an $n$-manifold with corners. Given a point $x\in X$, we can define in the usual way the $\R$-vector space of tangent vectors $\mathsf{T}_xX$ at $x$ using derivations of the algebra of smooth functions or 1-jets of curves (see \cite[Derivations and vector fields]{conrad_differential_nodate} for a detailed discussion). If $\mathsf{index}(x)=q$, we can extract the convex subspace of inner tangent vectors $\mathsf{T}^{\, i}_{x}X\subseteq \mathsf{T}_xX$ corresponding, under the linear isomorphism $\mathsf{T}_xX\cong \R^{p+q}$ associated to any choice of local coordinates, to the quadrant $\R^{p,q}\subseteq \R^{p+q}$ of inner directions around $x$ (see \cite{michor_manifolds_1980} for an alternative description using jets of curves). Gluing these spaces together we obtain the \emph{tangent bundle} $\mathsf{T}X=\bigcup_{x\in X}\mathsf{T}_xX\to X$, a vector bundle/locally trivial bundle with fiber $\R^{n}$ and structural group $\mathsf{GL}(\R^{n})$ in the category of manifolds with corners, and its subspace $\mathsf{T}^{\, i}X=\bigcup_{x\in X}\mathsf{T}^{\, i}_xX\subseteq \mathsf{T}X$ consisting of \emph{inner tangent vectors}. Notice that $\mathsf{T}^{\,i}X$ may not be a manifold with corners and that $\mathsf{T}^{\,i}X\to X$ is not a locally trivial bundle (see \cite[pag. 20]{michor_manifolds_1980}). However, for any $Z\in \mathsf{P}(X)$, the pullback map
$$
\begin{tikzcd}
    \mathsf{T}^{\,i}_{[Z]}X \ar[r, hookrightarrow] \ar[d]\ar[rd, phantom, "\lrcorner"] & \mathsf{T}^{\,i}X \ar[r, hookrightarrow]\ar[d] & \mathsf{T}X \ar[ld]\\
    X_{[Z]} \ar[r, hookrightarrow] & X
\end{tikzcd}
$$
becomes a locally trivial bundle over $X_{[Z]}$ with fiber $\R^{Z}$ in the category of manifolds with corners. See \cite[Equivalence of bundles and O-modules]{conrad_differential_nodate} for formalities.  

Similarly, given a point $x\in X$, we can consider the $\mathsf{GL}(\R^{n})$-torsor of linear frames at $x$, $\mathsf{Fr}_x(X)=\mathsf{Hom}^{\mathsf{iso}}_{\R}(\R^{n},\mathsf{T}_xX)$. By considering only those framings which restrict to an isomorphism of quadrants $\R^{p,q}\cong \mathsf{T}^{\, i}_xX$, we obtain the subspace of inner frames $\mathsf{Fr}^{\, i}_x(X)\subseteq \mathsf{Fr}_x(X)$ at $x$, which is accordingly a $\mathsf{GL}(\R^Z)$-torsor where $Z\in \mathsf{P}(X)$ denotes the stratum containing $x$. Gluing these spaces together, one gets the \emph{frame bundle} $\mathsf{Fr}(X)=\bigcup_{x\in X}\mathsf{Fr}_x(X)\to X$, a $\mathsf{GL}(\R^{n})$-principal bundle in the category of manifolds with corners, and its subspace $\mathsf{Fr}^{\, i}(X)=\bigcup_{x\in X}\mathsf{Fr}^{\, i}_x(X)\subseteq \mathsf{Fr}(X)$ consisting of \emph{inner frames}. Again, for any $Z\in \mathsf{P}(X)$, the pullback map
$$
\begin{tikzcd}
    \mathsf{Fr}^{\,i}_{[Z]}(X_{[Z]}) \ar[r, hookrightarrow] \ar[d]\ar[rd, phantom, "\lrcorner"] & \mathsf{Fr}^{\,i}(X) \ar[r, hookrightarrow]\ar[d] & \mathsf{Fr}(X) \ar[ld]\\
    X_{[Z]} \ar[r, hookrightarrow] & X
\end{tikzcd}
$$
becomes a $\mathsf{GL}(\R^{Z})$-principal bundle over $X_{[Z]}$ in the category of manifolds with corners.

\begin{remark} It is also possible to define jet spaces and jet bundles for manifolds with corners (\cite[\textsection 2.11]{michor_manifolds_1980} or \cite[\textsection II.3]{cerf_topologie_1961}) and hence to endow sets of differentiable mappings between manifolds with corners, and their subsets, with strong and weak $\mathcal{C}^r$-topologies for $1\leq r\leq \infty$ using these. In the sequel, we will be mostly interested in the weak $\mathcal{C}^{\infty}$-topology because of its compatibility with composition, but we will come back to this point in due time.
\end{remark}

\paragraph{Adapted Riemannian structures.} Recall that a Riemannian metric on a manifold with corners $X$ is just a smooth symmetric $2$-tensor $g\in \Upgamma(X,\mathsf{Sym}^2(\mathsf{T}^*X))$ which is positive definite, i.e.\ $g_x(v_x,v_x)>0$ for any non-zero vector $v_x\in \mathsf{T}X$. Notice the usage of the full tangent bundle over $X$ (see \cite[Operations with metrics]{conrad_differential_nodate}).

A fundamental result for our purposes is the existence of \emph{adapted Riemannian metrics} on manifolds with corners:
\begin{thm}(\cite[\textsection I.3.3]{cerf_topologie_1961}) \label{thm:Adapted Riemannian metrics} Let $X$ be a manifold with corners. Then, there exists a Riemannian metric $g$ on $X$ satisfying:
\begin{itemize}
    \item Every face of $X$ is totally geodesic with respect to $g$ (see \cite[(14)]{cerf_topologie_1961}). 

    \item any two different faces $F_1,F_2$ of $X$ are orthogonal with respect to $g$ at the intersection points, i.e.\ for any $x\in F_1\cap F_2$ and any pair of inner vectors $v_x\in \mathsf{T}^{\,i}_{x}F_1$ and $v'_x\in \mathsf{T}^{\,i}_{x}F_2$, we have $g_x(v_x,v'_x)=0$.
\end{itemize}
\end{thm}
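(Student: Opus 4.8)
The plan is to reduce to the standard local models $\R^{p,q}$, where the Euclidean metric is visibly adapted, and then to globalize by a partition of unity. The subtlety to keep in mind is that being \emph{totally geodesic} is not a convex condition on the metric, so a partition of unity of arbitrary adapted metrics need not be adapted; the remedy is to arrange that, in a neighbourhood of the faces, all the metrics to be averaged are \emph{Riemannian products} with respect to one and the same collar structure. Product-along-a-fixed-collar metrics do form a convex set for which each face is totally geodesic, so the averaging survives. The construction is then carried out by induction on the depth of $X$ (the maximal index of a point, cf.\ the filtration $X_{\leq 0}\subseteq\dots\subseteq X_{\leq n}=X$).

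\textbf{Local models and the convexity lemma.} On $\R^{p,q}$ the Euclidean metric $\sum_{i}dx_{i}^{2}$ has every face, being a coordinate subspace, totally geodesic; distinct faces through a point span orthogonal subspaces of $\mathsf{T}^{\,i}$; and — the feature I propagate — it is a metric product in the boundary-normal coordinate directions. The elementary fact I would record is: given a collar embedding $\Phi\colon N\times[0,\varepsilon)\hookrightarrow X$ of a submanifold $N$ and metrics $g_{1},g_{2}$ on $X$ with $\Phi^{*}g_{a}=h_{a}\oplus dt^{2}$, where $h_{a}$ is a metric on $N$ independent of $t$, one has $\Phi^{*}\big(\lambda g_{1}+(1-\lambda)g_{2}\big)=\big(\lambda h_{1}+(1-\lambda)h_{2}\big)\oplus dt^{2}$ for every $\lambda\in[0,1]$; in particular $N$ remains totally geodesic, with unit normal $\partial_{t}$. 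The same holds, at a corner, for several mutually orthogonal collar directions simultaneously.

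\textbf{Inductive construction.} Induct on $d=d(X)$. If $d=0$ then $X$ is boundaryless; any Riemannian metric works and both conditions are vacuous. If $d\geq 1$, let $\partial^{1}X$ denote the union of the codimension-one faces of $X$, again a manifold with corners, of strictly smaller depth, and pick an adapted metric $g_{\partial}$ on it by the inductive hypothesis. Choose a collar $\Phi\colon\partial^{1}X\times[0,\varepsilon)\hookrightarrow X$ which is part of a \emph{coherent system of collars} — one for $\partial^{1}F\hookrightarrow F$ for every face $F$, mutually compatible under restriction, in the sense of \cite[\textsection I.3]{cerf_topologie_1961} — so that near a codimension-$k$ corner it is the standard quadrant collar in the $k$ inward-normal directions. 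Transport $g_{\partial}\oplus dt^{2}$ via $\Phi$ to the image of the collar, fix any Riemannian metric $g_{\mathrm{int}}$ on $X$, take a smooth bump function $\chi$ with $\chi\equiv 1$ on a neighbourhood of $\partial^{1}X$ and support in the collar (available since $X$ admits partitions of unity subordinate to open covers, cf.\ Definition~\ref{defn:Mfld with corners}), and set
\[
g \;=\; \chi\cdot\big(\Phi\text{-pushforward of }g_{\partial}\oplus dt^{2}\big)\;+\;(1-\chi)\cdot g_{\mathrm{int}}.
\]
Near $\partial^{1}X$ one has $\chi\equiv 1$, so $g$ is exactly the product metric there.

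\textbf{Verification and the main obstacle.} Near the interior of a codimension-one face $F$, $g=g_{F}\oplus dt^{2}$ in collar coordinates, so $F$ is totally geodesic; along a codimension-$k$ corner the coherent collar makes $g$ a product in $k$ mutually orthogonal normal directions, so the faces through the corner are totally geodesic and pairwise orthogonal — this is precisely where one uses that $g_{\partial}$ was already adapted on $\partial^{1}X$. A face of higher codimension is a connected component of a clean, orthogonal intersection of codimension-one faces, so the nested product structures show it is totally geodesic too (equivalently, $\mathsf{T}^{\,i}$ of such a face splits off the orthogonal collar directions, whose second fundamental form vanishes). Away from $\partial^{1}X$ there is nothing to check, so $g$ is adapted and the induction closes. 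The genuine difficulty is the parenthetical in the previous paragraph: constructing a coherent system of collars for $X$ in the generality of Definition~\ref{defn:Mfld with corners}, where a codimension-one face need not be embedded and the inward normal along $\partial^{1}X$ degenerates exactly at the corners. This is the technical heart of \cite[\textsection I.3.3]{cerf_topologie_1961} (see also \cite[\textsection 2.11]{michor_manifolds_1980} and \cite{conrad_differential_nodate} for the relevant collar and tubular-neighbourhood machinery), and it is itself handled by induction on depth; everything else above is bookkeeping.
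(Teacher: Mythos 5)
You should first be aware that the paper contains no proof of this statement: Theorem \ref{thm:Adapted Riemannian metrics} is quoted directly from Cerf \cite[\textsection I.3.3]{cerf_topologie_1961}, so your attempt can only be judged on its own merits, not against an in-paper argument.

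Your overall strategy (product metrics relative to a fixed compatible collar system, convexity of that class under partitions of unity, induction on depth) is the right one and is essentially Cerf's. But the induction as you have organised it has a genuine gap, and it is a different gap from the existence of coherent collars that you explicitly defer to Cerf. The union $\partial^{1}X$ of the codimension-one faces is not a manifold with corners, so the inductive hypothesis can only hand you an adapted metric on each codimension-one face separately (equivalently, on their disjoint union); the ``collar'' $\Phi$ is then not an embedding near a corner, since the collars of two faces $F_1,F_2$ meeting along a codimension-two face $F_{12}$ overlap in a whole neighbourhood of $F_{12}$ in $X$. On that overlap the two transported metrics $g_{F_1}\oplus dt_1^{2}$ and $g_{F_2}\oplus dt_2^{2}$ agree only if each $g_{F_i}$ is itself a Riemannian product near $F_{12}$ with respect to the induced collar \emph{and} the two induced metrics on $F_{12}$ coincide. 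Adaptedness of $g_{\partial}$ gives neither: total geodesy of $F_{12}$ inside $F_1$ does not force $g_{F_1}$ to split as $k\oplus dt^{2}$ near $F_{12}$, and even if both face metrics split there is nothing forcing the two corner factors $k$ to match. So the sentence ``this is precisely where one uses that $g_{\partial}$ was already adapted'' is exactly where the argument breaks: your metric is not even well defined near the corners, and total geodesy and orthogonality of the deeper faces do not follow from the stated hypothesis.

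The repair is to strengthen the inductive statement. Fix a compatible system of collars once and for all (Cerf's construction) and prove, by induction on depth, the existence of a metric that is not merely adapted but is a Riemannian product near \emph{every} face with respect to those collars, with matching factors on common faces; equivalently, glue Euclidean metrics of corner charts adapted to the fixed collar system by a partition of unity, which is where your convexity lemma genuinely earns its keep. With this stronger invariant the transported metrics agree on overlaps, your verification at codimension-$k$ corners goes through, and adaptedness of the resulting metric is then a consequence of the product structure rather than the other way around.
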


From \cite[\textsection I.3.4]{cerf_topologie_1961} or \cite[Lemma 2.10]{alekseevsky_reflection_2007}, we know that the geodesic exponential map for such a Riemannian metric is well-defined and satisfies:
\begin{cor}\label{cor: Exponential mappings} Let $(X,g)$ be a Riemannian manifold with corners as in Theorem \ref{thm:Adapted Riemannian metrics}. Then, there exists an open neighborhood $W$ in $\mathsf{T}^{\,i}X$ of the zero-section such that $\mathsf{exp}\colon W\to X$ is well-defined and satisfies:
\begin{itemize}
    \item  $\mathsf{exp}(0_x)=x$ for any $x\in X$.
    \item  $\mathsf{Exp}:=(\uppi_X,\mathsf{exp})\colon W\to X\times X$ is a diffeomorphism onto an open neighborhood of the diagonal $X\hookrightarrow X\times X $.
    \item $\mathsf{exp}_x\colon W\cap \mathsf{T}^{\,i}_xX\to X$ is a diffeomorphism onto an open neighborhood of $x\in X$.
    \item $W\cap \mathsf{T}^{\, i}_xX$ is the intersection of an open ball in the inner product space $(\mathsf{T}_xX,g_x)$ with a quadrant $Q_x\subseteq \mathsf{T}_xX$.
    \item $\mathsf{exp}$ restricts to the exponential mapping of the induced Riemannian metric on each face. 
\end{itemize}

Moreover, this neighborhood $W$ can be chosen by fixing an appropriate positive continuous function $\uplambda\colon X\to \R_{>0}$ and defining:
$$
W\equiv W(\uplambda)=\left\lbrace v_{x}\in \mathsf{T}^{\,i}X:\;\;||v_{x}||_{g}=\sqrt{g_x(v_x,v_x)}<\uplambda(x)\right\rbrace.
$$
\end{cor}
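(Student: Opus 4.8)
The statement is \cite[\textsection I.3.4]{cerf_topologie_1961} (see also \cite[Lemma 2.10]{alekseevsky_reflection_2007}); the plan is to spell out how it follows from Theorem \ref{thm:Adapted Riemannian metrics}. First I would fix an adapted metric $g$ on $X$ and realize $X$ as a closed subspace of a boundaryless manifold $\widetilde{X}$ of the same dimension --- one can take $\widetilde{X}$ to be the manifold covered by the ambient charts $\R^n\supseteq\R^{p,q}$ of the atlas of $X$, glued along smooth extensions of the transition maps, which exist by the very notion of differentiability in Definition \ref{defn:Mfld with corners} --- and extend $g$ to a Riemannian metric $\widetilde{g}$ on a neighborhood of $X$ in $\widetilde{X}$. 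The geodesic spray of $\widetilde{g}$ is a smooth vector field on $\mathsf{T}\widetilde{X}$, so its flow defines a smooth exponential map $\widetilde{\exp}$ on an open neighborhood of the zero section; the properties $\widetilde{\exp}(0_x)=x$, local-diffeomorphism-on-fibers, and the normal-coordinates description of fibers are the classical facts of Riemannian geometry for $\widetilde{g}$.

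The heart of the matter is to check that $\widetilde{\exp}$ carries inner tangent vectors back into $X$, so that it restricts to a well-defined $\exp\colon W\to X$ on $W=\mathsf{T}^{\,i}X\cap(\text{domain of }\widetilde{\exp})$. This is a local statement near a point $x$ of depth $q$: in an adapted chart centered at $x$ the faces through $x$ are the coordinate hyperplanes $H_k=\{x_{p+k}=0\}$, $1\le k\le q$, and by Theorem \ref{thm:Adapted Riemannian metrics} each $H_k$ is totally geodesic and the $H_k$ meet pairwise orthogonally. Totally geodesic means a $\widetilde{g}$-geodesic starting on $H_k$ with velocity tangent to $H_k$ stays in $H_k$; together with orthogonality, a short-time Taylor expansion of the geodesic equation shows that for a geodesic $\gamma$ issuing from any $y\in\R^{p,q}$ with inner velocity $v$ --- i.e. $v_{p+k}\ge0$ for all $k$ with $v_{p+k}=0$ whenever $y_{p+k}=0$ --- one has $x_{p+k}(\gamma(t))\ge 0$ for small $t>0$ (identically $0$ precisely for those $k$ with $y_{p+k}=0$, by the totally geodesic property, and positive for small $t$ in the remaining coordinates by continuity). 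Hence the inner quadrant is forward-invariant under the geodesic flow, which gives $\exp\colon W\to X$; restricting to a single face yields that $\exp$ restricts to the face exponentials, and $\exp(0_x)=x$ is immediate.

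It then remains to upgrade $\widetilde{\mathsf{Exp}}=(\uppi_{\widetilde{X}},\widetilde{\exp})$ to the diffeomorphism statement over $X$. Its differential along the zero section is an isomorphism in the canonical splitting, so by the inverse function theorem --- valid for the class of maps between manifolds with corners considered here because differentiability is witnessed by ambient smooth extensions --- $\widetilde{\mathsf{Exp}}$ is a diffeomorphism from a neighborhood of the zero section of $\mathsf{T}\widetilde{X}$ onto a neighborhood of the diagonal of $\widetilde{X}\times\widetilde{X}$; intersecting with $\mathsf{T}^{\,i}X$ and $X\times X$ and invoking forward-invariance, $\mathsf{Exp}=(\uppi_X,\exp)$ is a diffeomorphism onto an open neighborhood of the diagonal $X\hookrightarrow X\times X$, whence also the claimed local-diffeomorphism properties for $\exp$. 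Finally, the open set of $v_x\in\mathsf{T}^{\,i}X$ for which the geodesic is defined up to time $1$ and $\mathsf{Exp}$ is a local diffeomorphism contains the zero section, so by paracompactness of $X$ (Definition \ref{defn:Mfld with corners}) one constructs, via a partition of unity, a positive continuous $\uplambda\colon X\to\R_{>0}$ such that the tube $W(\uplambda)=\{v_x\in\mathsf{T}^{\,i}X:\ \|v_x\|_g<\uplambda(x)\}$ lies inside it and $\mathsf{Exp}$ is injective on it.

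I expect the one genuinely delicate point to be the forward-invariance of the inner tangent cone: one must rule out that a geodesic leaving a depth-$q$ stratum in an inner direction escapes $X$ through any of the $q$ bounding hyperplanes, and the clean way to do this is exactly the decomposition afforded by the adapted metric, which reduces the problem to the codimension-one totally geodesic case one hyperplane at a time; everything else is either standard Riemannian geometry on $\widetilde{X}$ or the inverse function theorem for the restricted notion of smoothness fixed in Definition \ref{defn:Mfld with corners}.
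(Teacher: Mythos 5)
The paper does not actually prove this corollary: it is imported wholesale from \cite[\textsection I.3.4]{cerf_topologie_1961} and \cite[Lemma 2.10]{alekseevsky_reflection_2007}, so the only thing to assess is whether your re-derivation from Theorem \ref{thm:Adapted Riemannian metrics} is sound. It is not, and it fails exactly at the point you yourself single out as delicate. The assertion that ``the inner quadrant is forward-invariant under the geodesic flow'' is false: at a point $x$ of depth $0$ one has $\mathsf{T}^{\,i}_xX=\mathsf{T}_xX$, so for an interior point $x$ close to a face the inner cone contains vectors pointing towards (and across) that face. Already for the flat metric on $\R^{p,q}$ --- which is adapted in the sense of Theorem \ref{thm:Adapted Riemannian metrics} --- the geodesic issued from such an $x$ with such a velocity leaves $X$ before time $1$ as soon as its norm exceeds the distance from $x$ to the face, and this happens for vectors of arbitrarily small norm as $x$ approaches the face. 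Your short-time Taylor expansion only controls the signs of the normal coordinates for $t$ small depending on the initial condition, and it exploits the totally geodesic/orthogonality hypotheses only at points lying \emph{on} the faces; it does not yield invariance up to time $1$ for all inner initial data. Consequently $W=\mathsf{T}^{\,i}X\cap(\text{domain of }\widetilde{\mathsf{exp}})$ is too large, $\mathsf{exp}$ does not map it into $X$, and the ``intersect with $\mathsf{T}^{\,i}X$ and $X\times X$ and invoke forward-invariance'' step for $\mathsf{Exp}$ collapses with it.

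The same phenomenon defeats your closing construction of $\uplambda$: near a face, the set of inner vectors whose geodesic remains in $X$ up to time $1$ is pinched from the interior side, so it is not open around the zero-section over boundary points and cannot be engulfed by a tube $W(\uplambda)$ produced by a soft paracompactness/partition-of-unity argument. Producing the correct domain of $\mathsf{exp}$ --- equivalently the bound $\uplambda$, which near a face has to be played off against the distance to the face and against the uniform short-time control that the adapted metric gives at points on the faces --- is precisely the nontrivial content of the cited results of Cerf and Alekseevsky--Kriegl--Losik--Michor, and it is what your argument would have to reproduce rather than treat as automatic. A secondary gap: the ambient boundaryless $\widetilde{X}$ cannot be obtained by ``gluing the ambient charts along smooth extensions of the transition maps'', since the chosen extensions need not satisfy the cocycle condition off $X$; one needs a genuine extension/doubling argument, or one works directly on $X$ via the local analysis of the geodesic equation on quadrants, as Cerf does. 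In short, your outline is the right heuristic, but the step it takes for granted is the theorem.
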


\begin{remarks}
The second and third points in Corollary \ref{cor: Exponential mappings} describe how nearby points in \(X\) are connected by geodesics: fixing \(x\), every point \(y\) sufficiently close to \(x\) can be reached by a unique geodesic starting at \(x\), with velocity \(w \in W\cap \mathsf{T}^{\, i}_xX \). Moreover, this correspondence varies smoothly with \(x\). The fourth and fifth points are manifestations of the compatibility of this construction with corners/strata. For example, the fifth point can be interpreted as saying: a geodesic whose velocity is tangent to a face remains in that face.
\end{remarks}

Another useful consequence of Theorem \ref{thm:Adapted Riemannian metrics} is the existence of good Weiss covers for manifolds with corners.
\begin{prop}\label{prop: Good supply of Weiss covers} Any manifold with corners $X$ admits a Weiss cover whose elements are finite disjoint unions of quadrants, or in other words basics, and their finite intersections remain so. Similarly, any manifold with corners $X$ have enough good discs in the sense of \cite[Definition 3.34]{karlsson_assembly_2024}.
\end{prop}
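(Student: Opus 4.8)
The plan is to build, from an adapted Riemannian structure on $X$, a basis of \emph{convex basics} which is stable under finite intersections, and then assemble it into the required Weiss cover. First I would fix an adapted Riemannian metric $g$ on $X$ (Theorem \ref{thm:Adapted Riemannian metrics}) together with its geodesic exponential map $\mathsf{exp}\colon W(\uplambda)\to X$ (Corollary \ref{cor: Exponential mappings}), and for $x\in X$, $\rho>0$ set $B_\rho(x)=\mathsf{exp}_x(\{v\in\mathsf{T}^{\,i}_xX:\|v\|_g<\rho\})$. By the fourth bullet of Corollary \ref{cor: Exponential mappings}, for $\rho$ small the domain of $\mathsf{exp}_x$ here is the intersection of a round ball about $0$ with the linear quadrant $Q_x=\mathsf{T}^{\,i}_xX$; a radial rescaling of $\mathsf{T}_xX$ (which preserves $Q_x$, a linear cone through $0$) then identifies $B_\rho(x)$ with $\R^{p_x,q_x}$, where $(p_x,q_x)$ is the local type of $x$ (so $q_x=\mathsf{index}(x)$). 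Thus each such $B_\rho(x)$ is a basic.

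Next I would produce a continuous \emph{convexity radius} $r\colon X\to\R_{>0}$ such that for every $x$ and $0<\rho<r(x)$ the ball $B_\rho(x)$ is strongly geodesically convex — the minimizing geodesic between any two points of $\overline{B_\rho(x)}$ is unique and lies in $\overline{B_\rho(x)}$ — and $\overline{B_\rho(x)}$ is compact. This is the classical Whitehead-type convexity argument adapted to corners: the only extra input is that, for the adapted metric, faces are totally geodesic and pairwise orthogonal at their intersection points, so a geodesic with inner initial velocity stays in $X$ and one tangent to a face stays in that face (Corollary \ref{cor: Exponential mappings}, bullets 4--5). This portion can be quoted, essentially verbatim, from \cite[\textsection I.3.3--I.3.4]{cerf_topologie_1961} and \cite[Lemma 2.10]{alekseevsky_reflection_2007}.

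The heart of the argument is the lemma that \emph{any non-empty geodesically convex open $C\subseteq X$ with $\overline{C}$ compact is diffeomorphic to $\R^{p,q}$}, where $(p,q)$ is the type of the unique deepest stratum of $X$ meeting $C$. To see this I would pick $x\in C$ of maximal index; then for every $y\in C$ the minimizing geodesic from $x$ to $y$ lies in $C$ and, by the stratified behaviour of geodesics above, has inner initial velocity, so $\mathsf{exp}_x^{-1}(C)$ is an open neighbourhood of $0$ in the quadrant $Q_x$ which is star-shaped about $0$; since $0$ lies in the deepest stratum of $Q_x$, a corner-aware radial smoothing identifies such a star-shaped open subset of $Q_x$ with all of $Q_x\cong\R^{p_x,q_x}$, and maximality of the index of $x$ forces $(p_x,q_x)$ to be the claimed type (cf.\ Remark \ref{remark: cubes are nice}).

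Finally I would assemble: working in $X$ — the same applies verbatim to any open $U\subseteq X$, itself a manifold with corners, which is what Hypothesis \ref{hyp: Good supply of Weiss covers} requires — let $\mathcal{U}$ consist of the finite disjoint unions $\bigsqcup_i B_{\rho_i}(x_i)$ with $\rho_i<r(x_i)$ and the $B_{\rho_i}(x_i)$ pairwise disjoint. Then $\mathcal{U}$ is a Weiss cover (given a finite $S\subseteq X$, Hausdorffness lets one shrink the radii until the balls around the points of $S$ are disjoint, producing a member of $\mathcal{U}$ containing $S$); its members are finite disjoint unions of basics by the first step; and it is stable under finite intersections up to this form, since $(\bigsqcup_i B_i)\cap(\bigsqcup_j B'_j)=\bigsqcup_{i,j}(B_i\cap B'_j)$ is again a disjoint union and each $B_i\cap B'_j$ is geodesically convex with compact closure, hence empty or a basic by the lemma. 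Passing to a locally finite subfamily gives in addition an open cover of $X$ by single quadrants whose finite intersections are quadrants or empty, as used in the proofs of Theorems \ref{thm: Global additivity} and \ref{thm: Global additivity max generality}. For the last assertion, the basis $\{B_\rho(x):\rho<r(x)\}$ of convex quadrant-neighbourhoods — arbitrarily small around each point, and stable under finite intersections — is readily checked to satisfy the requirements of \cite[Definition 3.34]{karlsson_assembly_2024}, so $X$ has enough good discs. The hard part will be the lemma in the third paragraph: both the stratified convexity-radius statement (checking that the classical argument survives the presence of corners, which is precisely what the totally-geodesic/orthogonality properties of the adapted metric and Corollary \ref{cor: Exponential mappings} are for) and, especially, the corner-aware smoothing identification of a star-shaped open subset of a quadrant with the quadrant itself.
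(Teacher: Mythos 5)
Your proposal follows essentially the same route as the paper's proof: fix an adapted metric (Theorem \ref{thm:Adapted Riemannian metrics}) and its exponential map (Corollary \ref{cor: Exponential mappings}), take small strongly geodesically convex geodesic balls whose finite intersections remain convex, identify any such convex open set with a quadrant by observing that $\mathsf{exp}_x^{-1}$ at a point $x$ of maximal depth is star-shaped in $\mathsf{T}^{\,i}_xX$ and then applying a radial diffeomorphism of a star-shaped open subset of $\R^{p,q}$ onto $\R^{p,q}$, and finally pass to finite disjoint unions to obtain the Weiss cover and the good discs. The only cosmetic differences are that you obtain the convexity radius via a Whitehead-type argument (Cerf/Alekseevsky) where the paper quotes the bound from do Carmo, and that you additionally impose compact closures, which the paper's argument does not need.
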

\begin{proof} It suffices to construct a base $\mathfrak{B}$ for the topology of $X$ closed under intersections and such that every element $U\in \mathfrak{B}$ is diffeomorphic to $\R^{p,q}$ for some $(p,q)$. The Weiss cover/base of $X$ can be obtained by considering finite disjoint unions of elements in $\mathfrak{B}$.

As Bott-Tu argued in the case of manifolds without boundary \cite[Theorem 5.1]{bott_differential_1982}, considering (strongly) geodesically convex open subsets in $X$ for an appropriate Riemannian metric does the job. In the corner setting, we need to choose a Riemannian metric, as in Theorem \ref{thm:Adapted Riemannian metrics}, and provide more concrete details regarding which subsets form the base. 

We set $\mathfrak{B}$ to be the closure under finite intersections of 
$$
\widetilde{\mathfrak{B}}=\left\{\mathsf{B}_g(z,r):=\mathsf{exp}_{z}\big(\{v_z\in \mathsf{T}^{\,i}_zX:\;||v_z||_{g}<r\}\big)\;\text{ with }z\in X\text{ and }r<\mathsf{min}\{\uplambda(z),\upbeta_z\}\right\}
$$
where $\uplambda\colon X\to \R_{>0}$ is the continuous function of Corollary \ref{cor: Exponential mappings} and $\upbeta_z>0$ is the bound that appears in \cite[\textsection 3.4, Proposition 4.2]{do_carmo_riemannian_1992}. Then, a geodesic ball $\mathsf{B}_{g}(z,r)\in\widetilde{\mathfrak{B}}$ is (strongly) geodesically convex by the same arguments provided for the cited result in \cite{do_carmo_riemannian_1992}\footnote{Notice that do Carmo does not consider corners, but the arguments employed to obtain this result carry over to our setting since they only involve the local-length-minimizing property of geodesics. 
}
and so does any non-empty $U\in \mathfrak{B}$. In particular, for any $U\in \mathfrak{B}$ and any point $x\in U$ of maximal depth, $\mathsf{exp}_{x}^{-1}(U)$ is star-shaped around $0_x$ in $\mathsf{T}^{\,i}_xX$ and $\mathsf{exp}_x$ yields a diffeomorphism between this star-shaped subset and $U$; the exponential map is bijective, due to the geodesically convex condition, and a local-diffeomorphism.

It remains to check that any $U\in \mathfrak{B}$ is diffeomorphic to a basic $\R^{p,q}$. Recall that any star-shaped (around the origin) open subset $V\subseteq \R^n$ is diffeomorphic to $\R^n$ via a map of the form 
 $$
    \begin{tikzcd}[ampersand replacement=\&]
     V\ar[rr,"\simeq"] \&\& \R^n \\[-6mm]
 v \ar[rr, mapsto] \&\& \left(1+\Big(\displaystyle \int_{0}^{||v||}\frac{\mathsf{d}t}{\upphi(tv/||v||)}\Big)^2\right)\cdot v
    \end{tikzcd},
 $$
where $\upphi\colon V\to \R_{\geq 0}$ is a smooth map satisfying $\upphi^{-1}(0)=\R^n\backslash V$ (see \cite{noauthor_mathoverflow--what_2020}). Since this map is radial, i.e.\ of the form $v\mapsto \upmu(v)\cdot v$ with $\upmu\colon V\to \R_{\geq 1}$, it can be used as well to show that any star-shaped open subset $V'\subseteq \R^{p,q}$ is diffeomorphic to $\R^{p,q}$; the existence of a smooth function $\upphi\colon \R^{p,q}\to \R_{\geq 0}$ such that $\upphi^{-1}(0)=\R^{p,q}\backslash V'$ simply follows from the existence of such functions on $\R^n$ by restriction, since any closed subset in $\R^{p,q}$ can be obtained as the intersection of a closed subset in $\R^n$ and $\R^{p,q}$. Using this digression, we obtain
$$
U\xleftarrow[\;\;\mathsf{exp}_x]{\quad\; \simeq\quad\;} \mathsf{exp}_x^{-1}(U)\xrightarrow[]{\quad\;\simeq\quad\;}\mathsf{T}^{\,i}_{x}X\cong \R^{p,q}.
$$
\end{proof}

\paragraph{Fibration lemmas for embedding spaces.} The set of smooth maps between two manifolds with corners can be endowed with various important topologies (see \cite[\textsection 4]{michor_manifolds_1980}). Our focus will be on the \emph{weak} $\mathcal{C}^{\infty}$-\emph{topology} (or $\mathsf{CO}^{\infty}$-topology in \cite{michor_manifolds_1980}), and so we will denote the resulting space of differentiable mappings by $\mathsf{Map}_{\mathcal{C}^{\infty}}(X, Y)$. We favor this topology over the others because it makes composition
$$
\circ\colon \mathsf{Map}_{\mathcal{C}^{\infty}}(X',X'')\times\mathsf{Map}_{\mathcal{C}^{\infty}}(X,X') \xrightarrow{\;\;\quad\;\;}\mathsf{Map}_{\mathcal{C}^{\infty}}(X,X'')
$$
a continuous function for any $X,X',X''$. This is an obvious necessary ingredient to consider topological categories (and operads) of manifolds with corners. In fact, we will extensively use the subspaces $\Emb(X,Y)\subseteq \mathsf{Map}_{\mathcal{C}^{\infty}}(X,Y)$ spanned by \emph{open embeddings}.

\begin{notation}\label{notat:Topological category of manifolds with corners}
 We denote by $\cMfld$ the topological category of $n$-manifolds with corners and open embeddings between them. It admits a symmetric monoidal structure induced by taking disjoint unions. We will denote by $\cgE$ the full (topological) suboperad of $(\cMfld,\amalg)$ spanned by the set of colors $\{\R^{p,q}\}_{p+q=n}$. Its underlying category, denoted $\Basic_n$, is the topological category of \emph{basic} $n$-\emph{manifolds with corners}, or simply \emph{basics}, and open embeddings. 
\end{notation}

In practice, a more refined notation regarding embedding spaces will be employed. Let $X$ be a manifold with corners and $Z\in \mathsf{P}(X)$ be a connected component of strata. Then,  $\Emb(\R^Z,X)$ will refer to the subspace of $\Emb(\R^{p,q},X)$ given by the (open) embeddings $\varphi$ that preserve the strata, $\varphi(\R^{Z}_{[Z]})\subseteq X_{[Z]}$. In particular and equivalently, $\varphi(0)\in X_{[Z]}$. The obvious generalization will be employed for $\Emb(\R^{Z_1}\sqcup\cdots\sqcup \R^{Z_k},X)$.

In the remainder of this appendix, we collect several technical results concerning these embedding spaces and their weak homotopy type.

\begin{lemma}\label{lem: Evaluation at center is a fibration for Embeddings}
    Let $X$ be a manifold with corners, $I$ a finite set, and let $Z\colon I\to \Prect(X)$ be a function. Evaluation at the points $0\in \R^{Z(i)}$ yields a Serre fibration
$$
 \Emb\big(\bigsqcup_{j\in I}\mathbbst{R}^{Z(j)},X\big)\xrightarrow{\quad\quad} \Conf_Z(X).\footnote{See Notation \ref{notat: Configuration spaces}.}
$$
\end{lemma}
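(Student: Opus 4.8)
The plan is to show that $\mathsf{ev}_0$ is a locally trivial fibre bundle over $\Conf_Z(X)$; since the latter is a smooth manifold without boundary by Example~\ref{examp: Conf as mfld with corners}, hence paracompact, a map that is locally trivial over a numerable open cover of such a base is a Hurewicz, and a fortiori a Serre, fibration (Dold's theorem). So it suffices to prove: for every configuration $\underline{x}^{0}=(x^{0}_{i})_{i\in I}\in\Conf_{Z}(X)$ there is an open neighbourhood $U\ni\underline{x}^{0}$ and a continuous family $\{g_{\underline{x}}\}_{\underline{x}\in U}$ of diffeomorphisms of $X$, \emph{preserving every stratum} $X_{[K]}$, with $g_{\underline{x}^{0}}=\id_{X}$ and $g_{\underline{x}}(x^{0}_{i})=x_{i}$ for all $i$. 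Granting this, the trivialization over $U$ is
\[
\mathsf{ev}_{0}^{-1}(U)\xrightarrow{\ \sim\ }U\times\mathsf{ev}_{0}^{-1}(\underline{x}^{0}),\qquad e\longmapsto\bigl(\mathsf{ev}_{0}(e),\, g_{\mathsf{ev}_{0}(e)}^{-1}\circ e\bigr),
\]
with inverse $(\underline{x},e_{0})\mapsto g_{\underline{x}}\circ e_{0}$; both maps are continuous because composition $\Emb(X,X)\times\Emb(\bigsqcup_{j\in I}\R^{Z(j)},X)\to\Emb(\bigsqcup_{j\in I}\R^{Z(j)},X)$ is continuous for the weak $\mathcal{C}^{\infty}$-topology (Notation~\ref{notat:Topological category of manifolds with corners} and the surrounding discussion), the maps $\underline{x}\mapsto g_{\underline{x}}$ and $\underline{x}\mapsto g_{\underline{x}}^{-1}$ are continuous, and all diffeomorphisms involved are supported in a fixed compact set.

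To construct the $g_{\underline{x}}$, I would work locally around each $x^{0}_{i}$: since $X$ is a manifold with corners (Definition~\ref{defn:Mfld with corners}), if $x^{0}_{i}$ has depth $q_{i}$ there is a chart $\phi_{i}\colon V_{i}\hookrightarrow\R^{p_{i},q_{i}}$ onto an open neighbourhood of $0$, with $\phi_{i}(x^{0}_{i})=0$, carrying the local stratum $V_{i}\cap X_{[Z(i)]}$ onto an open piece of $\R^{p_{i}}\times\{0\}$ (shrinking $V_{i}$ to be connected). Since the $x^{0}_{i}$ are pairwise distinct (if $Z(i)=Z(j)$ for $i\neq j$ this is still forced by the definition of $\Conf_{Z}(X)$), we may take the $V_{i}$ pairwise disjoint. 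Fix bump functions $\beta_{i}$ on $\R^{p_{i},q_{i}}$ equal to $1$ near $0$ and compactly supported in $\phi_{i}(V_{i})$. For $\underline{x}$ in a small enough neighbourhood $U$ of $\underline{x}^{0}$, each $x_{i}$ lies in $V_{i}\cap X_{[Z(i)]}$, so $\phi_{i}(x_{i})=(\delta_{i},0)$ for a small $\delta_{i}\in\R^{p_{i}}$; define $g_{\underline{x}}$ to be the identity outside $\bigsqcup_{i}V_{i}$ and, on $V_{i}$, the conjugate by $\phi_{i}$ of the cutoff translation $(y,z)\mapsto(y+\beta_{i}(y,z)\,\delta_{i},\,z)$ in the stratum direction. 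For $U$ small this is a diffeomorphism close to $\id$; it preserves every stratum because it never alters the $z$-coordinate; it is the identity near the frontier of $V_{i}$, so it glues to a diffeomorphism of $X$; it depends continuously (indeed smoothly) on $\underline{x}$; and $g_{\underline{x}^{0}}=\id_{X}$, $g_{\underline{x}}(x^{0}_{i})=\phi_{i}^{-1}(\delta_{i},0)=x_{i}$. This produces the bundle chart, hence the proof modulo the routine checks above.

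The main obstacle, and the only corner-specific point, is exactly that the diffeomorphisms used to move the configuration around must preserve the stratification of $X$: an arbitrary small ambient isotopy would send an embedding of type $\bigsqcup_{j\in I}\R^{Z(j)}$ to one whose centres leave the prescribed strata, i.e.\ outside $\Conf_{Z}(X)$. Above this is handled by translating only in stratum-tangent directions inside manifold-with-corners charts. Alternatively, one could run the classical parametrized isotopy-extension argument — lifting a cube $D^{k}$ of configuration paths by integrating a $D^{k}$-parametrized time-dependent vector field that realizes the prescribed point velocities and is tangent to all faces of $X$ — in which case the tangency to the faces (needed both for the flow to consist of diffeomorphisms of the manifold with corners and for the centres to remain in their strata) is obtained from the adapted Riemannian metric of Theorem~\ref{thm:Adapted Riemannian metrics} and its exponential map from Corollary~\ref{cor: Exponential mappings}, using that geodesics tangent to a face stay in that face; that route, however, also requires smoothing the lifting datum, which the local-triviality argument avoids. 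All remaining verifications — that the cutoff translation is a diffeomorphism for $\delta_{i}$ small, continuity in the weak $\mathcal{C}^{\infty}$-topology, and the passage from local triviality over a numerable cover to a fibration — are standard.
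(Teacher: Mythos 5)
Your proposal is correct, and it reaches the conclusion by a genuinely more hands-on route than the paper. The paper's proof has the same skeleton — local triviality of $\mathsf{ev}_0$ over the (paracompact) base $\Conf_Z(X)$, hence a fibration — but it obtains local triviality abstractly from the Cerf--Palais criterion (Lemma \ref{lem: Cerf-Palais}): the evaluation map is $\mathsf{Diff}_{\mathsf{c}}(X)$-equivariant, and Lemma \ref{lem: Local-cross sections for Configuration spaces} produces local cross-sections of the $\mathsf{Diff}_{\mathsf{c}}(X)$-action on $\Conf_Z(X)$ by adapting Palais' construction, i.e.\ using the adapted Riemannian metric of Theorem \ref{thm:Adapted Riemannian metrics}, its exponential map (Corollary \ref{cor: Exponential mappings}) and parallel transport, with a rescaling to stay inside the domain of $\mathsf{exp}$. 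Your family $\underline{x}\mapsto g_{\underline{x}}$ is precisely such a local cross-section, so in effect you replace the Riemannian/parallel-transport construction by an elementary chart-level one: since nearby configurations in $\Conf_Z(X)$ differ from $\underline{x}^0$ only in the stratum-tangent (``free'') coordinates of a corner chart, cutoff translations in those coordinates suffice, and face-tangency — the only corner-specific danger, which the paper handles via the adapted metric — is automatic because the $z$-coordinates are never moved. You then write the bundle chart explicitly, which is exactly what the proof of the Cerf--Palais lemma does anyway. What each approach buys: yours avoids the adapted-metric machinery entirely at the cost of the chart bookkeeping and the (genuinely routine, but worth stating) verifications that the cutoff translations are diffeomorphisms preserving each connected component of strata for small displacements, and that $\underline{x}\mapsto g_{\underline{x}}$ and $\underline{x}\mapsto g_{\underline{x}}^{-1}$ are continuous for the weak $\mathcal{C}^{\infty}$-topology (the latter via smooth dependence of inverses on the finitely many parameters $\delta_i$); the paper's route is heavier but reuses technology it needs elsewhere (e.g.\ Proposition \ref{prop: Good supply of Weiss covers}) and transfers more directly to other orbits of the diffeomorphism-group action, following Cerf and Palais. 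Both arguments conclude identically: local triviality over a paracompact base gives a Hurewicz, hence Serre, fibration.
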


\begin{remark} When the finite set $I$ has only one element, an elementary argument due to Kupers (see \cite{noauthor_mathoverflow--fibrations_2023}) can be adapted to show that $\mathsf{ev}_0\colon\Emb(\R^Z, X)\to X_{[Z]}$ is a Serre fibration. Unfortunately, we haven't been able to extend this simple idea to cover arbitrary finite sets.
\end{remark}

To handle the general case, we will adapt the strategy followed by \cite{palais_local_1960}. In \cite[\textsection II.2-5]{cerf_topologie_1961}, more general fibration lemmas were proven for manifolds with corners. Unfortunately, Cerf equips the spaces of embeddings with the strong (or Whitney) $\mathcal{C}^{\infty}$-topology in loc.cit., while we need to consider the weak (or compact-open) $\mathcal{C}^{\infty}$-topology. See \cite[\textsection 4]{palmer_homological_2021} for a clear exposition.

The crucial criterion that we will use, known as Cerf--Palais Lemma, is:
\begin{lemma}(\cite[Proposition 4.7]{palmer_homological_2021})\label{lem: Cerf-Palais} Let $G$ be a topological group and $B$ a $G$-space. If $B$ admits local cross-sections (i.e.\ for any $b_0\in B$, there is an open neighborhood $b_0\in U\subseteq B$ and a continuous map $\upchi\colon U\to G$ such that 
$$
\begin{tikzcd}
    U\ar[r,"\upchi"]\ar[rr, bend right=25,"\mathsf{inc}"'] &[3mm] G\ar[r,"-\cdot\, b_0"] &[3mm] B 
\end{tikzcd}
$$
commutes), any $G$-equivariant continuous map $E\to B$ is locally trivial if we forget the $G$-action.
\end{lemma}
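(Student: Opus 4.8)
The plan is to produce, around each point of the base, an explicit trivialization of the given map manufactured directly from a local cross-section of the orbit map; local triviality then follows pointwise. So fix a $G$-equivariant continuous map $\uppi\colon E\to B$ and an arbitrary $b_0\in B$. By hypothesis there is an open set $U\ni b_0$ and a continuous map $\upchi\colon U\to G$ with $\upchi(b)\cdot b_0=b$ for every $b\in U$; after replacing $\upchi$ by $b\mapsto \upchi(b_0)^{-1}\upchi(b)$ we may also assume $\upchi(b_0)=e$, although this normalization is inessential.

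First I would write down the candidate trivialization and its proposed inverse,
\[
\Phi\colon \uppi^{-1}(U)\longrightarrow U\times \uppi^{-1}(b_0),\qquad \Phi(x)=\big(\uppi(x),\,\upchi(\uppi(x))^{-1}\cdot x\big),
\]
\[
\Psi\colon U\times \uppi^{-1}(b_0)\longrightarrow \uppi^{-1}(U),\qquad \Psi(b,y)=\upchi(b)\cdot y.
\]
The only point that is not pure formalism is checking that these maps take values where claimed: for $\Phi$, equivariance of $\uppi$ together with the cross-section identity give $\uppi\big(\upchi(\uppi(x))^{-1}\cdot x\big)=\upchi(\uppi(x))^{-1}\cdot \uppi(x)=b_0$, so the second coordinate really lands in $\uppi^{-1}(b_0)$; for $\Psi$, similarly $\uppi(\upchi(b)\cdot y)=\upchi(b)\cdot b_0=b\in U$.

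Next I would verify that $\Phi$ and $\Psi$ are mutually inverse — a one-line group-law computation in each direction — and that both are continuous, which is immediate from continuity of $\uppi$, of $\upchi$, of the two action maps $G\times B\to B$ and $G\times E\to E$, and of inversion in $G$. Thus $\Phi$ is a homeomorphism over $U$, i.e.\ a trivialization of $\uppi$ above $U$ with fiber $\uppi^{-1}(b_0)$; since $b_0$ was arbitrary, $\uppi$ is locally trivial. I do not expect a genuine obstacle here, as the statement is elementary; the only things to be slightly careful about are that ``$G$-space'' is understood to mean a jointly continuous action — automatic for a topological group — and that one uses the \emph{weak} $\mathcal{C}^{\infty}$-topology consistently in the intended applications, so that the relevant action maps on embedding spaces are genuinely continuous (cf.\ the discussion of the $\mathcal{C}^{\infty}$-topology above). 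One may also record the by-product that all fibers $\uppi^{-1}(b)$ lying over a single $G$-orbit are homeomorphic via the action, so the fiber is well-defined up to homeomorphism on each connected component of $B$.
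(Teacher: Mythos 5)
Your argument is correct. The paper itself gives no proof of this lemma --- it is quoted verbatim from \cite[Proposition 4.7]{palmer_homological_2021} --- and your explicit trivialization $\Phi(x)=\big(\uppi(x),\,\upchi(\uppi(x))^{-1}\cdot x\big)$ with inverse $\Psi(b,y)=\upchi(b)\cdot y$ is exactly the standard Cerf--Palais argument underlying that citation, with all the relevant checks (well-definedness via equivariance and the cross-section identity, mutual inversity, continuity from the group operations and the actions) carried out correctly.
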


\begin{lemma}\label{lem: Local-cross sections for Configuration spaces} Let $X$ be a manifold with corners, $I$ a finite set, and $Z\colon I\to \Prect(X)$ a function. Denote by $\mathsf{Diff}_{\mathsf{c}}(X)$ the topological group of compactly supported diffeomorphisms of $X$. Then, the configuration space $\Conf_{Z}(X)$ with its canonical $\mathsf{Diff}_{\mathsf{c}}(X)$-action admits local cross-sections.
\end{lemma}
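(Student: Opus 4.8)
<br>

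The goal is to show that $\Conf_Z(X)$, viewed as a $\mathsf{Diff}_{\mathsf{c}}(X)$-space via the diagonal-type action (a compactly supported diffeomorphism acts componentwise on each factor $\Conf_{Z^{-1}\{K\}}(X_{[K]})$, which makes sense because diffeomorphisms of $X$ preserve the stratification and hence restrict to diffeomorphisms of each stratum $X_{[K]}$), admits local cross-sections in the sense of Lemma \ref{lem: Cerf-Palais}. The plan is the classical Palais-style argument adapted to corners: around a given configuration $\underline{x}_0 = (x_i^0)_{i\in I}$, build a continuous family of compactly supported diffeomorphisms $\upchi(\underline{x})$ that carries $\underline{x}_0$ to $\underline{x}$ for all $\underline{x}$ in a suitable neighborhood, by transporting each point along a time-dependent vector field supported near $x_i^0$ and flowing.

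\textbf{Step 1: local charts adapted to the stratification.} For each $i \in I$, fix an open embedding $\upiota_i \colon \R^{Z(i)} \hookrightarrow X$ with $\upiota_i(0) = x_i^0$, chosen small enough that the images $\upiota_i(\square^{Z(i)})$ have pairwise disjoint closures (possible since $X$ is Hausdorff and the $x_i^0$ are distinct; note points in the same stratum have disjoint charts, and points in different strata automatically have disjoint images because $\upiota_i(0) \in X_{[Z(i)]}$ lands in a different stratum). By Remark \ref{remark: cubes are nice}-type reasoning and the openness of the embeddings, an open neighborhood $N$ of $\underline{x}_0$ in $\Conf_Z(X)$ can be chosen so that for every $\underline{x} \in N$, each $x_i$ lies in $\upiota_i(\square^{Z(i)}_{[Z(i)]})$ — i.e. in the deepest stratum of the $i$-th chart. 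This is the key place the corner structure enters: the local type of $x_i$ equals $Z(i)$ on the nose, so we never need to move a point across strata of different depth.

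\textbf{Step 2: from points to compactly supported isotopies, within a single chart.} Working in each chart $\upiota_i$ separately, we are reduced to the model situation: given the basepoint $0 \in \square^{Z(i)}_{[Z(i)]} \subseteq \R^{p_i, q_i}$ and a nearby point $y \in \square^{Z(i)}_{[Z(i)]}$, produce a compactly supported diffeomorphism $\upphi_y$ of $\R^{p_i,q_i}$, depending continuously on $y$, with $\upphi_0 = \id$ and $\upphi_y(0) = y$, and supported inside $\square^{p_i,q_i}$. Since $y$ and $0$ have the same depth $q_i$ and lie in the relative interior of the stratum, the straight-line path $t \mapsto ty$ stays in $\R^{p_i,q_i}$, and one builds $\upphi_y$ by integrating a compactly supported vector field that equals the constant field $y$ near the origin and is cut off by a bump function supported in $\square^{p_i,q_i}$; the flow at time $1$ does the job. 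Continuity in $y$ (in the weak $\mathcal{C}^\infty$-topology) follows from smooth dependence of solutions of ODEs on parameters. Transporting back along $\upiota_i$ and extending by the identity outside $\upiota_i(\square^{Z(i)})$ gives a compactly supported diffeomorphism of $X$; doing this for all $i$ and composing (the supports being disjoint, the order is irrelevant) defines $\upchi \colon N \to \mathsf{Diff}_{\mathsf{c}}(X)$ with $\upchi(\underline{x})(\underline{x}_0) = \underline{x}$ and $\upchi(\underline{x}_0) = \id$. Post-composing with $\upchi(\underline{x}_0)^{-1}$ if one insists on the precise normalization of Lemma \ref{lem: Cerf-Palais} is harmless.

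\textbf{Expected main obstacle.} The only genuinely delicate point is the continuity of $\upchi$ in the weak $\mathcal{C}^\infty$-topology together with verifying that the construction respects corners at every depth simultaneously — a configuration can have several points in the same positive-codimension stratum and others in the interior, so the cut-off functions and vector fields must be chosen uniformly and the ``same depth'' guarantee from Step 1 must be invoked for each point. Once one knows each $x_i$ sits in the deepest stratum of its own chart, the corner aspect becomes a non-issue (we are flowing along directions tangent to that stratum, inside a quadrant, never approaching a deeper face), and the argument is the standard Palais one. Hence the proof will proceed by: (1) choosing the disjoint adapted charts and shrinking $N$ so that the depth-matching holds; (2) the ODE/bump-function construction of $\upphi_y$ in the model quadrant with continuous dependence; (3) gluing over $i$ and transporting to $X$ to obtain the local cross-section, at which point Lemma \ref{lem: Cerf-Palais} applies and, combined with Lemma \ref{lem: Local-cross sections for Configuration spaces} plus the obvious $\mathsf{Diff}_{\mathsf{c}}(X)$-equivariance of $\mathsf{ev}_0$, yields the Serre fibration of Lemma \ref{lem: Evaluation at center is a fibration for Embeddings}.
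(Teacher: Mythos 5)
Your proposal is correct in outline but takes a genuinely different route from the paper's. The paper follows Palais globally: it fixes an adapted Riemannian metric (Theorem \ref{thm:Adapted Riemannian metrics}), identifies a neighborhood of the base configuration with a neighborhood of zero in $\prod_{j} \mathsf{T}^{\,i}_{z_j}X$ via the exponential map of Corollary \ref{cor: Exponential mappings}, extends this tangent data to a compactly supported inner vector field using parallel transport and bump functions, and then applies Palais's map $v\mapsto (x\mapsto \mathsf{exp}(v_x))$; the corner-specific work there is the estimate $\Vert k(u(\underline{y}))_x\Vert_g<\uplambda(x)$, which guarantees that this pointwise exponentiation is defined and yields a compactly supported diffeomorphism. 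You instead localize: pairwise disjoint quadrant charts around the points of the configuration, and in each chart the time-one flow of a cut-off constant vector field tangent to the deepest stratum, glued by disjointness of supports. This avoids the adapted-metric/parallel-transport machinery entirely and is arguably more elementary, at the price of checking by hand what the paper extracts from Palais's lemmas: that your flows exist and preserve the quadrant (true, since the field has vanishing components in the corner directions), that the resulting maps are diffeomorphisms of $X$ supported in the charts, and that they depend continuously on the configuration in the weak $\mathcal{C}^{\infty}$-topology. Two small imprecisions, neither fatal: the bump function must be $\equiv 1$ on a fixed star-shaped neighborhood of the origin containing the entire segment $[0,y]$ for every admissible $y$ (not merely ``near the origin''), so that the trajectory starting at $0$ is the straight line and the time-one flow lands exactly at $y$; and your parenthetical claim that charts around points in different strata are ``automatically'' disjoint is not a valid justification (an open chart around a corner point meets nearby strata of smaller depth), although disjointness is immediate anyway from Hausdorffness and finiteness of $I$.
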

\begin{proof} Let us fix $\underline{z}\in \Conf_{Z}(X)$. When $X$ is an ordinary manifold without boundary, Palais constructed in \cite[Theorem B]{palais_local_1960} a local cross-section $\upchi$ for $\Conf_Z(X)$ on a neighborhood $\underline{z}\in U\subseteq \Conf_{Z}(X)$ as the composition of three maps
$$
\begin{tikzcd}[ampersand replacement=\&]
    U\ar[rd,"u"']\ar[rrr,"\upchi"] \&\&\& \mathsf{Diff}_{\mathsf{c}}(X)\\
    \& \displaystyle \prod_{j\in I}\mathsf{T}_{z_j}X\ar[r,"k"'] \& \Upgamma_{K}(X,\mathsf{T} X) \ar[ru,"E"'] 
\end{tikzcd},
$$
where $K$ is a compact neighborhood of $\bigsqcup_{j\in I}  \{z_j\}\subset X$. The first map $u$ exploits that $\mathsf{Exp}\colon \bigsqcup_{j\in I}\mathsf{T}_{z_j}X \to \bigsqcup_{j\in I} \{z_j\}\times X$ is a local diffeomorphism. The second one, $k$, is a section of $\Upgamma_{K}(X,\mathsf{T}X)\to \prod_{j\in I}\mathsf{T}_{z_j}X$ built using parallel transport, whose definition determines the compact neighborhood $K$. The last term $E$ is the restriction of
$$
\begin{tikzcd}[ampersand replacement=\&]
    \Upgamma(X,\mathsf{T}X) \ar[rr] \&\& \Map_{\mathcal{C}^{\infty}}(X,X)\\[-8mm]
    v\ar[rr, mapsto] \&\& \big(x\mapsto \mathsf{exp}(v_x)\big)
\end{tikzcd}.
$$
In words, \(u\) identifies a neighborhood \(U\) of \(\underline{z} \in \Conf_{Z}(X)\) with a neighborhood of \((0_{z_j})_j\in \prod_{j\in I}\mathsf{T}_{z_j}X\), where each point in \(U\) is reached by \((\mathsf{exp}_{z_j}(v_{z_j}))_{j \in I}\) for some tangent vector \(v_{z_j} \in \mathsf{T}_{z_j}X\). From there, we extend the tangent data to a smooth, compactly supported vector field, and \(E\) exponentiates it to yield a diffeomorphism.

To replicate this strategy, we must ensure that all steps in the construction can be adapted to our situation. Some care will be required since exponential maps are not globally defined for us (see Corollary \ref{cor: Exponential mappings}).

Assume again that $X$ is a manifold with corners. Let us fix a Riemannian metric $g$ on $X$ and a positive continuous function $\uplambda\colon X\to \R_{>0}$ as in Corollary \ref{cor: Exponential mappings}. Then, the restricted exponential mapping $\mathsf{Exp}\colon \bigsqcup_{j\in I} \mathsf{T}^{\,i}_{z_j}X\to \bigsqcup_{j\in I}\{z_j\}\times X$ yields a diffeomorphism between a neighborhood of the zero-section $\bigsqcup_{j\in I}W_{z_j}$, with $W_{z_j}:=W\cap \mathsf{T}^{\,i}_{z_j}X$, and a neighborhood of the diagonal $O_{\underline{z}}$. Define a neighborhood $ U$ of $\underline{z}\in \Conf_{Z}(X)$ as 
$$
U:=\left\{\underline{x}\in \Conf_Z(X):\;\; (z_j,x_j)\in O_{\underline{z}}\text{ for all }j\in I\right\}
$$
and the map $u\colon U\to \prod_{j\in I}\mathsf{T}^{\,i}_{z_j}X$ by setting $\uppi_ju(\underline{x})=\mathsf{Exp}^{-1}(z_j,x_j)$ for each $j\in I$. Hence,  $u$ lands in $\prod_{j\in I}W_{z_j}$, which by  Corollary \ref{cor: Exponential mappings} is a product of intersections of open balls with quadrants. 

Next, we define $k$. Since $\mathsf{exp}\colon W_{z_j}\to X$ is a diffeomorphism onto an open neighborhood of $z_j$, let us consider relatively compact open subsets $(V_{z_j})_{j\in I}$ such that $z_j\in V_{z_j}$, $\overline{V}_{z_i}\cap\overline{V}_{z_j}=\diameter$ for $i\neq j$, and $\mathsf{exp}^{-1}(V_{z_j})\subseteq W_{z_j}$. 
Then, the exponential map 
$
\mathsf{exp}\colon \bigsqcup_{j\in I}\mathsf{exp}^{-1}(V_{z_j})\to \bigsqcup_{j\in I} V_{z_j}\subseteq X
$
is a diffeomorphism and we set $K:=  \bigsqcup_{j\in I} \overline{V}_{z_j}$. Choose a bump function $\upalpha_j\colon X\to [0,1]$ for each $j\in I$ such that $\upalpha_j(z_j)=1$, which vanishes on a neighborhood of $X\backslash V_{z_j}$, and define 
$$
k\colon \prod_{j\in I}\mathsf{T}^{\,i}_{z_j}X=\Upgamma\big(\bigsqcup_{j\in I}\{z_j\}, \bigsqcup_{j\in I}\mathsf{T}^{\,i}_{z_j}X\big)\xrightarrow{\qquad} \Upgamma_{K}(X,\mathsf{T}^{\,i}X)
$$ as follows:
$$
k(w)\colon x \xmapsto{\qquad} \begin{cases}
    \displaystyle{\sum_{j\in I}}\,\frac{\uplambda(x)\upalpha_j(x)}{\uplambda(z_j)}\mathsf{trans}\big(w_{\uppi\, \mathsf{exp}^{-1}(x)},\mathsf{exp}^{-1}(x)\big) & \quad \text{if }x\in K, \\[8mm]
    0 & \quad \text{if }x\in X\backslash K,
\end{cases}
$$
where $\uppi\colon \mathsf{T}^{\,i}X\to X$ denotes the canonical projection and $\mathsf{trans}(u_x,v_x)$ denotes parallel transport of $u_x$ along $t\mapsto \mathsf{exp}(tv_x)$. Notice that we need a modification of the map appearing in \cite[Lemma c]{palais_local_1960} which takes into account the positive function $\uplambda$ in Corollary \ref{cor: Exponential mappings}. Also, observe that parallel transport is taken with respect to the Levi-Civita connection associated to $(X,g)$; its existence is guaranteed by \cite[Covariant derivatives, Theorem 4.1]{conrad_differential_nodate} and \cite[Linear ODE, Theorem 3.1]{conrad_differential_nodate}. 

The last map, $E$, needs no modification in our case (see \cite[Lemmas a and b]{palais_local_1960}); just observe that we exponentiate inner vector fields $v\in \Upgamma_{K}(X,\mathsf{T}^{\,i}X)$ using Corollary \ref{cor: Exponential mappings} and hence, $v$ must satisfy $||v_{x}||_{g}<\uplambda(x)$ for any $x\in X$.

Let us check that the three defined maps can be composed. That is, given $\underline{y}\in U$, the inequality $||k(u(\underline{y}))_x ||_g<\uplambda(x)$ holds for any $x\in X$. By definition, this is clearly satisfied when $x\in X\backslash K$. Therefore, it suffices to study what happens for $x\in \overline{V}_{z_j}$, since $K=\bigsqcup_{j\in I}\overline{V}_{z_j}$. In this case,
\begin{align*}
    ||k(u(\underline{y}))_x ||_g\; & \overset{(a)}{=}\; \frac{\uplambda(x)\upalpha_j(x)}{\uplambda(z_j)}\left|\left|\mathsf{trans}\big(u(\underline{y})_{z_j}, \mathsf{exp}^{-1}(x)\big) \right|\right|_g\\
    &\overset{(b)}{=} \;  \frac{\uplambda(x)\upalpha_j(x)}{\uplambda(z_j)} ||u(\underline{y})_{z_j} ||_g\\
    & \overset{(c)}{<}\;  \uplambda(x)
\end{align*}
where we have applied: $\upalpha_{i}\vert_{\overline{V}_{z_j}}=0$ if $i\neq j$ for $(a)$; in $(b)$, that parallel transport with respect to the Levi-Civita connection is an isometry (see \cite[Covariant derivatives, Lemma 3.1]{conrad_differential_nodate}); and $0\leq \upalpha_j(x)\leq 1$ together with $  ||u(\underline{y})_{z_j} ||_g<\uplambda(z_j)$, since $\underline{y}\in U$ implies that $u(\underline{y})_{z_j}\in W_{z_j}$, for the last inequality $(c)$.

To finish the proof, observe that the rest of the properties used by Palais in \cite[Lemmas a--d]{palais_local_1960} readily hold for our simple modifications of the maps $u$, $k$, $E$. Henceforth, we obtain a local cross-section $\upchi=E\circ k\circ u$ as desired.
\end{proof}

\begin{proof}[Proof of Lemma \ref{lem: Evaluation at center is a fibration for Embeddings}]
Combining Lemmas \ref{lem: Cerf-Palais} and \ref{lem: Local-cross sections for Configuration spaces}, we obtain that the  map
$$
 \Emb\big(\bigsqcup_{i\in I}\mathbbst{R}^{Z(i)},X\big)\xrightarrow{\quad\quad} \Conf_Z(X)
$$
from Lemma \ref{lem: Evaluation at center is a fibration for Embeddings}, which is clearly $\mathsf{Diff}_{\mathsf{c}}(X)$-equivariant, is a fiber bundle. By \cite[Theorem 6.3.3]{tom_dieck_algebraic_2008}, we conclude the proof.\footnote{Notice that the evaluation map is even a Hurewicz fibration since $\Conf_{Z}(X)$ is paracompact.}
\end{proof}

\begin{prop}\label{prop: Scanning with one basic}
Let $X$ be an $n$-manifold with corners. For any $Z\in\Prect(X)$, differentiating at $0\in \mathbbst{R}^Z$ yields a weak homotopy equivalence 
$$
\Emb(\mathbbst{R}^{Z},X)\xrightarrow{\quad\quad} \Fr^{\,i}_{[Z]}(X_{[Z]}).
$$
\end{prop}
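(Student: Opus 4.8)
The plan is to reduce the statement to the more refined Lemma \ref{lem:rect-conf} (or rather its analogue for a single target) together with the fibration lemma just established. First I would recall that both sides fibre over the stratum $X_{[Z]}$: on the right, $\Fr^{\,i}_{[Z]}(X_{[Z]})\to X_{[Z]}$ is a $\mathsf{GL}(\R^Z)$-principal bundle (as recorded in \textsection\ref{sect:Manifolds with Corners}); on the left, evaluation at $0\in \R^Z$ gives $\mathsf{ev}_0\colon \Emb(\R^Z,X)\to X_{[Z]}$, which is a Serre (indeed Hurewicz) fibration by Lemma \ref{lem: Evaluation at center is a fibration for Embeddings} applied to the one-point set $I=\ast$, since $\Conf_{\ast}(X)=X_{[Z]}$ in that case. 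The differentiation map $\mathsf{d}_0$ is compatible with these two projections down to $X_{[Z]}$, so it suffices to check that it induces a weak homotopy equivalence on homotopy fibres over an arbitrary point $x\in X_{[Z]}$.

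Next I would identify those fibres. The fibre of $\Fr^{\,i}_{[Z]}(X_{[Z]})$ over $x$ is the torsor $\Fr^{\,i}_x(X)=\{\text{linear isos }\R^n\to \mathsf{T}_xX\text{ restricting to an iso }\R^{p,q}\cong \mathsf{T}^{\,i}_xX\}$, which deformation retracts onto $\mathsf{GL}(\R^Z)$; in particular it has the homotopy type of a point up to this torsor structure, but what matters is that $\mathsf{d}_0$ maps the fibre $\mathsf{ev}_0^{-1}(x)=\{\varphi\in\Emb(\R^Z,X):\varphi(0)=x\}$ into it. So the claim reduces to: the map $\mathsf{ev}_0^{-1}(x)\to \Fr^{\,i}_x(X)$, $\varphi\mapsto \mathsf{d}_0\varphi$, is a weak homotopy equivalence. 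Now $\mathsf{ev}_0^{-1}(x)$ is the space of open embeddings $\R^{p,q}\hookrightarrow X$ sending $0$ to $x$ and preserving strata, which is homotopy equivalent to the space of local stratum-preserving embeddings $(\R^{p,q},0)\hookrightarrow (X,x)$ — this is where one uses the adapted exponential map $\mathsf{exp}_x$ of Corollary \ref{cor: Exponential mappings}: conjugating by $\mathsf{exp}_x$ identifies germs of such embeddings with germs of stratum-preserving diffeomorphisms of a neighbourhood of $0$ in $\mathsf{T}^{\,i}_xX\cong\R^{p,q}$, and a standard scaling/isotopy argument (differentiate, then linearly rescale to the origin) produces a deformation retraction onto the linear such maps, i.e.\ onto $\mathsf{GL}(\R^Z)\subseteq \Fr^{\,i}_x(X)$, compatibly with $\mathsf{d}_0$.

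The cleanest way to package the last step, and the one I would actually write, is to run the same three-term comparison used in the proof of Proposition \ref{prop: Coincidence of little (p,q)-disc operads}: factor $\mathsf{d}_0$ as $\Emb(\R^Z,X)\to \Fr^{\,i}(X)\to X_{[Z]}$ against $\Emb(\R^Z,X)\xrightarrow{\mathsf{ev}_0}X_{[Z]}$ (via the inner frames), reduce to fibres using that $\mathsf{ev}_0$ is a fibration, and invoke the linearisation-by-rescaling homotopy on the fibre. The main obstacle I anticipate is purely the corner bookkeeping in this last homotopy: one must check that the rescaling $\varphi\mapsto (t\mapsto \tfrac1t\varphi(t\,\cdot\,))$, or rather its $\mathsf{exp}_x$-conjugate, stays within \emph{open} stratum-preserving embeddings for all $t\in(0,1]$ and converges $\mathcal{C}^\infty$ to the linear map $\mathsf{d}_0\varphi$ as $t\to 0$ — this is routine for $\R^n$ (Cerf, Palais) but needs the fourth and fifth bullets of Corollary \ref{cor: Exponential mappings} (geodesics tangent to a face stay in that face, and the relevant neighbourhoods are intersections of balls with quadrants) to see that the rescaled maps respect the stratification $\R^{p,q}\subseteq\R^n$. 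Everything else is formal.
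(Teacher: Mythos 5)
Your proposal is correct and follows essentially the same route as the paper: compare the two fibrations over $X_{[Z]}$ (evaluation at $0$, via Lemma \ref{lem: Evaluation at center is a fibration for Embeddings}, and the principal-bundle projection $\Fr^{\,i}_{[Z]}(X_{[Z]})\to X_{[Z]}$), reduce to the induced map on fibres over a point of $X_{[Z]}$, and conclude by linearising origin-fixing embeddings, i.e.\ by the weak equivalence $\mathsf{d}_0\colon\Emb_{0\mapsto 0}(\R^{Z},\R^{Z})\to\mathsf{GL}(\R^{Z})$. The only cosmetic differences are that the paper shrinks into an arbitrary chart around the point rather than the exponential chart of the adapted metric, and performs the linearisation via the bump-function-plus-dilation argument (as in Kupers) rather than the bare rescaling $t^{-1}\varphi(t\,\cdot)$ whose continuity at $t=0$ you flag as the main technical point.
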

\begin{proof} Explicitly, this map is given by $\varphi\mapsto (\varphi(0),\mathsf{d}_0\varphi)$. Thus, clearly $\varphi(0)\in X_{[Z]}$ and $\mathsf{d}_0\varphi\colon \R^n\cong\mathsf{T}_0\R^{Z}\to \mathsf{T}_{\varphi(0)}X$ is a linear isomorphism which restricts to an isomorphism of quadrants $ \R^Z\cong \mathsf{T}^{\,i}_{\varphi(0)}X$. Therefore, the map is well-defined as displayed and fits into a commutative triangle 
$$
\begin{tikzcd}
    \Emb(\mathbbst{R}^{Z},X)\ar[rr,"\mathsf{d}_0"]\ar[rd,"\mathsf{ev}_0"'] && \Fr^{\,i}_{[Z]}(X_{[Z]})\ar[ld,"\mathsf{p}"] \\
    & X_{[Z]}
\end{tikzcd}.
$$

The diagonal maps are Serre fibrations: the one on the left due to Lemma \ref{lem: Evaluation at center is a fibration for Embeddings} and $\mathsf{p}$ because it is the projection of a principal bundle (see \cite[Theorem 6.3.3]{tom_dieck_algebraic_2008}). Hence, it suffices to look at the induced map on fibers over $z\in X_{[Z]}$. By shrinking in $\mathbbst{R}^{Z}$, we may assume that we land in a fixed chart around $z$, which must be of type $\mathbbst{R}^{Z}$ and such that $0\mapsto z$. Hence, the induced map on fibers over $z$ is a weak homotopy equivalence because so is the map
$
\mathsf{d}_0\colon\Emb_{0\mapsto 0}(\mathbbst{R}^{Z},\mathbbst{R}^{Z})\to \mathsf{GL}(\mathbbst{R}^Z)
$. To see this, observe that $\mathsf{d}_0$ is a left-inverse for the canonical inclusion $\mathsf{GL}(\R^Z)\hookrightarrow \Emb_{0\mapsto 0}(\R^Z,\R^Z)$ and the latter can be shown to be a weak homotopy equivalence as in \cite[Theorem 9.1.1]{kupers_lectures_nodate} by $(i)$ deforming embeddings that fix the origin to be linear near $0$ (with a bump function), and $(ii)$ pushing the non-linearity out to $\infty$ (using dilation maps).   
\end{proof}

Using similar ideas, one can also obtain the following:
\begin{prop}\label{prop: Scanning with several basics} Let $X$ be an $n$-manifold with corners, $I$ be a finite set and let $Z\colon I\to \Prect(X)$ be a function. Differentiating at the origin yields a weak homotopy equivalence
$$
\Emb\big(\bigsqcup_{j\in I}\mathbbst{R}^{Z(j)},X\big)\xrightarrow{\quad\quad} \prod_{K\in \Prect(X)} \Fr^{\,i}_{[K]}\big(\Conf_{Z^{-1}\{K\}}(X_{[K]})\big).
$$
\end{prop}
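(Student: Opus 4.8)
The plan is to run the argument of Proposition~\ref{prop: Scanning with one basic}, with the single‑point evaluation replaced by the multi‑point evaluation fibration of Lemma~\ref{lem: Evaluation at center is a fibration for Embeddings}. Write a point of $\Emb\big(\bigsqcup_{j\in I}\mathbbst{R}^{Z(j)},X\big)$ as a tuple $\varphi=(\varphi_j)_{j\in I}$, with $\varphi_j\colon \mathbbst{R}^{Z(j)}\hookrightarrow X$; the map in the statement sends $\varphi$ to the family of centers $(\varphi_j(0))_j$ together with the family of differentials $(\mathsf{d}_0\varphi_j)_j$. Since $\varphi$ is injective, the centers with $Z(j)=K$ form an ordered configuration in $X_{[K]}$, and each $\mathsf{d}_0\varphi_j$ is a linear isomorphism restricting to an isomorphism of quadrants $\mathbbst{R}^{Z(j)}\cong \mathsf{T}^{\,i}_{\varphi_j(0)}X$, i.e.\ an inner frame at $\varphi_j(0)\in X_{[Z(j)]}$. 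Hence the map is well defined and fits into a commutative triangle
\[
\begin{tikzcd}
\Emb\big(\bigsqcup_{j\in I}\mathbbst{R}^{Z(j)},X\big)\ar[rr,"\mathsf{d}_0"]\ar[rd,"\mathsf{ev}_0"'] && \prod_{K\in\Prect(X)}\Fr^{\,i}_{[K]}\big(\Conf_{Z^{-1}\{K\}}(X_{[K]})\big)\ar[ld,"\mathsf{p}"]\\
& \Conf_Z(X)
\end{tikzcd}
\]
where $\mathsf{ev}_0$ is evaluation at the centers and $\mathsf{p}$ is the bundle projection. Note that $\Fr^{\,i}_{[K]}\big(\Conf_{Z^{-1}\{K\}}(X_{[K]})\big)$ is the pullback of the fibrewise frame bundle $\prod_{j\in Z^{-1}\{K\}}\Fr^{\,i}_{[K]}(X_{[K]})$ along $\Conf_{Z^{-1}\{K\}}(X_{[K]})\hookrightarrow\prod_{Z^{-1}\{K\}}X_{[K]}$.

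I would then observe that both legs of the triangle are Serre fibrations: $\mathsf{ev}_0$ by Lemma~\ref{lem: Evaluation at center is a fibration for Embeddings}, and $\mathsf{p}$ because it is a pullback of a finite product of principal $\mathsf{GL}$-bundles, hence a fiber bundle (cf.\ \cite[Theorem 6.3.3]{tom_dieck_algebraic_2008}). Comparing the long exact sequences of these fibrations, it suffices to check that for an arbitrary configuration $\underline{z}=(z_j)_{j\in I}\in\Conf_Z(X)$ the induced map on fibers over $\underline{z}$ is a weak homotopy equivalence. The fiber of $\mathsf{p}$ over $\underline{z}$ is $\prod_{j\in I}\Fr^{\,i}_{z_j}(X_{[Z(j)]})$, while that of $\mathsf{ev}_0$ is the space of tuples $(\varphi_j)_j$ of open embeddings with $\varphi_j(0)=z_j$ and pairwise disjoint images.

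This is where I would insert the one multi‑point ingredient. The points $z_j$ are pairwise distinct — within a fixed stratum by the definition of $\Conf_Z$, and across strata automatically since the $X_{[K]}$ are disjoint — so, using that $X$ is Hausdorff, I pick pairwise disjoint charts $U_j\cong\mathbbst{R}^{Z(j)}$ with $z_j\mapsto 0$. Exactly as in the proof of Proposition~\ref{prop: Scanning with one basic}, shrinking each $\varphi_j$ near the origin (continuously in $\varphi$, and keeping the images pairwise disjoint throughout, since they only shrink inside $\operatorname{im}(\varphi_j)$) deformation retracts the fiber of $\mathsf{ev}_0$ onto the subspace of tuples with $\varphi_j$ landing in $U_j$, namely $\prod_{j\in I}\Emb_{0\mapsto 0}(\mathbbst{R}^{Z(j)},\mathbbst{R}^{Z(j)})$. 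Under the chart trivializations the map on fibers becomes $\prod_{j\in I}\mathsf{d}_0\colon \prod_{j\in I}\Emb_{0\mapsto 0}(\mathbbst{R}^{Z(j)},\mathbbst{R}^{Z(j)})\to\prod_{j\in I}\mathsf{GL}(\mathbbst{R}^{Z(j)})$, a finite product of the weak homotopy equivalences recorded at the end of the proof of Proposition~\ref{prop: Scanning with one basic} (deform an embedding fixing $0$ to be linear near $0$, then push the nonlinearity out to infinity with dilations). This finishes the proof.

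Granting Lemma~\ref{lem: Evaluation at center is a fibration for Embeddings}, which is the single non‑formal input, the whole argument is a transcription of the one‑basic case; the only point to keep an eye on is the bookkeeping by strata, i.e.\ making sure the centers and frames really do assemble into a point of $\Conf_Z(X)=\prod_K\Conf_{Z^{-1}\{K\}}(X_{[K]})$ and the corresponding pulled‑back frame bundle, together with the routine continuity in $\varphi$ of the chart‑shrinking deformation.
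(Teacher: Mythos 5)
Your proposal is correct and takes essentially the same route as the paper's proof: the same commutative triangle over $\Conf_Z(X)$ with both legs Serre fibrations (Lemma \ref{lem: Evaluation at center is a fibration for Embeddings} for $\mathsf{ev}_0$, the bundle structure for the frame-bundle projection), reduction to a fiberwise comparison, and the shrink-into-pairwise-disjoint-charts step identifying the fiber map with the product of the one-basic equivalences $\Emb_{0\mapsto 0}(\R^{Z(j)},\R^{Z(j)})\to \mathsf{GL}(\R^{Z(j)})$ from Proposition \ref{prop: Scanning with one basic}. Your handling of the shrinking step is at the same level of detail as the paper's, so there is nothing to add.
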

\begin{proof} Let us begin by explaining more concretely the previous map. First notice that evaluating embeddings $\varphi\colon \bigsqcup_j\R^{Z(j)}\hookrightarrow X$ at the origins $\{0\in \R^{Z(j)}\}_j$ yields a map $\mathsf{ev}_0\colon \Emb(\bigsqcup_j\R^{Z(j)},X)\longrightarrow \Conf_I(X)$ which factors through the subspace $\Conf_Z(X)$, by our refined convention on embedding spaces. Regarding the tangential data, we have a linear isomorphism $\mathsf{d}_{0}(\varphi\vert_{\R^{Z(j)}})\colon \R^n\cong \mathsf{T}_0\R^{Z(j)}\to \mathsf{T}_{\varphi(0)}X$ for each $j\in I$ which restricts to an isomorphism of quadrants $\R^{Z(j)}\cong \mathsf{T}^{\,i}_{\varphi(0)}X $. This is exactly what the notation in the statement signifies; i.e.\ the structural group of the principal bundle $\Fr^{\,i}_{[K]}(\Conf_{t}(X_{[K]}))$ is $\mathsf{GL}(\R^K)^{\times t}$. Slightly more formally, this can be expressed as saying that the canonical mapping
$$
\Emb\big(\bigsqcup_{j\in I}\R^{Z(j)},X\big) \xrightarrow{\quad\quad} \mathsf{Fr}^{\,i}\big(\Conf_I(X)\big),\quad \varphi\longmapsto  \big((\varphi\vert_{\R^{Z(j)}})(0), \mathsf{d}_0(\varphi\vert_{\R^{Z(j)}})\big)_j
$$
lands in the claimed subspace. Therefore, as in Proposition \ref{prop: Scanning with one basic}, we have a commutative triangle with lower tip $\Conf_Z(X)$ that reduces the claim to analyzing the induced map on fibers over points $\underline{z}\in\Conf_Z(X)$. Again, by shrinking in every $\R^{Z(j)}$, we may assume that we land in an open subspace of $X$ diffeomorphic to $\bigsqcup_{j}\R^{Z(j)}$. From here, the result follows from $(i)$ the argument in Proposition \ref{prop: Scanning with one basic} because, by continuity and connectivity, we have $\Emb_{(0_j\mapsto 0_j)_j}(\bigsqcup_j\R^{Z(j)},\bigsqcup_j\R^{Z(j)})= \prod_j\Emb_{0\mapsto 0}(\R^{Z(j)},\R^{Z(j)})$, and $(ii)$ since the fiber of $\prod_K\mathsf{Fr}^{\,i}_{[K]}\big(\Conf_{Z^{-1}\{K\}}(X_{[K]})\big) $ over $\underline{z}\in \Conf_Z(X)$ is $\prod_j\mathsf{GL}(\R^{Z(j)})$. 
\end{proof}

\section{Higher categorical tools}\label{sect: Higher categorical tools}
We collect some abstract results that are used in the main body of the text. Let us begin with two direct consequences of the long exact sequence of homotopy groups associated with a fiber sequence of spaces.

\begin{prop}\label{prop: Categorical triangle to analyze fibers}
   Let us consider a commutative triangle of $\infty$-categories
    \[
    \begin{tikzcd}
        \EuScript{C} \ar[rr, "F"] \ar[dr, "H"'] && \EuScript{D} \ar[dl, "G"] \\
        & \EuScript{E} &
    \end{tikzcd}.
    \]
  Then, the homotopy fiber of \(F\) over an object \(d \in \EuScript{D}\) is contractible if, for \(G(d) = e \in \EuScript{E}\), the functor $F$ induces a weak homotopy equivalence between the homotopy fibers \(\EuScript{C}_{\vert e}\) and \(\EuScript{D}_{\vert e}\). If moreover \(G\) is a right/left fibration, the homotopy fiber $\EuScript{D}_{\vert e}$ coincides with the strict fiber of $G$ over $e$ (see \cite[Corollary 5.3.3]{cisinski_higher_2019}).
\end{prop}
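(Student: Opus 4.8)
The plan is to reduce the homotopy fiber of $F$ over $d$ to the homotopy fiber, over the same object, of the functor $F_e\colon\EuScript{C}_{\vert e}\to\EuScript{D}_{\vert e}$ induced by $F$ between the homotopy fibers over $e=G(d)$, and then to read off contractibility from the assumption that $F_e$ is a weak homotopy equivalence; the last assertion will be a bare invocation of \cite[Corollary 5.3.3]{cisinski_higher_2019}.

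For the reduction I would paste homotopy pullback squares. Since $H=G\circ F$, pulling $\EuScript{C}$ back along $\EuScript{D}_{\vert e}=\EuScript{D}\times^{\mathsf{h}}_{\EuScript{E}}\{e\}\to\EuScript{D}$ gives $\EuScript{C}\times^{\mathsf{h}}_{\EuScript{D}}\EuScript{D}_{\vert e}\simeq\EuScript{C}\times^{\mathsf{h}}_{\EuScript{E}}\{e\}=\EuScript{C}_{\vert e}$; moreover, as $G(d)=e$, the object $d$ lifts to $\EuScript{D}_{\vert e}$, so the point $\{d\}\hookrightarrow\EuScript{D}$ factors as $\{d\}\hookrightarrow\EuScript{D}_{\vert e}\to\EuScript{D}$. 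Composing the two homotopy pullbacks then yields a canonical equivalence $\EuScript{C}\times^{\mathsf{h}}_{\EuScript{D}}\{d\}\simeq\EuScript{C}_{\vert e}\times^{\mathsf{h}}_{\EuScript{D}_{\vert e}}\{d\}$, so the task becomes showing that the right-hand side is weakly contractible.

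This is where the hypothesis on $G$ is used. When $G$ is a right (or left) fibration — which in all the intended applications it is, being a slice projection of the form $\EuScript{O}_{/(-)}\to\EuScript{O}$ — the homotopy fiber $\EuScript{D}_{\vert e}$ may be identified with the strict fiber $G^{-1}(e)$ by \cite[Corollary 5.3.3]{cisinski_higher_2019}, and in particular $\EuScript{D}_{\vert e}$ is an $\infty$-groupoid; this already settles the second assertion. Granting that $\EuScript{D}_{\vert e}$ is a space, I would use that geometric realization of a functor into a space is computed fiberwise, so that $\big|\EuScript{C}_{\vert e}\times^{\mathsf{h}}_{\EuScript{D}_{\vert e}}\{d\}\big|$ is precisely the homotopy fiber over $d$ of the map of spaces $|F_e|\colon|\EuScript{C}_{\vert e}|\to\EuScript{D}_{\vert e}$; since $|F_e|$ is assumed to be an equivalence, the long exact sequence of homotopy groups of this fiber sequence of spaces kills every homotopy group of the fiber, giving the desired contractibility. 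The only step that is not purely formal — and hence the closest thing to an obstacle — is this last ``fiberwise realization'' claim, i.e.\ that $|{-}|$ commutes with the homotopy pullback along $\{d\}\hookrightarrow\EuScript{D}_{\vert e}$; this is exactly what forces one to arrange that $\EuScript{D}_{\vert e}$ be an $\infty$-groupoid, which is why the right/left fibration hypothesis (and thus \cite[Corollary 5.3.3]{cisinski_higher_2019}) enters the picture.
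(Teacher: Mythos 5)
Your argument is correct, and it actually supplies more than the paper does: the appendix presents this proposition (like Proposition \ref{prop:localization}) as a ``direct consequence of the long exact sequence of homotopy groups associated with a fiber sequence of spaces'' and records no further proof, so the final LES step you invoke is exactly the paper's intended mechanism, while the pasting of homotopy pullbacks giving $\EuScript{C}\times^{\mathsf{h}}_{\EuScript{D}}\{d\}\simeq \EuScript{C}_{\vert e}\times^{\mathsf{h}}_{\EuScript{D}_{\vert e}}\{d\}$ and the fiberwise-realization step are your own (correct) scaffolding; the second assertion is, as you say, just the citation of \cite{cisinski_higher_2019}.

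One remark on hypotheses, which is in your favor rather than a gap: you use the right/left-fibration assumption on $G$ already for the first assertion, whereas the proposition states the contractibility claim for an arbitrary commutative triangle and only brings in the fibration hypothesis to identify $\EuScript{D}_{\vert e}$ with the strict fiber. Your instinct is sound: if the homotopy fibers are taken $\infty$-categorically and ``weak homotopy equivalence'' means an equivalence after realization (which is how the statement is applied in, e.g., Lemma \ref{lem:Second condition WeakAprroxofE}, where the fibers are posets compared to configuration spaces), then the first sentence is not literally true for arbitrary $G$: take $\EuScript{C}=\{1\}\subset\Delta^1=\EuScript{D}$ over $\EuScript{E}=\Delta^0$ and $d=0$; the hypothesis at $e$ holds while the categorical fiber of $F$ over $d$ is empty. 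The failure is exactly the point you isolate, namely that realization does not commute with the pullback along $\{d\}\to\EuScript{D}_{\vert e}$ unless $\EuScript{D}_{\vert e}$ is an $\infty$-groupoid (your ``fiberwise realization'' claim is a Quillen Theorem B--type statement and is the only non-formal input). Since in every use of the proposition $G$ is a slice projection, hence a right fibration, and hence $\EuScript{D}_{\vert e}$ is a Kan complex (compare also Lemma \ref{lem:strict pullback}), your proof covers all cases in which the result is actually needed, and it makes explicit a hypothesis that the literal statement leaves implicit.
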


\begin{prop}
\label{prop:localization}
    Let \(F: \EuScript{C} \to \EuScript{D}\) be a functor. Assume \(\EuScript{D}\) is an \(\infty\)-groupoid. Then \(F\) has contractible homotopy fibers if and only if the induced map \( \vert\EuScript{C} \vert \, \simeq \EuScript{D}\) is an equivalence of $\infty$-groupoids.
\end{prop}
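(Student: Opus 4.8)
The statement we must prove is Proposition \ref{prop:localization}: for a functor $F\colon \EuScript{C}\to\EuScript{D}$ with $\EuScript{D}$ an $\infty$-groupoid, $F$ has contractible homotopy fibers if and only if the induced map $\lvert\EuScript{C}\rvert\to\EuScript{D}$ is an equivalence. The plan is to reduce everything to the long exact sequence of a fibration of spaces, exactly as suggested by the placement of this proposition right after the remark on such sequences.

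First I would reduce to the case where $\EuScript{C}$ is itself an $\infty$-groupoid. Since $\EuScript{D}$ is an $\infty$-groupoid, the functor $F$ factors through the localization $\EuScript{C}\to\lvert\EuScript{C}\rvert$ (i.e.\ through the fundamental $\infty$-groupoid / classifying space), yielding $\bar F\colon\lvert\EuScript{C}\rvert\to\EuScript{D}$. The key point to establish is that the homotopy fibers of $F$ over any object $d\in\EuScript{D}$ agree with those of $\bar F$; this is because the localization map $\EuScript{C}\to\lvert\EuScript{C}\rvert$ is a weak homotopy equivalence, and homotopy fibers (as measured by classifying spaces of the relevant comma categories, e.g.\ $\EuScript{C}\times_{\EuScript{D}}\EuScript{D}_{d/}$) are invariant under such equivalences — concretely, one checks that $\lvert\EuScript{C}\times_{\EuScript{D}}\EuScript{D}_{d/}\rvert\simeq\lvert\lvert\EuScript{C}\rvert\times_{\EuScript{D}}\EuScript{D}_{d/}\rvert$, using that $\EuScript{D}_{d/}\to\EuScript{D}$ is a left fibration and that pulling back a left fibration along a localization-equivalence of the base does not change the classifying space of the total space. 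After this reduction we have a map $\bar F\colon\mathcal{X}\to\mathcal{Y}$ of $\infty$-groupoids, i.e.\ a map of spaces.

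For a map of spaces $\bar F\colon\mathcal{X}\to\mathcal{Y}$, I would then argue directly. Its homotopy fiber over a point $y$ is the usual homotopy fiber $\mathrm{hofib}_y(\bar F)$. If all homotopy fibers are contractible, then $\bar F$ is a weak homotopy equivalence: one checks this on homotopy groups using the long exact sequence of the fibration $\mathrm{hofib}_y(\bar F)\to\mathcal{X}\to\mathcal{Y}$ at each basepoint (and on $\pi_0$ directly, noting that contractibility of the fiber over every point of $\mathcal{Y}$ forces $\pi_0\bar F$ to be a bijection), so $\bar F$ is an equivalence of $\infty$-groupoids. Conversely, if $\bar F$ is an equivalence, then every homotopy fiber is equivalent to the homotopy fiber of $\mathrm{id}_{\mathcal{Y}}$, which is a point, hence contractible. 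Finally, unwinding the reduction, $\lvert\EuScript{C}\rvert\to\EuScript{D}$ being an equivalence is precisely the statement that $\bar F$ is an equivalence, so we are done.

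The main obstacle is the homotopy-fiber-invariance step in the first paragraph: making precise that the homotopy fiber of $F\colon\EuScript{C}\to\EuScript{D}$ — defined via the comma construction — is not changed upon replacing $\EuScript{C}$ by its classifying space, given that $\EuScript{D}$ is an $\infty$-groupoid. This is where the hypothesis that $\EuScript{D}$ is an $\infty$-groupoid is essential (it guarantees $\EuScript{D}_{d/}\simeq\EuScript{D}$ is contractible-over-$\EuScript{D}$ in the appropriate sense and that left fibrations over $\EuScript{D}$ correspond to local systems of spaces, i.e.\ to spaces over $\EuScript{D}$). One clean way to phrase it: the homotopy fiber of $F$ over $d$ is the pullback of spaces $\lvert\EuScript{C}\rvert\times_{\lvert\EuScript{D}\rvert}\{d\}$, and this manifestly only depends on $\lvert\EuScript{C}\rvert\to\lvert\EuScript{D}\rvert=\EuScript{D}$; the content is that the comma-category model computes this pullback, which follows from Quillen's Theorem B applied to $F$ together with the fact that over an $\infty$-groupoid every functor automatically satisfies the Theorem-B hypothesis (the base-change maps between fibers are equivalences because all morphisms in $\EuScript{D}$ are invertible). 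Once this is in place the rest is the routine long-exact-sequence bookkeeping indicated above.
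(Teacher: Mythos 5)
Your proposal is correct and follows essentially the route the paper intends: the paper states Proposition \ref{prop:localization} without proof as a direct consequence of the long exact sequence for a fiber sequence of spaces, which is exactly your second step after reducing to the map $\lvert\EuScript{C}\rvert\to\EuScript{D}$. Your explicit justification that the classifying space of the categorical homotopy fiber (modelled by the comma category $\EuScript{C}\times_{\EuScript{D}}\EuScript{D}_{d/}$) agrees with the homotopy fiber of $\lvert\EuScript{C}\rvert\to\EuScript{D}$ — via Theorem B over a groupoid, or equivalently cofinality of the localization $\EuScript{C}\to\lvert\EuScript{C}\rvert$ — is the one nontrivial point the paper leaves implicit, and you handle it correctly.
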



\begin{lemma}
\label{lem:strict pullback}
    Let \(\mathsf{C}\) be an ordinary category, and let 
    \(
    F \colon \mathsf{C} \to \EuScript{D}
    \)
    be a functor into an \(\infty\)-category \(\EuScript{D}\). Assume that \(\mathsf{C}\) and \(\EuScript{D}\) admit isofibrations \(\mathsf{C} \xrightarrow[]{} \Fin_{*}\) and \(\EuScript{D} \xrightarrow[]{} \Fin_{*}\) such that the functor \(F\) sits in the commutative diagram 
    \[
    \begin{tikzcd}
    \mathsf{C} \ar[rr,"F"] \ar[dr] && \EuScript{D} \ar[dl, ]  \\
     & \Fin_{*}&
    \end{tikzcd}.
    \]
    Assume that for all \(\langle n \rangle \in \Fin_{*}\), the mapping spaces of \(\EuScript{D}_{\,|\langle n \rangle}\) are either contractible or empty. 
    Then for every object \(d \in \EuScript{D}\) that lives over \(\langle n \rangle \in \Fin_{*}\), the homotopy fiber of \(F\) over \(d\),  \(\mathsf{C}_{|d}\), sits into a strict pullback of ordinary categories:
    \[
    \begin{tikzcd}
    \mathsf{C}_{|d} \ar[r] \ar[d] \ar[rd, phantom, "\lrcorner" description] & \mathsf{C}_{|\langle n \rangle}  \ar[d]  \\
    \ast \ar[r,"d"'] &  h(\EuScript{D}_{\, |\langle n \rangle}),
    \end{tikzcd}
    \]
    where \(h(-)\) denotes the homotopy category.
\end{lemma}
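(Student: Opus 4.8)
The plan is to compute $\mathsf{C}_{|d}$ in two stages: first I would descend to the fibres over $\langle n\rangle\in\Fin_{*}$, and then I would analyse the resulting functor, whose source is an ordinary category and whose target is $0$-truncated.

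\textbf{Step 1 (reduction to the fibres over $\langle n\rangle$).} Since $\mathsf{C}\to\Fin_{*}$ and $\EuScript{D}\to\Fin_{*}$ are isofibrations, hence categorical fibrations, the strict fibres $\mathsf{C}_{|\langle n\rangle}$ and $\EuScript{D}_{|\langle n\rangle}$ already compute the homotopy fibres over $\langle n\rangle$. Because $F$ is a functor over $\Fin_{*}$, the square
\[
\begin{tikzcd}
\mathsf{C}_{|\langle n\rangle}\ar[r]\ar[d,"F_{\langle n\rangle}"'] & \mathsf{C}\ar[d,"F"]\\
\EuScript{D}_{|\langle n\rangle}\ar[r] & \EuScript{D}
\end{tikzcd}
\]
is a homotopy pullback: indeed $\mathsf{C}\times^{\mathsf{h}}_{\EuScript{D}}\EuScript{D}_{|\langle n\rangle}\simeq\mathsf{C}\times^{\mathsf{h}}_{\EuScript{D}}\big(\EuScript{D}\times^{\mathsf{h}}_{\Fin_{*}}\{\langle n\rangle\}\big)\simeq\mathsf{C}\times^{\mathsf{h}}_{\Fin_{*}}\{\langle n\rangle\}\simeq\mathsf{C}_{|\langle n\rangle}$. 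As $d$ lies over $\langle n\rangle$, the point $\{d\}\to\EuScript{D}$ factors through $\EuScript{D}_{|\langle n\rangle}$, and pasting the square above with the homotopy pullback defining $\mathsf{C}_{|d}$ identifies $\mathsf{C}_{|d}$ with the homotopy fibre of $F_{\langle n\rangle}\colon\mathsf{C}_{|\langle n\rangle}\to\EuScript{D}_{|\langle n\rangle}$ over $d$.

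\textbf{Step 2 (identification of the target).} The hypothesis that every mapping space of $\EuScript{D}_{|\langle n\rangle}$ is contractible or empty says exactly that $\EuScript{D}_{|\langle n\rangle}$ is $0$-truncated, i.e. the canonical functor $\EuScript{D}_{|\langle n\rangle}\to\Nerve\big(h(\EuScript{D}_{|\langle n\rangle})\big)$ is an equivalence, with $h(\EuScript{D}_{|\langle n\rangle})$ a preorder. Since $\mathsf{C}_{|\langle n\rangle}$ is an ordinary category and the nerve is fully faithful on $1$-categories, $F_{\langle n\rangle}$ is, up to equivalence, the nerve of an honest functor $\overline{F}\colon\mathsf{C}_{|\langle n\rangle}\to h(\EuScript{D}_{|\langle n\rangle})$.

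\textbf{Step 3 (homotopy fibre $=$ strict fibre).} It remains to show that the homotopy fibre of $\Nerve(\overline{F})$ over $d$ coincides with the strict $1$-categorical pullback $\mathsf{C}_{|\langle n\rangle}\times_{h(\EuScript{D}_{|\langle n\rangle})}\{d\}$, i.e. with the subcategory of $\mathsf{C}_{|\langle n\rangle}$ spanned by the objects mapping to $d$ and the morphisms mapping to $\id_{d}$. The standard model for the homotopy fibre replaces $\{d\}\hookrightarrow h(\EuScript{D}_{|\langle n\rangle})$ by the slice projection $h(\EuScript{D}_{|\langle n\rangle})_{/d}\to h(\EuScript{D}_{|\langle n\rangle})$, which is a right fibration of $1$-categories, hence a categorical fibration, so that the homotopy fibre is the strict pullback $\mathsf{C}_{|\langle n\rangle}\times_{h(\EuScript{D}_{|\langle n\rangle})}h(\EuScript{D}_{|\langle n\rangle})_{/d}$ (using that $(d,\id_{d})$ is terminal in the slice). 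The remaining — and main — point is to see that, in the presence of the $0$-truncatedness hypothesis, this slice may be replaced by the single object $\{d\}$: concretely, that the inclusion $\mathsf{C}_{|\langle n\rangle}\times_{h(\EuScript{D}_{|\langle n\rangle})}\{d\}\hookrightarrow\mathsf{C}_{|\langle n\rangle}\times_{h(\EuScript{D}_{|\langle n\rangle})}h(\EuScript{D}_{|\langle n\rangle})_{/d}$ is an equivalence of ordinary categories. This amounts to lifting, along $F_{\langle n\rangle}$, the isomorphisms of the $0$-truncated category $\EuScript{D}_{|\langle n\rangle}$ with target $d$ to isomorphisms of $\mathsf{C}_{|\langle n\rangle}$ — equivalently, to checking that $F_{\langle n\rangle}$, or a replacement of it by an equivalent functor, is itself a categorical fibration over the $1$-category $\EuScript{D}_{|\langle n\rangle}$, which is precisely where the isofibration hypotheses on $\mathsf{C}\to\Fin_{*}$ and $\EuScript{D}\to\Fin_{*}$ re-enter. (In the cases of interest, where $\EuScript{D}_{|\langle n\rangle}$ is even discrete, this is vacuous.) Once this is established, both the homotopy fibre of $F$ over $d$ and the strict $1$-categorical pullback in the statement are identified with the same subcategory of $\mathsf{C}_{|\langle n\rangle}$, proving the lemma.
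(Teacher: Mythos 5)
Your Steps 1 and 2 agree with the paper's argument: the isofibrations to $\Fin_*$ let you replace homotopy pullbacks over $\Fin_*$ by strict ones, reducing to the functor $F_{\langle n\rangle}\colon \mathsf{C}_{|\langle n\rangle}\to\EuScript{D}_{|\langle n\rangle}$, and the truncation hypothesis identifies $\EuScript{D}_{|\langle n\rangle}$ with $\Nerve\big(h(\EuScript{D}_{|\langle n\rangle})\big)$ for a poset $h(\EuScript{D}_{|\langle n\rangle})$. The gap is in Step 3. The full slice $h(\EuScript{D}_{|\langle n\rangle})_{/d}$ is \emph{not} a legitimate replacement of the point $\{d\}$ for computing the $\infty$-categorical (Joyal) homotopy pullback: the inclusion $\{d\}\hookrightarrow h(\EuScript{D}_{|\langle n\rangle})_{/d}$ is only a weak homotopy equivalence (terminal object), not an equivalence of categories, so the strict pullback against the slice projection computes the comma category $\overline{F}\downarrow d$, which is in general strictly larger than the homotopy fibre. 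Already for $\mathsf{C}_{|\langle n\rangle}=h(\EuScript{D}_{|\langle n\rangle})=[1]$, $\overline{F}=\id$, $d=1$, the homotopy fibre is $\{1\}\simeq \ast$, while $[1]\times_{[1]}[1]_{/1}\cong[1]$; note that the inclusion you then want to be an equivalence, $\ast\hookrightarrow[1]$, fails in the same example. So both intermediate claims of Step 3 are false in general, even though the lemma's conclusion is true.

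Nor can the comparison be rescued by letting the isofibration hypotheses on $\mathsf{C}\to\Fin_*$ and $\EuScript{D}\to\Fin_*$ ``re-enter'': those hypotheses are already consumed in Step 1 (they identify strict with homotopy fibres over $\langle n\rangle$) and give no lifting property for $F_{\langle n\rangle}$, which is an arbitrary functor over $\Fin_*$; moreover, the discrepancy between $\overline{F}\downarrow d$ and the fibre is caused by the \emph{non-invertible} morphisms into $d$, which no isofibration property removes. The correct finish — and the one the paper uses — is much shorter: since $h(\EuScript{D}_{|\langle n\rangle})$ is a poset, \emph{every} functor into its nerve is automatically an isofibration (\cite[Example 01GV]{lurie_kerodon_nodate}), in particular $\mathsf{C}_{|\langle n\rangle}\to \Nerve\big(h(\EuScript{D}_{|\langle n\rangle})\big)$ is one, so the homotopy pullback against $\{d\}$ is already the strict pullback of the statement. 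Equivalently, model the homotopy fibre by the iso-comma category of pairs $(c,\,F_{\langle n\rangle}(c)\xrightarrow{\,\sim\,}d)$ and observe that in a poset all isomorphisms are identities. Finally, your parenthetical that in the cases of interest $\EuScript{D}_{|\langle n\rangle}$ is discrete is not accurate: for $\Eactive_{p,q}$ the fibre over $\langle n\rangle$ is a genuine poset with non-identity morphisms (products of nonempty $\Rect$-spaces), so the issue is not vacuous in the intended applications.
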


\begin{proof}
    Consider the following diagram
    
    \[
    \begin{tikzcd}
    \mathsf{C}_{|d} \ar[r] \ar[d] \ar[rd, phantom, "\overset{\mathsf{h}\;\;}{\lrcorner}" description] & \mathsf{C}_{|\langle n \rangle} \ar[d] \ar[r,]  &  \mathsf{C}\ar[d] \ar[ld, phantom, "\overset{\mathsf{h}\;\;}{\lrcorner}" description] \\
    \ast \ar[r,"d"']  & \EuScript{D}_{\, |\langle n \rangle} \ar[r]  \ar[d] & \EuScript{D} \ar[d] \ar[ld, phantom, "\lrcorner" description] \ar[d]\\
    & \ast \ar[r,"\langle n \rangle"'] & \Fin_{*}
    \end{tikzcd}.
    \] 
    A priori, we are taking homotopy pullbacks. Since homotopy pullbacks along isofibrations agree with strict pullbacks (see \cite[Proposition A.2.4.4]{lurie_higher_2009}), the lower square can be taken as a strict pullback. The same is true for the pullback along \(\mathsf{C} \xrightarrow[]{} \Fin_{*}\). This justifies all the categories on the vertices of the diagram. 
    
    Due to the assumptions on \(\EuScript{D}_{\,|\langle n \rangle}\) (see \cite[Proposition 2.3.4.5, Example 2.3.4.17 and Proposition 2.3.4.18]{lurie_higher_2009}), we conclude that \(\EuScript{D}_{\, |\langle n \rangle} \simeq \Nerve ( h(\EuScript{D}_{\, |\langle n \rangle}))\), where \(h(\EuScript{D}_{\, |\langle n \rangle})\) is a poset. Therefore, we obtain the following diagram
    \[
    \begin{tikzcd}
    \mathsf{C}_{|d} \ar[r] \ar[d] \ar[rd, phantom,  "\overset{\mathsf{h}\;\;}{\lrcorner}" description] & \mathsf{C}_{|\langle n \rangle} \ar[d] \\
    \ast \ar[r]  \ar[d,"\wr"'] \ar[rd, phantom,  "\overset{\mathsf{h}\;\;}{\lrcorner}" description]  & \EuScript{D}_{\, |\langle n \rangle} \ar[d,"\wr"]  \\
    \ast \ar[r] & h(\EuScript{D}_{\, |\langle n \rangle} )
    \end{tikzcd},
    \]
    whose outer square is the one from the statement. As any functor that maps into a nerve of a poset is an isofibration \cite[Example 01GV]{lurie_kerodon_nodate}, we conclude the proof.
\end{proof}

\begin{remark}
    This last result was applied throughout the text on several occasions to streamline the arguments, namely in the proof of Lemmas \ref{lem:Second condition WeakAprroxofE},  \ref{lem: First condition WreathProdApprox}, \ref{lem: Second condition WreathProdApprox}, and \ref{lem: First condition LocalizationTheorem via WeakApprox}. Notice that the $\infty$-category \(\EuScript{D}=\Eactive_{p,q}\) satisfies the assumptions of Lemma \ref{lem:strict pullback} since 
    \[
    (\Eactive_{p,q})_{|\langle n \rangle} \brbinom{\{Q_{\bullet}\}_{ \langle n \rangle}}{\{R_{\bullet}\}_{\langle n \rangle}}= \prod_{i \in \langle n \rangle^{\circ}} \Rect \big( \square^{\,p, Q_i}, \square^{\,p, R_i} \big).
    \]
    An analogous statement holds for $\EuScript{D}=\gE^{\otimes,\,\act}_{\square^{\,p,q}}$ by the homotopy pullback diagram (\ref{eqt:hopbs showing space of relative embeddings is contractible}).
\end{remark}

\paragraph{A nested formula for homotopy colimits.}

\begin{lemma}\label{lem: colims over cocart fibrations} Let $\int_{i\in \,\EuScript{I}}F(i)\to \EuScript{I}$ be the cocartesian fibration corresponding to a diagram $F\colon \EuScript{I}\to \Cat_{\infty}$, and let $\EuScript{C}$ be a cocomplete \(\infty\)-category. For any diagram $D\colon \int_{i}F(i)\to \EuScript{C}$, there is a natural equivalence
$$
\underset{x\in \int_{i}F(i)}{\hocolim}\,D_x \simeq \underset{i\in \,\EuScript{I}}{\hocolim}\,\underset{z\in F(i)}{\hocolim}\,D_z.
$$
Consequently, given a diagram $\overline{D}\colon \hocolim_{i}F(i)\to \EuScript{C}$, there is a natural equivalence
$$
 \hocolim\big(\underset{i\in\,\EuScript{I}}{\hocolim}\,F(i)\xrightarrow{\;\overline{D}\;}\EuScript{C}\big)\simeq \underset{i\in \,\EuScript{I}}{\hocolim}\,\underset{z\in F(i)}{\hocolim}\,\overline{D}_z.
$$  

Dually, for any diagram $\overline{D}\colon (\hocolim_iF(i))^{\op}\to \EuScript{D}$, where $\EuScript{D}$ is a complete \(\infty\)-category, there is a natural equivalence
$$
 \holim\big(\underset{i\in \,\EuScript{I}}{(\hocolim}\,F(i))^{\op}\xrightarrow{\;\overline{D}\;}\EuScript{D}\big)\simeq \underset{i\in \,\EuScript{I}^{\op}}{\holim}\,\underset{z\in F(i)^{\op}}{\holim}\,\overline{D}_z.
$$
\end{lemma}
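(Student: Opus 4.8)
\textbf{Proof plan for Lemma \ref{lem: colims over cocart fibrations}.}
The plan is to establish the first equivalence — the Fubini formula for homotopy colimits over a cocartesian fibration — and then extract everything else from it by formal manipulations. For the first statement, I would invoke the standard fact (e.g.\ \cite[Corollary 4.3.2.15]{lurie_higher_2009}, or the straightening/unstraightening machinery) that a colimit of a diagram $D\colon \int_{i}F(i)\to\EuScript{C}$ can be computed by first left Kan extending along the projection $\mathsf{p}\colon\int_{i}F(i)\to\EuScript{I}$ and then taking the colimit over $\EuScript{I}$. Since $\mathsf{p}$ is a cocartesian fibration, its fiber over $i$ is $F(i)$, and the pointwise formula for the left Kan extension along $\mathsf{p}$ identifies $(\mathsf{p}_{!}D)(i)\simeq \hocolim_{z\in F(i)}D_z$ — here one uses that the comma category $\mathsf{p}/i$ receives a cofinal map from the fiber $F(i)$ because $\mathsf{p}$ is cocartesian (the cocartesian lifts provide the required adjoint/cofinality witness). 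Composing, $\hocolim_{x\in\int_iF(i)}D_x\simeq \hocolim_{i\in\EuScript{I}}(\mathsf{p}_!D)(i)\simeq\hocolim_{i\in\EuScript{I}}\hocolim_{z\in F(i)}D_z$, and naturality is automatic from the naturality of Kan extension.

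For the second statement, recall that $\hocolim_{i\in\EuScript{I}}F(i)$ is, by definition, the $\infty$-localization of $\int_iF(i)$ at the $\mathsf{p}$-cocartesian morphisms; equivalently it is the target of the universal functor $L\colon\int_iF(i)\to\hocolim_iF(i)$ inverting exactly those arrows. Given $\overline{D}\colon\hocolim_iF(i)\to\EuScript{C}$, set $D:=\overline{D}\circ L$. Because $L$ is a localization, it is in particular cofinal (a localization functor is both initial and final), so $\hocolim_{x}D_x\simeq\hocolim\big(\hocolim_iF(i)\xrightarrow{\overline{D}}\EuScript{C}\big)$; combining with the first equivalence and noting $D_z=\overline{D}_z$ for $z$ in a fiber gives the claim. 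The dual (last) statement follows by applying the second statement with $\EuScript{C}=\EuScript{D}^{\op}$ and a diagram $\overline{D}^{\op}$, using $(\hocolim_iF(i))^{\op}\simeq\holim$-indexing in the standard way — more precisely, $\hocolim$ in $\EuScript{D}^{\op}$ is $\holim$ in $\EuScript{D}$, and $\hocolim_{z\in F(i)}$ computed in $\EuScript{D}^{\op}$ is $\holim_{z\in F(i)^{\op}}$ in $\EuScript{D}$; reindexing the outer colimit over $\EuScript{I}$ as a limit over $\EuScript{I}^{\op}$ finishes it.

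The main obstacle I anticipate is the bookkeeping in the cofinality step: verifying cleanly that the inclusion of the fiber $F(i)\hookrightarrow \mathsf{p}/i$ (or the relevant comma category) is cofinal for a cocartesian fibration, and that this is exactly what makes the pointwise left Kan extension formula collapse to an honest colimit over the fiber rather than over the comma category. This is well documented in the literature but is the one place where the cocartesian hypothesis is genuinely used, so I would cite it carefully (e.g.\ via \cite{gepner_lax_2017} or the oplax colimit description of unstraightening, which is already invoked in the proof of Theorem \ref{thm: Hyperdescent for cbl fact}) rather than reprove it. Everything else — the definitional identification of $\hocolim_iF(i)$ as a localization, the fact that localizations are cofinal, and the passage to the opposite — is formal.
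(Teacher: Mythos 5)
Your proposal is correct and follows essentially the same route as the paper: left Kan extension along the projection to $\EuScript{I}$ with the pointwise formula, cofinality of the fiber inclusion into the comma category witnessed by cocartesian lifts, then the fact that $\int_iF(i)\to\hocolim_iF(i)$ is a localization at the cocartesian morphisms (via the oplax-colimit description of \cite{gepner_lax_2017}) together with cofinality of localizations, and finally passage to opposites for the dual statement. The only difference is presentational: the paper establishes the ``colimit equals iterated colimit over slices'' step by hand for an arbitrary span via a mapping-cylinder (Grothendieck construction over $\Delta^1$) argument before specializing to cocartesian fibrations, whereas you cite the standard transitivity of left Kan extensions directly.
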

\begin{proof} Let us first prove a more general statement: consider a span of \(\infty\)-categories $\EuScript{I}\xleftarrow{\,\uppi\,}\gE\xrightarrow{\,D\,}\EuScript{C}$. Then, we claim that the homotopy colimit of $D$ can be computed as follows
$$
\underset{\gE}{\hocolim}\,D\simeq \underset{i\in\,\EuScript{I}}{\hocolim}\,\underset{\gE_{/i}}{\hocolim}\,D\vert_{\gE_{/i}}. 
$$
In order to show this, consider the Grothendieck construction $\EuScript{M}\to \Delta^1$, i.e.\ the unstraightening, of $\uppi\colon \gE\to \EuScript{I}$. Informally, $\EuScript{M}$ is the \(\infty\)-category with objects $\mathsf{ob}(\gE)\amalg \mathsf{ob}(\EuScript{I})$ and mapping spaces
$$
\Map_{\EuScript{M}}(m,m')=
\begin{cases}
    \Map_{\gE}(m,m') & \quad \text{if } m,m' \in \mathsf{ob}(\gE), \vspace*{2mm}\\ 
    \Map_{\EuScript{I}}(m,m') & \quad \text{if } m,m'\in \mathsf{ob}(\EuScript{I}), \vspace*{2mm}\\ 
    \Map_{\EuScript{I}}(\uppi(m),m') & \quad \text{if } m\in \mathsf{ob}(\gE) \text{ and }m'\in \mathsf{ob}(\EuScript{I}), \vspace*{2mm}\\ 
    \diameter & \quad \text{if } m\in \mathsf{ob}(\EuScript{I}) \text{ and }m'\in \mathsf{ob}(\gE).
\end{cases}
$$
Then, there are canonical inclusions $j_0\colon\gE\hookrightarrow \EuScript{M}\hookleftarrow \EuScript{I}:\! j_1$, which are just the fiber inclusions of $\EuScript{M}\to \Delta^1$. Observe that $\EuScript{M}\hookleftarrow \EuScript{I}$ admits a left adjoint $\widetilde{\uppi}$, essentially given by the identity on $\mathsf{ob}(\EuScript{I})$ and by $\uppi$ on $\mathsf{ob}(\gE)$. Hence, one can (homotopy) left Kan extend $D$ along $j_0$  and notice that $\EuScript{I}\hookrightarrow \EuScript{M}$ is cofinal (since it admits a left adjoint) to obtain
$$
\underset{x\in \gE}{\hocolim}\,D(x)\simeq \underset{m\in \EuScript{M}}{\hocolim}\,(\mathsf{Lan}_{j_0}D)(m)\simeq \underset{i\in \,\EuScript{I}}{\hocolim}\,(\mathsf{Lan}_{j_0}D)(i).
$$
We conclude our first claim by applying the pointwise formula for left Kan extensions due to the identification  
$
\gE_{/i}:= \EuScript{I}_{/i}\times_{\EuScript{I}}\gE \simeq \EuScript{M}_{/i}\times_{\EuScript{M}}\gE 
$
for any $i\in \mathsf{ob}(\EuScript{I})\subseteq\mathsf{ob}(\EuScript{M})$.

For the first statement in the lemma, we consider the particular case when $\uppi\colon \gE\to \EuScript{I}$ is a cocartesian fibration. Thus, it suffices to observe that $\gE_i=\{i\}\times_{\EuScript{I}}\gE\to \EuScript{I}_{/i}\times_{\EuScript{I}}\gE=\gE_{/i}$ is cofinal in this case, because $F(i)\simeq \gE_i$ when $\uppi$ is the cocartesian fibration associated with $F$. For this, note that for any $(e,l\colon \uppi(e)\to i)\in \gE_{/i}$, the slice $(\gE_i)_{(e,l)/}$ is contractible since it has an initial object: pick a cocartesian lift $e\to l_!e$ over $l$. 

For the second statement, we use that $\infty$-localizations are cofinal \cite[Proposition 7.1.10]{cisinski_higher_2019} and that $\int_{i}F(i)\to \hocolim_iF(i)$ is the $\infty$-localization at the cocartesian morphisms since $\int_iF(i)$ is the oplax colimit of $F\colon \EuScript{I}\to \Cat_{\infty}$ (see \cite[Theorem 7.4]{gepner_lax_2017}).

 The last statement follows from the previous one by looking at $\overline{D}$ as a diagram $\hocolim_iF(i)\to \EuScript{D}^{\op}$. 
\end{proof}

\paragraph{The Boardman-Vogt tensor product for \(\infty\)-operads.}

Lurie showed that $\Opd_{\infty}$ can be equipped with a symmetric monoidal structure, $\Opd_{\infty}^{\otimes}\to \Fin_{*}$, called Boardman-Vogt tensor product \cite[\textsection 2.2.5]{lurie_higher_nodate}. Such monoidal product $\boxtimes \colon \Opd_{\infty}\times\Opd_{\infty}\to \Opd_{\infty}$ can be obtained from a left Quillen bifunctor \cite[Proposition 2.2.5.7]{lurie_higher_nodate} and hence it commutes with homotopy colimits in each variable. Therefore, for any \(\infty\)-operad $\EuScript{O}$, we obtain a right adjoint functor $\ALG_{\EuScript{O}}(-)\colon \Opd_{\infty}\to \Opd_{\infty}$ of $\EuScript{-\boxtimes O}$. Actually, this functor is characterized by the existence of a natural equivalence of \(\infty\)-categories
\begin{equation}\label{eqt: Algebras over BV-tensor}
    \Alg_{\EuScript{B}\boxtimes\EuScript{O}}(\EuScript{P})\simeq \Alg_{\EuScript{B}}(\ALG_{\EuScript{O}}(\EuScript{P}))
\end{equation}
for any pair $\EuScript{B}, \EuScript{P}\in \Opd_{\infty}$. For instance, using $\Map_{\Opd_{\infty}}(\EuScript{O},\EuScript{P})\simeq \Alg_{\EuScript{O}}(\EuScript{P})^{\simeq}$, we obtain
$$
\Map_{\Opd_{\infty}}(\EuScript{B}\boxtimes \EuScript{O},\EuScript{P})\simeq \Map_{\Opd_{\infty}}(\EuScript{B},\ALG_{\EuScript{O}}(\EuScript{P}))
$$
from the previous equivalence between \(\infty\)-categories of algebras.

In \cite[\textsection 2.2.5]{lurie_higher_nodate}, Lurie also worked out an explicit description of the $\infty$-category $\Alg_{\EuScript{B}}(\ALG_{\EuScript{O}}(\EuScript{P}))$ for any triple of $\infty$-operads $\EuScript{B}$, $\EuScript{O}$, $\EuScript{P}$. Its objects are identified with the so called \emph{bifunctors}, i.e.\ a bifunctor is just a functor $\upvarphi\colon \EuScript{B}^{\otimes}\times \EuScript{O}^{\otimes}\to \EuScript{P}^{\otimes}$ such that: $(i)$ the diagram 
$$
\begin{tikzcd}
\EuScript{B}^{\otimes}\times \EuScript{O}^{\otimes} \ar[d]\ar[r,"\upvarphi"] & \EuScript{P}^{\otimes}\ar[d]\\
\Fin_*\times \Fin_* \ar[r,"\wedge"'] & \Fin_*
\end{tikzcd}
$$
commutes (where $\wedge$ denotes the smash product of finite sets), and $(ii)$ $\upvarphi$ sends pairs of inert maps to inert maps. By considering the full subcategory spanned by them, $\BiFun(\EuScript{B},\EuScript{O};\EuScript{P})$, in the $\infty$-category of functors $\EuScript{B}^{\otimes}\times \EuScript{O}^{\otimes}\to \EuScript{P}^{\otimes}$ satisfying $(i)$, Lurie established a canonical equivalence of $\infty$-categories
$$
\BiFun(\EuScript{B},\EuScript{O};\EuScript{P})\simeq \Alg_{\EuScript{B}}(\ALG_{\EuScript{O}}(\EuScript{P})).
$$
Combined with Equation (\ref{eqt: Algebras over BV-tensor}), we arrive at:
\begin{defn}\label{defn: Exhibits a tensor product} A bifunctor $\upvarphi\colon \EuScript{B}_1^{\otimes}\times \EuScript{B}_2^{\otimes}\to \EuScript{B}^{\otimes}$ exhibits $\EuScript{B}$ as a tensor product of $\EuScript{B}_1$ and $\EuScript{B}_2$ if, for any $\infty$-operad $\EuScript{P}$, precomposition with $\upvarphi$ induces an equivalence of $\infty$-categories
$
\Alg_{\EuScript{B}}(\EuScript{P})\simeq 
\BiFun(\EuScript{B}_1,\EuScript{B}_2;\EuScript{P}).
$    
\end{defn}

In the remainder of this appendix, we collect a couple of simple facts about the functor $\ALG$ that we apply throughout the text.

\begin{lemma}{\cite[Proposition 3.2.4.3 and Example 3.2.4.4]{lurie_higher_nodate}} If $\EuScript{V}^{\otimes}\to \Fin_{*}$ is a symmetric monoidal \(\infty\)-category, so is $\ALG_{\EuScript{O}}(\EuScript{V})\to \Fin_{*}$. Informally, the tensor product of two $\EuScript{O}$-algebras $A$ and $B$ is given by the pointwise tensor product $A\otimes B\colon o\mapsto A(o)\otimes B(o) $.
\end{lemma}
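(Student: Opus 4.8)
Since the lemma is attributed to Lurie, the shortest proof is simply to invoke \cite[Proposition~3.2.4.3 and Example~3.2.4.4]{lurie_higher_nodate}; what follows is the route I would spell out if proving it from scratch, and in particular the reason the induced tensor product is the pointwise one.

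The plan is to exhibit $\ALG_{\EuScript{O}}(-)$ as a functor \emph{from} symmetric monoidal $\infty$-categories \emph{to} symmetric monoidal $\infty$-categories, not merely to $\Cat_{\infty}$. First I would note the elementary fact that the naive functor $\Alg_{\EuScript{O}}(-)$ preserves finite products: an $\EuScript{O}$-algebra in a product $\EuScript{C}\times\EuScript{D}$ of symmetric monoidal $\infty$-categories is precisely a pair of $\EuScript{O}$-algebras, and $\Alg_{\EuScript{O}}(\ast)\simeq\ast$, both naturally. Applying the functor $\mathrm{CMon}(-)$ of commutative monoid objects (with respect to the cartesian product) to the product-preserving functor $\Alg_{\EuScript{O}}\colon\mathrm{SymMonCat}_{\infty}\to\Cat_{\infty}$ yields a functor $\mathrm{CMon}(\mathrm{SymMonCat}_{\infty})\to\mathrm{CMon}(\Cat_{\infty})$. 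I would then use the standard identifications $\mathrm{CMon}(\Cat_{\infty})\simeq\mathrm{SymMonCat}_{\infty}$ and $\mathrm{CMon}(\mathrm{SymMonCat}_{\infty})\simeq\mathrm{SymMonCat}_{\infty}$ (a commutative monoid object in symmetric monoidal $\infty$-categories is again just a symmetric monoidal $\infty$-category — an Eckmann--Hilton/Dunn-type collapse, equivalently $\mathrm{Comm}\boxtimes\mathrm{Comm}\simeq\mathrm{Comm}$) to read this off as a functor $\mathrm{SymMonCat}_{\infty}\to\mathrm{SymMonCat}_{\infty}$ whose composite with the forgetful functor to $\Cat_{\infty}$ is the original $\Alg_{\EuScript{O}}(-)$. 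In particular $\ALG_{\EuScript{O}}(\EuScript{V})\to\Fin_{*}$ is a symmetric monoidal $\infty$-category with underlying $\infty$-category $\Alg_{\EuScript{O}}(\EuScript{V})$.

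To pin down the tensor product on objects I would unwind the active map $\langle 2\rangle\to\langle 1\rangle$ through this construction: on $\EuScript{V}^{\otimes}$ it is classified by the tensor functor $\otimes\colon\EuScript{V}\times\EuScript{V}\to\EuScript{V}$, which is symmetric monoidal for the product structure on the source because of the interchange equivalence $(v_{1}\otimes w_{1})\otimes(v_{2}\otimes w_{2})\simeq(v_{1}\otimes v_{2})\otimes(w_{1}\otimes w_{2})$; applying $\Alg_{\EuScript{O}}(-)$ and using product-preservation gives the functor $\Alg_{\EuScript{O}}(\EuScript{V})\times\Alg_{\EuScript{O}}(\EuScript{V})\to\Alg_{\EuScript{O}}(\EuScript{V})$ sending $(A,B)$ to the composite $\EuScript{O}^{\otimes}\xrightarrow{(A,B)}\EuScript{V}^{\otimes}\times_{\Fin_{*}}\EuScript{V}^{\otimes}\xrightarrow{\otimes}\EuScript{V}^{\otimes}$, i.e.\ $o\mapsto A(o)\otimes B(o)$, as asserted. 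The only non-formal ingredient — and the reason it is cleanest to cite Lurie rather than assemble the argument by hand — is the idempotency $\mathrm{CMon}(\mathrm{SymMonCat}_{\infty})\simeq\mathrm{SymMonCat}_{\infty}$ together with the coherence bookkeeping it packages; everything else is a direct manipulation.
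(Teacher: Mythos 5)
Your proposal is correct, and on the main point it coincides with the paper: the lemma is stated there purely as a citation of \cite[Proposition 3.2.4.3 and Example 3.2.4.4]{lurie_higher_nodate}, with no independent proof beyond the informal description of the pointwise tensor product, so invoking Lurie is exactly what the paper does. Your from-scratch sketch, however, is a genuinely different construction from Lurie's: he builds the coCartesian fibration $\ALG_{\EuScript{O}}(\EuScript{V})^{\otimes}\to \Fin_*$ explicitly and verifies the coCartesian lifting condition fiberwise (which is where the pointwise formula $o\mapsto A(o)\otimes B(o)$ is literally read off), whereas you obtain the symmetric monoidal structure formally, from product-preservation of $\Alg_{\EuScript{O}}(-)$ on $\mathrm{CMon}(\Cat_{\infty})\simeq \mathsf{sm}\!\Cat_{\infty}^{\mathsf{strong}}$ together with the idempotence $\mathrm{CMon}(\mathrm{CMon}(\Cat_{\infty}))\simeq \mathrm{CMon}(\Cat_{\infty})$, i.e.\ $\mathrm{Comm}\boxtimes\mathrm{Comm}\simeq\mathrm{Comm}$, and then identify the binary multiplication by unwinding the active map $\langle 2\rangle\to\langle 1\rangle$ through $\otimes\colon \EuScript{V}\times\EuScript{V}\to\EuScript{V}$. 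Both routes are sound; Lurie's buys the concrete model $\ALG_{\EuScript{O}}(\EuScript{V})^{\otimes}$ that the paper uses downstream (e.g.\ for the strong monoidal inclusion $\FACT^{\cbl}_X(\EuScript{V})\hookrightarrow \ALG_{\mathsf{Disj}_X}(\EuScript{V})$ and for Lemma \ref{lem:ALG commutes with lims}), while yours is shorter and more functorial but hides the work in the Eckmann--Hilton input you correctly flag as the non-formal ingredient, and it produces the structure only up to the (easy) identification with Lurie's explicit fibration. Your verification of product-preservation, via $(\EuScript{C}\times\EuScript{D})^{\otimes}\simeq \EuScript{C}^{\otimes}\times_{\Fin_*}\EuScript{D}^{\otimes}$, and your extraction of the pointwise formula are both accurate, so there is no gap to report.
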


\begin{remark} The previous lemma just asserts that $\Alg_{\EuScript{O}}(\EuScript{V})$ admits a symmetric monoidal structure by tensoring $\EuScript{O}$-algebras in $\EuScript{V}$ with the monoidal structure of $\EuScript{V}$.
\end{remark}

\begin{lemma}\label{lem:ALG commutes with lims} The functor $\ALG\colon \Opd^{\op}_{\infty}\times \Opd_{\infty}\to \Opd_{\infty}$ commutes with homotopy limits in both variables. In other words, 
$$
\ALG_{(\underset{i}{\hocolim}\, \EuScript{O}_i)}(\EuScript{P})\simeq \underset{i}{\holim}\ALG_{\EuScript{O}_i}(\EuScript{P})  \quad \text{and} \quad \ALG_{\EuScript{O}}(\underset{j}{\holim}\,\EuScript{P}_j)\simeq \underset{j}{\holim}\ALG_{\EuScript{O}}(\EuScript{P}_j).
$$

The same holds for $\Alg\colon \Opd_{\infty}^{\op}\times \Opd_{\infty}\to \Cat_{\infty}$. 
\end{lemma}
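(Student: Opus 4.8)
The plan is to deduce the whole statement from the general principle that, in a closed symmetric monoidal $\infty$-category $\mathcal{C}$, the internal-hom bifunctor $[-,-]\colon\mathcal{C}^{\op}\times\mathcal{C}\to\mathcal{C}$ preserves limits in the second variable and carries colimits in the first variable to limits. In the case at hand $\mathcal{C}=\Opd_\infty$ equipped with the Boardman--Vogt tensor product $\boxtimes$, and $[-,-]=\ALG$: this closedness is exactly the adjunction recalled just above the statement, namely that $-\boxtimes\EuScript{O}$ is left adjoint to $\ALG_{\EuScript{O}}(-)$ (equivalently, the equivalence $\Map_{\Opd_\infty}(\EuScript{B}\boxtimes\EuScript{O},\EuScript{P})\simeq\Map_{\Opd_\infty}(\EuScript{B},\ALG_{\EuScript{O}}(\EuScript{P}))$), together with the symmetry of $\boxtimes$. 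So no input beyond \S\ref{sect: Higher categorical tools} is required, and the task is just to spell the argument out.

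First I would dispatch the equivalence $\ALG_{\EuScript{O}}(\holim_j\EuScript{P}_j)\simeq\holim_j\ALG_{\EuScript{O}}(\EuScript{P}_j)$, which is immediate since $\ALG_{\EuScript{O}}(-)$ is a right adjoint and hence preserves all homotopy limits. For the other equivalence I would combine the adjunction with the symmetry equivalence $\EuScript{Q}\boxtimes\EuScript{O}\simeq\EuScript{O}\boxtimes\EuScript{Q}$ to get a natural equivalence
\[
\Map_{\Opd_\infty}\big(\EuScript{Q},\ALG_{\EuScript{O}}(\EuScript{P})\big)
\;\simeq\;\Map_{\Opd_\infty}\big(\EuScript{Q}\boxtimes\EuScript{O},\EuScript{P}\big)
\;\simeq\;\Map_{\Opd_\infty}\big(\EuScript{O},\ALG_{\EuScript{Q}}(\EuScript{P})\big),
\]
natural in $\EuScript{O}$, $\EuScript{Q}$ and $\EuScript{P}$. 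Substituting $\EuScript{O}=\hocolim_i\EuScript{O}_i$, using that $\Map_{\Opd_\infty}(-,\ALG_{\EuScript{Q}}(\EuScript{P}))$ turns this colimit into a homotopy limit, and applying the displayed equivalence once more for each index $i$, I would obtain, naturally in $\EuScript{Q}$,
\[
\Map_{\Opd_\infty}\big(\EuScript{Q},\ALG_{\hocolim_i\EuScript{O}_i}(\EuScript{P})\big)
\;\simeq\;\holim_i\,\Map_{\Opd_\infty}\big(\EuScript{Q},\ALG_{\EuScript{O}_i}(\EuScript{P})\big)
\;\simeq\;\Map_{\Opd_\infty}\big(\EuScript{Q},\holim_i\ALG_{\EuScript{O}_i}(\EuScript{P})\big),
\]
so that the Yoneda lemma gives $\ALG_{\hocolim_i\EuScript{O}_i}(\EuScript{P})\simeq\holim_i\ALG_{\EuScript{O}_i}(\EuScript{P})$ in $\Opd_\infty$.

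To pass from $\ALG$ to $\Alg$ I would take underlying $\infty$-categories: $\Alg_{\EuScript{O}}(\EuScript{P})$ is the fibre over $\langle 1\rangle$ of the $\infty$-operad $\ALG_{\EuScript{O}}(\EuScript{P})$, and $\mathrm{ev}_{\langle 1\rangle}\colon\Opd_\infty\to\Cat_\infty$ preserves homotopy limits (homotopy limits in $\Opd_\infty$ are computed compatibly with the projection to $\Fin_*$), so applying it to the two equivalences above produces the $\Cat_\infty$-valued versions. Alternatively, one can simply rerun the Yoneda argument with $\Cat_\infty$ in place of $\Opd_\infty$, using that $\Opd_\infty$ is cotensored over $\Cat_\infty$ with $\Alg_{\EuScript{O}}(\EuScript{P}^{K})\simeq\Fun(K,\Alg_{\EuScript{O}}(\EuScript{P}))$.

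The hard part, to the extent there is one, is purely a matter of bookkeeping with homotopy limits in $\Opd_\infty$: one must make sure the adjunction and symmetry equivalences are natural enough to feed into the Yoneda lemma, and that $\mathrm{ev}_{\langle 1\rangle}$ really does commute with these limits, so that the $\Alg$-version is not strictly weaker than the $\ALG$-version. Both are standard consequences of Lurie's construction of $\Opd_\infty^{\otimes}$ and of the general theory of closed monoidal $\infty$-categories, and size issues attached to the arbitrary test object $\EuScript{Q}$ are absorbed by the universe conventions fixed in the introduction.
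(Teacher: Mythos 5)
Your proof is correct and follows essentially the same route as the paper: the second variable is handled by the adjunction $-\boxtimes\EuScript{O}\dashv\ALG_{\EuScript{O}}(-)$, the first variable by a Yoneda argument on mapping spaces, and the $\Alg$-statement by evaluating at $\langle 1\rangle$, which preserves homotopy limits. The only minor difference is that the paper moves the colimit across $\boxtimes$ using that the Boardman--Vogt tensor product preserves homotopy colimits in each variable, whereas you accomplish the same step via the symmetry of $\boxtimes$ together with a second application of the adjunction.
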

\begin{proof}
    The commutation of homotopy limits in the second variable of $\ALG$ comes from the fact that $\ALG_{\EuScript{O}}(-)$ is right adjoint to $-\boxtimes\EuScript{O}$ for any \(\infty\)-operad $\EuScript{O}$. The commutation in the first variable follows from the fact that the Boardman-Vogt tensor product $\boxtimes\colon \Opd_{\infty}\times \Opd_{\infty}\to \Opd_{\infty}$ preserves homotopy colimits in each variable by Yoneda lemma applied to the chain of equivalences
    \begin{align*}
          \Map_{\Opd_{\infty}}\big(\EuScript{B}, \ALG_{(\underset{i}{\hocolim}\, \EuScript{O}_i)}(\EuScript{P})\big) &\simeq \Map_{\Opd_{\infty}}\big(\EuScript{B}\boxtimes (\underset{i}{\hocolim}\,\EuScript{O}_i),\EuScript{P}\big)\\
          &\simeq \Map_{\Opd_{\infty}}\big(\underset{i}{\hocolim}\,(\EuScript{B}\boxtimes \EuScript{O}_i),\EuScript{P}\big)\\
          &\simeq \underset{i}{\holim}\Map_{\Opd_{\infty}}\big(\EuScript{B}\boxtimes \EuScript{O}_i,\EuScript{P}\big)\\
          &\simeq \underset{i}{\holim}\Map_{\Opd_{\infty}}\big(\EuScript{B},\ALG_{\EuScript{O}_i}(\EuScript{P})\big)\\
           &\simeq \Map_{\Opd_{\infty}}\big(\EuScript{B},\underset{i}{\holim}\ALG_{\EuScript{O}_i}(\EuScript{P})\big).
    \end{align*}
    
    The claims for $\Alg$ reduce to those of $\ALG$ since $\Alg=\ALG_{\langle 1\rangle}$.
\end{proof} 

\begin{remark}
    Recall that the ``forgetful" functors  
    $
    \mathsf{fgt}\colon\mathsf{sm}\!\Cat_{\infty}^{\mathsf{strong}}\xrightarrow{\quad} \Opd_{\infty}
    $  
    and 
    $
    (-)_{\langle 1\rangle}\colon \Opd_{\infty}\xrightarrow{\quad} \Cat_{\infty}
    $
    preserve homotopy limits. 
\end{remark}

\section{Weak and strong approximations}
\label{sect:weak operads}

This last appendix gathers the key results from \cite{harpaz_little_nodate}, which serve as the foundation for our arguments in \textsection \ref{sect: Additivity for generalized swiss-cheese operads} on the additivity of generalized Swiss-cheese operads.

\begin{defn}{\cite[Definition 4.2.4]{harpaz_little_nodate}}
    A \emph{weak \(\infty\)-operad} consists of an \(\infty\)-category \(\EuScript{C}\) together with:  
    \begin{itemize}
        \item a \emph{factorization system} \((\EuScript{C}^{\inert}, \EuScript{C}^{\act})\) (see \cite[Definition 4.2.1]{harpaz_little_nodate}), 
        whose morphisms are called \emph{inert} and \emph{active} respectively, and 

        \item a full \emph{subcategory of basics} \(\EuScript{C}_0 \subseteq \EuScript{C}\), such that for every object \(x \in \EuScript{C}\), there exist: $(1)$ a finite set \(I_x\), and $(2)$ a family of inert morphisms \(\{f_i : x \to x_i\}_{i \in I_x}\) with \(x_i \in \EuScript{C}_{0}^{\inert}\),
        so that the induced functor
          \(
          I_x \;\xrightarrow[]{}\; (\EuScript{C}^{\inert})_{x/} \times_{\EuScript{C}^{\inert}} \EuScript{C}_{0}^{\inert} 
          \)
          is coinitial.
    \end{itemize}

  A morphism of weak \(\infty\)-operads $(\EuScript{C}, \EuScript{C}^{\inert},\EuScript{C}^{\act},\EuScript{C}_0)\to (\EuScript{D}, \EuScript{D}^{\inert},\EuScript{D}^{\act},\EuScript{D}_0)$ is just a functor $\upvarphi\colon \EuScript{C}\to \EuScript{D}$ which respects the three subcategories and such that it induces a coinitial functor $(\EuScript{C}^{\inert})_{x/} \times_{\EuScript{C}^{\inert}} \EuScript{C}_{0}^{\inert} \to (\EuScript{D}^{\inert})_{\upvarphi(x)/} \times_{\EuScript{D}^{\inert}} \EuScript{D}_{0}^{\inert} $ for any $x\in \EuScript{C}$. 
\end{defn}

\begin{example}
\label{ex:weak structure on infinity cat}
    In any \(\infty\)-category \( \EuScript{C} \), there exists a canonical factorization system in which the inert morphisms are precisely the equivalences in \( \EuScript{C} \), and the active morphisms are all morphisms in \( \EuScript{C} \). Moreover, given any full subcategory \( \EuScript{C}_0 \subset \EuScript{C} \), one may regard \( \EuScript{C}_0 \) as the subcategory of basics for this factorization system. 
\end{example}

\begin{example}
\label{ex:weak structure on infinity operads}
    For any \(\infty\)-operad \(\EuScript{O}^{\otimes} \xrightarrow[]{} \Fin_{*}\), there is a canonical weak \(\infty\)-operad structure on \(\EuScript{O}^{\otimes}\) in which the factorization system is given by exactly the inert and active maps, and the basic objects are those lying over \(\langle 1 \rangle\).
\end{example}

\begin{defn}\label{defn: weak and strong approximation}
    Let $\upgamma: \EuScript{C} \to \EuScript{D}$ be a morphism of weak \(\infty\)-operads. Then $\upgamma$ is a \emph{weak approximation} if the following conditions hold:
    \begin{itemize}
       \item The map $\upgamma^{-1} \; \EuScript{D}^{\inert}_0 \to \EuScript{D}^{\inert}_0 $ is an $\infty$-localization map.
       \item For every $y \in \EuScript{C}$, the homotopy fibers of $(\EuScript{C}^{\,\act})_{/y} \to (\EuScript{D}^{\,\act})_{/ \psi(y)}$ are contractible.
    \end{itemize}
    We say that $\upgamma\colon \EuScript{C}\to \EuScript{D}$ is a \emph{strong approximation} if it is a weak approximation and $\upgamma^{-1} \; \EuScript{D}^{\inert}_0 \to \EuScript{D}^{\inert}_0 $ is an equivalence of \(\infty\)-categories.
\end{defn}

\begin{remark} 
\label{rem:WeakApproxReduction}
    Let $\upgamma\colon \EuScript{O}^{\otimes}\to \EuScript{P}^{\otimes}$ be a map of \(\infty\)-operads. Then, the second condition for $\upgamma$ to be a weak approximation can be checked only for $y\in \EuScript{O}^{\otimes}_{\langle 1\rangle}$. This fact appears and it is justified explicitly in the proof of \cite[Proposition 5.2.4]{harpaz_little_nodate}. Also, observe that in this case   $(\EuScript{P}^{\otimes})^{\inert}_0= (\EuScript{P}^{\otimes})^{\inert}_{\langle 1\rangle}$.
\end{remark}

\begin{remark}
    \label{rmk:ProductOfWeakApprox}
     \cite[Remark 4.2.16]{harpaz_little_nodate}
        Let $\upgamma_1: \EuScript{C}_1 \to \EuScript{D}_1$ and $\upgamma_1: \EuScript{C}_1 \to \EuScript{D}_1$ be two weak approximations of weak $\infty$-operads. Then $ ( \upgamma_1 \times \upgamma_2) : \EuScript{C}_1 \times \EuScript{C}_2 \to \EuScript{D}_1 \times \EuScript{D}_2$ is a weak approximation.
\end{remark}

\begin{remark}
\label{rem: weak operad structure on the pullback}
    \cite[Remark 4.2.17]{harpaz_little_nodate}
    Let
    \[
    \begin{tikzcd}
    \EuScript{P} \ar[r, "\upvarphi'"] \ar[d, "\uppsi'"'] \ar[dr, phantom, "\overset{\mathsf{h}\;\;\;}{\lrcorner}"] &
    \EuScript{C} \ar[d, "\uppsi"] \\
    \EuScript{D} \ar[r, "\upvarphi"'] &
    \EuScript{E}
    \end{tikzcd}
    \]
    be a homotopy pullback diagram of \(\infty\)-categories. Assume that \( \EuScript{C} \), \( \EuScript{D} \), and \( \EuScript{E} \) are equipped with weak \( \infty \)-operad structures, and that both \( \upvarphi \) and \( \uppsi \) are morphisms of weak \( \infty \)-operads. Then the pullback \( \EuScript{P} \) naturally inherits a weak \( \infty \)-operad structure characterized as follows: a morphism in \( \EuScript{P} \) is inert (resp.\ active) if and only if its images under \( \upvarphi' \) and \( \uppsi' \) are inert (resp.\ active) in \( \EuScript{C} \) and \( \EuScript{D} \). Furthermore, the subcategory of basic objects in \( \EuScript{P} \) is given by the homotopy fiber product
    \[
    \EuScript{P}_0 := \EuScript{D}_0 \times^{\mathsf{h}}_{\EuScript{E}_0} \EuScript{C}_0.
    \]
    Under these assumptions, if the map \( \uppsi \colon \EuScript{C} \to \EuScript{E} \) is a strong (resp.\ weak) approximation, the induced map \( \uppsi' \colon \EuScript{P} \to \EuScript{D} \) likewise defines a strong (resp.\ weak) approximation.
\end{remark}

\begin{defn}
    Let \((\EuScript{C},\EuScript{C}^{\inert},\EuScript{C}^{\act},\EuScript{C}_0)\) be a weak \(\infty\)-operad. Let \(\EuScript{E}\) be an \(\infty\)-category that admits finite products. Then a \(\EuScript{C}\)-\emph{monoid object} in \(\EuScript{E}\) is a functor \(A : \EuScript{C} \to \EuScript{E}\) such that for every \(x \in \EuScript{C}\) the collection \(\{ A(f_i) :  A(x) \to A(x_i)\}_{i \in I_x}\) exhibits \(A(x)\) as the product of the objects \(\{A(x_i)\}_{i \in I_x}\) in \(\EuScript{E}\). The full subcategory of \(\Fun (\EuScript{C},\EuScript{E})\) spanned by \(\EuScript{C}\)-monoid objects is denoted with \(\Mon_{\EuScript{C}}(\EuScript{E})\).
\end{defn}

\begin{remark}
    If \(\EuScript{O}^{\otimes}\) is an \(\infty\)-operad, we can consider both the $\infty$-category $\Alg_{\EuScript{O}}(\EuScript{E})$ and $\Mon_{\EuScript{O}^{\otimes}} (\EuScript{E})$. \cite[Proposition 4.1.21]{harpaz_little_nodate} shows that they are equivalent.
\end{remark}

\begin{prop}{\cite[Proposition 4.1.23]{harpaz_little_nodate}}
    \label{prop:weak operad 1}
    Let  $\upgamma: \EuScript{C} \to \EuScript{D}$ be a map of \(\infty\)-operads. Then the following conditions are equivalent:
    \begin{enumerate}
        \item The map $ \upgamma $ is an operadic equivalence.
        \item The map $ \upgamma $ is essentially surjective, and restriction along $ \upgamma $ induces an equivalence of \(\infty\)-categories
        $$
            \upgamma^* \colon \Mon_{\EuScript{D}}(\Spc) \xrightarrow{\;\;\;\sim\;\;\;} \Mon_{\EuScript{C}}(\Spc).
        $$
    \end{enumerate}
\end{prop}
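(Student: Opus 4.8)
Since both $\EuScript{C}$ and $\EuScript{D}$ are genuine $\infty$-operads, the plan is to first use the identification $\Mon_{\EuScript{O}^{\otimes}}(\Spc)\simeq \Alg_{\EuScript{O}}(\Spc)$ recorded just before the statement (\cite[Proposition 4.1.21]{harpaz_little_nodate}) to reformulate $(2)$ as: $\upgamma$ is essentially surjective and $\upgamma^{\ast}\colon \Alg_{\EuScript{D}}(\Spc)\to \Alg_{\EuScript{C}}(\Spc)$ is an equivalence. The implication $(1)\Rightarrow(2)$ is then formal: an operadic equivalence is essentially surjective by definition, and the restriction functor $\EuScript{O}\mapsto \Alg_{\EuScript{O}}(\EuScript{V})$, $\Opd_{\infty}^{\op}\to \Cat_{\infty}$, sends equivalences of $\infty$-operads to equivalences of $\infty$-categories for any symmetric monoidal $\EuScript{V}$, in particular for $\EuScript{V}=\Spc^{\times}$; if desired, this can be seen from $\Map_{\Opd_{\infty}}(\EuScript{O},\EuScript{P})\simeq \Alg_{\EuScript{O}}(\EuScript{P})^{\simeq}$ together with its enhancement to mapping $\infty$-categories. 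The whole content therefore lies in $(2)\Rightarrow(1)$.

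For $(2)\Rightarrow(1)$ the strategy I would follow is to reconstruct the spaces of multimorphisms of an $\infty$-operad from its $\infty$-category of $\Spc$-valued algebras, so that the equivalence $\upgamma^{\ast}$ forces $\upgamma$ to be fully faithful on multimorphism spaces; combined with the assumed essential surjectivity, this produces an operadic equivalence. Concretely: for each color $c$ of $\EuScript{C}$ the evaluation functor $\mathrm{ev}_{c}\colon \Alg_{\EuScript{C}}(\Spc)\to\Spc$ is accessible and preserves limits (limits in $\Alg_{\EuScript{C}}(\Spc)$ being computed objectwise), hence is corepresentable, by a free algebra $F^{\EuScript{C}}_{c}$; finite coproducts $F^{\EuScript{C}}_{c_{1}}\sqcup\cdots\sqcup F^{\EuScript{C}}_{c_{n}}$ in $\Alg_{\EuScript{C}}(\Spc)$ corepresent the products $\mathrm{ev}_{c_{1}}\times\cdots\times\mathrm{ev}_{c_{n}}$. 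The value at a color $c$ of such a free algebra splits, by the usual formula for free algebras over an $\infty$-operad, as a coproduct indexed by finite words in the generators $c_{1},\dots,c_{n}$ of (symmetrized) multimorphism spaces of $\EuScript{C}$, and the summand indexed by the word using each generator exactly once in order is precisely $\EuScript{C}\brbinom{\{c_{i}\}_{i}}{c}$; this ``linear'' summand is functorially singled out by the arity filtration on free algebras (equivalently, by comparing the free algebras on $n$ and on $n-1$ generators). Since $\upgamma^{\ast}$ is restriction along $\upgamma$ one has $\mathrm{ev}_{c}\circ\upgamma^{\ast}=\mathrm{ev}_{\upgamma c}$, so the equivalence $\upgamma^{\ast}$ matches $F^{\EuScript{D}}_{\upgamma c}$ with $F^{\EuScript{C}}_{c}$ (and their finite coproducts correspondingly) compatibly with the arity filtrations; passing to linear summands, and using essential surjectivity to realize an arbitrary tuple of colors of $\EuScript{D}$ as $\{\upgamma c_{i}\}$, shows that $\EuScript{C}\brbinom{\{c_{i}\}_{i}}{c}\to\EuScript{D}\brbinom{\{\upgamma c_{i}\}_{i}}{\upgamma c}$ is an equivalence for all tuples. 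Hence $\upgamma$ is essentially surjective and fully faithful on multimorphisms, i.e.\ an operadic equivalence.

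The hard part is the last step: ensuring that $\upgamma^{\ast}$ is compatible not only with the underlying spaces of the free algebras but with their full filtered and $\Sigma_{n}$-equivariant structure, so that one genuinely recovers $\EuScript{C}\brbinom{\{c_{i}\}_{i}}{c}$ and not merely its $\Sigma_{n}$-coinvariants (which would discard the higher operadic information). Phrased invariantly, this is the fully-faithfulness, on the subcategory of essentially surjective maps, of the assignment sending an $\infty$-operad to its $\infty$-category of $\Spc$-valued algebras equipped with the evaluation functors, and it rests on the standard interaction between $\Alg_{\EuScript{O}}$, the symmetric monoidal envelope of $\EuScript{O}$ (for which $\Alg_{\EuScript{O}}(-)\simeq \Fun^{\otimes}(\mathrm{Env}(\EuScript{O}),-)$), and free $\EuScript{O}$-algebras. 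This is exactly the content packaged by \cite[Proposition 4.1.23]{harpaz_little_nodate}, whose proof I would follow and cite rather than reprove in full.
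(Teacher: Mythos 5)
The paper never proves this proposition: it is imported verbatim, with citation, from Harpaz's notes, so the paper's ``proof'' is precisely the reference you end up leaning on. Measured against that, your $(1)\Rightarrow(2)$ direction is correct and formal: operadic equivalences are essentially surjective and induce equivalences $\Alg_{\EuScript{D}}(\Spc)\simeq\Alg_{\EuScript{C}}(\Spc)$, which transports to monoids through the identification $\Mon_{\EuScript{O}^{\otimes}}(\Spc)\simeq\Alg_{\EuScript{O}}(\Spc)$ recorded before the statement.

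For $(2)\Rightarrow(1)$, however, your sketch is not a proof as it stands. The corepresentability part is fine: $\mathrm{ev}_c$ is corepresented by a free algebra, the equivalence $\upgamma^*$ matches $F^{\EuScript{D}}_{\upgamma c}$ with $F^{\EuScript{C}}_{c}$ because $\mathrm{ev}_c\circ\upgamma^*\simeq\mathrm{ev}_{\upgamma c}$, and $\Map\bigl(F^{\EuScript{C}}_{c},F^{\EuScript{C}}_{c_1}\sqcup\cdots\sqcup F^{\EuScript{C}}_{c_n}\bigr)$ is the value of a free algebra at $c$. The gap is exactly where you flag it: that value contains $\EuScript{C}\brbinom{\{c_i\}_i}{c}$ only as one summand of an arity-graded decomposition, and you never show that the abstract equivalence $\upgamma^*$ --- which is not induced by any map of generating objects --- respects this decomposition. ``The arity filtration'' and ``comparing free algebras on $n$ and $n-1$ generators'' are named but not shown to be intrinsic to the data $(\Alg_{(-)}(\Spc),\mathrm{ev})$; in $\Spc$ there is no retraction killing a generator, so the comparison you allude to needs an actual construction (for instance an intrinsic weight decomposition of $S\mapsto$ the corepresenting object of $A\mapsto\Map(S,A(c))$), which you do not carry out. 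Closing the argument by appealing to ``the content packaged by'' the cited proposition is circular, since that proposition is the statement under proof. What is legitimate --- and is what the paper in effect does --- is simply to cite Harpaz's proof, in which case the sketch is superfluous; as a self-contained argument, the crucial step is missing.
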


\begin{thm}{\cite[Theorem 4.3.11]{harpaz_little_nodate}}
    \label{thm:wreathprod}
    Let $ \EuScript{E} $ be an \(\infty\)-category that admits small limits. Then restriction along the map of weak $\infty$-operads
    $
         \upgamma \colon \EuScript{C}^{\otimes} \times \EuScript{D}^{\otimes} \to \EuScript{C}^{\otimes} \wr \EuScript{D}^{\otimes}
    $
    induces an equivalence of \(\infty\)-categories
    $$
        \Mon_{\EuScript{C} \,\wr\, \EuScript{D}}(\EuScript{E}) \xrightarrow{\;\;\;\sim\;\;\;} \Mon_{\EuScript{C} \times \EuScript{D}}(\EuScript{E}) \simeq \Mon_{\EuScript{C}}(\Mon_{\EuScript{D}}(\EuScript{E})).
    $$
\end{thm}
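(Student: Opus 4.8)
The plan is to split the statement into a formal reduction and one substantive comparison, and then to attack the latter by straightening the projection of the wreath product onto its $\EuScript{C}$-coordinate.

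First I would dispatch the purely formal equivalence $\Mon_{\EuScript{C}\times\EuScript{D}}(\EuScript{E}) \simeq \Mon_{\EuScript{C}}(\Mon_{\EuScript{D}}(\EuScript{E}))$. The factorization system on $\EuScript{C}\times\EuScript{D}$ is the product of the two given ones, its basics are $(\EuScript{C}\times\EuScript{D})_0=\EuScript{C}_0\times\EuScript{D}_0$, and for an object $(x,y)$ the coinitial family of inert maps to basics is the product of the coinitial family $I_x$ for $x$ with the one $I_y$ for $y$. Under the exponential equivalence $\Fun(\EuScript{C}\times\EuScript{D},\EuScript{E})\simeq \Fun(\EuScript{C},\Fun(\EuScript{D},\EuScript{E}))$, a functor $A$ is a $\EuScript{C}\times\EuScript{D}$-monoid at $(x,y)$ exactly when $A(x,y)\simeq \prod_{i\in I_x}\prod_{j\in I_y} A(x_i,y_j)$, which unwinds — using that finite products in $\EuScript{E}$ commute with one another and are computed pointwise in functor categories — to the assertion that the curried functor lands in $\Mon_{\EuScript{D}}(\EuScript{E})$ and is a $\EuScript{C}$-monoid there. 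I would record this as a short lemma requiring nothing beyond the definition of monoid objects.

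The content then becomes that $\gamma^*\colon \Mon_{\EuScript{C}\wr\EuScript{D}}(\EuScript{E}) \to \Mon_{\EuScript{C}\times\EuScript{D}}(\EuScript{E})$ is an equivalence. I would isolate two structural features of $\EuScript{C}^\otimes\wr\EuScript{D}^\otimes$ that make it behave like ``$\EuScript{D}^\otimes$ plugged into each color of $\EuScript{C}^\otimes$'': (1) its basics are the objects $(c,\{d\})$ with $c\in\EuScript{C}_0$, $d\in\EuScript{D}_0$, and $\gamma$ restricts to the evident equivalence $\EuScript{C}_0\times\EuScript{D}_0\xrightarrow{\ \sim\ }(\EuScript{C}\wr\EuScript{D})_0$, $(c,d)\mapsto (c,\{d\})$; (2) a general object $(c_\bullet,\{d^\bullet_\bullet\})$ — with $\EuScript{C}$-part over $\langle n\rangle$ and $\EuScript{D}$-parts over $\langle l_i\rangle$ — carries the coinitial family of inert maps to its basics $(c_i,\{d^i_j\})$, and each such inert factors through $(c_i,(d^i_1,\dots,d^i_{l_i}))=\gamma\bigl(c_i,(d^i_\bullet)\bigr)$, which lies in the image of $\gamma$. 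The plan is to exhibit the projection $p\colon \EuScript{C}^\otimes\wr\EuScript{D}^\otimes\to\EuScript{C}^\otimes$ as (the unstraightening of) the functor $\EuScript{C}^\otimes\to\widehat{\Cat}_\infty$ sending $c_\bullet\in\EuScript{C}^\otimes_{\langle n\rangle}$ to $(\EuScript{D}^\otimes)^{\times\langle n\rangle^{\circ}}$, with transport along inert maps of $\EuScript{C}^\otimes$ given by projecting away factors and along active maps by the diagonal-then-fold dictated by the operadic composition of $\EuScript{C}$; the hands-on arguments in the proofs of Lemmas \ref{lem: Directed integral is cocartesian fibration} and \ref{lem: Homotopy fibers of directed integral} are the model for identifying such a fibration and computing its fibers. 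Granting this, (1)–(2) say that a $\EuScript{C}\wr\EuScript{D}$-monoid is precisely a section of $p$ whose restriction to the fiber over each basic of $\EuScript{C}$ is a $\EuScript{D}$-monoid and which is a ``$\EuScript{C}$-monoid relative to $p$''; straightening $p$ and combining with the currying of the previous step identifies this datum with $\Mon_{\EuScript{C}}(\Mon_{\EuScript{D}}(\EuScript{E}))$, and tracing through the identifications shows the result agrees with $\gamma^*$ followed by the formal equivalence.

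An equivalent, more explicit route — handy for pinning down the essential image — is to construct $\gamma^*$'s inverse directly: a $\EuScript{C}\times\EuScript{D}$-monoid is, by the first step, a family $c\mapsto B_c\in\Mon_{\EuScript{D}}(\EuScript{E})$ with compatible composition products; set $A(c_\bullet,\{d^\bullet_\bullet\}):=\prod_{i\in\langle n\rangle^{\circ}}\prod_{j\in\langle l_i\rangle^{\circ}} B_{c_i}(d^i_j)$, with the action on inert maps forced by the Segal decompositions and the action on active maps assembled from the $\EuScript{C}$- and $\EuScript{D}$-monoid structures of $B$; then verify $A\in\Mon_{\EuScript{C}\wr\EuScript{D}}(\EuScript{E})$, that $A\circ\gamma\simeq B$, and that applying $\gamma^*$ to a $\EuScript{C}\wr\EuScript{D}$-monoid and then this construction recovers it (because such a monoid is determined by its value on basics, by (2), and $\gamma$ is an equivalence on basics, by (1)). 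The main obstacle — and the place where the weak-$\infty$-operad formalism of \textsection\ref{sect:weak operads} does the real work — is making all of this homotopy-coherent: since $\gamma$ is not an $\infty$-localization, $\gamma^*$ on full functor categories is neither full nor essentially surjective, so one cannot simply ``define $A$ by a formula''. What one must genuinely exploit is that $\Mon_{\EuScript{C}\wr\EuScript{D}}(\EuScript{E})$ and $\Mon_{\EuScript{C}\times\EuScript{D}}(\EuScript{E})$ are the full subcategories of the respective functor categories cut out by the coinitial-family conditions, together with a cofinality argument resting on observation (2), to conclude that $\gamma^*$ becomes fully faithful and essentially surjective once restricted to monoids; organizing the bookkeeping through the fibration $p$, as above, is what I expect to keep this tractable.
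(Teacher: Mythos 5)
First, a remark on the comparison itself: the paper does not prove this statement. It is imported verbatim from Harpaz's notes (Theorem 4.3.11 there) as background material in the appendix on weak $\infty$-operads, so there is no internal proof to measure your argument against; I can only assess it on its own terms.

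Your formal step, $\Mon_{\EuScript{C}\times\EuScript{D}}(\EuScript{E})\simeq\Mon_{\EuScript{C}}(\Mon_{\EuScript{D}}(\EuScript{E}))$ by currying and commuting finite products, is fine. The organizing claim for the substantive half is not. With the morphisms of the wreath product as recalled in Section 3 (a morphism over $\upalpha$ consists of \emph{one} map $\upbeta_k\colon d^k_{\bullet}\to d'^{\,\upalpha(k)}_{\bullet}$ in $\EuScript{D}^{\otimes}$ for each source index $k$), the projection $p\colon\EuScript{C}^{\otimes}\wr\EuScript{D}^{\otimes}\to\EuScript{C}^{\otimes}$ is \emph{not} a cocartesian fibration, so it is not the unstraightening of $c_\bullet\mapsto(\EuScript{D}^{\otimes})^{\times\langle n\rangle^{\circ}}$ with your prescribed transports. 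A $p$-cocartesian lift of an active map merging two $\EuScript{C}$-indices carrying $d^1,d^2$ would require an object $e\in\EuScript{D}^{\otimes}$ with $\Map_{\EuScript{D}^{\otimes}}(e,g)\simeq\Map_{\EuScript{D}^{\otimes}}(d^1,g)\times\Map_{\EuScript{D}^{\otimes}}(d^2,g)$ for all $g$, i.e.\ a coproduct in the total category $\EuScript{D}^{\otimes}$; the ``fold''/concatenation candidate your transport dictates is not such an object. Already in $\Cube{p}{q}$: for two overlapping sub-cubes $V_1,V_2\subseteq W$ the concatenated pair admits $3$ maps to $W$ while the product side has $4$, and since overlapping cubes cannot be separated into disjoint sub-cubes containing them, no list corepresents the product; a similar count rules out the associative operad. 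Cocartesian lifts do exist over inert maps, which is exactly why the analogous verification succeeds for the paper's directed integral (Lemma \ref{lem: Directed integral is cocartesian fibration}) but does not transfer to $p$. So the ``straighten $p$ and read off sections'' route describes a different category (one with concatenation-style morphisms), not $\EuScript{C}^{\otimes}\wr\EuScript{D}^{\otimes}$, and the identification of wreath-monoids with relative monoid sections collapses with it.

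Second, even setting the fibration aside, the core of the theorem — that $\upgamma^{*}$ is fully faithful and essentially surjective on monoid objects — is never actually argued. Your observation (2) only controls the Segal decompositions (every object receives inerts from basics factoring through the image of $\upgamma$). The genuine difficulty sits in the \emph{active} morphisms of $\EuScript{C}^{\otimes}\wr\EuScript{D}^{\otimes}$ that are not in the image of $\upgamma$, namely those whose underlying $\upalpha$ merges several $\EuScript{C}$-indices carrying different $\EuScript{D}$-lists: a $\EuScript{C}\times\EuScript{D}$-monoid has to be extended coherently over exactly these, and this is where the interchange structure lives. You acknowledge that one ``cannot simply define $A$ by a formula'', but what replaces it is an unspecified cofinality argument ``resting on observation (2)'' and organized by the incorrect fibration; that deferred step \emph{is} the theorem. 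A complete proof must engage with these active morphisms directly — for instance by analyzing the relevant slice/comma categories of $\upgamma$ over objects of the wreath product, or by right Kan extending along $\upgamma$ and verifying pointwise that the result satisfies the monoid conditions and restricts back to the given monoid — and none of that analysis appears in your sketch. I would therefore record this as a genuine gap rather than a complete alternative argument.
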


\begin{defn}
    Let $\upgamma: \EuScript{C} \to \EuScript{D}$  be a morphism of weak \(\infty\)-operads. A \emph{locally constant monoid with respect to $\upgamma$} is a monoid object $A: \EuScript{C} \to \EuScript{E}$ such that $A|_{\upgamma^{-1} \EuScript{D}^{\inert}_0}$ factors through $\EuScript{D}^{\inert}_0$. We denote by $\Mon^{\mathsf{lc}}_{\EuScript{C}}(\EuScript{E})$ the full subcategory of $\Mon_{\EuScript{C}}(\EuScript{E})$ spanned by locally constant monoids.
\end{defn}

\begin{remark}
\label{rmk:LocConstMonoid}
    Let $\upgamma: \EuScript{C} \to \EuScript{D}$ be a weak approximation and let \(W\) be a wide subcategory with arrows in \({\upgamma^{-1} \EuScript{D}^{\inert}_0}\) that are mapped to equivalences in \( \EuScript{D}^{\inert}_0\). Let $A: \EuScript{C} \to \EuScript{E}$ be a monoid object. Then \(A\) is locally constant with respect to \(\upgamma\) if and only if arrows in \(W\) are sent to equivalences in \(\EuScript{E}\).
\end{remark}

\begin{lemma}{\cite[Proposition 4.2.18]{harpaz_little_nodate}}\label{lem:Harpaz weak approx}
    Let $\upgamma: \EuScript{C} \to \EuScript{D}$ be a weak approximation map of weak \(\infty\)-operads and let $\EuScript{E}$ be an \(\infty\)-category which admits small limits. Then the restriction
$$\upgamma^{*}: \Mon_{\EuScript{D}}(\EuScript{E}) \xrightarrow{\;\;\sim\;\;} \Mon_{\EuScript{C}}^{\mathsf{lc}}(\EuScript{E})$$ is an equivalence of \(\infty\)-categories.
\end{lemma}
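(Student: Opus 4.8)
The plan is to produce an explicit quasi-inverse to $\upgamma^{*}$ by right Kan extension. Since $\EuScript{E}$ admits small limits, restriction along $\upgamma$ fits into an adjunction $\upgamma^{*}\colon \Fun(\EuScript{D},\EuScript{E})\rightleftarrows \Fun(\EuScript{C},\EuScript{E})\colon \upgamma_{*}$, where $\upgamma_{*}=\mathrm{Ran}_{\upgamma}$ is computed pointwise by $(\upgamma_{*}B)(d)\simeq \lim_{(c,\,d\to\upgamma(c))\,\in\,\EuScript{C}\times_{\EuScript{D}}\EuScript{D}_{d/}}B(c)$. I would show that $\upgamma^{*}$ and $\upgamma_{*}$ restrict to mutually inverse equivalences between $\Mon_{\EuScript{D}}(\EuScript{E})$ and $\Mon^{\mathsf{lc}}_{\EuScript{C}}(\EuScript{E})$.

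\textbf{The easy half.} First I would check that $\upgamma^{*}$ carries $\EuScript{D}$-monoids to locally constant $\EuScript{C}$-monoids. Local constancy is automatic, since $\upgamma^{*}A=A\circ\upgamma$ restricted to $\upgamma^{-1}\EuScript{D}^{\inert}_{0}$ factors through $\EuScript{D}^{\inert}_{0}$ by construction. That $\upgamma^{*}A$ is again a monoid follows because $\upgamma$, being a morphism of weak $\infty$-operads, sends inert maps to inert maps and basics to basics, and induces a coinitial functor $(\EuScript{C}^{\inert})_{x/}\times_{\EuScript{C}^{\inert}}\EuScript{C}^{\inert}_{0}\to(\EuScript{D}^{\inert})_{\upgamma(x)/}\times_{\EuScript{D}^{\inert}}\EuScript{D}^{\inert}_{0}$ for every $x\in\EuScript{C}$; hence the product diagram witnessing the Segal condition for $A$ at $\upgamma(x)$ restricts, along a coinitial subfamily, to one for $\upgamma^{*}A$ at $x$.

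\textbf{The substantive half.} The real work is to compute $\upgamma_{*}B$ for a locally constant monoid $B$ and to verify that it lands in $\Mon_{\EuScript{D}}(\EuScript{E})$ with unit and counit equivalences. I would analyse the pointwise limit by slicing the indexing $\infty$-category $\EuScript{C}\times_{\EuScript{D}}\EuScript{D}_{d/}$ using the factorization system on $\EuScript{D}$: factor each $d\to\upgamma(c)$ first through the basic decomposition $\{d\to d_{i}\}_{i\in I_{d}}$ and then through active morphisms, thereby reducing the limit over $d$ to a product indexed by $I_{d}$ of limits attached to the individual basics $d_{i}$. For a fixed basic $d_{i}$, the first clause of Definition \ref{defn: weak and strong approximation} forces $\upgamma^{-1}\EuScript{D}^{\inert}_{0}\to\EuScript{D}^{\inert}_{0}$ to have contractible homotopy fibers (via Proposition \ref{prop:localization}), so the ``inert'' part of the indexing category is weakly contractible and the limit collapses to $B$ evaluated at any chosen preimage $\widetilde{c}_{i}$; the second clause — contractibility of the homotopy fibers of $(\EuScript{C}^{\act})_{/y}\to(\EuScript{D}^{\act})_{/\upgamma(y)}$ — makes the remaining ``active'' directions cofinal onto a point, so no further contributions arise (here local constancy of $B$ is exactly what guarantees the values along inert morphisms lying over equivalences agree, cf.\ Remark \ref{rmk:LocConstMonoid}). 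Granting this, $(\upgamma_{*}B)(d)\simeq\prod_{i\in I_{d}}B(\widetilde{c}_{i})$ exhibits $\upgamma_{*}B$ as a $\EuScript{D}$-monoid; the unit $B\to\upgamma^{*}\upgamma_{*}B$ is an equivalence (check it on basics using the localization, then on arbitrary objects using the Segal condition); and applying the same computation to $B=\upgamma^{*}A$ gives $(\upgamma_{*}\upgamma^{*}A)(d)\simeq\prod_{i\in I_{d}}A(d_{i})\simeq A(d)$, since $A$ is a monoid, so the counit is an equivalence as well.

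\textbf{Main obstacle.} The crux is the slicing argument above: one must genuinely show that the factorization system on $\EuScript{D}$ decomposes the indexing category of the pointwise Kan extension into a ``basic'' part and an ``active'' part in a way compatible with passing to classifying spaces, so that both clauses of weak approximation can be applied in tandem. This cofinality bookkeeping is the technical heart of \cite[Proposition 4.2.18]{harpaz_little_nodate}; it is also where one sees why ``locally constant'' on the source is precisely the right adjective to match ``monoid'' on the target.
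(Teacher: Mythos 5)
You should first note a point of comparison: the paper does not prove Lemma \ref{lem:Harpaz weak approx} at all --- it is imported verbatim from \cite[Proposition 4.2.18]{harpaz_little_nodate} --- so your attempt can only be judged on its own merits and against the cited argument. Your ``easy half'' is fine: $\upgamma^*$ of a $\EuScript{D}$-monoid is a locally constant $\EuScript{C}$-monoid, by exactly the coinitiality built into the definition of a morphism of weak $\infty$-operads.

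The substantive half, however, has a genuine gap. Everything rests on the claim that, for a locally constant monoid $B$, the pointwise right Kan extension collapses as $(\upgamma_*B)(d)\simeq\prod_{i\in I_d}B(\widetilde{c}_i)$, naturally enough in $d$ that $\upgamma_*B$ is again a monoid and the adjunction maps are equivalences; this claim \emph{is} the content of the lemma, and your sketch of it does not go through as written. Concretely: (i) the first clause of Definition \ref{defn: weak and strong approximation} says $\upgamma^{-1}\EuScript{D}^{\inert}_0\to\EuScript{D}^{\inert}_0$ is an $\infty$-localization, not that it has contractible homotopy fibers; Proposition \ref{prop:localization} converts one into the other only when $\EuScript{D}^{\inert}_0$ is an $\infty$-groupoid, which is true for $\E_{p,q}$ but is not a hypothesis of the lemma, so this step is unjustified in the stated generality (and a weakly contractible index collapses a \emph{limit} only after the diagram has been inverted on that whole piece of the comma category, which is more than local constancy of $B$ on the fiber gives you for free). (ii) More seriously, there is an indexing mismatch: the second clause controls the slices $(\EuScript{C}^{\act})_{/y}\to(\EuScript{D}^{\act})_{/\upgamma(y)}$, i.e.\ active morphisms \emph{into} a fixed $y$, resp.\ $\upgamma(y)$, whereas the indexing category of the pointwise Kan extension at $d$ is the under-comma category $\EuScript{C}\times_{\EuScript{D}}\EuScript{D}_{d/}$, whose active pieces are active morphisms out of (the inert factors of) $d$ into \emph{varying} targets $\upgamma(c)$. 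The axiom does say something fiberwise for each fixed $c$, but assembling this, together with the localization clause and local constancy, into a coinitiality statement (via Joyal's criterion) for a candidate subcategory of $\EuScript{C}\times_{\EuScript{D}}\EuScript{D}_{d/}$ --- say pairs $(c,f)$ with $f$ inert onto images of objects of $\upgamma^{-1}\EuScript{D}_0$ --- is precisely where all the hypotheses get used, and it is exactly the step you defer as ``cofinality bookkeeping''. Until it is supplied, nothing rules out the limit over the comma category seeing more than the basics. Finally, even granting the pointwise formula, you would still need its compatibility with the inert maps $d\to d_i$ (so that $\upgamma_*B$ satisfies the Segal condition) and that the specific unit and counit maps --- whose names you have interchanged, a harmless slip --- are equivalences, not merely that their sources and targets agree abstractly.
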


\begin{lemma}{\cite[Proposition 4.2.18]{harpaz_little_nodate}}\label{lem:Harpaz strong approx}
    Let $\upgamma: \EuScript{C} \to \EuScript{D}$ be a strong approximation map of weak \(\infty\)-operads and let $\EuScript{E}$ be an \(\infty\)-category which admits small limits. Then the restriction
$$\upgamma^{*}: \Mon_{\EuScript{D}}(\EuScript{E}) \xrightarrow{\;\;\sim\;\;} \Mon_{\EuScript{C}}(\EuScript{E})$$ is an equivalence of \(\infty\)-categories.
\end{lemma}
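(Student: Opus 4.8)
The plan is to bootstrap this statement from its weak-approximation counterpart, Lemma~\ref{lem:Harpaz weak approx}, by showing that the stronger hypothesis collapses the distinction between locally constant monoids and all monoids. Since a strong approximation is, by Definition~\ref{defn: weak and strong approximation}, in particular a weak approximation, Lemma~\ref{lem:Harpaz weak approx} already provides an equivalence of \(\infty\)-categories
\[
\upgamma^{*}\colon \Mon_{\EuScript{D}}(\EuScript{E}) \xrightarrow{\;\;\sim\;\;} \Mon^{\mathsf{lc}}_{\EuScript{C}}(\EuScript{E}).
\]
Hence it will suffice to prove the equality of full subcategories \(\Mon^{\mathsf{lc}}_{\EuScript{C}}(\EuScript{E}) = \Mon_{\EuScript{C}}(\EuScript{E})\) inside \(\Fun(\EuScript{C},\EuScript{E})\); that is, to check that every \(\EuScript{C}\)-monoid \(A\colon \EuScript{C}\to\EuScript{E}\) is automatically locally constant with respect to \(\upgamma\).

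To establish this, I would apply the characterization of Remark~\ref{rmk:LocConstMonoid}: let \(W\) be the wide subcategory of \(\upgamma^{-1}\EuScript{D}^{\inert}_{0}\) whose morphisms are exactly those carried by \(\upgamma\) to equivalences in \(\EuScript{D}^{\inert}_{0}\); then a monoid \(A\) is locally constant with respect to \(\upgamma\) precisely when \(A\) sends the arrows of \(W\) to equivalences in \(\EuScript{E}\). Now the defining feature of a \emph{strong} approximation is that \(\upgamma^{-1}\EuScript{D}^{\inert}_{0} \to \EuScript{D}^{\inert}_{0}\) is not merely a localization but an equivalence of \(\infty\)-categories, hence conservative. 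Therefore any morphism of \(\upgamma^{-1}\EuScript{D}^{\inert}_{0}\) mapping to an equivalence in \(\EuScript{D}^{\inert}_{0}\) is already an equivalence, so \(W\) consists entirely of equivalences. Since every functor of \(\infty\)-categories preserves equivalences, \(A\) automatically carries the arrows of \(W\) to equivalences of \(\EuScript{E}\), and thus \(A \in \Mon^{\mathsf{lc}}_{\EuScript{C}}(\EuScript{E})\).

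Combining the two steps gives the desired equivalence \(\upgamma^{*}\colon \Mon_{\EuScript{D}}(\EuScript{E}) \xrightarrow{\sim} \Mon_{\EuScript{C}}(\EuScript{E})\). This argument is almost entirely formal, so there is no genuine obstacle; the only point that deserves a second look is the conservativity of \(\upgamma^{-1}\EuScript{D}^{\inert}_{0}\to \EuScript{D}^{\inert}_{0}\), which is immediate from the fact that equivalences of \(\infty\)-categories reflect equivalences. One could equally well avoid Remark~\ref{rmk:LocConstMonoid} and note directly that, since the localization functor \(\upgamma^{-1}\EuScript{D}^{\inert}_{0}\to \EuScript{D}^{\inert}_{0}\) is now an equivalence, the ``factors through \(\EuScript{D}^{\inert}_{0}\)'' condition defining local constancy is vacuously satisfied by any monoid; but the route through Remark~\ref{rmk:LocConstMonoid} is the most economical.
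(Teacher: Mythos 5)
Your argument is correct. Note that the paper itself does not prove this statement: both Lemma~\ref{lem:Harpaz weak approx} and Lemma~\ref{lem:Harpaz strong approx} are imported from \cite{harpaz_little_nodate} (Proposition 4.2.18), so there is no in-paper proof to compare against; your deduction of the strong case from the weak one is the natural self-contained route and is exactly what the strong hypothesis is designed to give. The crucial point — that local constancy becomes automatic once $\upgamma^{-1}\EuScript{D}^{\inert}_{0}\to \EuScript{D}^{\inert}_{0}$ is an equivalence — is sound: an equivalence is conservative, so the arrows of $W$ are already equivalences, and every functor preserves these. One small caveat on your invocation of Remark~\ref{rmk:LocConstMonoid}: as stated there, the ``if'' direction is only reasonable when $W$ exhausts (or at least generates) the class of morphisms inverted by the localization, since an arbitrary wide subcategory of inverted arrows would be too small to detect local constancy; you use the maximal such $W$, which is fine, but it is worth saying so explicitly. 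Your alternative direct argument is actually the cleanest finish: when the localization functor is an equivalence, any restriction $A|_{\upgamma^{-1}\EuScript{D}^{\inert}_{0}}$ factors through $\EuScript{D}^{\inert}_{0}$ by composing with a chosen inverse, so the defining condition of local constancy is automatically (rather than ``vacuously'') satisfied, and $\Mon^{\lc}_{\EuScript{C}}(\EuScript{E})=\Mon_{\EuScript{C}}(\EuScript{E})$ without any appeal to the remark; combined with Lemma~\ref{lem:Harpaz weak approx} this closes the proof.
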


\begin{cor}\label{cor: Harpaz approxs as operadic notions} Let $\upgamma\colon \EuScript{O}^{\otimes}\to \EuScript{P}^{\otimes}$ be a morphism of \(\infty\)-operads. Then, 
\begin{itemize}
    \item if $\upgamma$ is a weak approximation, it exhibits an \(\infty\)-localization of \(\infty\)-operads;
    \item if $\upgamma$ is a strong approximation, it is an equivalence of \(\infty\)-operads.
\end{itemize}
\end{cor}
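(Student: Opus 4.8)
The plan is to deduce both assertions from Harpaz's criterion for operadic equivalences (Proposition \ref{prop:weak operad 1}), feeding it the monoid comparisons of Lemmas \ref{lem:Harpaz weak approx} and \ref{lem:Harpaz strong approx}; the only genuinely new input needed is a bootstrap from ``local'' data (on basics) to ``global'' data (on all objects) through the Segal condition, plus — in the weak case — an identification of monoids over the operadic localization.

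First the strong case. Since $\Spc$ admits small limits, Lemma \ref{lem:Harpaz strong approx} with $\EuScript{E}=\Spc$ shows that $\upgamma^{*}\colon \Mon_{\EuScript{P}^{\otimes}}(\Spc)\to \Mon_{\EuScript{O}^{\otimes}}(\Spc)$ is an equivalence, so by Proposition \ref{prop:weak operad 1} it only remains to check that $\upgamma$ is essentially surjective. Being a strong approximation, $\upgamma^{-1}(\EuScript{P}^{\otimes})^{\inert}_{0}\to(\EuScript{P}^{\otimes})^{\inert}_{0}$ is an equivalence, in particular essentially surjective; since $(\EuScript{P}^{\otimes})^{\inert}_{0}$ is the maximal $\infty$-groupoid on the colours of $\EuScript{P}$, every colour of $\EuScript{P}$ is equivalent to $\upgamma$ of a colour of $\EuScript{O}$. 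An arbitrary object $y$ of $\EuScript{P}^{\otimes}$ over $\langle n\rangle$ is determined, via the Segal equivalence, by the tuple of colours $(y_{1},\dots,y_{n})$ produced by the inert projections; choosing colours $x_{i}$ of $\EuScript{O}$ with $\upgamma(x_{i})\simeq y_{i}$ and letting $x$ be the object of $\EuScript{O}^{\otimes}$ corresponding to $(x_{1},\dots,x_{n})$, the fact that $\upgamma$ is a morphism of $\infty$-operads (hence commutes with Segal maps) gives $\upgamma(x)\simeq y$.

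Now the weak case. Let $W$ denote the wide subcategory of $\upgamma^{-1}(\EuScript{P}^{\otimes})^{\inert}_{0}$ spanned by the arrows that $\upgamma$ sends to equivalences (by Remark \ref{rem:WeakApproxReduction} it would even suffice to look at such arrows over $\langle 1\rangle$). Since $\upgamma$ inverts $W$, the universal property of the operadic $\infty$-localization yields a factorisation $\EuScript{O}^{\otimes}\xrightarrow{\;\ell\;}\EuScript{O}^{\otimes}[W^{-1}]\xrightarrow{\;\bar{\upgamma}\;}\EuScript{P}^{\otimes}$ in $\Opd_{\infty}$, and it is enough to prove that $\bar{\upgamma}$ is an equivalence of $\infty$-operads, for which we again invoke Proposition \ref{prop:weak operad 1}. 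Essential surjectivity of $\bar{\upgamma}$ follows once $\upgamma$ is essentially surjective, and this last fact is proved exactly as above — now using that the first weak-approximation condition makes $\upgamma^{-1}(\EuScript{P}^{\otimes})^{\inert}_{0}\to(\EuScript{P}^{\otimes})^{\inert}_{0}$ an $\infty$-localization, hence essentially surjective, and then bootstrapping through the Segal condition. For the monoid input, consider the commuting triangle of restriction functors on $\Spc$-monoids with composite $\upgamma^{*}=\ell^{*}\circ\bar{\upgamma}^{*}$: by Lemma \ref{lem:Harpaz weak approx}, $\upgamma^{*}$ identifies $\Mon_{\EuScript{P}^{\otimes}}(\Spc)$ with $\Mon^{\mathsf{lc}}_{\EuScript{O}^{\otimes}}(\Spc)$, which by Remark \ref{rmk:LocConstMonoid} is exactly the full subcategory of $\Mon_{\EuScript{O}^{\otimes}}(\Spc)$ on the monoids inverting $W$; on the other hand, since the localization $\ell$ is formed inside $\Opd_{\infty}$ and $W$ consists of inert morphisms towards basics, the subcategory of basics and the inert Segal maps survive unchanged in $\EuScript{O}^{\otimes}[W^{-1}]$, so restriction along $\ell$ is fully faithful with essential image precisely the $W$-inverting monoids, i.e. $\ell^{*}$ is an equivalence onto $\Mon^{\mathsf{lc}}_{\EuScript{O}^{\otimes}}(\Spc)$. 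Two-out-of-three in the triangle then forces $\bar{\upgamma}^{*}$ to be an equivalence, and Proposition \ref{prop:weak operad 1} concludes.

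\textbf{The main obstacle} is the middle step of the last paragraph: controlling the weak $\infty$-operad structure on $\EuScript{O}^{\otimes}[W^{-1}]$ precisely enough to identify its $\Spc$-monoids with the $W$-local monoids over $\EuScript{O}^{\otimes}$. Concretely, one must unwind the construction of the operadic localization and verify that inverting a family of inert morphisms between (or towards) basics does not disturb the Segal conditions, so that the notion of monoid is preserved under $\ell^{*}$; once this is in place the rest is a formal assembly of the cited results.
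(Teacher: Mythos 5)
Your strong-approximation bullet is argued exactly as in the paper: Lemma \ref{lem:Harpaz strong approx} feeds Proposition \ref{prop:weak operad 1}, and essential surjectivity is extracted from the hypothesis on $\upgamma^{-1}(\EuScript{P}^{\otimes})^{\inert}_{\langle 1\rangle}\to(\EuScript{P}^{\otimes})^{\inert}_{\langle 1\rangle}$ together with the Segal decomposition; that part is fine. For the weak bullet the paper is terser (it translates locally constant monoids into algebras inverting the relevant unary operations via Harpaz's monoid/algebra comparison and then combines Lemma \ref{lem:Harpaz weak approx}, Proposition \ref{prop:weak operad 1} and essential surjectivity), and your explicit factorization $\EuScript{O}^{\otimes}\to\EuScript{O}^{\otimes}[W^{-1}]\xrightarrow{\;\overline{\upgamma}\;}\EuScript{P}^{\otimes}$ with a two-out-of-three argument is a reasonable way of making the same route precise, provided its inputs are actually established.

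The genuine gap is the step you flag, and the justification you sketch for it would fail. The arrows in $W$ are unary morphisms of $\EuScript{O}$ that are in general \emph{not} inert in $\EuScript{O}$ (for $\mathsf{Bsc}_X\to\gE_X$ they are the non-invertible inclusions $U\subseteq V$ of basics of the same abstract type); they only become equivalences after applying $\upgamma$. Hence it is false that ``the subcategory of basics and the inert Segal maps survive unchanged in $\EuScript{O}^{\otimes}[W^{-1}]$'': inverting these arrows changes the space of colours and all multimapping spaces drastically — that is the whole point of the construction (compare the discrete $\mathsf{Bsc}_X$ with $\gE_X$). What your argument really needs is the universal property of the localization in $\Opd_{\infty}$ upgraded from mapping spaces to algebra categories, i.e.\ that $\ell^{*}\colon\Alg_{\EuScript{O}[W^{-1}]}(\Spc)\to\Alg_{\EuScript{O}}(\Spc)$ (equivalently, on monoids via Harpaz's comparison) is fully faithful with essential image exactly the $W$-inverting algebras of Remark \ref{rmk:LocConstMonoid}. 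This statement is true and standard, but it must be cited or proved (for instance by a cotensoring argument reducing functor categories to mapping spaces of $\Opd_{\infty}$); it does not follow from ``nothing is disturbed''. Until that input is supplied, the identification $\Mon_{\EuScript{O}[W^{-1}]}(\Spc)\simeq\Mon^{\mathsf{lc}}_{\EuScript{O}}(\Spc)$, and with it the conclusion that $\overline{\upgamma}$ is an operadic equivalence, remains unproven — note this is also precisely the property that the paper later exploits (e.g.\ in Proposition \ref{prop: LocalizationTheorem via WeakApprox} and Lemma \ref{lem:cbl Fact as operadic algebras}), so it cannot be treated as cosmetic.
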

\begin{proof} Observe that a locally constant monoid $A\in \Mon^{\mathsf{lc}}_{\EuScript{O}}(\Spc)$ is a monoid which sends the morphisms inverted by $\upgamma^{-1}(\EuScript{P}^{\otimes})^{\inert}_{\langle 1\rangle}\to (\EuScript{P}^{\otimes})^{\inert}_{\langle 1\rangle}$ to equivalences, when $\upgamma$ is a weak approximation. Through the equivalence of \(\infty\)-categories given in \cite[Proposition 4.1.21]{harpaz_little_nodate}, those correspond to $\EuScript{O}$-algebras in $\Spc$ sending the unary operations inverted by the previous functor to equivalences. Therefore, the conclusions follow by combining Lemma \ref{lem:Harpaz weak approx} (resp.\ Lemma \ref{lem:Harpaz strong approx}) with \cite[Proposition 4.1.23]{harpaz_little_nodate} and noticing that the hypothesis on $\upgamma^{-1}(\EuScript{P}^{\otimes})^{\inert}_{\langle 1\rangle}\to (\EuScript{P}^{\otimes})^{\inert}_{\langle 1\rangle}$ implies that $\upgamma$ is essentially surjective.
\end{proof}


\bibliographystyle{plain}
\bibliography{References}

\begin{thebibliography}{10}

\bibitem{noauthor_mathoverflow--what_2020}
{MathOverflow}--{What} are the open subsets of $\mathbb{R}^n$ that are
  diffeomorphic to $\mathbb{R}^n$, December 2020.
\newblock URL: \url{https://mathoverflow.net/q/4468}.

\bibitem{noauthor_mathoverflow--fibrations_2023}
{MathOverflow}--{Fibrations} between spaces of embeddings, August 2023.
\newblock URL: \url{https://mathoverflow.net/a/452495}.

\bibitem{alekseevsky_reflection_2007}
Dmitri~V. Alekseevsky, Andreas Kriegl, Mark Losik, and Peter~W. Michor.
\newblock Reflection groups on {Riemannian} manifolds.
\newblock {\em Ann. Mat. Pura Appl. (4)}, 186(1):25--58, 2007.
\newblock \href {https://doi.org/10.1007/s10231-005-0166-4}
  {\path{doi:10.1007/s10231-005-0166-4}}.

\bibitem{arakawa_universal_2025}
Kensuke Arakawa.
\newblock Universal {Properties} of {Variations} of the {Little} {Cubes}
  {Operads}, January 2025.
\newblock Accepted in M\"unster Journal of Mathematics, arXiv:2406.01084
  [math].
\newblock URL: \url{http://arxiv.org/abs/2406.01084}, \href
  {https://doi.org/10.48550/arXiv.2406.01084}
  {\path{doi:10.48550/arXiv.2406.01084}}.

\bibitem{ayala_factorization_2015}
David Ayala and John Francis.
\newblock Factorization homology of topological manifolds.
\newblock {\em Journal of Topology}, 8(4):1045--1084, December 2015.
\newblock URL: \url{http://doi.wiley.com/10.1112/jtopol/jtv028}, \href
  {https://doi.org/10.1112/jtopol/jtv028} {\path{doi:10.1112/jtopol/jtv028}}.

\bibitem{ayala_factorization_2017}
David Ayala, John Francis, and Hiro~Lee Tanaka.
\newblock Factorization homology of stratified spaces.
\newblock {\em Selecta Math. (N.S.)}, 23(1):293--362, 2017.
\newblock URL: \url{https://mathscinet.ams.org/mathscinet-getitem?mr=3595895},
  \href {https://doi.org/10.1007/s00029-016-0242-1}
  {\path{doi:10.1007/s00029-016-0242-1}}.

\bibitem{ayala_local_2017}
David Ayala, John Francis, and Hiro~Lee Tanaka.
\newblock Local structures on stratified spaces.
\newblock {\em Adv. Math.}, 307:903--1028, 2017.
\newblock \href {https://doi.org/10.1016/j.aim.2016.11.032}
  {\path{doi:10.1016/j.aim.2016.11.032}}.

\bibitem{barata_additivity_2024}
Miguel Barata and Ieke Moerdijk.
\newblock On the additivity of the little cubes operads, October 2024.
\newblock arXiv:2205.12875 [math].
\newblock URL: \url{http://arxiv.org/abs/2205.12875}, \href
  {https://doi.org/10.48550/arXiv.2205.12875}
  {\path{doi:10.48550/arXiv.2205.12875}}.

\bibitem{beilinson_chiral_2004}
Alexander Beilinson and Vladimir Drinfeld.
\newblock {\em Chiral algebras}, volume~51 of {\em American {Mathematical}
  {Society} {Colloquium} {Publications}}.
\newblock American Mathematical Society, Providence, RI, 2004.
\newblock URL: \url{https://mathscinet.ams.org/mathscinet-getitem?mr=2058353},
  \href {https://doi.org/10.1090/coll/051} {\path{doi:10.1090/coll/051}}.

\bibitem{benini_cast-categorical_2025}
Marco Benini, Victor Carmona, Pieter Naaijkens, and Alexander Schenkel.
\newblock ${C}^*$-categorical prefactorization algebras for superselection
  sectors and topological order, June 2025.
\newblock arXiv:2505.07960 [math-ph].
\newblock URL: \url{http://arxiv.org/abs/2505.07960}, \href
  {https://doi.org/10.48550/arXiv.2505.07960}
  {\path{doi:10.48550/arXiv.2505.07960}}.

\bibitem{berry_additivity_2021}
E.~Berry.
\newblock {\em Additivity of factorization algebras \& the cohomology of real
  {Grassmannians}}.
\newblock {PhD} {Thesis}, Montana State University, 2021.
\newblock URL: \url{https://scholarworks.montana.edu/handle/1/16314}.

\bibitem{bott_differential_1982}
Raoul Bott and Loring~W. Tu.
\newblock {\em Differential forms in algebraic topology}, volume~82 of {\em
  Graduate {Texts} in {Mathematics}}.
\newblock Springer-Verlag, New York-Berlin, 1982.

\bibitem{brinkmeier_operads_2001}
M.~Brinkmeier.
\newblock {\em On operads}.
\newblock {PhD} {Thesis}, Universitat Osnabrück, 2001.
\newblock URL:
  \url{https://osnadocs.ub.uni-osnabrueck.de/handle/urn:nbn:de:gbv:700-2001051822}.

\bibitem{calaque_algebras_2025}
Damien Calaque and Victor Carmona.
\newblock Algebras over not too little discs, January 2025.
\newblock Accepted in Communications in Mathematical Physics, arXiv:2407.18192
  [math].
\newblock URL: \url{http://arxiv.org/abs/2407.18192}, \href
  {https://doi.org/10.48550/arXiv.2407.18192}
  {\path{doi:10.48550/arXiv.2407.18192}}.

\bibitem{carmona_model_2022}
Victor Carmona, Ramon Flores, and Fernando Muro.
\newblock A model structure for locally constant factorization algebras, April
  2022.
\newblock arXiv:2107.14174 [math].
\newblock URL: \url{http://arxiv.org/abs/2107.14174}, \href
  {https://doi.org/10.48550/arXiv.2107.14174}
  {\path{doi:10.48550/arXiv.2107.14174}}.

\bibitem{cerf_topologie_1961}
Jean Cerf.
\newblock Topologie de certains espaces de plongements.
\newblock {\em Bull. Soc. Math. France}, 89:227--380, 1961.

\bibitem{cisinski_higher_2019}
Denis-Charles Cisinski.
\newblock {\em Higher categories and homotopical algebra}, volume 180 of {\em
  Cambridge {Studies} in {Advanced} {Mathematics}}.
\newblock Cambridge University Press, Cambridge, 2019.
\newblock URL: \url{https://mathscinet.ams.org/mathscinet-getitem?mr=3931682},
  \href {https://doi.org/10.1017/9781108588737}
  {\path{doi:10.1017/9781108588737}}.

\bibitem{conrad_differential_nodate}
Brian Conrad.
\newblock Differential {Geometry} handouts.
\newblock {\em Online resource}.
\newblock URL: \url{https://virtualmath1.stanford.edu/~conrad/diffgeomPage/}.

\bibitem{contreras_defects_2023}
Ivan Contreras, Chris Elliott, and Owen Gwilliam.
\newblock Defects via factorization algebras.
\newblock {\em Lett. Math. Phys.}, 113(2):Paper No. 46, 26, 2023.
\newblock \href {https://doi.org/10.1007/s11005-023-01670-2}
  {\path{doi:10.1007/s11005-023-01670-2}}.

\bibitem{costello_factorization_2017}
Kevin Costello and Owen Gwilliam.
\newblock {\em Factorization algebras in quantum field theory. {Vol}. 1},
  volume~31 of {\em New {Mathematical} {Monographs}}.
\newblock Cambridge University Press, Cambridge, 2017.
\newblock URL: \url{https://mathscinet.ams.org/mathscinet-getitem?mr=3586504},
  \href {https://doi.org/10.1017/9781316678626}
  {\path{doi:10.1017/9781316678626}}.

\bibitem{costello_factorization_2023}
Kevin Costello and Owen Gwilliam.
\newblock Factorization algebra, October 2023.
\newblock arXiv:2310.06137 [math-ph].
\newblock URL: \url{http://arxiv.org/abs/2310.06137}, \href
  {https://doi.org/10.48550/arXiv.2310.06137}
  {\path{doi:10.48550/arXiv.2310.06137}}.

\bibitem{do_carmo_riemannian_1992}
Manfredo~Perdigão do~Carmo.
\newblock {\em Riemannian geometry}.
\newblock Mathematics: {Theory} \& {Applications}. Birkhäuser Boston, Inc.,
  Boston, MA, portuguese edition, 1992.
\newblock \href {https://doi.org/10.1007/978-1-4757-2201-7}
  {\path{doi:10.1007/978-1-4757-2201-7}}.

\bibitem{dugger_topological_2004}
Daniel Dugger and Daniel~C. Isaksen.
\newblock Topological hypercovers and \${A}{\textasciicircum}1\$-realizations.
\newblock {\em Math. Z.}, 246(4):667--689, 2004.
\newblock URL: \url{https://mathscinet.ams.org/mathscinet-getitem?mr=2045835},
  \href {https://doi.org/10.1007/s00209-003-0607-y}
  {\path{doi:10.1007/s00209-003-0607-y}}.

\bibitem{dunn_tensor_1988}
Gerald Dunn.
\newblock Tensor product of operads and iterated loop spaces.
\newblock {\em J. Pure Appl. Algebra}, 50(3):237--258, 1988.
\newblock \href {https://doi.org/10.1016/0022-4049(88)90103-X}
  {\path{doi:10.1016/0022-4049(88)90103-X}}.

\bibitem{fiedorowicz_additivity_2015}
Z.~Fiedorowicz and R.~M. Vogt.
\newblock An additivity theorem for the interchange of {E}\_n structures.
\newblock {\em Adv. Math.}, 273:421--484, 2015.
\newblock \href {https://doi.org/10.1016/j.aim.2014.10.020}
  {\path{doi:10.1016/j.aim.2014.10.020}}.

\bibitem{francis_chiral_2012}
John Francis and Dennis Gaitsgory.
\newblock Chiral {Koszul} duality.
\newblock {\em Selecta Math. (N.S.)}, 18(1):27--87, 2012.
\newblock \href {https://doi.org/10.1007/s00029-011-0065-z}
  {\path{doi:10.1007/s00029-011-0065-z}}.

\bibitem{francis-staite_cinfty-algebraic_2024}
Kelli Francis-Staite and Dominic Joyce.
\newblock {\em ${C}^{\infty}$-algebraic geometry with corners}, volume 490 of
  {\em London {Mathematical} {Society} {Lecture} {Note} {Series}}.
\newblock Cambridge University Press, Cambridge, 2024.

\bibitem{gepner_lax_2017}
David Gepner, Rune Haugseng, and Thomas Nikolaus.
\newblock Lax colimits and free fibrations in $\infty$-categories.
\newblock {\em Doc. Math.}, 22:1225--1266, 2017.

\bibitem{ginot_notes_2013}
G.~Ginot.
\newblock Notes on factorization algebras, factorization homology and
  applications.
\newblock {\em arXiv:1307.5213 [math]}, July 2013.
\newblock arXiv: 1307.5213.
\newblock URL: \url{http://arxiv.org/abs/1307.5213}.

\bibitem{ginot_higher_2014}
Grégory Ginot, Thomas Tradler, and Mahmoud Zeinalian.
\newblock Higher {Hochschild} homology, topological chiral homology and
  factorization algebras.
\newblock {\em Comm. Math. Phys.}, 326(3):635--686, 2014.
\newblock \href {https://doi.org/10.1007/s00220-014-1889-0}
  {\path{doi:10.1007/s00220-014-1889-0}}.

\bibitem{harpaz_little_nodate}
Yonatan Harpaz.
\newblock Little cube algebras and factorization homology.
\newblock {\em Lecture notes}.
\newblock URL:
  \url{https://www.math.univ-paris13.fr/~harpaz/lecture_notes.pdf}.

\bibitem{haugseng_allegedly_nodate}
Rune Haugseng.
\newblock An allegedly somewhat friendly introduction to $\infty$-operads.
\newblock {\em Lecture notes}.
\newblock URL: \url{https://folk.ntnu.no/runegha/iopd.pdf}.

\bibitem{haugseng_higher_2017}
Rune Haugseng.
\newblock The higher {Morita} category of $\mathbb{E}_n$-algebras.
\newblock {\em Geom. Topol.}, 21(3):1631--1730, 2017.
\newblock \href {https://doi.org/10.2140/gt.2017.21.1631}
  {\path{doi:10.2140/gt.2017.21.1631}}.

\bibitem{haugseng_remarks_2023}
Rune Haugseng.
\newblock Some remarks on higher {Morita} categories, September 2023.
\newblock arXiv:2309.09761 [math].
\newblock URL: \url{http://arxiv.org/abs/2309.09761}, \href
  {https://doi.org/10.48550/arXiv.2309.09761}
  {\path{doi:10.48550/arXiv.2309.09761}}.

\bibitem{haugseng_algebras_2024}
Rune Haugseng.
\newblock Algebras for enriched $\infty$-operads, October 2024.
\newblock Accepted in Algebraic and Geometric Topology, arXiv:1909.10042
  [math].
\newblock URL: \url{http://arxiv.org/abs/1909.10042}, \href
  {https://doi.org/10.48550/arXiv.1909.10042}
  {\path{doi:10.48550/arXiv.1909.10042}}.

\bibitem{hennion_gelfand_2023}
Benjamin Hennion and Mikhail Kapranov.
\newblock Gel'fand-{Fuchs} cohomology in algebraic geometry and factorization
  algebras.
\newblock {\em J. Amer. Math. Soc.}, 36(2):311--396, 2023.
\newblock \href {https://doi.org/10.1090/jams/1001}
  {\path{doi:10.1090/jams/1001}}.

\bibitem{horel_factorization_2017}
Geoffroy Horel.
\newblock Factorization homology and calculus a la {Kontsevich} {Soibelman}.
\newblock {\em J. Noncommut. Geom.}, 11(2):703--740, June 2017.
\newblock URL:
  \url{https://www.ems-ph.org/journals/show_abstract.php?issn=1661-6952&vol=11&iss=2&rank=8},
  \href {https://doi.org/10.4171/JNCG/11-2-8} {\path{doi:10.4171/JNCG/11-2-8}}.

\bibitem{horel_two_2025}
Geoffroy Horel, Manuel Krannich, and Alexander Kupers.
\newblock Two remarks on spaces of maps between operads of little cubes.
\newblock {\em High. Struct.}, 9(1):329--339, 2025.

\bibitem{karlsson_assembly_2024}
Eilind Karlsson, Claudia~I. Scheimbauer, and Tashi Walde.
\newblock Assembly of {Constructible} {Factorization} {Algebras}, March 2024.
\newblock arXiv:2403.19472 [math].
\newblock URL: \url{http://arxiv.org/abs/2403.19472}, \href
  {https://doi.org/10.48550/arXiv.2403.19472}
  {\path{doi:10.48550/arXiv.2403.19472}}.

\bibitem{kupers_lectures_nodate}
Sander Kupers.
\newblock Lectures on diffeomorphism groups of manifolds.
\newblock {\em Lecture notes}.
\newblock URL:
  \url{https://www.utsc.utoronto.ca/people/kupers/teaching/diffeomorphism-groups-lecture-notes/}.

\bibitem{lurie_higher_nodate}
Jacob Lurie.
\newblock Higher {Algebra}.
\newblock {\em Unpublished book}.
\newblock URL: \url{https://www.math.ias.edu/~lurie/papers/HA.pdf}.

\bibitem{lurie_kerodon_nodate}
Jacob Lurie.
\newblock Kerodon.
\newblock {\em Online resource}.
\newblock URL: \url{https://kerodon.net/}.

\bibitem{lurie_higher_2009}
Jacob Lurie.
\newblock {\em Higher {Topos} {Theory}}.
\newblock Princeton University Press, December 2009.
\newblock URL:
  \url{https://www.degruyter.com/document/doi/10.1515/9781400830558/html},
  \href {https://doi.org/10.1515/9781400830558}
  {\path{doi:10.1515/9781400830558}}.

\bibitem{lurie_classification_2009}
Jacob Lurie.
\newblock On the classification of topological field theories.
\newblock In {\em Current developments in mathematics, 2008}, pages 129--280.
  Int. Press, Somerville, MA, 2009.

\bibitem{matsuoka_descent_2017}
Takuo Matsuoka.
\newblock Descent properties of topological chiral homology.
\newblock {\em Münster J. Math.}, 10(1):83--118, 2017.
\newblock \href {https://doi.org/10.17879/33249451102}
  {\path{doi:10.17879/33249451102}}.

\bibitem{mazel-gee_users_2019}
Aaron Mazel-Gee.
\newblock A user's guide to co/cartesian fibrations.
\newblock {\em Grad. J. Math.}, 4(1):42--53, 2019.

\bibitem{melrose_calculus_1992}
Richard~B. Melrose.
\newblock Calculus of conormal distributions on manifolds with corners.
\newblock {\em Internat. Math. Res. Notices}, (3):51--61, 1992.
\newblock \href {https://doi.org/10.1155/S1073792892000060}
  {\path{doi:10.1155/S1073792892000060}}.

\bibitem{michor_manifolds_1980}
Peter~W. Michor.
\newblock {\em Manifolds of differentiable mappings}, volume~3 of {\em Shiva
  {Mathematics} {Series}}.
\newblock Shiva Publishing Ltd., Nantwich, 1980.

\bibitem{moerdijk_mysterious_2023}
Ieke Moerdijk.
\newblock A mysterious tensor product in topology.
\newblock In {\em Mathematics going forward—collected mathematical
  brushstrokes}, volume 2313 of {\em Lecture {Notes} in {Math}.}, pages
  209--214. Springer, Cham, 2023.
\newblock \href {https://doi.org/10.1007/978-3-031-12244-6_16}
  {\path{doi:10.1007/978-3-031-12244-6_16}}.

\bibitem{nocera_whitney_2023}
Guglielmo Nocera and Marco Volpe.
\newblock Whitney stratifications are conically smooth.
\newblock {\em Selecta Math. (N.S.)}, 29(5):Paper No. 68, 20, 2023.
\newblock \href {https://doi.org/10.1007/s00029-023-00877-4}
  {\path{doi:10.1007/s00029-023-00877-4}}.

\bibitem{palais_local_1960}
Richard~S. Palais.
\newblock Local triviality of the restriction map for embeddings.
\newblock {\em Comment. Math. Helv.}, 34:305--312, 1960.
\newblock \href {https://doi.org/10.1007/BF02565942}
  {\path{doi:10.1007/BF02565942}}.

\bibitem{palmer_homological_2021}
Martin Palmer.
\newblock Homological stability for moduli spaces of disconnected submanifolds,
  {I}.
\newblock {\em Algebr. Geom. Topol.}, 21(3):1371--1444, 2021.
\newblock \href {https://doi.org/10.2140/agt.2021.21.1371}
  {\path{doi:10.2140/agt.2021.21.1371}}.

\bibitem{scheimbauer_factorization_2014}
Claudia~Isabella Scheimbauer.
\newblock {\em Factorization {Homology} as a {Fully} {Extended} {Topological}
  {Field} {Theory}}.
\newblock {PhD} {Thesis}, Zurich, ETH, 2014.
\newblock \href {https://doi.org/10.3929/ethz-a-010399715}
  {\path{doi:10.3929/ethz-a-010399715}}.

\bibitem{stewart_homotopical_2025}
Natalie Stewart.
\newblock Homotopical additivity of equivariant little disk operads and
  configuration spaces, 2025.
\newblock (to appear).

\bibitem{szczesny_equivariant_2024}
Ben Szczesny.
\newblock Equivariant {Framed} {Little} {Disk} {Operads} are {Additive},
  October 2024.
\newblock arXiv:2410.20235 [math].
\newblock URL: \url{http://arxiv.org/abs/2410.20235}, \href
  {https://doi.org/10.48550/arXiv.2410.20235}
  {\path{doi:10.48550/arXiv.2410.20235}}.

\bibitem{tom_dieck_algebraic_2008}
Tammo tom Dieck.
\newblock {\em Algebraic topology}.
\newblock {EMS} {Textbooks} in {Mathematics}. European Mathematical Society
  (EMS), Zürich, 2008.
\newblock \href {https://doi.org/10.4171/048} {\path{doi:10.4171/048}}.

\end{thebibliography}

$\,$\\
{V.C.: \small\textsc{Max-Planck Institut for Mathematics in the Sciences, Leipzig, Germany}}\\
\textit{Email address:} \texttt{vcarmonamath@gmail.com}
\vspace*{2mm}

$\,$\\
{A.Š.: \small\textsc{Technical University Munich, Munich, Germany}}\\
\textit{Email address:} \texttt{svr@cit.tum.de} 
 \end{document}